 \DeclareMathAlphabet      {\mathbf}{OT1}{cmr}{bx}{n}
 \DeclareFontFamily{OT1}{pzc}{}
 \DeclareFontShape{OT1}{pzc}{m}{it}%
 {<-> s * [1.15] pzcmi7t}{}
 \DeclareMathAlphabet{\mathpzc}{OT1}{pzc}{m}{it}
 \newtheorem{thm}{Theorem}
 \newtheorem{theorem}{Theorem}[section]
 \newtheorem{lemma}[theorem]{Lemma}
 \newtheorem{corollary}[theorem]{Corollary}
 \newtheorem{definition}[theorem]{Definition}
 \newtheorem{prop}[theorem]{Proposition}
 \newtheorem{remark}{Remark}[section]
 \def\ack{\section*{Acknowledgements}%
 	\addtocontents{toc}{\protect\vspace{6pt}}%
 	\addcontentsline{toc}{section}{Acknowledgements}%
 }
\begin{document}   \small
 	\pagenumbering{arabic}

\def\ack{\section*{Acknowledgements}%
	\addtocontents{toc}{\protect\vspace{6pt}}%
	\addcontentsline{toc}{section}{Acknowledgements}%
}

	\pagenumbering{arabic}
	\title{Closed-open morphisms on  periodic Floer homology}
	\author{{ Guanheng Chen}}
	\date{}
	\maketitle
	\thispagestyle{empty}
	\begin{abstract}
	  In this paper, we investigate homomorphisms from periodic Floer
homology (PFH) to quantitative Heegaard Floer homology. We call these
homomorphisms closed-open morphisms. Under certain assumptions on the
 Lagrangian link, we first follow R. Lipshitz's idea to give a cylindrical formulation of the quantitative Heegaard Floer homology. Then, we construct
 the closed-open morphisms from PFH to quantitative Heegaard Floer
 homology. Moreover,  we show that these morphisms are non-vanishing.  As
 an application, we deduce a relation between the PFH spectral invariants
 and the link spectral invariants.
	\end{abstract}
	\tableofcontents
	\section{Introduction and Main results}
Throughout the paper, we assume that $(\Sigma, \omega)$ is a connected, closed symplectic surface
such that $\int_{\Sigma} \omega =1$. Given a \textbf{Hamiltonian function} $H: [0,1] \times \Sigma \to \mathbb{R}$, we define the
\textbf{Hamiltonian vector field} $X_H$ by $\omega(X_H, \cdot) =d_{\Sigma} H$. Let $\varphi_H^t$
be the flow generated by
$X_H$. Set  $\varphi_H : =\varphi_H^1$.  A symplectomorphism $\varphi$ is called \textbf{Hamiltonian} if  $\varphi=\varphi_H^1$ for
some $H$. The set of all Hamiltonian symplectomorphisms is denoted by  $Ham(\Sigma, \omega). $

For  $\varphi \in Ham(\Sigma, \omega)$, there are two distinct Floer theories associated with it: \textbf{periodic Floer homology} (abbreviated as PFH)   $\widetilde{PFH}(\Sigma, \varphi, \gamma^{ref})$ and \textbf{quantitative Heegaard Floer homology}  $HF(\operatorname{Sym}^d \varphi(\underline{\Lambda}), \operatorname{Sym}^d \underline{\Lambda})$, where $\underline{\Lambda} \subset \Sigma$ is a link (a disjoint union of
embedded circles) and $\gamma^{ref}$ is a reference 1-cycle. The former is introduced by M. Hutchings \cite{H1, H3} which is a sister
version of a more well-known invariant called embedded contact homology (abbreviated
as ECH) (see \cite{H4} for example). It is well defined due to M. Hutchings and C. Taubes
\cite{HT1, HT2}. The later is Lagrangian Floer homology of symmetric product of a monotone link on a surface which is shown to be well defined and computed by D. Cristofaro-Gardiner, V. Humili\`{e}re, C. Mak, S. Seyfaddini and I. Smith \cite{CHMSS}. Moreover, these two Floer homologies are independent of $\varphi$.   Denote the abstract groups  respectively by  $\widetilde{PFH}(\Sigma,  d)$ and  $HF(\operatorname{Sym}^d \underline{\Lambda})$, where $d$ is the ``degree'' of  the reference 1-cycle $\gamma^{ref}$.  We will take $d$ to be the number of components of  $\underline{\Lambda}.$

Two types
of numerical invariants emerge respectively from these two Floer theories called \textbf{PFH
spectral invariants} \cite{CHS1, EH, CPZ} and \textbf{link spectral invariants} \cite{CHMSS}. More precisely, they are
maps
\begin{equation*}
\begin{split}
&c^{pfh}_d : C^{\infty}(S^1\times \Sigma) \times \widetilde{PFH}(\Sigma, d) \to \{-\infty\} \cup \mathbb{R},\\
&c^{link}_{\underline{\Lambda}} : C^{\infty}([0,1]\times \Sigma) \times  HF( \operatorname{Sym}^d\underline{\Lambda}) \to \{-\infty\} \cup  \mathbb{R}.
\end{split}
\end{equation*}
Both spectral invariants satisfy a kind of “Weyl law” called Calabi property, originally
proposed by M. Hutchings (Remark 1.12 of \cite{CHS1}). It is an analogue of the “volume property” for ECH \cite{CHR}. The Calabi property for PFH spectral invariants is first verified for a
special class of Hamiltonian functions on the two-sphere \cite{CHS1} by D. Cristofaro-Gardiner,
V. Humili\`{e}re, and S. Seyfaddini. The general case is proved by O. Edtmair and Hutchings \cite{EH}, also by D. Cristofaro-Gardiner, R. Prasad and B. Zhang independently \cite{CPZ}.
The assumption about existence of U-sequences in \cite{EH} is guaranteed by \cite{CPZ1}. For the
link spectral invariants, the Calabi property is proved by D. Cristofaro-Gardiner, V.
Humili\`{e}re, C. Mak, S. Seyfaddini and I. Smith \cite{CHMSS}. The power of the spectral invariants
and the Calabi property has already been demonstrated in the study of various topics
such as the simplicity of the $C^0$-closure of $Ham(\Sigma, \omega)$ \cite{CHS1, CHMSS}, the $C^{\infty} $-closing lemma
\cite{EH, CPZ}, and the Hofer geometry of $Ham(\Sigma, \omega)$\cite{CHS2}, etc.

The purpose of this paper is to investigate the relation between these two types of
spectral invariants. The motivation arises naturally from the observation that these
two kinds of spectral invariants share many common properties. Moreover, the PFH
spectral invariants are more closely related to dynamics as PFH counts periodic orbits
of a Hamiltonian vector field, while the link spectral invariants tend to be easier to
compute due to the Lagrangian control property and the flexibility in selecting the
links. Consequently, elucidating their relation should have some potential applications
in the future. The principal outcome of this paper is that we obtain a partial result on
this relation (see third statement of Theorem \ref{thm2}).

To prove the central result, we try to construct a homomorphism from PFH to the
quantitative Heegaard Floer homology called a \textbf{closed-open morphism}. It is used to
relate the domains of the spectral invariants. This homomorphism is an analogy of the
one defined by P. Albers \cite{A} which is a bridge between symplectic Floer homology and
Lagrangian Floer homology.

Following P. Albers’s idea, the closed-open morphism should be defined by counting holomorphic curves in a certain symplectic cobordism. However, a notable difficulty arises in this construction: PFH is defined by counting holomorphic curves in
a 4-manifold while the quantitative Heegaard Floer homology is defined by counting holomorphic curves in a manifold of higher dimension. This dimensional disparity
presents a significant obstacle in directly relating the two concepts. To overcome this
problem, the strategy is to follow R. Lipshitz’s idea to reformulate the quantitative Heegaard Floer homology \cite{RL1}. That is to define an intermediate invariant $HF(\Sigma, \underline{\Lambda}, \varphi)$ by
counting holomorphic curves in a 4-manifold $\mathbb{R} \times [0,1] \times \Sigma$ with Lagrangian boundary
conditions determined by  the  link, and show that it is isomorphic to the quantitative
Heegaard Floer homology (see Theorem \ref{thm1}). Then we construct the homomorphism
from PFH to $HF(\Sigma, \underline{\Lambda}, \varphi)$(Theorem \ref{thm2}). From the view point of tautological correspondence, the closed-open morphisms we constructing are reformulation of the ones
defined by P. Albers \cite{A} (Remark \ref{remark2}). Therefore, it is reasonable to anticipate that
the closed-open morphisms preserve the units. We confirm this point in our result. In particular, the closed-open morphisms are non-vanishing.

Following the framework in \cite{CHMSS}, we define a numerical invariant
 \begin{equation}\label{eq23}
c^{hf}_{\underline{\Lambda}} : C^{\infty}([0,1]\times \Sigma) \times  HF(\Sigma, \underline{\Lambda}) \to \{-\infty\} \cup  \mathbb{R}.
\end{equation}
from the reformulation called \textbf{HF spectral invariant}, which is equivalent to the
link spectral invariant (Corollary \ref{corollary1}). We show that the HF spectral
invariants are controlled by the PFH spectral invariants (\ref{eq68}). More about the
properties of the HF spectral invariants and an enhancement (in the case of sphere)
of the relation between PFH spectral and HF spectral invariants will appear in the
author’s subsequent paper \cite{GHC3}.

\textbf{Comparing with the existing results: } Most of the results presented in our
paper in fact are not new tales. They have already appeared in the work of many
predecessors. What we have done is to reconstruct these results within our settings so
that they can be used to compare the PFH spectral invariants and the link spectral
invariants. Consequently, it is worth demonstrating the main differences between our
results and the previous work.
Our reformulation of the quantitative Heegaard Floer homology basically the same
as Lipshitz \cite{RL1}, yet with a closer alignment to Chapter 4 of \cite{VPK} in the details, using the
concepts such as HF curves and ECH index, etc. There is a main difference between our
reformulation with Lipshitz is that we cannot rule out the bubbles simply by topological
reasons. Thus, we need to calculate the contribution of the bubbles to the ECH index.
This is also the reason why we need a stronger assumption on the link than  \cite{CHMSS} (see
Remark \ref{remark10}).
The closed-open morphism between PFH and Heegaard Floer homology is defined
by V. Colin, P. Ghiggini, and K. Honda when they prove the equivalence between ECH
and Heegaard Floer homology \cite{VPK}. As an intermediate step, they show that the closed-open morphism is an isomorphism between the Heegaard Floer homology and a version
of PFH. They consider the curves satisfying a novelty Lagrangian boundary where
all the components of the link intersect at a common point, and they need to spend
a significant amount of time on the transversality and compactness. Our situation
is much simpler, the Lagrangians are pairwise disjoint, and we only need to use the
SFT compactness and the classical transversality argument. In fact, the construction
is closer to the argument in Chapter 6 of \cite{VPK}, where they construct the morphism
from Heegaard Floer homology to PFH, called open-closed morphism. Furthermore,
the morphisms defined in \cite{VPK} are quasi-isomorphisms, but this may not be true in our
case, even when the link has a single component. What we hope is that the closed-open morphisms are non-vanishing. We prove this by using a classical idea: we take
the Hamiltonian function to be a small Morse function and compute this special case.

\paragraph{Coefficient} Throughout this paper, all the Floer theories are defined with $\mathbb{F} =\mathbb{Z}/2\mathbb{Z}$
coefficient.

\begin{ack}
The author would like to express sincere gratitude to the anonymous referee for his/her valuable feedback and insightful comments, which have greatly improved the quality of this manuscript. 
\end{ack}

\subsection{Twisted periodic Floer homology}
We begin our paper by reviewing those aspects of periodic Floer homology that we
need.  For more details, please refer to \cite{H3, H4, LT}.
	
	
	Given $\varphi \in Ham(\Sigma, \omega)$, we define the \textbf{mapping torus} by
	\begin{equation*}
		Y_{\varphi}: =[0,1 ] \times \Sigma/\sim, \ (1, x) \sim (0, \varphi(x)).
	\end{equation*}
	Obviously, $\pi: Y_{\varphi} \to S^1$ is a surface  bundle over the circle.  Let $\xi:=\ker \pi_*$ denote the vertical bundle.
	The volume form $\omega$ and the vector field $\partial_t$ descend  to a  closed two form $\omega_{\varphi}$ and  a vector field $R$  on $Y_{\varphi}$ respectively. Fix a Hamiltonian function $H$ generating $\varphi$. Then,  we have the following global
trivialization
	\begin{equation} \label{eq11}
		\begin{split}
			\Psi_H: &S_t^1 \times  \Sigma  \to Y_{\varphi_H}\\
			&(t, x) \to (t, (\varphi_H^t)^{-1}(x)).
		\end{split}
	\end{equation}
	Moreover,  $\Psi_H^*(\omega_{\varphi}) =\omega + dH \wedge dt$ and $(\Psi_H)_*^{-1}(R) =\partial_t +X_H$.

	A \textbf{periodic orbit}  is a smooth map $\gamma: \mathbb{R}_{\tau} / q \mathbb{Z} \to Y_{\varphi} $ satisfying the ODE $\partial_{\tau} \gamma =R \circ \gamma$ for some $q>0$. The number  $q$ is called \textbf{period or degree} of the periodic orbit.  It is easy to show that $q$ is the intersection number $[\gamma] \cdot [\Sigma]$.
	
	 We say that a periodic orbit $\gamma$ is  \textbf{non-degenerate} if $1$ is not an eigenvalue of the linearized return map.  It is called elliptic if the eigenvalues of the linearized return map are on the unit circle, positive hyperbolic if the eigenvalues of  are real positive numbers, and negative hyperbolic if the eigenvalues   are real negative numbers. The symplectomorphism $\varphi$ is called \textbf{$d$-nondegenerate} if   every closed orbit with degree at most $d$ is nondegenerate. We assume that the   symplectomorphism is $d$-nondegenerate throughout. 	
	
	An \textbf{orbit set} $\alpha=\{(\alpha_i, m_i) \}$ is a finite collection  of periodic orbits, where $\alpha_i$ are distinct, nondegenerate, irreducible embedded periodic orbits and $m_i$  are positive integers.
	An orbit set $\alpha$ is called a  \textbf{PFH generator} if $m_i=1$ whenever $\alpha_i$ is a hyperbolic orbit.   Sometime  we write an orbit set using multiplicative notation $\alpha=\Pi_i \alpha_i$; we allow  $\alpha_i =\alpha_j$ for $i\ne j$.

The following terminology is introduced by Hutchings  \cite{H5} which is used to define
the cobordism maps on periodic Floer homology in some special cases.
	\begin{definition} (see \cite{H5} Definition 4.1) \label{definition2}
		Fix  $d>0$.  Let  $\gamma$ be an embedded elliptic orbit with degree $q \le d$.
		\begin{itemize}
			\item
			$\gamma$ is called $d$-positive elliptic if the rotation number $\theta  $ is in $ (0, \frac{q}{d}) \mod 1$.
			\item
			$\gamma$ is called $d$-negative elliptic if the rotation number $\theta $ is in $ ( -\frac{q}{d},0) \mod 1$.
		\end{itemize}
	\end{definition}

\paragraph{Holomorphic curves and holomorphic currents}
Let $$(X, \Omega_X)=(\mathbb{R}_s \times Y_{\varphi},   \omega_{\varphi} +ds \wedge dt )$$ be  the symplectization of $Y_{\varphi}$.    An  almost complex structure is called \textbf{admissible} if $J(\partial_s) =R$, $J$ preserving $\xi$ and $J \vert_{\xi}$ is compatible with $\omega \vert_{\xi}$.   The set of admissible almost complex structures is denoted by $\mathcal{J}(Y_{\varphi}, \omega_{\varphi})$.

Fix   $J \in \mathcal{J}(Y_{\varphi}, \omega_{\varphi})$.
 A holomorphic curve is a map $u: (\dot{F}, j) \to (X, J) $ satisfying
the Cauchy-Riemann equation $J\circ  du =du \circ j$, where $(\dot{F}, j)$ is a punctured Riemann
surface without boundary. If the energy $\int u^*\omega_{\varphi}$ is finite, then each puncture of $u$
is positively or negatively asymptotic to a periodic orbit. Let  $\gamma$ be a periodic orbit
with degree $q$. We say $u$ is ``positively asymptotic to $\gamma$'' if there are holomorphic
coordinates $[R_0, \infty)_{\sigma} \times \mathbb{R}_{\tau}/ q\mathbb{Z} $ near each puncture such that  $\lim_{\sigma \to \infty} \pi_{\mathbb{R}} \circ u =+\infty$  and $\lim_{\sigma\to\infty} \pi_{Y_{\varphi}} \circ u =\gamma$.
 ``Negatively asymptotic to $\gamma$" is defined analogously with $\sigma \to -\infty$.
To distinguish with the HF curves defined later, we call a holomorphic curve in $(X, \Omega_X)$
\textbf{a PFH curve.}

A holomorphic curve is called \textbf{simple} if it does not factor as
$$(\dot{F}, j) \xrightarrow{\phi}  (\dot{F}', j') \xrightarrow{v} (X, J)$$
where $\phi$ is a branched cover of degree 2 or more, and $v$ is another $J$-holomorphic curve.
When a curve $u$ is simple, it is determined just by its image $C=u(\dot{F})$ in $X$. Thus,
sometimes we use the image $C$ to denote a simple holomorphic curve.
There is a similar concept called holomorphic current which is originated from
Taubes when he defines the Gromov invariant.

\begin{definition} [Section 3.1 of \cite{H4}]
A \textbf{holomorphic current} $\mathcal{C}= \sum_a d_a C_a$  from $\alpha$ to $\beta$ is a finite formal sum of  holomorphic curves such that  $\mathcal{C}$ is asymptotic to $\alpha$ as $s\to \infty$ and asymptotic to $\beta$ as $s \to -\infty$ in current sense, where  $C_a$  are distinct, irreducible,  simple  $J$-holomorphic curves with finite energy $\int_{C_a} \omega_{\varphi} < \infty$ and $ d_a$  are positive integers.
\end{definition}
By the standard theory of holomorphic curves (see Chapter 7, 8 of \cite{Wen}), there exists a Baire subset of admissible almost complex structures $\mathcal{J}^{reg}(Y_{\varphi}, \omega_{\varphi})$ such that the
moduli space of simple $J$-holomorphic curves is a manifold for $\mathcal{J}^{reg}(Y_{\varphi}, \omega_{\varphi})$. We
call an almost complex structure in  $\mathcal{J}^{reg}(Y_{\varphi}, \omega_{\varphi})$ a \textbf{generic almost complex structure}.
When discussing the almost complex structures of  $\mathbb{R} \times [0,1] \times \Sigma$ and the closed-open cobordism later, we will also use the term ``generic"  to describe those structures such that
the simple holomorphic curves are Fredholm regular.


	\paragraph{The ECH index}
	Given orbit sets $  \alpha=\{(\alpha_i, m_i) \}$ and $\beta= \{ ( \beta_j, {n_j}) \}$ on $Y_{\varphi}$, define   $H_2(Y_{\varphi}, \alpha,\beta)$ to be the set of   $2$-chains $Z$ in $Y_{\varphi}$ such that $\partial Z = \sum_i m_i \alpha_i- \sum_j n_j \beta_j$, modulo the boundary of $3$-chains. Note that  $H_2(Y_{\varphi}, \alpha ,\beta)$ is an affine space over $H_2(Y_{\varphi}, \mathbb{Z})\cong \mathbb{Z}[\Sigma] \oplus (H_1(S^1, \mathbb{Z}) \otimes H_1(\Sigma, \mathbb{Z}))$. An element in   $H_2(Y_{\varphi}, \alpha ,\beta)$ is called a \textbf{relative homology class}.
	
	Given $Z \in H_2(Y_{\varphi}, \alpha ,\beta)$ and  trivializations    $\tau$ of $ \xi \vert_{\alpha}$ and $\xi \vert_{\beta}$,  the ECH index is defined by
	\begin{equation} \label{eq28}
		I(\alpha, \beta, Z) := c_{\tau}(\xi \vert_Z) + Q_{\tau}(Z) + \sum_i  \sum\limits_{p=1}^{m_i} CZ_{\tau}(\alpha_i^{p})- \sum_j \sum\limits_{q=1}^{n_j} CZ_{\tau}(\beta_j^{q}),
	\end{equation}
	where $c_{\tau}(Z)$ and $Q_{\tau}(Z)$ are respectively the relative Chern number and the relative self-intersection number  (see \cite{H2, H4}), and $CZ_{\tau}$ is the Conley-Zehnder index.   The ECH index also can be defined  for punctured holomorphic curves in a general symplectic cobordism \cite{H2}.  
	
\paragraph{Fredholm index}	Let $u: \dot{F} \to X$ be a $J$-holomorphic curve from $\alpha= \{(\alpha_{i}, m_i)\}$ to $\beta= \{ (\beta_{j}, n_j)\}$. For each $i$, let $k_i$ denote the number of ends of $u$ at $\alpha_{i}$, and let $\{p_{ia}\}^{k_i}_{a=1}$ denote their multiplicities. Likewise,
	for each $j$, let $l_j$ denote the number of ends of $u$ at $\beta_{j}$, and let $\{q_{jb}\}^{lj}_{b=1}$ denote their multiplicities. Then the Fredholm index of $u$ is defined by
	\begin{eqnarray} \label{eq29}
		\operatorname{ind} u: = -\chi(C) + 2 c_{\tau}(u^*\xi) + \sum\limits_i \sum\limits_{a=1}^{k_i} CZ_{\tau} (\alpha^{p_{ia}}_{i}) -  \sum\limits_j \sum\limits_{b=1}^{l_j}  CZ_{\tau} (\beta^{q_{jb}}_{j}).
	\end{eqnarray}

\paragraph{$J_0$ index}
Besides the ECH index, Hutchings also introduces another topological index $J_0$ which measures the Euler characteristic of the holomorphic curves \cite{H2}. Given $Z \in H_2(Y_{\varphi}, \alpha ,\beta)$, the $J_0$ index is defined by
	\begin{equation}
		J_0(\alpha, \beta, Z): = -c_{\tau}(\xi \vert_Z) + Q_{\tau}(Z) + \sum_i  \sum\limits_{p=1}^{m_i-1} CZ_{\tau}(\alpha_i^{p})- \sum_j \sum\limits_{q=1}^{n_j-1} CZ_{\tau}(\beta_j^{q}).
	\end{equation}

It is useful to know that holomorphic currents in the symplectization of $Y_{\varphi}$ have
nonnegative $J_0$ index.
\begin{lemma} \label{lem27}
Let $J \in \mathcal{J}(Y_{\varphi}, \omega_{\varphi})$  be an admissible almost complex structure in  the symplectization of $\mathbb{R} \times Y_{\varphi}$. Let $\mathcal{C} \in \mathcal{M}^{J}(\alpha_+, \alpha_-)$ be a $J$-holomorphic current in $\mathbb{R} \times Y_{\varphi}$ without closed component. Then $J_0(\mathcal{C}) \ge 0$.
\end{lemma}
\begin{proof}
Let $C$ be a simple $J$-holomorphic curve. By Corollary 6.11 of \cite{H2}, we have
\begin{equation*}
J_0(C) \ge 2(g(C)-1+\delta(C)) + |\alpha_+| + |\alpha_-|,
\end{equation*}
where $|\alpha_{\pm}|$ are  the quantities  in Definition 6.4 of \cite{H2}. By definition, $|\alpha_{\pm}| \ge 1$ if $\alpha_{\pm}$ is nonempty.
Assume that $C$ is not closed. Thanks to the fibration structure, $C$ has at least one positive end and one negative end. Thus, $|\alpha_+|, |\alpha_-| \ge 1$. In particular, $J_0(C) \ge 0$.

If $C=\mathbb{R} \times \gamma$ is a trivial cylinder, then $J_0(mC)=0$ for any $m\ge 1$ by definition.

For a general holomorphic current  $\mathcal{C}$, write $\mathcal{C} =\mathcal{C}_0 \cup \mathcal{C}_1$, where $\mathcal{C}_0$ consists of trivial cylinders and $\mathcal{C}_1$ does not contain trivial cylinder component.  Now $J$ is $\mathbb{R}$ invariant. We  may  assume the all the components in $\mathcal{C}_1$ are distinct by the translation trick in \cite{H1}.
More specifically, as a holomorphic current,  $mC=\sum C_i$
, where $C_i$
is $s_i$-translation
of $C$ and $s_i \ne s_j$ for $i\ne j$. Then we regard $C_i$ as distinct holomorphic curves and apply Proposition 6.14  of \cite{H2}, we have $J_0(\mathcal{C}) \ge 0.$
\end{proof}
\begin{remark}
In contrast, there may be holomorphic plane in $(\mathbb{R} \times Y, d(e^s \lambda))$ with $J_0=-1$ in  contact cases. Furthermore, the result may not true in the cobordism case.
The reason is the same as that the ECH index can be negative in cobordism. When the
multiple covered curve $mC$ appears, the results in \cite{H1, H2, H5} is not enough to show
that  $J_0(mC) \ge 0$.
\end{remark}

	

	
	\paragraph{Partition conditions}
Given a simple periodic orbit $\gamma$ and a holomorphic curve $u$,
suppose that $u$ has positive ends at covers of $\gamma$ whose total covering multiplicity is $m$.
The multiplicities of these covers form a partition of the integer $m$, denoted by $p^+_{\gamma}(u)$.
For example, if $u$ has two positive ends at $\gamma$, one positive end at the double cover of $\gamma$,
and one positive end at the triple cover of $\gamma$, with no other ends at $\gamma$, then  $m=7$ and
$p^+_{\gamma}(u)= (1, 1, 2, 3)$. Likewise, we define the partition $p^-_{\gamma}( u)$ for negative ends.

	\begin{definition}
		Let $u: \dot{F} \to X$ be a $J$-holomorphic curve from $\alpha=\{(\alpha_i, m_i)\}$ to $\beta=\{(\beta_j, n_j)\}$ without $\mathbb{R}$-invariant cylinders.    We  say that $u$ satisfies the \textbf{ECH partition conditions} if  $p^{+}_{\alpha_i}(u)=p^{+}(\alpha_i, m_i)$ and $p^{-}_{\beta_j}(u)=p^{-}(\beta_j, n_j)$, where $p^{+}(\alpha_i, m_i)$ and $p^{-}(\beta_j, n_j)$ are defined in Definition 4.3. of \cite{H1} (also see Section 3.9 of \cite{H4}).
	\end{definition}

	\begin{definition}
		A \textbf{connector}   $u: \dot{F} \to X$  is a union of branched covers  of trivial cylinders.   A connector is called  trivial if it is a union of unbranched covers of trivial cylinders; otherwise, it is called non-trivial.
	\end{definition}
	
The precise definitions of $p^{+}(m_i,\alpha_i)$ and $p^{-}(n_j,\beta_j)$ are not required in this paper. 	We  only  need   following facts about the connectors and ECH partition conditions.
	\begin{enumerate} [label=\textbf{F.\arabic*}]
		\item \label{fact1}
	Let $\gamma$ be an elliptic orbit. There are no nontrivial $\operatorname{ind}u=0$ connectors $u$ such that $p^+_{\gamma}(u)= p^+(\gamma, m)$.
Likewise, there are no nontrivial $\operatorname{ind}u=0$ connectors $u$ such
that $p^-_{\gamma}(u)= p^-(\gamma, m)$  (see exercise 3.14 of \cite{H4}).
		\item \label{fact2}
		Suppose that  $u$ is a simple holomorphic curve.  Then
\begin{equation}
I(u) \ge \operatorname{ind} u+ 2\delta(u),
\end{equation}
where $\delta(u)$   is a sign count of the double points and singularities of $u$. Equality
holds only if $u$ satisfies the ECH partition conditions.   For more details, see
Theorem 1.7 of \cite{H1} or Theorem 4.15. of \cite{H2}.
		This property also holds for the PFH-HF curves defined later; see Theorem 5.6.9 of \cite{VPK}.
		
		\item \label{fact3}
		The connector has nonnegative Fredholm index (see Lemma 1.7 of \cite{HT1}).

	\end{enumerate}

	\paragraph{Definition of periodic Floer homology }
	Now let us return to the definition of PFH.    Fix abase point $\mathbf{x} =(x_1,...,x_d) \in \Sigma^{\times d}$.  Define a reference 1-cycle by $\gamma_H^{\mathbf{x}}: =\Psi_H(S^1 \times \mathbf{x})$, where $\Psi_H$ is the trivialization (\ref{eq11}). We require that $d> g$ throughout.

The pair  $(\varphi, [\gamma_H^{\mathbf{x}}])$ is called \textbf{monotone} if there exists a constant  $\rho \in \mathbb{R}$ such that
\begin{equation*}
c_1(\xi) +2 PD([\gamma_H^{\mathbf{x}}]) =\rho[\omega_{\varphi}].
\end{equation*}
The monotonicity  of $(\varphi, [\gamma_H^{\mathbf{x}}])$  implies that $I(Z_1) - I(Z_2) =\rho \int_{Z_1-Z_2} \omega_{\varphi} $ (Proposition 1.6
of \cite{H1}). This guarantees the finiteness of the classes in $H_2(Y_{\varphi}, \alpha, \beta)/ \ker\omega_{\varphi}$ with fixed
ECH index. For $\varphi = \varphi_H \in Ham(\Sigma, \omega) $, the pair $(\varphi, [\gamma_H^{\mathbf{x}}])$ always satisfies the above
monotone condition. The reason is that using the trivialization (\ref{eq11}), we have
\begin{equation*}
c_1(\xi) =c_1(T\Sigma),  PD([\gamma_H^{\mathbf{x}}]) =d[\Sigma],  [\omega_{\varphi}] =[\omega] \in   H^2(\Sigma, \mathbb{R}) \cong \mathbb{R}.
\end{equation*}
	
	The PFH chain complex $\widetilde{PFC}(\Sigma, \varphi, \gamma_H^{\mathbf{x}})$   is a free module  generated by  $(\alpha, [Z])$, where $\alpha$ is a PFH generator  with degree $d$, 
and $[Z]$ is an equivalence class in $ H_2(Y_{\varphi}, \alpha, \gamma_H^{\mathbf{x}}) /\ker \omega_{\varphi}$.   Note that  $ H_2(Y_{\varphi}, \alpha, \gamma_H^{\mathbf{x}}) /\ker \omega_{\varphi}$ is an affine space over $\mathbb{Z}[\Sigma]$. Fix a generic $J \in \mathcal{J}(Y_{\varphi}, \omega_{\varphi})$.  Let  $\mathcal{M}_i^J(\alpha, \beta, Z)$ denote the moduli space of holomorphic currents from $\alpha$ to $\beta$ with ECH index $I=i$ and relative homology class $Z$. The PFH differential     is defined by
	\begin{equation*}
		\partial_J (\alpha, [Z]): =\sum_{(\beta, [Z'])}  \left( \# {\mathcal{M}}^J_{1}(\alpha, \beta, Z-Z')/\mathbb{R}  \right) (\beta, [Z'])
	\end{equation*}
	Hutchings shows that ${\mathcal{M}}^J_{1}(\alpha, \beta, Z-Z')/\mathbb{R}$   consists of finitely many points (Theorem
1.8 of \cite{H1}), thus the counting is well-defined. Also, the right hand side of the above equation is a finite sum thanks to the monotonicity of    $(\varphi, [\gamma_H^{\mathbf{x}}])$.
	The obstruction gluing  argument  in \cite{HT1, HT2} shows that $\partial_J^2=0$.   The homology of $(\widetilde{PFC}(\Sigma, \varphi, \gamma_H^{\mathbf{x}} ),  \partial_J)$ is called \textbf{twisted periodic Floer homology}, denoted by $\widetilde{PFH}_*(\Sigma, \varphi, \gamma_H^{\mathbf{x}})_J$.
	By  Lee and Taubes's isomorphism \cite{LT}, we know that $\widetilde{PFH}_*(\Sigma, \varphi,
	\gamma_H^{\mathbf{x}})_J$ is independent of the choices of $J$ and   Hamiltonian isotropy  of $\varphi$.

\begin{remark}
We require  $d>g(\Sigma)$ throughout because we will take $d$ to be the number of components of a link $\underline{\Lambda}$, which is  strictly greater than $g(\Sigma)$.

Another reason is that when $d \le g(\Sigma)$, one need to define PFH by using $(\omega_{\varphi}+ds \wedge dt)$-tame almost complex structure (see  (1.6) of \cite{LT}).    Some extract works  are needed for defining  the PFH cobordism maps and prove the invariance of the closed-open morphisms.
\end{remark}
	


	\paragraph{Cobordism maps}
  Given two Hamiltonian functions $H$ and $ G$,  there is a canonical
isomorphism
\begin{equation*}
\mathfrak{I}^{\mathbf{x}}_{H, G} : \widetilde{PFH}_*(\Sigma, \varphi_{H},  \gamma_H^{\mathbf{x}}) \to \widetilde{PFH}_*(\Sigma, \varphi_{G},  \gamma_G^{\mathbf{x}})
\end{equation*}
satisfying the following properties:
\begin{equation} \label{eq66}
\mathfrak{I}^{\mathbf{x}}_{H, H}=\operatorname{Id},  \mbox{ and }    \mathfrak{I}^{\mathbf{x}}_{H_2, H_3} \circ \mathfrak{I}^{\mathbf{x}}_{H_1, H_2} =\mathfrak{I}^{\mathbf{x}}_{H_1, H_3} \mbox{ for Hamiltonian functions $H_1, H_2, H_3$}.
\end{equation}
When the context is clear, we omit the superscript ``$\mathbf{x}$.''
The above isomorphisms are defined as follows:

A (complete) \textbf{symplectic cobordism}   from $(Y_{\varphi_+}, \omega_{\varphi_+})$ to $(Y_{\varphi_-}, \omega_{\varphi_-})$ is a symplectic manifold $(X, \Omega_X)$ such that
\begin{equation} \label{eq19}
		(X-K, \Omega_X) \cong ([R_0, \infty) \times, Y_{\varphi_+},   \omega_{\varphi_{+}} + ds \wedge dt ) \cup  ((-\infty, -R_0] \times, Y_{\varphi_-},   \omega_{\varphi_{-}} + ds \wedge dt ),
	\end{equation}
	where $K\subset X$ is a compact subset.
Given a symplectic cobordism $(X, \Omega_X)$  from $(Y_{\varphi_+}, \omega_{\varphi_+})$ to $(Y_{\varphi_-}, \omega_{\varphi_-})$ and a relative homology class $Z_{ref} \in H_2(X, \gamma_+^{ref}, \gamma_-^{ref})$,
then they induce a homomorphism (Theorem 1 of \cite{GHC},  also see
Appendix \ref{appendixB})
	\begin{equation*}
		PFH_{Z_{ref}}^{sw}(X, \Omega_X): \widetilde{PFH}(\Sigma_+, \varphi_{+}, \gamma_+^{ref})  \to  \widetilde{PFH}(\Sigma_-, \varphi_{-}, \gamma_-^{ref}).
	\end{equation*}
	Such a homomorphism is called a \textbf{cobordism map}. 
 For our purpose, we only need
the following symplectic cobordism:
	\begin{equation} \label{eq9}
		X=\mathbb{R} \times S^1 \times \Sigma,  \  \omega_{X} =\omega +dH_{s, t} \wedge dt,  \mbox{\ and \ } \Omega_X= \omega_X+ ds \wedge dt,
	\end{equation}
	where $H_{s, t} =\chi(s) H_t + (1-\chi(s)) G_t$,  where $\chi$ is a  cutoff function  such that $\chi(s)=1$ for $s \gg 1$ and $\chi(s)=0$ for $s\ll 0$. Then $(X, \Omega_{X})$ gives  a   symplectic cobordism  from $(Y_{\varphi_H}, \omega_{\varphi_H})$ to $(Y_{\varphi_G}, \omega_{\varphi_G})$ under the trivialization  (\ref{eq11}). Take the reference homology
class to be $Z_{ref} = [\mathbb{R} \times S^1 \times \mathbf{x}] \in H_2(X, \gamma_H^{\mathbf{x}}, \gamma_G^{\mathbf{x}})$. Then we define
	\begin{equation*}
		 \mathfrak{I}_{H, G}: =PFH_{Z_{ref}}^{sw}(X, \Omega_X).
	\end{equation*}
The properties (\ref{eq66}) follow from the holomorphic curve axioms and composition rule
of the PFH cobordism maps (see Theorem 1 of \cite{GHC} and Theorem \ref{thm3}).

The superscript ``$sw$" in  $PFH_{Z_{ref}}^{sw}(X, \Omega_X)$ indicates that the cobordism map is
defined by the Seiberg-Witten cobordism maps \cite{KM} via the isomorphism “SWF=PFH”
in \cite{LT}. The story of Seiberg-Witten equations with non-exact perturbation can be found in Chapter 30, 31, 32 of   \cite{KM}. Some review of these theory also can be found in
\cite{GHC, LT} and Appendix \ref{appendixA}. The idea of PFH cobordism maps is the same as the ECH cobordism maps defined by Hutchings and Taubes \cite{HT}. Their differences are summary in Remark 3.11 of \cite{CHS1}. Even these maps are defined by Seiberg-Witten equations, they
ensure that the existence of holomorphic curves in $X$ when the cobordism maps are
nontrivial. These crucial properties are called \textbf{holomorphic axioms.} We put the
precise statements in the Appendix \ref{appendixB} (see Theorem \ref{thm3}).

An $\Omega_X$-compatible almost complex structure $J_X$ is called  \textbf{admissible} if it satisfies
	\begin{enumerate}
		\item
		$J_X$ agrees with some  admissible almost complex structures   $J_{\pm} \in \mathcal{J}(Y_{\varphi_{\pm}}, \omega_{\varphi_{\pm}}) $ over the cylindrical ends;
		\item
		(Compatible with the fibration) $J_X$ preserves $TX^{vert}$ and $TX^{hor}$, where $TX^{vert}=\ker (\pi_X)_*$ and $TX^{hor}$ is the $\Omega_X$-orthogonal complement.
	\end{enumerate}
	Fix a generic admissible almost complex structure $J_X$.  Suppose that $\varphi_{H_+}$  and  $\varphi_{H_-}$  satisfy
certain technical assumptions (described by \ref{assumption1} and \ref{assumption2}  later).  According to  Theorem 2 in \cite{GHC}, we can define the  cobordism map
	\begin{equation*}
		PFH_{Z_{ref}}(X, \Omega_X)_{J_X}: \widetilde{PFH}(\Sigma, \varphi_{H_+}, \gamma_{H_+}^{\mathbf{x}})  \to  \widetilde{PFH}(\Sigma, \varphi_{H_-}, \gamma_{H_-}^{\mathbf{x}}).
	\end{equation*}
	alternatively  by  counting embedded $J_X$-holomorphic curves with $I=0$. More precisely,  $PFH_{Z_{ref}}(X, \Omega_X)_{J_X}$ is induced by the following chain map
 	\begin{equation*}
		PFC_{Z_{ref}}(X, \Omega_X)_{J_X}(\alpha_+, Z_+) =\sum_{(\alpha_-, Z_-), I(Z) =0} \#\mathcal{M}^{J_X}(\alpha_+, \alpha_-, Z)(\alpha_-, Z_-)
	\end{equation*}
where $Z= Z_+\#Z_{ref} \#Z_-$.
 In this case, we remove the superscript ``sw" to indicate that the map is defined by counting holomorphic curves.   Moreover, the holomorphic curve definition coincides with the Seiberg-Witten definition (see Theorem 3 of \cite{GHC}).

\begin{remark}
In Appendix \ref{appendixB}, the PFH cobordism maps are defined by using a larger class of almost complex structures called cobordism-admissible almost complex structure (defined in Appendix \ref{appendixA}). There is no deference between these two kinds of almost complex structures when defining PFH cobordism maps for the symplectic cobordisms (\ref{eq9}). However, using the admissible almost complex structures, we obtain a better estimate for the energy of holomorphic currents (see (\ref{eq6})).
\end{remark}

\paragraph{Invariance  of PFH}
	Let $\mathbf{x}'$ be another base point.  Take a relative homology class
$Z_{\mathbf{x}, \mathbf{x}'} =\Psi_H(S^1 \times \eta) \in H_2(Y_{\varphi}, \gamma_H^{\mathbf{x}}, \gamma_H^{\mathbf{x}'})$, where $\eta:[0,1] \times \operatorname{Sym}^d\Sigma$ is a path such that $\eta(0) =\mathbf{x}'$ and $\eta(1) =\mathbf{x}$.
Then it induces an isomorphism
	\begin{equation}\label{eq34}
	\Psi^{pfh}_{H, \mathbf{x}, \mathbf{x}'}: \widetilde{PFH}_*(\Sigma, \varphi, \gamma_H^{\mathbf{x}})  \to \widetilde{PFH}_*(\Sigma, \varphi, \gamma_H^{\mathbf{x}'})
	\end{equation}
	by sending $(\alpha, [Z]) $ to $(\alpha,[Z+ Z_{\mathbf{x}, \mathbf{x}'}])$.

By  (\ref{eq34}) and  (\ref{eq66}),  for two  different  choices of $(H, \mathbf{x})$ and $(G, \mathbf{x}')$, $ \widetilde{PFH}_*(\Sigma, \varphi_H, \gamma_H^{\mathbf{x}})$ is canonically isomorphic to   $\widetilde{PFH}_*(\Sigma, \varphi_G, \gamma_G^{\mathbf{x}'})$. In other words, we have an abstract group $\widetilde{PFH}_*(\Sigma, d)$ and maps  $\mathfrak{j}_H^{\mathbf{x}}:   \widetilde{PFH}_*(\Sigma, \varphi_H, \gamma_H^{\mathbf{x}}) \to \widetilde{PFH}_*(\Sigma, d)$ satisfying $ \mathfrak{j}_H^{\mathbf{x}}   =\mathfrak{j}_G^{\mathbf{x}'} \circ \Psi^{pfh}_{G, \mathbf{x}, \mathbf{x}'}\circ \mathfrak{I}^{\mathbf{x}}_{H, G} .$

\paragraph{PFH-spectral invariants}
  The PFH complex admits a natural action functional.
It induces a filtration on the complex and gives us the PFH-spectral invariants \cite{CHS1, EH}.
We review them as follows.

	Let $H$ be a Hamiltonian function such that $\varphi=\varphi_H$ is $d$-nondegenerate.   Given a generator $(\alpha, [Z])$, its \textbf{action} is defined by
	$$\mathbb{A}_H (\alpha, [Z]) := \int_{Z} \omega_{\varphi_H} + \int_{S^1} H_t(\mathbf{x})dt, $$
where  $H_t(\mathbf{x}) :=\sum_{i=1}^dH_t(x_i).$
 	Define a  subset  $\widetilde{PFC}^L(\Sigma, \varphi_H,  {\gamma_H^{\mathbf{x}}}) $ generated by the anchored PFH generators   $(\alpha, [Z])$ with $\mathbb{A}_H  <L$.   It is easy to see this is a subcomplex. Its homology is  denoted by $\widetilde{PFH}^L(\Sigma, \varphi_H,  {\gamma_H^{\mathbf{x}}}) $. The natural inclusion induces a homomorphism
		\begin{equation*}
	\mathfrak{i}_L: \widetilde{PFH}^L(\Sigma, \varphi_H,  {\gamma_H^{\mathbf{x}}}) \to  \widetilde{PFH}(\Sigma, \varphi_H,  {\gamma_H^{\mathbf{x}}}).
	\end{equation*}

\begin{definition}	\label{definition5}
 	Fix a nonzero  class $ \sigma \in \widetilde{PFH}(\Sigma,  d)$.  The  \textbf{PFH spectral invariant} at $\sigma$ is defined by
	\begin{equation*}
		c_d^{pfh}(H, \sigma):= \inf\{L \in \mathbb{R}\vert  (\mathfrak{j}_H^{\mathbf{x}})^{-1}( \sigma) \mbox{ belongs to the  image of }   \mathfrak{i}_L  \}.
	\end{equation*}
In the case that  $\varphi_H$ is degenerate,  we find a sequence $\{H_n\}_{n=1}^{\infty}$ such that $H_n$ converges in $C^{\infty}$ to $H$ and each $\varphi_{H_n}$ is nondegenerate. Then we define
$$	c_d^{pfh}(H, \sigma):= \lim_{n\to \infty} 	c_d^{pfh}(H_n, \sigma),$$
The limit is well defined by Corollary 4.4 of \cite{EH}.
\end{definition}


	Let  $\mathbf{x}'$ be another base point. Then the isomorphism $\Psi^{pfh}_{H, \mathbf{x}, \mathbf{x'}}$  at the chain level preserves  the action
filtration in the following ways
	\begin{equation} \label{eq21}
		\mathbb{A}_H(\alpha, Z + Z_{\mathbf{x}, \mathbf{x}'})    =  	\mathbb{A}_H(\alpha, Z).
	\end{equation}
Following the argument in  Proposition 4.2 of \cite{EH}, we know that  $c_d^{pfh}(H, \sigma)$ is independent of the choice of the base point.



	\subsection{Quantitative Heegaard Floer homology} \label{Section1.2}
 In this section, we review the quantitative Heegaard Floer homology defined in \cite{CHMSS}. We
 begin with introduce the definition of the link.

Let  $\underline{\Lambda} =\sqcup_{i=1}^d \Lambda_i$ be a disjoint union of simple  closed curves in $\Sigma$. We call  $\underline{\Lambda}$ a
link on $\Sigma$. \textbf{Throughout this paper, we fix a constant  $\eta \ge 0$ and assume that
the link $\underline{\Lambda} $  satisfies the following properties:}
	\begin{enumerate} [label=\textbf{A.\arabic*}]
		\item \label{assumption5}
		$\underline{\Lambda}$ consists of $k$ disjoint contractible  components $\sqcup_{i=1}^k\Lambda_i$  and $g$ non-contractible
components. There exists a decomposition of $\Sigma$ as a sphere adding $g$ 1-handles. In
each handle, we have exactly one non-contractible component $\Lambda_i$.
		Consequently, $d=k+g$.
		\item \label{assumption3}
		Let $\Sigma \setminus \underline{\Lambda} =\cup_{i=1}^{k+1} \mathring{B}_k$, where $\mathring{B}_i$ is a disk for $1 \le i\le k$ and $\mathring{B}_{k+1} $ is a planar domain. Let $B_i$ denote the closure of $\mathring{B}_i$ in $\Sigma$. For $1\le i\le k $,	each circle $\Lambda_i$ is the boundary of   $B_i$.
		\item \label{assumption6}
		$\mathring{B}_i \cap \mathring{B}_j =\emptyset$ for $i\ne j$. 
		\item \label{assumption4}
		For $ 0\le i<j \le k$, we have  $\int_{B_i} \omega =\int_{B_j} \omega=\lambda$. Also, $\lambda=2\eta(2g+k-1)+ \int_{B_{k+1}} \omega$.
	\end{enumerate}
We call the link satisfying the above conditions \textbf{$\eta$-admissible}. See   examples in Figure \ref{figure4}.

\begin{figure} [h]
    \centering
    \includegraphics[width=1\textwidth,  height=0.2\textheight]{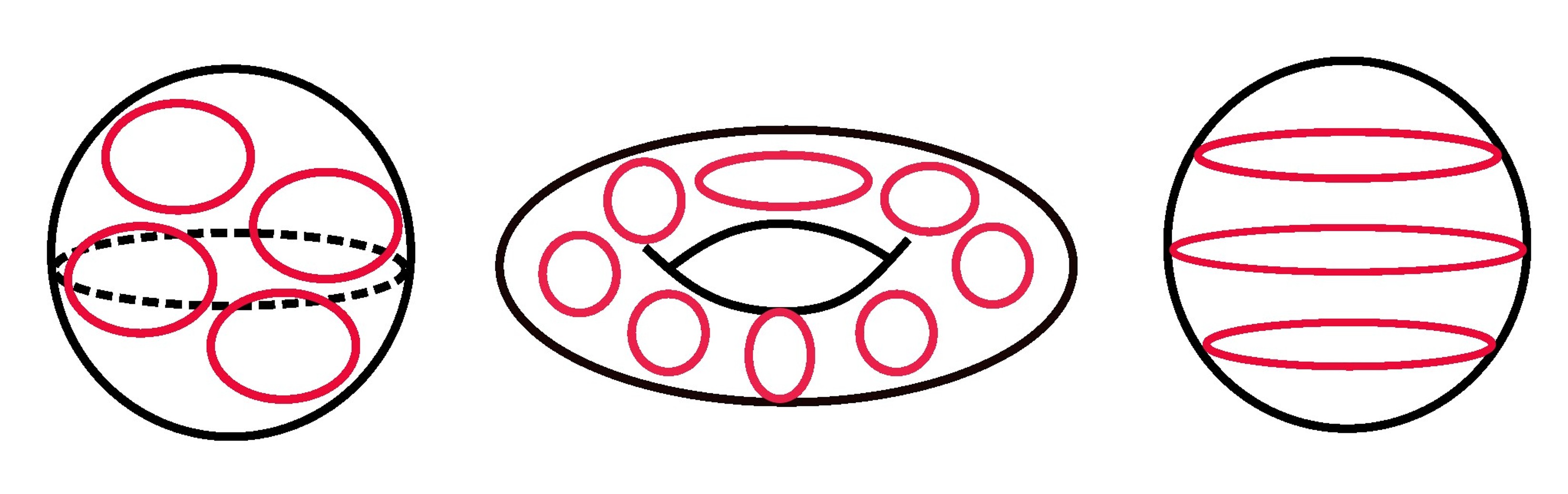}
    \caption{The red circles are the links. Here the first two pictures are examples
 of $\eta$-admissible links. The third one is a non-example. However, it satisfies the
 monotone conditions in Definition 1.12 of \cite{CHMSS}.}
  \label{figure4}
\end{figure}

	Let $\mathbb{M}:=\operatorname{Sym}^d \Sigma$ be the $d$ symmetric product of           $ \Sigma$. The product symplectic form $\omega^{\times d }$  on $(\Sigma)^{\times d}$ descends to  a singular K\"{a}hler  current $\omega_{\mathbb{M}}$ on $\mathbb{M}$. $\omega_{\mathbb{M}}$ is smooth away from the diagonal $\Delta$.   Fix   a small  neighbourhood  $V$  of $\Delta$.  According   Perutz's result (Section 7 of \cite{P}), we obtain a smooth K\"{a}hler  form $\omega_V$ on $\mathbb{M}$ such that
	\begin{itemize}
		\item
		$[\omega_V]=[\omega_{\mathbb{M}}] \in H^2(\mathbb{M}; \mathbb{R})$;
		\item
		$\omega_V=\omega_{\mathbb{M}}$ on ${\mathbb{M}} \setminus V$.
	\end{itemize}
	Then $\operatorname{Sym}^d\varphi_H(\underline{\Lambda})$ and $\operatorname{Sym}^d\underline{\Lambda}$ are $\omega_V$-Lagrangian submanifolds. In fact, they are monotone Lagrangian submanifolds in the sense of Lemma 4.19 of \cite{CHMSS}. 
The Hamiltonian symplectomorphism $\varphi_H$ is called \textbf{nondegenerate (with respect to $\underline{\Lambda}$) } if $\operatorname{Sym}^d\varphi_H(\underline{\Lambda})$   intersects  $\operatorname{Sym}^d\underline{\Lambda} $ transversely.  We assume that $\varphi_H$ is nondegenerate; unless otherwise stated.
	
	Fix a base point $\textbf{x}\in \operatorname{Sym}^d\underline{\Lambda}$. Define a reference chord
\begin{equation*}
\textbf{x}_H(t): =(\operatorname{Sym}^d\varphi_{H})  \circ (\operatorname{Sym}^d\varphi_{H}^{t})^{-1}(\mathbf{x}).
\end{equation*}
For any $\textbf{y} \in  \operatorname{Sym}^d \varphi_H(\underline{\Lambda}) \cap \operatorname{Sym}^d \underline{\Lambda}$,  a capping is a smooth map $\hat{\mathbf{y}}: [0,1]_s \times [0,1 ]_t \to  \mathbb{M}$ such that $\hat{\mathbf{y}}(1,t) =\textbf{x}_H(t) $, $\hat{\mathbf{y}}(0,t) =\textbf{y}$, and $\hat{\mathbf{y}}(s, i) \in  \operatorname{Sym}^d \varphi_H^{1-i}( \operatorname{Sym}^d \underline{\Lambda})$, $i \in \{0, 1\}$. Two cappings $\hat{\mathbf{y}}_0$ and $\hat{\mathbf{y}}_1$ are equivalent if $\textbf{y}_0=\textbf{y}_1$ and $\int_{\hat{\mathbf{y}}_0} \omega_V +\eta \Delta \cdot \hat{\mathbf{y}}_0=\int_{\hat{\mathbf{y}}_1} \omega_V+\eta \Delta \cdot \hat{\mathbf{y}}_1$.   The equivalent class of the capping is denoted by $\mathcal{S}$.
	
	\begin{definition} [Definition 13.1  of \cite{RL1}, \cite{CHMSS}]
		A path of  almost complex structures $\{{J}_t\}_{t\in [01,]}$ on $\operatorname{Sym}^d \Sigma$ is called  a  quasi-nearly-symmetric almost complex structure if it satisfies
		\begin{enumerate}
			\item
			${J}_t$ is $\omega_{\mathbb{M}}$-compatible  over $\operatorname{Sym}^d \Sigma \setminus V$.
			\item
			For each $t$,  there exists a complex structure $j_t$ on $\Sigma$ such that ${J}_t =\operatorname{Sym}^d(j_t)$ over $V$. 	
		\end{enumerate}
	\end{definition}
	
	Let $CF (\operatorname{Sym}^d\varphi_{H}(\underline{\Lambda}), \operatorname{Sym}^d\underline{\Lambda}, \textbf{x})$ be a free module generated by $(\textbf{y}, \mathcal{S})$.    Fix a generic  quasi-nearly-symmetric almost complex structure $\{J_t\}_{t\in [0,1]}$. Let $\mathcal{M}_1^{\{J_t\}}(\textbf{y}_+, \textbf{y}_-)$ be the moduli space of  holomorphic curves  $u: \mathbb{R}_s \times [0,1]_t \to \mathbb{M}$ satisfying
	\begin{equation*}
		\left\{  	\begin{split}
			&\partial_s u + J_t(u) \partial_t u  =0 \\
			&u(s, 0) \in \operatorname{Sym}^d\varphi_{H}(\underline{\Lambda}), \ u(s,1)\in \operatorname{Sym}^d\underline{\Lambda}\\
			& \lim_{s\to \pm\infty}u(s,t)=\textbf{y}_{\pm}\\
			&\operatorname{ind} u=1.
		\end{split} \right.
	\end{equation*}
	The differential is defined by
	\begin{equation} \label{eq48}
		m_1(\textbf{y}_+, \mathcal{S}_+) := \sum_{\textbf{y}_-}   \sum_{u \in \mathcal{M}_1^{\{J_t\}}(\textbf{y}_+, \textbf{y}_-)  / \mathbb{R}}  \varepsilon(u) (\textbf{y}_-, [u]\#\mathcal{S}_+).
	\end{equation}
	The \textbf{quantitative  Heegaard Floer homology} $HF_{*}(\operatorname{Sym}^d \varphi_H(\underline{\Lambda}), \operatorname{Sym}^d\underline{\Lambda}, \textbf{x})$ is the homology of  $(CF(\operatorname{Sym}^d\varphi_{H}(\underline{\Lambda}), \operatorname{Sym}^d\underline{\Lambda}, \textbf{x}), m_1)$. Under the assumptions \ref{assumption5}, \ref{assumption3} , \ref{assumption6} and \ref{assumption4},  the Lagrangian submanifolds $\operatorname{Sym}^d \varphi_H(\underline{\Lambda})$ and $\operatorname{Sym}^d\underline{\Lambda}$ are monotone in the sense of  Lemma 4.21  of \cite{CHMSS} (also see (\ref{eq65})).   By Lemma 6.6 in \cite{CHMSS}, the homology is well defined.
	
	The  quantitative  Heegaard Floer homology  is independent of the choices of Hamiltonian function and almost complex structure.  	 Moreover, Lemma 6.10 in \cite{CHMSS} shows that
	\begin{equation}\label{eq17}
		HF_{*}( \operatorname{Sym}^d \varphi_H(\underline{\Lambda}), \operatorname{Sym}^d\underline{\Lambda}, \textbf{x})\cong H^*(\mathbb{T}^d, \mathbb{F}[T^{-1}, T]),
	\end{equation}
where $T$ is a formal variable.

\begin{remark} \label{remark10}
Our assumptions on the links are different from the ones in  \cite{CHMSS}. Especially, if $\Sigma$ is not the sphere, then $\underline{\Lambda}$ does not satisfy the   planarity conditions  in Definition 1.12 of \cite{CHMSS}.

In Remark  \ref{remark8}, we  know that  $\operatorname{Sym}^d \varphi_H(\underline{\Lambda})$ and $\operatorname{Sym}^d\underline{\Lambda} $ are monotone in the sense of  Lemma 4.21  of \cite{CHMSS}. Therefore, the Floer homology  $HF_{*}( \operatorname{Sym}^d \varphi_H(\underline{\Lambda}), \operatorname{Sym}^d\underline{\Lambda}, \mathbf{x})$ is still well defined.   Following  the same argument  in  Section 5 of \cite{CHMSS} to compute the disk potential, one can  prove the isomorphism (\ref{eq17}). Alternatively,  we can obtain the same result by applying    Mak-Trifa's  K$\ddot{u}$nneth formula (Theorem 6 of \cite{MT}).

 The reason using these links  is that we
need to compute the contribution of the bubbles to the ECH index. In the $\eta$-admissible
case, the homology classes of the bubbles basically are linear combination of disks and
$[\Sigma]$. In this case, the contribution of a disk to the ECH index can be computed (Lemma
\ref{lem6}). See Remark \ref{remark7} for more discussion on the computation of the ECH index. Later,
we need to choose a suitable Morse function on $\Sigma$ to compute the closed-open morphisms. Another  merit from $\eta$-admissible is that
we can draw down the Morse function in a concrete picture (Figure  \ref{figure2}).

\end{remark}

\paragraph{Invariance  of quantitative  Heegaard Floer homology}
	For two different choices of base points $\textbf{x}$,  $\textbf{x}'$, we  construct an isomorphism
\begin{equation*}
	\Psi^{hf}_{H, {\textbf{x}, \textbf{x}'}}: HF_{*}( \operatorname{Sym}^d\varphi_H(\underline{\Lambda}), \operatorname{Sym}^d\underline{\Lambda}, \textbf{x}) \to   HF_{*}( \operatorname{Sym}^d\varphi_H(\underline{\Lambda}), \operatorname{Sym}^d\underline{\Lambda}, \textbf{x}').
	\end{equation*}
as  follows.  Take a path  $\eta: [0,1] \to \operatorname{Sym}^d \underline{\Lambda}$ such that $\eta(0) =\mathbf{x}$ and $\eta(1) =\mathbf{x}'$. Define a
 surface  $u_{\mathbf{x}, \mathbf{x}'} (s, t) : =\left(s, t, (\operatorname{Sym}^d \varphi_H)\circ (\operatorname{Sym}^d \varphi_H^t)^{-1}) (\eta(s)\right)$. Note that $u_{\mathbf{x}, \mathbf{x}'} (s, 0) \subset \operatorname{Sym}^d \varphi_H(\underline{\Lambda})$ and  $u_{\mathbf{x}, \mathbf{x}'} (s, 1) \subset \operatorname{Sym}^d \underline{\Lambda} $.
 Then,  we define a map  $\psi_{\mathbf{x}, \mathbf{x}'} $
 at the chain level by sending  $(\mathbf{y}, \mathcal{S})$ to   $(\mathbf{y}, [u_{\mathbf{x}, \mathbf{x}'} ] \# \mathcal{S})$. One can verify directly that   $\psi_{\mathbf{x}, \mathbf{x}'} $
 is a chain map from the definition. Since  $\psi_{\mathbf{x}, \mathbf{x}'} $
is just a shifting of the generators’s
 cappings via  $u_{\mathbf{x}, \mathbf{x}'}$, $\psi_{\mathbf{x}, \mathbf{x}'} $
is an isomorphism at the chain level. Let  $\Psi^{hf}_{H, \mathbf{x}, \mathbf{x}'} : =(\psi_{\mathbf{x}, \mathbf{x}'})_*$
 be the induced map at  the homological level. The above discussion implies that   $\Psi^{hf}_{H, \mathbf{x}, \mathbf{x}'} $ is an
 isomorphism.

Let $\mathbb{I}_{H, G}: HF_{*}( \operatorname{Sym}^d\varphi_H(\underline{\Lambda}), \operatorname{Sym}^d\underline{\Lambda}, \textbf{x}) \to HF_{*}( \operatorname{Sym}^d\varphi_G(\underline{\Lambda}), \operatorname{Sym}^d\underline{\Lambda}, \textbf{x})$ be the \textbf{continuous morphism} on the Lagrangian Floer homology.   It is
 defined by the formula (\ref{eq48}) but replacing the index one curves by index zero holomorphic strips, and replacing the Lagrangian submanifolds  by a Lagrangian cobordism determined
 by $H$ and $G$.
 From the definition, we know that  $\Psi_{H, \mathbf{x}, \mathbf{x}'}^{hf}$
satisfies the following diagram:
 \begin{equation} \label{eq41}
 	\begin{CD}
				HF_{*}( \operatorname{Sym}^d \varphi_H(\underline{\Lambda}), \operatorname{Sym}^d \underline{\Lambda}, \textbf{x})  @>\Psi_{H, \mathbf{x}, \mathbf{x}'}^{hf}>> HF_{*}( \operatorname{Sym}^d \varphi_H(\underline{\Lambda}), \operatorname{Sym}^d \underline{\Lambda}, \textbf{x}')  \\
				@VV \mathbb{I}_{H, G}V @VV  \mathbb{I}_{H, G}V\\
				HF_{*}( \operatorname{Sym}^d \varphi_G(\underline{\Lambda}), \operatorname{Sym}^d \underline{\Lambda}, \textbf{x})  @>\Psi_{G, \mathbf{x}, \mathbf{x}'}^{hf}>> HF_{*}( \operatorname{Sym}^d\varphi_G(\underline{\Lambda}), \operatorname{Sym}^d \underline{\Lambda}, \textbf{x}').			
				\end{CD}
				\end{equation}
Therefore, we have an abstract Floer group $HF(\operatorname{Sym}^d\underline{\Lambda})$ and a canonical isomorphism
	\begin{equation} \label{eq38}
		 \textbf{j}^{\textbf{x}}_H:  HF_{*}(\operatorname{Sym}^d \varphi_H(\underline{\Lambda}), \operatorname{Sym}^d\underline{\Lambda}, \textbf{x}) \to HF(\operatorname{Sym}^d \underline{\Lambda} ).
	\end{equation}

	 There is a canonical class $\mathbf{1}_{\underline{\Lambda}} \in HF(\operatorname{Sym}^d \underline{\Lambda})$   called \textbf{unit}. It is the unit with
 respective to the quantum product. We defer the definition of $\mathbf{1}_{\underline{\Lambda}}  $   to Section 5.3 because
 we do not need it at the current stage. Also, we will give a concrete description when $H$
 is a small Morse function therein.

	\paragraph{Action functional and link spectral invariants}
	Given a generator $(\textbf{y}, \mathcal{S})$, its \textbf{action} is defined by
	\begin{equation}\label{eq46}
		\mathfrak{A}^{\eta}_H(\textbf{y}, \mathcal{S})_{\textbf{x}} :=- \int_{\mathcal{S}} \omega_V + \int_0^1\operatorname{Sym}^d H_t( \mathbf{x})dt-\eta \Delta \cdot \mathcal{S},
	\end{equation}
where $\Delta \cdot \mathcal{S}$ is the algebraic intersection number with the diagonal.
	We omit the subscript ``\textbf{x}" when the context is clear.
	
	Let $  CF^L(\operatorname{Sym}^d\varphi_H(\underline{\Lambda}), \operatorname{Sym}^d\underline{\Lambda}, \textbf{x}) $   be the complex generated by the cappings with $\mathfrak{A}^{\eta}_H<L$.   Similar to  the PFH case, for $L'<L$,   the inclusion induces a homomorphism
$$   \textbf{i}_{L, L'}: HF^{L}(\operatorname{Sym}^d\varphi_H(\underline{\Lambda}), \operatorname{Sym}^d\Lambda, \textbf{x}) \to HF^{L'}(\operatorname{Sym}^d\varphi_H(\underline{\Lambda}), \operatorname{Sym}^d\underline{\Lambda}, \textbf{x})$$
 on the chain complex (see Lemma 6.12 of \cite{CHMSS}). We denote $\mathbf{i}_{L, \infty}$ by  $\mathbf{i}_{L}$.

 Fix a class $a  \ne 0 \in  HF(\operatorname{Sym}^d\underline{\Lambda})$. The \textbf{link spectral invariant} is defined by
	\begin{equation*}
		c^{link}_{\underline{\Lambda},\eta}(H,   a ):= \inf\{L \in  \mathbb{R}\vert  ( \textbf{j}^{\textbf{x}}_{H})^{-1}( a) \mbox{ belongs to the image of }  \textbf{i}_L \}.
	\end{equation*}
\begin{remark}
To ensure that the link spectral invariants take the same form as the
 PFH spectral invariants, the definition here differs from the one in  \cite{CHMSS} by a factor of $d$. More precisely, we have $c^{link}_{\underline{\Lambda}, \eta} (H, a) = d c_{\underline{\Lambda}, \eta}(H, a)$, where  $ c_{\underline{\Lambda}, \eta}(H, a)$ denotes the link
 spectral invariant defined in \cite{CHMSS}.
\end{remark}

\begin{lemma} \label{lem32}
 The link spectral invariant is independent of the choice of the base point $\mathbf{x}$.
\end{lemma}
\begin{proof}
By definition, we have $\mathbf{j}_H^{\mathbf{x}} = \mathbf{j}_H^{\mathbf{x}'} \circ \Psi_{H, \mathbf{x}, \mathbf{x}'}^{hf}  $.  If the isomorphism $ \Psi_{H, \mathbf{x}, \mathbf{x}'}^{hf} $
preserves the filtration in the sense that
\begin{equation} \label{eq49}
\mathfrak{A}^{\eta}_H(\textbf{y}, \mathcal{S})_{\textbf{x}}  =\mathfrak{A}^{\eta}_H( \psi_{\mathbf{x}, \mathbf{x}'} ( \textbf{y}, \mathcal{S}) )_{\textbf{x}'},
\end{equation}
then the lemma is true.

To prove the above equality, reintroduce
\begin{equation*}
u_{\mathbf{x}, \mathbf{x}'} (s, t) : = \left(s, t, (\operatorname{Sym}^d \varphi_H)\circ (\operatorname{Sym}^d \varphi_H^t)^{-1} (\eta(s))\right)=[u_1(s,t),..., u_d(s,t)],
\end{equation*}
consists of $d$ disjoint union of strips  $u_i(s,t) =(s, t, \varphi_H \circ (\varphi_H^t)^{-1}(\eta_i(s)))$, where $\eta_i: [0,1] \to \Lambda_i$ such that $\eta_i(0) =x_i$ and $\eta_i(1) =x_i'.$  Therefore, $u_{\mathbf{x}, \mathbf{x}'} \cdot\Delta =0$
and
\begin{equation} \label{eq13}
\begin{split}
\int_{[0,1] \times [0,1]}u_{\mathbf{x}, \mathbf{x}'}^*\omega_V&= \sum_{i=1}^d\int_{[0,1] \times [0,1]} \omega(X_H(\eta_i(s)), \partial_s \eta_i ) ds \wedge dt\\
&=  \sum_{i=1}^d\int_{[0,1] \times [0,1]}  d_{\Sigma} H_t(\partial_s \eta_i(s)) ds \wedge dt\\
& =\int_0^1 H_t(\mathbf{x}) dt - \int_0^1 H_t(\mathbf{x}') dt.
\end{split}
\end{equation}
where  $H_t(\mathbf{x})$ is short for $\sum_{i=1}^d H_t(x_i)$  and similar for $H_t(\mathbf{x}')$. Therefore, (\ref{eq49})  is true.

\end{proof}

\subsection{Statements of main results}

 In this section, we give the precise statement about the results mentioned at the be
ginning of this paper.

As mentioned at the introduction, we define an invariant $HF(\Sigma, \underline{\Lambda}, \varphi_H)_J$ using R. Lipshitz’s idea \cite{RL1}. Roughly speaking, the chain complex is generated by the intersection points $\operatorname{Sym}^d \varphi_H(\underline{\Lambda}) \cap \operatorname{Sym}^d \underline{\Lambda}$ and certain cappings depending on the base point $\mathbf{x}$.
The differential is defined by counting punctured holomorphic curves in $\mathbb{R} \times [0,1] \times \Sigma$
 with Lagrangian boundary condition $\mathbb{R} \times \{0\} \times \varphi_H(\underline{\Lambda}) \cup \mathbb{R} \times \{1\} \times \underline{\Lambda}$, and the punctures are asymptiotic to the intersection points. The construction is very similar to the usual Lagrangian Floer theory, the main differences are that now we have several Lagrangian submanifolds  rather than a single one, and the topological type of the curves can be more complicated than a strip.

As $HF(\Sigma, \underline{\Lambda}, \varphi_H)_J$  is a reconceptualization of the quantitative Heegaard Floer homology, it has various parallel structures to those of its counterpart. These include the following structures:
\begin{itemize}
\item
For different choices of Hamiltonian functions $H, G$ and almost complex structures $J_H, J_G$, we have a canonical isomorphism
\begin{equation*}
I_{H, G}: HF(\Sigma, \underline{\Lambda}, \varphi_H)_{J_H} \to HF(\Sigma, \underline{\Lambda}, \varphi_G)_{J_G}
\  (\mbox{Proposition }  \ref{lem30}).
\end{equation*}
 We still call it a continuous morphism for convenience.

\item
 The continuous morphisms tell us that the homology group is independent of the
 choice of Hamiltonian function and almost complex structure. In fact, it is also
 independent of the base point (\ref{eq22}). Thus, we have an abstract group $HF(\Sigma, \underline{\Lambda})$
 and a canonical isomorphism $j^{\mathbf{x}}_H : HF(\Sigma, \underline{\Lambda}, \varphi_H) \to HF(\Sigma, \underline{\Lambda})$.

\item
There is a special class $e_{\underline{\Lambda}} \in HF(\Sigma, \underline{\Lambda})$ which is corresponding to the unit  $\mathbf{1}_{\underline{\Lambda}} \in HF(\operatorname{Sym}^d \underline{\Lambda})$ (Definition \ref{definition4}). Moreover,  $e_{\underline{\Lambda}}  \ne 0$ in our situation.

\item
There is an action functional  $\mathcal{A}_H^{\eta}$  on the chain complex.  $\mathcal{A}^{\eta}_H$ is defined by the integration of the capping plus a term that measures the topological complexity of
 the capping (see (\ref{eq7})). We define the filtered version $ HF(\Sigma, \underline{\Lambda}, \varphi_H, \mathbf{x}) $ as the other
 two Floer theories, ie., the homology of the subcomplex generated by generators
 with  $\mathcal{A}^{\eta}_H<L$. For $L < L'$, we have a homomorphism
\begin{equation*}
 i_{L,L'}: HF^{L}(\Sigma, \underline{\Lambda}, \varphi_H, \mathbf{x}) \to HF^{L'}(\Sigma, \underline{\Lambda}, \varphi_H, \mathbf{x})
\end{equation*}
induced by the inclusion.

\item
Using the filtration in the last bullet, we have the HF spectral invariants (\ref{eq23}).
\end{itemize}

The first result we got is the equivalence between  $ HF(\Sigma, \underline{\Lambda}, \varphi_H, \mathbf{x}) $  and the quantitative Heegaard Floer homology. Note that this also implies the non-trivialness of
$ HF(\Sigma, \underline{\Lambda}, \varphi_H, \mathbf{x}) $.
	\begin{thm} \label{thm1}
		Fix an admissible link $\eta$ and a  base point $ {\mathbf{x}} \in \operatorname{Sym}^d \underline{\Lambda}$. Given a nondegenerate  Hamiltonian symplectomorphism $\varphi_H $,  then there is an isomorphism
		$$\Phi^L_H: HF^L_{*}(\Sigma, \underline{\Lambda}, \varphi_H,  {\mathbf{x}})  \to HF^L_{*}( \operatorname{Sym}^d \varphi_H(\underline{\Lambda}), \operatorname{Sym}^d\underline{\Lambda},  {\mathbf{x}})$$
for any $L \in \mathbb{R}$. Moreover, the isomorphisms satisfy the following properties:
\begin{enumerate}
\item
Compatible with the action filtration:
 \begin{equation}
 	\begin{CD}
				HF_{*}^L( \Sigma, \underline{\Lambda}, \varphi_H, \mathbf{x})  @>\Phi_H^L>> HF_{*}^L( \operatorname{Sym}^d \varphi_H(\underline{\Lambda}), \operatorname{Sym}^d\underline{\Lambda}, \mathbf{x})  \\
				@VV i_{L, L'}V @VV  \mathbf{i}_{L,L'}V\\
				HF_{*}^{L'}( \Sigma, \underline{\Lambda}, \varphi_H, \mathbf{x})  @>\Phi_H^{L'}>> HF_{*}^{L'}( \operatorname{Sym}^d \varphi_H(\underline{\Lambda}), \operatorname{Sym}^d\underline{\Lambda}, \mathbf{x}).			
				\end{CD}
				\end{equation}
Then $\Phi_H:=\lim_{L \to \infty} \Phi_H^L: 	HF_{*}( \Sigma, \underline{\Lambda}, \varphi_H, \mathbf{x}) \to HF_{*}( \operatorname{Sym}^d \varphi_H(\underline{\Lambda}), \operatorname{Sym}^d\underline{\Lambda}, \mathbf{x}) $
is an isomorphism. In particular, we have  $	HF_{*}( \Sigma, \underline{\Lambda}, \varphi_H, \mathbf{x}) \cong H_*(\mathbb{T}^d, \mathbb{F}[T^{-1}, T]).$
  \item
Compatible with the continuous morphisms:
\begin{equation}
 	\begin{CD}
				HF_{*}( \Sigma, \underline{\Lambda}, \varphi_H, \mathbf{x})_{J_H}  @>\Phi_H>> HF_{*}( \operatorname{Sym}^d \varphi_H(\underline{\Lambda}), \operatorname{Sym}^d\underline{\Lambda}, \mathbf{x})_{\mathbb{J}_H}  \\
				@VV I_{H, G}V @VV  \mathbb{I}_{H, G}V\\
				HF_{*}( \Sigma, \underline{\Lambda}, \varphi_G, \mathbf{x})_{J_G}  @>\Phi_G>> HF_{*}( \operatorname{Sym}^d \varphi_G(\underline{\Lambda}), \operatorname{Sym}^d\underline{\Lambda}, \mathbf{x})_{\mathbb{J}_G}.			
				\end{CD}
				\end{equation}

\item
Preserve the units:
\begin{equation*}
\mathbf{j}_H^{\mathbf{x}}\circ \Phi_H \circ (j^{\mathbf{x}}_H)^{-1}(e_{\underline{\Lambda}}) =\mathbf{1}_{\underline{\Lambda}}.
\end{equation*}
\end{enumerate}
\end{thm}

 Using Theorem \ref{thm1}, it is easy to deduce the following corollary.
\begin{corollary} \label{corollary1}
For an $\eta$-admissible link $\underline{\Lambda}$ and $a  \in  HF(\Sigma, \underline{\Lambda})$, we have
\begin{equation}  \label{eq43}
c^{link}_{\underline{\Lambda}, \eta}(H, \mathbf{j}_H^{\mathbf{x}}\circ \Phi_H \circ (j^{\mathbf{x}}_H)^{-1}(a)) =c^{hf}_{\underline{\Lambda}, \eta}(H, a).
\end{equation}

\end{corollary}

  The next result provides the existence of the homomorphisms from twisted PFH to $	HF_{*}( \Sigma, \underline{\Lambda}, \varphi_H, \mathbf{x})$  with desired properties. These homomorphisms are the closed-open  morphisms mentioned at the introduction.
	\begin{thm} \label{thm2}
		Fix a  $\eta$-admissible link $\underline{\Lambda}$ and a base point $\mathbf{x} \in \operatorname{Sym}^d \underline{\Lambda}$. For any Hamiltonian function $H$,   there exists a  homomorphism
$${\mathcal{CO}}(\underline{\Lambda}, H): \widetilde{PFH}(\Sigma, \varphi_H, \gamma_H^{\mathbf{x}}) \to HF(\Sigma, \underline{\Lambda}, \varphi_H,   {\mathbf{x}})$$
satisfying  the following properties:
		\begin{itemize}
			\item
			(Invariance) For any Hamiltonian functions $H$ and $G$, we have the following diagram:
			$$\begin{CD}
				\widetilde{PFH}_*(\Sigma,  \varphi_H, \gamma_H^{\mathbf{x}}) @> {\mathcal{CO}}(\underline{\Lambda}, H)>> HF_{*}(\Sigma,    \underline{\Lambda}, \varphi_H,  {\mathbf{x}}) \\
				@VV \mathfrak{I}_{H, G}V @VV I_{H, G}V\\
				\widetilde{PFH}_*(\Sigma,  \varphi_G, \gamma_G^{\mathbf{x}})  @>{\mathcal{CO}}(\underline{\Lambda}, G) >> HF_{*}(\Sigma,   \underline{\Lambda},  \varphi_G, {\mathbf{x}}).
			\end{CD}$$

			\item
			(Non-vanishing)	 There is a non-zero class $\mathfrak{e}  \in \widetilde{PFH}_*(\Sigma,d)$ (Definition \ref{definition6})  such that
			$$\mathcal{CO}(\underline{\Lambda}, H) ((\mathfrak{j}_H^{\mathbf{x}})^{-1}(\mathfrak{e})) =(j^{\mathbf{x}}_H)^{-1}(e_{\underline{\Lambda}}),$$
		where   $j^{\mathbf{x}}_H$ is the canonical isomorphism  (\ref{eq37}).
			In particular, the closed-open morphism is non-vanishing.

\item (Decreasing spectral invariants) Assume  that the link $ \underline{\Lambda}$ is 0-admissible.
Suppose that there are nonzero classes   $\sigma \in \widetilde{PFH}(\Sigma, d)$ and  $a\in HF(\Sigma, \underline{\Lambda})$
 such that  $\mathcal{CO}(\underline{\Lambda}, H)_J ((\mathfrak{j}_H^{\mathbf{x}})^{-1}(\sigma)) =(j^{\mathbf{x}}_H)^{-1}(a)$.
Then,  for any Hamiltonian function $H$, we have
\begin{equation}\label{eq68}
c_{\underline{\Lambda}, \eta=0}^{hf}(H, a) \le  c_{d}^{phf}(H, \sigma).
\end{equation}
In particular,   we have $$	c^{link}_{\underline{\Lambda}, \eta=0}(H,   \mathbf{1}_{\underline{\Lambda}})= c_{\underline{\Lambda},\eta=0}^{hf}(H, e_{\underline{\Lambda}}) \le  c_{d}^{phf}(H, \mathfrak{e}).$$
		\end{itemize}
	\end{thm}
	The class $\mathfrak{e}$ plays the role of unit in PFH; see Remark  \ref{remark4}.   Therefore, the second statement of Theorem \ref{thm2} says that the closed-open morphisms preserve the units.

	\begin{remark}  \label{remark2}
		By analogy with  Theorem  \ref{thm1}, it is conjectured that periodic Floer homology
 is isomorphic to symplectic Floer homology of $(\operatorname{Sym}^d\Sigma, \operatorname{Sym}^d\varphi)$ via the tautological correspondence.  The closed-open morphisms in
 Theorem \ref{thm2} should be regarded as an alternative formulation of the usual closed-open
 morphisms defined in  \cite{A}.  Moreover,  the estimate (\ref{eq68})  corresponds to Corollary 7.3 in \cite{CHMSS}.
	\end{remark}

	\section{A Heegaard Floer type homology}
	In this section, we construct a Heegaard Floer-type homology following Lipshitz’s approach \cite{RL1}. To begin with, we recall the essential concepts including HF curves, relative homology classes and the ECH index, introduced by V. Colin, P. Ghiggini, and
 K. Honda \cite{VPK}. All definitions in this section are the same as those in Section 4 of \cite{VPK},  except that our parameter $d$ is no longer related to the genus of the surface any more.

\begin{definition}
	Fix an admissible link $\underline{\Lambda}$ and a   nondegenerate  Hamiltonian symplectomorphism $\varphi_H \in Ham(\Sigma, \omega)$. A \textbf{Reeb chord}  of $\varphi_H$ is   a union of paths  $$ {\mathbf{y}}=[0,1] \times \{y_1,..., y_d\} \subset [0,1] \times \Sigma$$  so that $y_i\in \varphi_H(\Lambda_i)\cap \Lambda_{\sigma(i)},$ where $\sigma$  is a permutation of  $\{1, ... ,d\}$.
	\end{definition}
Note that there is a one-to-one correspondence between Reeb chords and intersection points  $\operatorname{Sym}^d \varphi_H(\underline{\Lambda}) \cap  \operatorname{Sym}^d \underline{\Lambda}$.  Thus, we do not distinguish these two concepts in
 the rest of the paper.

	Consider the symplectic manifold $M:= \mathbb{R}_s \times [0,1]_t \times \Sigma $  with the product  symplectic form $\Omega:=\omega+ds \wedge dt$. Let $L_0 :=\mathbb{R} \times \{0\} \times \varphi_H(\underline{\Lambda})$ and $L_1 :=\mathbb{R} \times \{1\} \times \underline{\Lambda}$. Note that they are Lagrangian submanifolds of $(M, \Omega)$.

\paragraph{Almost complex structure}
	An $\Omega$-compatible almost complex structure $J$ on $M$ is called \textbf{admissible} if it satisfies the following properties:
	\begin{enumerate}
		\item
		$J$ is $\mathbb{R}_s$-invariant.
		\item
		$J(\partial_s) = \partial_t$.
		\item
		$J$ preserves the vertical bundle of $M$.
	\end{enumerate}
	The space of admissible almost complex structures is denoted by $\mathcal{J}_M$.

	\paragraph{HF curves}
	Now we define the ``open" holomorphic  curves in $M$.
	Given two Reeb chords $\textbf{y}_{\pm}$, define $Z_{\mathbf{y}_+, \mathbf{y}_-} : =L_0 \cup L_1 \cup (\{\infty\} \times \mathbf{y}_+) \cup (\{-\infty\} \times  \mathbf{y}_-)$.
	\begin{definition} \label{definition3}(Definition 4.3.1 of  \cite{VPK})
		Let $(F,j)$ be a Riemann surface (possibly disconnected) with boundary marked points $\{p_{i}^{\pm}\}_{i=1}^d$. Let $\dot{F} =F- \{p_{i}^{\pm}\}_{i=1}^d$. Each irreducible component of $\dot{F}$ has at least one puncture. A \textbf{d-multisection} is a smooth map $u: (\dot{F}, \partial \dot{F}) \to (M,Z_{\mathbf{y}_+, \mathbf{y}_-}) $ such that
		\begin{enumerate}
			\item
			$u(\partial {\dot{F}}) \subset L_0, L_1$.  For each $1 \le i\le d$, $u^{-1}(L_0^i)$,  $u^{-1}(L_1^i)$ consists of exactly one component of $\partial \dot{F}$, where $L_0^i = \mathbb{R} \times \{0\} \times \varphi_H(\Lambda_i)$ and $L_1^i = \mathbb{R} \times \{1\} \times \Lambda_i$.

			\item
			$u$ is asymptotic to $ {\mathbf{y}}_{\pm}$ as $s \to \pm\infty $.
			
		\end{enumerate}
Given $J \in \mathcal{J}_M$, 	a $J$-holomorphic $d$-multisection $u: (\dot{F}, \partial \dot{F}, j) \to (M, Z_{\mathbf{y}_+, \mathbf{y}_-} ) $  is called a ($J$-holomorphic) \textbf{HF curve}. The number $d$ is called the \textbf{degree} of the HF curve.
	\end{definition}

\begin{remark} \label{remark6}
Under like the PFH curves, we require that each irreducible component of $\dot{F}$ contains at least one puncture. This excludes the possibility that an HF curve contains an irreducible component entirely within a fiber or that the whole curve is contained within a fiber. As a result, an HF curve has at least  one positive end and one negative end.

One could  define a concept called ``generalized HF curves" to include these possibilities.  In Lemma \ref{lem6}, we show that each of these  exceptional components (curves inside a fiber) contributes at least two to the ECH index.
To define the differential $\partial$ of $ HF(\Sigma, \underline{\Lambda}, \varphi_H) $, there is no difference between  using ``HF curves'' and ``generalized HF curves'', because we only consider the holomorphic curves with $I=1$. We only need a special case of   ``generalized HF curves''  when we prove $\partial ^2 =0$. To simplify  the concepts, we  use   Definition \ref{definition3}.

However, if a sequence of  HF curves converges to a broken holomorphic curve $\mathbf{u}$ in the sense of SFT  \cite{FYHKE}, then each level of $\mathbf{u}$ is a ``generalized HF curve''. See Lemma \ref{lem31} details.
\end{remark}
	
	Let $H_2(M, \textbf{y}_+, \textbf{y}_-)$ be the set of continuous maps $u: (\dot{F}, \partial \dot{F}) \to (M, Z_{\textbf{y}_+, \textbf{y}_-} ) $ satisfying the conditions $1),2)$,   modulo the equivalence relation  $\sim$.  Here  $u_1 \sim u_2  $ if and only if the compactifications $\check{u}_1$ and $\check{u}_2$ are equivalent in $H_2(M, Z_{\textbf{y}_+, \textbf{y}_-} ; \mathbb{Z})$.
	An element in $H_2(M, {\textbf{y}_+, \textbf{y}_-} )$  is called  \textbf{a relative homology class}. 
	
	Fix a base point $z_j $ in each connected component of $\Sigma \setminus ( \underline{\Lambda}\cup \varphi_H(\underline{\Lambda}))$. For each  $A\in H_2(M, {\textbf{y}_+, \textbf{y}_-} )$, we have a well-defined intersection number
	\begin{equation} \label{eq20}
		n_{z_j}(A) : = \#(A \cap (\mathbb{R} \times [0,1] \times z_j)).
	\end{equation}
	A relative homology class $A$ is called \textbf{positive} if $n_{z_j}(A) \ge 0$ for each $j$. Note that if $A$ admits an HF curve, then $A$ is positive.

	\paragraph{Fredholm index}
	To define the Fredholm index of an HF curve $u$, we first need to
 fix a trivialization of $u^* T \Sigma$ as follows.  Fix a non-singular vector field $v$ along each component of  $\underline{\Lambda}$.  Then $(\varphi_*(v), j \varphi_*(v))$ and $(v, j(v))$ give trivializations on  $T\Sigma \vert_{L_0}$ and $T\Sigma  \vert_{L_1}$ respectively.  We extend the trivialization arbitrarily along $\textbf{y}_{\pm}$.  Such a trivialization is denoted by $\tau$.
	
	Define a real line bundle $\mathcal{L}$ over $\partial F$ as follows. We set $\mathcal{L} \vert_{\partial \dot{F}} = u^*T\varphi(\underline{\Lambda}), u^*T \underline{\Lambda}$. Extend $\mathcal{L}$ along $\partial F \setminus \partial \dot{F}$ by rotating  in counterclockwise direction from  $u^*T \varphi(\underline{\Lambda})$ to  $u^*T \underline{\Lambda}$ by the minimum account possible. Then $(u^*T\Sigma, \mathcal{L})$ forms  a bundle pair over $\partial  F$. With respect to the trivialization $\tau$, we have  well-defined
	\begin{enumerate}
		\item
		Maslov index: $\mu_{\tau}(u)= \mu(u^* T\Sigma, \mathcal{L}, \tau)$;
		\item
		relative Chern number: $ c_1(u^*T\Sigma, \tau). $
	\end{enumerate}
A fact is that  $2c_1(u^*T\Sigma, \tau) + \mu_{\tau}(u)$ is independent of $\tau$.
	
	The \textbf{Fredholm index} of an HF curve  is defined by
	\begin{equation*}
		\operatorname{ind} u := -\chi(F) +d +2c_1(u^*T\Sigma, \tau) + \mu_{\tau}(u).
	\end{equation*}
	
	\paragraph{ECH index}
	 The concept of ECH index also can be generalized to the current setting.  Given $A \in H_2(M, \textbf{y}_+, \textbf{y}_-)$,   an oriented immersed surface   $C \subset  M$ is a  \textbf{$\tau$-trivial representative} of $A$ if
	\begin{enumerate}
		\item
		$C$ intersects the fibers positively along $\partial C$;
		\item
		$\pi_{[0,1] \times \Sigma } \vert_C$ is an embedding near infinity; 
		\item
		$C$ satisfies the $\tau$-trivial conditions in the sense of Definition 4.5.2 in \cite{VPK}.
	\end{enumerate}
	
	Given a class  $A \in H_2(M, \textbf{y}_+, \textbf{y}_-)$ with a  $\tau$-trivial representative $C$, we  define the \textbf{relative self-intersection number} as follows. Let $\nu_C $ be the normal bundle of   $C$.
	Let $\psi \in \Gamma(\nu_C)$ be a generic section such that $\psi \vert_{\partial C} =J \tau$, where $J$ is an admissible almost complex structure.  Let ${C}'$ be a push off of $ {C}$ along the direction $\psi$. Then   define
	$$Q_{\tau}(A):=\# ( {C} \cap  {C}').$$
	
	The \textbf{ECH index} (see Definition 4.5.11 in \cite{VPK}) is defined by
	\begin{equation*}
		I(A) :=c_1(T\Sigma \vert_A, \tau) + Q_{\tau }(A) + \mu_{\tau}( A).
	\end{equation*}
	
 The Fredholm index and ECH index satisfy the following relation. It is an analogy
 of Hutchings’s ECH inequality \cite{H1, H2}.
	\begin{theorem}[Theorem 4.5.13  of \cite{VPK}] \label{thm4}
		Fix an admissible almost complex structure $J$ (not necessarily generic). Then for a $J$-holomorphic  HF curve $u$, we have
		\begin{equation*}
			I(u) = \operatorname{ind} u +2 \delta(u),
		\end{equation*}
		where $\delta(u)$ is a sign count of the double points and singularities  of $u$.
		If $u$ is Fredholm regular, then   $I(u) \ge 0$.   Furthermore,  $I (u)= 0$ if and only if  $u$ is a union of  trivial strips.
	\end{theorem}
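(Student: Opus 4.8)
The plan is to follow the Colin–Ghiggini–Honda adjunction-type argument (their Theorem 4.5.13 in \cite{VPK}), transported to the present four-manifold $M = \mathbb{R}_s\times[0,1]_t\times\Sigma$ with the Lagrangian boundary conditions $L_0,L_1$. The starting point is the adjunction/writhe formula for a somewhere-injective $J$-holomorphic curve with Lagrangian boundary. Concretely, for the $\tau$-representative $C$ of the class $A=[u]$ one writes $Q_\tau(A) = c_N(u) - w_\tau(u) - 2\delta(u)$, where $c_N(u)$ is the count of zeros of a generic section of the normal bundle $\nu_C$ matching $J\tau$ on $\partial C$, $w_\tau(u)$ is the total writhe of the asymptotic braids of $u$ at $y_\pm$, and $\delta(u)\ge 0$ is the algebraic count of double points and singularities of $u$. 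This is the Lagrangian-boundary analogue of Hutchings' adjunction inequality; the boundary contributions are controlled because near infinity $u$ is embedded after projecting to $[0,1]\times\Sigma$ (condition 2 in the definition of $\tau$-representative) and because $\mathcal{L}$ is built by the minimal-rotation convention, which is exactly what makes $Q_\tau$ and $\mu_\tau$ interact cleanly.

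First I would assemble the three ingredients as separate lemmas (all essentially quoted/adapted from \cite{VPK}, Chapter 4): (i) the relative-adjunction formula above; (ii) the relation $c_N(u) = c_1(u^*T\Sigma,\tau) - \chi(F) + (\text{boundary Maslov correction})$ expressing the normal first Chern number in terms of the relative Chern number of $T\Sigma$ along $u$ and the Euler characteristic of the domain — here one uses the splitting of $u^*TM$ into the "horizontal" $\mathbb{R}_s\oplus\partial_t$ summand and the vertical summand $u^*T\Sigma$, together with the fact that the $\mathbb{R}_s$-direction contributes trivially; and (iii) the comparison of the asymptotic writhe $w_\tau(u)$ with the Conley–Zehnder / Maslov terms appearing in $\mathrm{ind}\,u$ and in $I(u)$, i.e. the statement that the difference between the ECH-type index and the Fredholm index at each end is exactly the writhe bound, with equality governed by the ECH partition conditions (fact \ref{fact2}). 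Then I would substitute (i)–(iii) into the definitions
$$
I(A) = c_1(T\Sigma|_A,\tau) + Q_\tau(A) + \mu_\tau(A), \qquad
\mathrm{ind}\,u = -\chi(F) + d + 2c_1(u^*T\Sigma,\tau) + \mu_\tau(u),
$$
and watch the writhe terms and the $c_N$ terms cancel, leaving $I(u) - \mathrm{ind}\,u = 2\delta(u)$. The bookkeeping constant $d$ on the index side should match the count of boundary punctures $\{p_i^\pm\}$ and is absorbed by the $-\chi(F)$ versus $-\chi(\dot F)$ discrepancy together with the boundary Maslov correction in (ii).

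For the second assertion, assume $J$ is generic. Since $u$ is a $J$-holomorphic HF-curve and every component is somewhere injective modulo multiply covered trivial strips, transversality (the standard Lazzarini/Oh decomposition into somewhere-injective pieces, applied to Lagrangian-boundary curves, plus the monotonicity of $Sym^d\varphi_H(\Lambda),Sym^d\Lambda$ invoked in \S3 to rule out sphere/disk bubbling in the relevant index range) forces every non-trivial somewhere-injective component to have $\mathrm{ind}\ge 1$ after quotienting by $\mathbb{R}$, hence $\mathrm{ind}\,u\ge 0$, and then $I(u) = \mathrm{ind}\,u + 2\delta(u)\ge 0$. Equality $I(u)=0$ forces simultaneously $\mathrm{ind}\,u=0$ and $\delta(u)=0$; combined with the asymptotic and positivity constraints ($n_j(A)\ge 0$, the partition conditions forced by fact \ref{fact2} when $I=\mathrm{ind}$, and $\delta=0$ meaning $u$ is embedded) and the fact that any non-constant component must consume positive $\omega$-energy, one concludes $u$ has no non-trivial component, i.e. $u$ is a trivial strip over $y_+=y_-$.

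The main obstacle I expect is ingredient (iii): getting the asymptotic writhe estimate and its relation to the Conley–Zehnder indices exactly right at \emph{boundary} punctures, where the asymptotic operator is a Lagrangian boundary-value problem on a half-disk rather than a closed Reeb orbit. The relevant statement is Theorem 5.6.9 of \cite{VPK} (quoted here as part of fact \ref{fact2}), which says $I(u)=\mathrm{ind}(u)$ iff the ECH partition conditions hold for the PFH–HF curves; the delicate point is to reconcile the normalization of the boundary Maslov index $\mu_\tau$ used in the definitions above with the writhe of the braid that $u$ traces near $s=\pm\infty$ around the chords $y_\pm$. I would handle this by citing the CGH computation verbatim and checking only that the boundary-condition conventions (the minimal counterclockwise rotation defining $\mathcal{L}$, and the choice of $\tau$ from the nonvanishing vector field $v$ on $\Lambda$) match theirs; everything else is linear-algebra bookkeeping of Chern numbers and Euler characteristics.
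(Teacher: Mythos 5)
The paper does not prove this statement at all: it is quoted verbatim from Colin--Ghiggini--Honda (Theorem 4.5.13 of \cite{VPK}), so your proposal is a reconstruction of the cited argument rather than an alternative to anything in the text, and as such it is essentially correct -- the relative adjunction formula plus the splitting $u^*TM \cong \underline{\mathbb{C}} \oplus u^*T\Sigma$ and the Riemann--Hurwitz/Euler-measure bookkeeping do combine to give $I(u)-\mathrm{ind}\,u = 2\delta(u)$, and the genericity statement follows exactly as you say from somewhere-injectivity of non-trivial components (Lemma \ref{lem23}) together with the free $\mathbb{R}$-action forcing $\mathrm{ind}\ge 1$ on them. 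One correction to your ingredient (iii): for a pure HF--curve every end is a boundary puncture asymptotic to a single simple Reeb chord, so the asymptotic writhe vanishes identically and no partition conditions enter -- this is precisely why Theorem \ref{thm4} is an \emph{equality}, whereas the inequality-with-partition-conditions statement (Theorem 5.6.9 of \cite{VPK}, fact \ref{fact2}) is only needed later for the PFH ends of the PFH--HF curves in $W$. The monotonicity/bubbling remarks in your last paragraph are likewise not needed here, since the theorem concerns a single curve rather than compactness of a moduli space.
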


	\subsection{Moduli space of HF curves}
	Fix a relative homology class $A \in H_2(M, {\textbf{y}_+, \textbf{y}_-} )$. Let $\mathcal{M}^J({\textbf{y}_+, \textbf{y}_-} , A)$ denote the moduli space of $J$-holomorphic  HF curves with relative homology class $A$.

	\paragraph{Bubbling analysis}
 As mentioned at the introduction of our paper, the main difference with Lipshitz’s work  is that we also need to compute the contribution from the bubbles. In this subsection, we study the possible bubbles in the compactification of the moduli space of HF curves and their contribution to ECH index.
	
	Note that $H_2(M, {\textbf{y}_+, \textbf{y}_-} )$ is an affine space over  $H_2([0,1] \times \Sigma,  \{0\} \times \varphi_{H}(\underline{\Lambda}) \cup \{1\} \times \underline{\Lambda}; \mathbb{Z})$.  By the exact sequence for relative homology,
  \begin{equation} \label{eq42}
  \begin{split}
  &0 \to  H_2([0,1] \times \Sigma, \mathbb{Z}) \to  H_2([0,1] \times \Sigma,  \{0\} \times \varphi_{H}(\underline{\Lambda}) \cup \{1\} \times \underline{\Lambda}; \mathbb{Z}) \to \\
   &H_1(  \{0\} \times \varphi_{H}(\underline{\Lambda}), \mathbb{Z}) \oplus  H_1(  \{1\} \times \underline{\Lambda}, \mathbb{Z})\xrightarrow{i_*} H_1([0,1] \times \Sigma, \mathbb{Z}) \to...
  \end{split}
  \end{equation}
  we know that $H_2([0,1] \times \Sigma,  \{0\} \times \varphi_{H}(\underline{\Lambda}) \cup \{1\} \times \underline{\Lambda}; \mathbb{Z}) \cong \mathbb{Z} [\Sigma] \oplus \ker i_*$, and $\ker i_*$  consists
 of the contractible components in  $\underline{\Lambda}$ and  $\varphi(\underline{\Lambda})$.
    Thus, $H_2([0,1] \times \Sigma,  \{0\} \times \varphi_{H}(\underline{\Lambda}) \cup \{1\} \times \underline{\Lambda}; \mathbb{Z})$ is spanned by classes $[\Sigma]$, $\{ \{0\} \times \varphi_H(B_i)\}_{i=1}^{k}$ and  $\{ \{1\} \times B_i\}_{i=1}^{k}$.  To simplify
 the notation, we write $[\{0\} \times \varphi_H(B_i)]$  and     $[\{1\} \times B_i]$ as $[\varphi_H(B_i)]$ and  $[B_i]$  respectively.
	Therefore, for  $A, A' \in H_2(M, \textbf{y}_+, \textbf{y}_-)$,  we have
	\begin{equation*}
		A'-A=m[\Sigma]+\sum_{i=1}^k \left( c_i[B_i] +c_i'[\varphi_H(B_i)] \right).
	\end{equation*}
	Since $\sum_{i=1}^{k+1}[B_i]= \sum_{i=1}^{k+1} [\varphi_H(B_i)] =[\Sigma]$, the above equation can be written as
	\begin{equation*}
		A'-A=\sum_{i=1}^{k+1} \left(c_i [B_i] + c_i' [\varphi_H(B_i') ] \right).
	\end{equation*}

	\begin{lemma} \label{lem6}
		Let $A, A' \in H_2(M,  {\mathbf{y}}_+,  {\mathbf{y}}_-)$  be positive homology classes  such that
\begin{equation*}
A'-A=m[\Sigma]+\sum_{i=1}^{k+1} \left( c_i[B_i] +c_i'[\varphi_H(B_i)] \right).
\end{equation*}
Then we have
		\begin{equation} \label{eq4}
			I(A')=I(A) + \sum_{i=1}^{k+1} (2c_i + 2c_i') +2m(k+1).
		\end{equation}
	\end{lemma}
	\begin{proof}
 The proof is divided into three steps. The first two steps is to prove the special
 case that $I(A+c_i B_i) =I(A) +2c_i$.  To this end, we construct a representative of $A+ B_i$ in Step 1, and then we compute the relative Chern number, relative self intersection number, and Maslov index of such a representative in Step 2. In Step 3, we use the above results and the definition of the ECH index to compute the general case.

\begin{itemize}
\item[\textbf{Step 1: }]
		Let $u$ be a $\tau$-trivial  representative  of $A$.  
As $u$ is $\tau$-trivial on the ends,  we assume that the ends of   $u$ are just trivial strips  by
 modifying the ends of $u$ but without changing the relative homology class. Let $\mathcal{E}_i=[-1, \infty) \times [0,1]_t \times y_i$ be a positive  end of $u$ and $y_i \in \varphi_H(\Lambda_i)$ for $1 \le i\le  k$.
		
 We perform a modification on  $\mathcal{E}_i$ as follows. Let $I=[0,1]$. Define  a path $p_0: I_s \to I_s \times  \{0\}\times \varphi_H( B_i)$ by  $p_0(s)=(s, 0, r=1, \theta_0 + 2\pi f(s))$, where $(r, \theta)$ is the polar coordinates of $\varphi_H( B_i)$, $f(s)$ is a  non-decreasing    function such that $f=0$ near $s=0$ and $f=1$ near $s=1$, and $(1, \theta_0)$ is the polar coordinates of $y_i$.   Note that $p_0$ wraps the boundary   $\partial \varphi_H(B_i)$ counterclockwise one time. Define  $p_1: I_s \to I_s \times \{1\} \times \varphi_H(B_i)$ by $p_1(s)=(s, 1, 1, \theta_0)$.  Let $p_-(t) =(0 ,t, 1, \theta_0)$ and  $p_+(t) =(1 ,t, 1, \theta_0)$. Then we have  an embedded map $v: I_s \times I_t \to  I_s \times I_t \times \varphi_H(B_i)$ such that
		\begin{equation*}
			\begin{split}
				&v(s,t ) =(s,t, v_{B}(s,t));\\
				&v(s,0)=p_0, \ v(s,1)=p_1;\\
				&v(0,t)=p_-, \ v(1, t)=p_+.
			\end{split}
		\end{equation*}
		Such a map $v$ exists   because  the topology of $I_s \times I_t \times \varphi_H(B_i)$ is trivial, the loop $p_0+p_+-p_1-p_-$ bounds a disk.
		We  replace a segment $[0,1 ]_s\times[0,1]_t \times y_i $ of the end $\mathcal{E}_i$ by $v$, the result is called $u'$.   Note that $[u']=[u] + [\varphi_{H}(B_i)]$.

\item[\textbf{Step 2:}]
Let $\psi \in \Gamma(u^* T\Sigma)$  be a generic section such that $\psi=\tau$ along $u(\partial \dot{F}) $, where $\tau$ is induced by a non-vanishing vector field on $\varphi_H(\Lambda_i)$.  We use the notation $\tau$ to denote the trivialization of $u'^* T\Sigma$  induced by the same vector field  on   $\varphi_H(\Lambda_i)$. Also, we choose $\tau$ such that it agrees with the one on $u^* T\Sigma$ outside the segment $v$.  Then the the real line bundle $\mathcal{L}$   still   is a constant along $u'(\partial \dot{F})$ with respect to $\tau$. Therefore, we have
\begin{equation} \label{eq50}
\mu_{\tau}(u') =\mu_{\tau}(u).
\end{equation}
		Take a generic  section $\psi' \in \Gamma({u'}^* T\Sigma)$ such that $\psi'=\psi$ outside the region $v$ and $\psi' \vert_{\partial v}=\tau \vert_{\partial v}$.
		Note that $\psi'^{-1}(0) =\psi^{-1}(0) $ outside  the segment $v$, but $\psi'$ has extra zeros inside  $v$. The zeros $\#(\psi'^{-1}(0) \vert_v) =\operatorname{wind}(\psi'\vert_{\partial v} ) $, where
		$\operatorname{wind}(\psi'\vert_{\partial v} )$ is the winding number of $\psi' \vert_{\partial v}$ in $v_B^*T \varphi_H(B_i)$. We have  $\#(\psi'^{-1}(0))=1  $ because $p_0$ wraps  $\varphi_H(\partial B_i)$ one time. As a result, we have
\begin{equation} \label{eq51}
c_{1}(u'^* T\Sigma,\tau)=c_{1}(u^* T\Sigma, \tau) +1.
\end{equation}		
		Note that after the modification, the ends of $u'$ still intersect the fibers positively. Therefore, we can take ${u'}^*T\Sigma$ to be the normal bundle. As before, we have $c_{1}(N_{u'}, J\tau)= c_{1}(N_{u}, J\tau) +1$. 
		Since the other positive  ends of $u'$ are $[-1, \infty) \times [0,1] \times \{y_j\}$, where $y_j \in \varphi_H(\Lambda_j)$ ($j\ne i$),  and the image of $v$ is contained in $I_s \times I_t \times \varphi_H(B_i)$, the modification does not create any new double points. Hence, $\delta(u')=\delta(u)$. By Lemma 4.5.8 of \cite{VPK}, we have
\begin{equation}\label{eq52}
Q_{\tau}(u')=Q_{\tau}(u) +1 .
\end{equation}

		
		
		We perform this operation $c_i \ge 0$ times  for $\varphi_H(B_i)$.  Then
\begin{equation*}
  I(A+c_i [\varphi_H(B_i)] ) = I(A) +2c_i.
\end{equation*}
If $c_i<0$ but  $A+c_i [\varphi_H(B_i)]$ still is positive,  then the above argument implies
\begin{equation*}
I(A)=  I(A+c_i [\varphi_H(B_i)]  -c_i [\varphi_H(B_i)] ) =I(A+c_i [\varphi_H(B_i)] ) -2c_i.
\end{equation*}
 Thus, the argument still work for  $c_i<0$.
For each $B_i$,  $1\le i\le k$, we  perform a similar operation. But in this case, we need to take $p_0$ to be a constant path and take $p_1$ to be a path wrapping  $\{1\} \times \partial B_i$ one time.  Then we get
		\begin{equation*}
			I(A +\sum_{i=1}^k \left( c_i[B_i]  + c_i' [\varphi_H(B_i)]\right)) =I(A)+ \sum_i^k (2c_i + 2c_i').
		\end{equation*}

\item[\textbf{Step 3: }]	
		If $m\ne 0$, we first modify $u$ to a $d$-multisection  $u'$ representing the class $A+ \sum_{i=1}^k\left( c_i [B_i] + c_i' [\varphi_H(B_i)] \right)$. Let $u''$ be a union of $u'$ with  $m$ copies of $\Sigma$. By definition of the ECH index, we have
\begin{equation*}
  \begin{split}
    I(u'')&=I(u')+I(m\Sigma) +2m\# (u' \cap \Sigma) \\
    &=I(u') + m<c_1(TM), \Sigma> + m^2\Sigma \cdot \Sigma  + 2m\#(u' \cap \Sigma)\\
    &=I(u')+ 2m(d-g+1)=I(u')+2m(k+1).
   \end{split}
\end{equation*}
Finally, to figure out the contribution of $B_{k+1}$, note that
\begin{equation}\label{eq64}
I(u+c[B_{k+1}]) +2ck = I(u+ c[B_{k+1}] + \sum_{i=1}^kc [B_i]) =I(u+ c[\Sigma]) =I(u) +2c(k+1).
\end{equation}
 Therefore,  $I(u+c[B_{k+1}]) = I(u) +2c$.   In sum, each $[B_i]$ ($1\le i\le k+1$)  contributes  $2$ to the ECH index, similarly for  $\varphi_H([B_i])$ ($1\le i\le k+1$).

\end{itemize}
	\end{proof}
	
\begin{remark} \label{remark7}
 The ECH index in \cite{VPK} is defined by using a  $\tau$-trivial representative.
 So strictly speaking, the ECH index cannot be calculated by using $u_0 \cup u_1$ directly, where $u_0$ is a $d$-multisection and $u_1$ is a bubble in a fiber, because $u_0\cup u_1$ is not a $\tau$-trivial representative. This is why in the proof of Lemma \ref{lem6}, we need to construct
 a representative of $A+[B_i]$ first. The construction is easy when $B_i$ is a disk, but  the author does not figure out the general case when he wrote the paper. Recently, J. Chaidez, O. Edtmair, L. Wang, Y. Yao, Z. Zhao develop the intersection theory for surfaces with Lagrangian boundary and generalize the ECH index \cite{CEWYZ}. Using their results, we could compute  $I(A+ [B_i])$ in terms of the ECH index of $A, B_i$ and their intersection number, without constructing a nice representative of $A + [B_i]$. Then the
 results of this paper may be generalized to the monotone link defined in \cite{CHMSS}.
\end{remark}

	\begin{lemma}\label{lem7}
		Fix an admissible almost complex structure $J \in \mathcal{J}_M$.  Let $u:F \to M$ be an irreducible $J$-holomorphic curve  with Lagrangian boundary conditions $u(\partial F) \subset \mathbb{R} \times \{0\} \times \varphi_H(\underline{\Lambda})$ or  $u(\partial F) \subset \mathbb{R} \times \{1\} \times  \underline{\Lambda} $,    provided that  $\partial F \ne \emptyset$. If  $u(F)$ is compact, then $u$ is contained in  a  fiber.
The  homology class $[u]$  is a linear combination of $\cup_i^k[B_i], \cup_{i=1}^k[\varphi_H(B_i)]$ and $[\Sigma]$.
	\end{lemma}
	\begin{proof}
		Note that $\pi \circ u : F \to \mathbb{R} \times [0,1]$ is holomorphic because $\pi$ is complex linear.  The image $Image (\pi \circ u)$ is a compact subset by the assumption. Then the open mapping theorem implies that $\pi \circ u$ is a constant, i.e., $u$ lies inside a fiber. 
	Then $[u] \in H_2(\Sigma, \underline{\Lambda}, \mathbb{Z})$ or  $[u] \in H_2(\Sigma, \varphi_H(\underline{\Lambda}), \mathbb{Z})$.  The computation in (\ref{eq42}) implies that   $[u]$  is a linear combination of $\cup_i^k[B_i], \cup_{i=1}^k[\varphi_H(B_i)]$ and $[\Sigma]$.
	\end{proof}

\subsection{$J_0$ index for HF curves}
	 In this subsection, we follow Hutchings’s approach to define the $J_0$ index for HF curves.
 There is a similar concept called $J_+$ index defined by C. Kutluhan, G. Matic, J. Van Horn-Morris, and A. Wand \cite{KMHW}.  In Lemma \ref{lem28}, we know that the  $J_0$ index is essentially the Euler characteristics of HF curves, while the  $J_+$ index also involves terms related
 to the Reeb chords $ \mathbf{y}_{\pm}$ (see (2-4) of \cite{KMHW}).

 Let $A \in H_2(M, \mathbf{y}_+, \mathbf{y}_-)$. We define the  \textbf{$J_0$ index } by
\begin{equation*}
J_0(A):=-c_1(T\Sigma \vert_A, \tau) + Q_{\tau }(A).
\end{equation*}
The properties of $J_0$ index are summarized in the following lemma.
\begin{lemma} \label{lem28}
The $J_0$  index satisfies the following properties:
\begin{enumerate}
\item
Let $u: \dot{F} \to M$ be an irreducible HF curve, then
\begin{equation*}
J_0(u)=-\chi(F) + d+ 2\delta(u).
\end{equation*}

\item
Let $u=\cup_a u_a$ be an HF curve and each $u_a$ is irreducible. Then
\begin{equation*}
J_0(u) = \sum_a J_0(u_a) +2 \sum_{a \ne b}  \#(u_a  \cap u_b).
\end{equation*}

\item
Let $u$ be an  HF curve, then $J_0(u) \ge 0$. Furthermore, equality holds if and only if $u$ is a disjoint union of holomorphic  strips.
\item
Let $A, A' \in H_2(M,  {\mathbf{y}}_+,  {\mathbf{y}}_-)$ be positive relative homology  classes. Suppose that $A'-A=m[\Sigma]+\sum_{i=1}^{k+1} \left( c_i[B_i] +c_i'[\varphi_H(B_i)] \right)$. Then
\begin{equation*}
J_0(A')=J_0(A) + 2(m + c_{k+1} +c_{k+1}')(d+g-1).
\end{equation*}
\end{enumerate}
\end{lemma}	
\begin{proof}
\begin{itemize}
\item
The item $(1)$ follows directly from the relative adjunction formula (Lemma 4.5.9) in \cite{VPK}.
\item
Without loss of generality, assume  that $u=u_0 \cup u_1$ has two irreducible components. Since   Chern number is additive and  the relative self-intersection is quadratic, we have
\begin{equation*}
\begin{split}
J_0(u) &=  -c_1(T\Sigma \vert_{u_0\cup u_1}, \tau) +Q_{\tau} (u_0 \cup u_1)\\
&= -c_1(T\Sigma \vert_{u_0}, \tau) +Q_{\tau} (u_0)  -  c_1(T\Sigma \vert_{u_1}, \tau)+ Q_{\tau} (u_1)  + 2\#(u_0 \cap u_1)\\
&= J_0(u_0) +  J_0(u_1) + 2\#(u_0 \cap u_1).
\end{split}
\end{equation*}
\item
Let $u=\cup_a u_a $ be an HF curve and each $u_a: \dot{F}_a \to M$ is irreducible.  By definition, $F_a$ has at least one boundary. By the first bullet,
\begin{equation}\label{eq53}
J_0(u_a) = -\chi(F_a) + d_a +2\delta(u_a) = 2g(F_a) -2 +\#\partial F_a + d_a +2\delta(u_a) \ge 0,
\end{equation}
where $d_a \ge 1$ is the degree of $u_a$.  By the second item and intersection positivity of holomorphic curves, we have
\begin{equation*}
J_0(u)=\sum_a J_0(u_a) + \sum_{a\ne b}  2\#(u_a \cap u_b) \ge 0.
\end{equation*}

 If $J_0(u) =0$, then the above inequality implies that $J_0(u_a) =0$ and $ \#(u_a \cap u_b)=0$. By (\ref{eq53}), $J_0(u_a) =0$ only when $d_a =1$ and $\#\partial F_a =1$.  In other words,  $u_a$ is a holomorphic strip.  Conversely, if $u$ is a disjoint union of holomorphic strips, then the first and second bullets of the lemma imply that $J_0(u)=0$.

 \item
 Let $u$ be a  $\tau$-representative  of $A$.  By the construction in Lemma \ref{lem6},  we  obtain a   $\tau$-trivial representative  $u'$ with relative homology class  $[u'] =A+ \sum_{i=1}^{k} \left(c_i[B_i] + c_i'[\varphi_H(B_i)] \right)$.  By the computations (\ref{eq50}), (\ref{eq51}) and (\ref{eq52}),  we have $J_0(u') = J_0(u)$. In other words, $J_0(A+ \sum_{i=1}^{k} \left(c_i[B_i] + c_i'[\varphi_H(B_i)] \right))  =J_0(A)$.  A geometric interpretation of this formula is that adding a disk does not change the topology  of a $d$-multisection.

 We now compute contribution of $m[\Sigma]$ to the $J_0$ index.  By definition,
\begin{equation}
\begin{split}
J_0(A + m[\Sigma]) &= -c_1(T\Sigma \vert_{A+m[\Sigma]}, \tau) +Q_{\tau} (A+m[\Sigma])  \\
&=- c_1(T\Sigma \vert_{A}, \tau) -mc_1(T\Sigma \vert_{[\Sigma]}) + Q_{\tau} (A) + m^2[\Sigma]\cdot[\Sigma] + 2m\#(A\cap \Sigma)\\
&=J_0(A) -m\chi(\Sigma)  + 2md \\
&= J_0(A) +2m(d+g-1).
\end{split}
\end{equation}
Finally, the contribution of $c_{k+1}[B_{k+1}]$ follows from
\begin{equation*}
 \begin{split}
     J_0(A+ c_{k+1}[B_{k+1}])  &=J_0(A+ c_{k+1}[B_{k+1}] + \sum_{i=1}^k c_{k+1}[B_i]) \\
     &=J_0(A + c_{k+1}[\Sigma])=  J_0(A) +2c_{k+1}(d+g-1).
   \end{split}
\end{equation*}
The contribution of  $c'_{k+1}[\varphi_H(B_{k+1})]$ follows from the same argument.
\end{itemize}
\end{proof}

The following lemma is an analogy of Lemma 4.21 in \cite{CHMSS}. It illustrates that the admissible link is monotone.
\begin{lemma}
Let $A, A' \in H_2(M,  {\mathbf{y}}_+,  {\mathbf{y}}_-)$. Then  we have
\begin{equation}  \label{eq8}
\left(\int_{A} \omega -\int_{A'} \omega \right) + \eta \left(J_0(A)-J_0(A')  \right) = \frac{\lambda}{2}  \left(I(A)-I(A')  \right)
\end{equation}
	
\end{lemma}
\begin{proof}
Suppose that  $A'-A=m[\Sigma]+\sum_{i=1}^{k+1} \left( c_i[B_i] +c_i'[\varphi_H(B_i)] \right)$. Then
\begin{equation*}
\int_{A} \omega -\int_{A'} \omega= \sum_{i=1}^{k} (c_i+ c_i') \lambda + (c_{k+1} + c_{k+1}')\int_{B_{k+1}} \omega +  m.
\end{equation*}
Combining the above equation with Lemmas \ref{lem6},\ref{lem28} and the assumption \ref{assumption4}, then we get the result.
\end{proof}

	\subsection{Transversality and compactness} \label{Section2.3}
With the preparations of the bubbling analysis in the last subsection, now we can
 prove the compactness of the moduli space with lower index. First, we deal with the
 transversality in the following lemma.
	\begin{lemma} \label{lem23}
		There is a Baire subset $\mathcal{J}^{reg}_M \subset \mathcal{J}_M$ such that for $J \in  \mathcal{J}^{reg}_M $,  each HF curve $u \in \mathcal{M}^J(  {\mathbf{y}}_+,  {\mathbf{y}}_-, A)$    is Fredholm regular. In particular, $I(u) \ge \operatorname{ind} u \ge 0.$
	\end{lemma}
	\begin{proof}
	 Just note that the HF curves cannot be multiply covered because of the asymptotic behaviours. Then the proof is standard. For the details, please refer to Section   3 of \cite{RL1} and Lemma 4.7.2 of \cite{VPK}.
	\end{proof}
As the case in PFH, we call an almost complex structure in $J^{reg}_M$
 a \textbf{generic} almost  complex structure.

 Before we move on, let us briefly review the Gromov compactness in \cite{FYHKE}. A \textbf{broken holomorphic curve} is a chain of holomorphic curves $\mathbf{u}=\{u^1, ..., u^N\}$ in $M$  such
 that the positive ends of  $u^i$ agree with the negative ends of  $u^{i+1}$. Moreover, none of  $u^i$  only consists of trivial strips.  The index $i$ is called the \textbf{level} of $u^i$.  Given a sequence of
 HF curves $\{u_n : (\dot{F}_n, j_n) \to M\}_{n=1}^{\infty}$ with fixed topological type and uniformly bounded
 energy, then the sequence converges to a broken holomorphic curve in the sense of SFT \cite{FYHKE}.   Roughly speaking, the convergence means that the domains   $\{  (\dot{F}_n, j_n)  \}_{n=1}^{\infty}$ converge
 to a nodal surface  $(F, j)$  (Riemann surface with double points) by collapsing some  ``special arcs” or ``special curves”, and then $\{u_n \circ \varphi_n \}_{n=1}^{\infty}$  converges $C^{\infty}_{loc}$ to $\mathbf{u}$  (modulo
 out the $\mathbb{R}$ translation) away from the double points, where $\varphi_n: (\dot{F}, j) \to  (\dot{F}_n, j_n)$ are
 certain identifications.    Let $F_i$ and $F_j$ be two irreducible components of  $ {F}$. As $n \to \infty$,  if the distance of the images of $u_n \circ \varphi_n \vert_{F_i}$ and  $u_n \circ \varphi_n \vert_{F_i}$
tends to infinity in $M$,   then
 the limits lies inside two different copy of $M$.   In this case,   $u_n \circ \varphi_n \vert_{F_i}$ and  $u_n \circ \varphi_n \vert_{F_i}$  converge to two curves in different level. Conversely, if their distances are bounded
 for all $n$,  then   $u_n \circ \varphi_n \vert_{F_i}$ and  $u_n \circ \varphi_n \vert_{F_i}$ converge to two curves in the same level.  On
 the other hand, some irreducible component  $u_n \circ \varphi_n \vert_{F_i}$
may converges to a curve with
 compact image in $M$. Such a curve is called a \textbf{bubble.} The degenerate phenomenon is shown in Figure \ref{figure5} and Figure \ref{figure3}.

\begin{figure}[h]
    \centering
    \includegraphics[width=0.8\textwidth,  height=0.25\textheight]{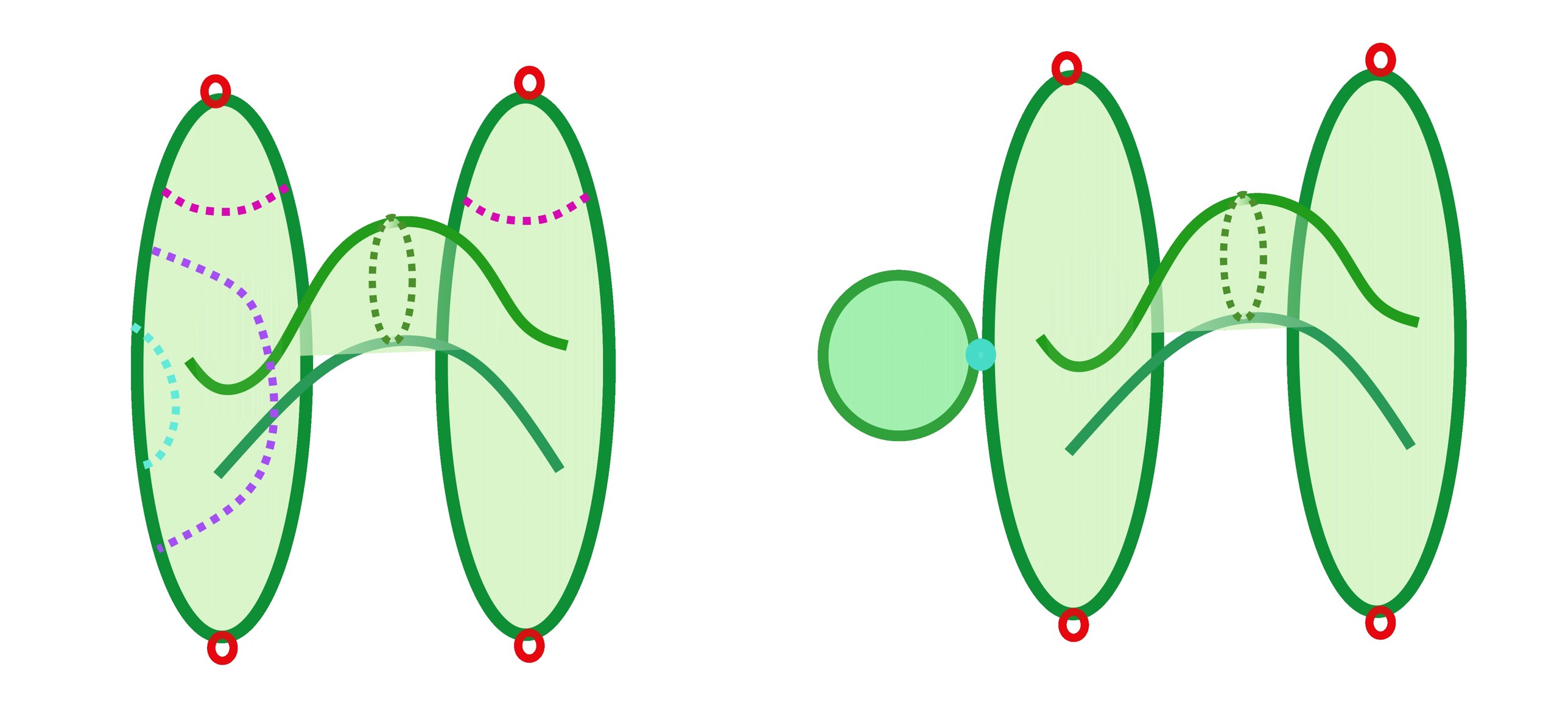}
    \caption{ The small red circles are the boundary punctures. If the holomorphic
 curve degenerates along the blue dashed arc, then a disk bubble is created as
 shown in the figure on the right.}
    \label{figure5}
\end{figure}

\begin{figure} [h]
    \centering
    \includegraphics[width=1\textwidth,  height=0.3\textheight]{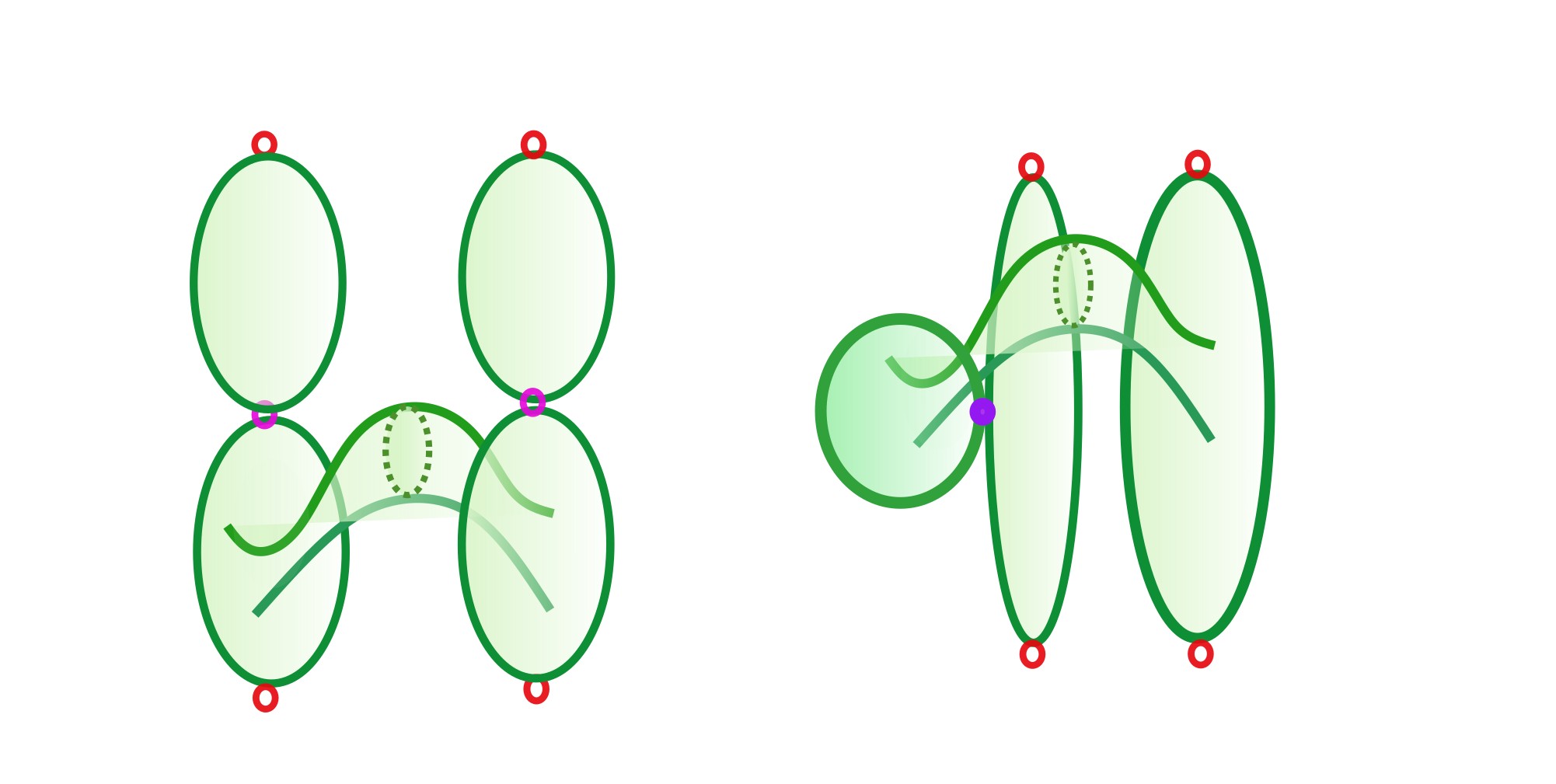}
    \caption{  If the curve degenerates along the pink dashed arcs in Figure 2, then
 the curve splits into two levels as depicted in the figure to the left. If the curve
 degenerates along the purple dashed arcs in Figure \ref{figure5}, then an extra boundary is
 created as shown in the picture on the right.}
    \label{figure3}
\end{figure}

\begin{lemma} \label{lem31}
 Let  $\{u_n\}_{n=1}^{\infty}$ be a sequence of HF curves with homology class $A$. Then $\{u_n\}_{n=1}^{\infty}$  converges to a broken holomorphic curve $\mathbf{u}= (u^1,...,u^N)$. Moreover, the
 non-compact component of each level is an HF curve (possibly with interior nodes).
\end{lemma}
\begin{proof}
By Lemma \ref{lem28}, the Euler characteristics of $\{\dot{F}_n\}_{n=1}^{\infty}$ have a uniform upper bound. Thus, we  assume that the topological type of $\{\dot{F}_n\}_{n=1}^{\infty}$  is fixed. The energy of
  $\{u_n\}_{n=1}^{\infty}$  is fixed $\int u_n^*\omega = \int_A \omega$.   Then, we apply the SFT compactness in Section 11.3
 of  \cite{FYHKE} mentioned above.

Recall that some arcs or curves in the domain of $u_n$ degenerate into a point, then
 we get a nodal surface $F= \cup_i F_i$, where $F_i$ are the irreducible components.  If a loop
  $a_n \subset \dot{F}_n$ degenerates to a point, then it creates a closed bubble with homology class $m[\Sigma]$.  If an arc    $a_n \subset \dot{F}_n$ degenerate to a point, then we have two possibilities: 1) One
 end point of $a_n$ lies in some  $u_n^{-1}(\varphi_H(\Lambda_i))$ and the other end point lies in $u_n^{-1}(\Lambda_{\sigma(i)})$. In
 this case, the curves splits into two levels (see left hand side of Figure \ref{figure3} for example).
 2) The end points of $a_n$ lie inside the same component of  $\partial \dot{F}_n$ because the Lagrangian submanifolds  are disjoint.   In the second case, the irreducible components with compact images in  $M$ are the bubbles described in Lemma \ref{lem7}. The noncompact irreducible components are supposed to be HF curves. However, a priori, some extra Lagrangian boundaries without punctures may created after the Gromov compactness, which violates the first
 condition in Definition \ref{definition3} (as shown in the right hand side of Figure \ref{figure3}). Suppose that $F_i$ is an irreducible component such that $u^j(F_i)$ is noncompact, and $\partial_0 F_i $ is a boundary
 component without punctures such that $u^j( \partial_0 F_i) \subset \mathbb{R} \times \{1\} \times \Lambda_i$ $ (\mathbb{R} \times \{0\} \times \varphi_H(\Lambda_i))$. The existence of such a component $F_i$  is  ruled out by the following reasons.

 Let $p\in \partial_0 F_i$ and $z$ be the holomorphic coordinate around $p$. Since $\pi: M \to \mathbb{R} \times [0,1]$  is holomorphic, $\pi\circ u^j(z) =z^k.$ Because the HF curves $\{u_n\}_{n=1}^{\infty}$ lie inside $M$, so does $\mathbf{u}$.  If $k > 1$,
 part of the image of $u^j$ goes outside $M$, then we have a contradiction. If $k=0$, then the
 open mapping theorem implies that  $\pi\circ u^j $ is a constant.  Then the image of $u^j$ cannot be
 non-compact, we have a contradiction again. Therefore, $k=1$.   In particular, $u^j(\partial_o F_i)$
 intersects $\Sigma$ positively at arbitrary $p \in \partial_o F_i$. On the other hand, $ [u^j(\partial_0F_i)]\cdot [\Sigma] = 0$. This leads
 to a contradiction. In sum, there is no extra boundary created for the non-compact components and the non-compact components of $u^j$ are HF curves.
\end{proof}

	\begin{lemma} \label{lem1}
		Suppose that $I(A)=1$. Then for a generic $J \in \mathcal{J}^{reg}_M$, the moduli space   $\mathcal{M}^J( {\mathbf{y}}_+,  {\mathbf{y}}_-, A)/ \mathbb{R}$ is  compact. In particular, $ \mathcal{M}^J( {\mathbf{y}}_+,  {\mathbf{y}}_-, A)/ \mathbb{R}$ is a finite set of points.
	\end{lemma}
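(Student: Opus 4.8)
The plan is to follow the standard Gromov--Floer compactness scheme for HF--curves, using the index formula of Lemma \ref{lem6} (or equivalently the relation \eqref{eq8}) together with the bubble classification of Lemma \ref{lem7} to rule out every possible breaking or bubbling phenomenon. First I would invoke the general SFT-type compactification of \cite{RL1}, \cite{VPK}: a sequence $u_n \in \mathcal{M}^J(y_+,y_-,A)$ has a subsequence converging to a broken holomorphic building, possibly with closed components (sphere-bubbles, mapped into fibers by Lemma \ref{lem7}), disk bubbles attached along $L_0$ or $L_1$, and multi-level breaking along intermediate Reeb chords $y'$. Writing $A$ as the concatenation of the homology classes carried by the pieces, the key point is that each such piece has nonnegative ECH index by Theorem \ref{thm4} (the generic-$J$ statement $I\ge 0$, with $I=0$ forcing a trivial strip), while by Lemma \ref{lem6}/\eqref{eq8} every closed or disk component lying in a fiber contributes a strictly positive even amount to the total ECH index.

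The core argument is additivity of the ECH index: if the limit building has a main level $u_\infty \in \mathcal{M}^J(y_+,y_-,A')$ together with extra fiber components (bubbles) in classes whose sum is $m[\Sigma]+\sum c_i[B_i]+c_i'[\varphi_H(B_i)]+\dots$ with nonnegative coefficients not all zero, then $I(A) = I(A') + (\text{positive even number}) \ge I(A') + 2$; since $I(A)=1$ this is impossible, so no bubbling occurs. Similarly, if the building breaks into two nontrivial strip levels in classes $A_1 \in H_2(M,y_+,y')$ and $A_2 \in H_2(M,y',y_-)$ with $A = A_1\# A_2$, then $I(A)=I(A_1)+I(A_2)$, and each $I(A_j)\ge 0$ with equality only for a trivial strip (so $y_+=y'$ or $y'=y_-$ and that level is constant in $\mathbb R$-direction, hence not a genuine new level); thus both $I(A_j)$ would have to be nonnegative integers summing to $1$, forcing one of them to be $0$, i.e. a trivial strip, contradicting that the breaking is nontrivial. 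Hence no breaking occurs either, and the limit building is a single curve in $\mathcal{M}^J(y_+,y_-,A)$, proving compactness. The finiteness of $\mathcal{M}^J(y_+,y_-,A)/\mathbb{R}$ then follows since by Lemma \ref{lem23} and the $I(A)=1$ (hence $\operatorname{ind}=1$ on the regular stratum) the quotient is a compact $0$--manifold.

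Two technical points need care. The first is the appearance of disk bubbles along the monotone Lagrangians $L_0, L_1$: here I would argue exactly as in the monotone Lagrangian Floer theory (as the paper's Section~3 preamble signals), namely a disk bubble lies in a fiber by Lemma \ref{lem7}, so its class is among $[B_i], [\varphi_H(B_i)], [B_i^c], [\varphi_H(B_i^c)], [\Sigma]$, and Lemma \ref{lem6} shows it adds at least $2$ to the ECH index — again incompatible with $I(A)=1$. The second point is ruling out $\mathbb{R}$--invariant or fiberwise-constant components sneaking in as separate levels; these are handled by the ``$I=0 \iff$ trivial strip'' clause of Theorem \ref{thm4} together with the observation that a genuine new level must carry a nontrivial class or change the asymptotic chord. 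The main obstacle I expect is bookkeeping the compactness statement correctly in the presence of nodal/broken configurations with several fiber components simultaneously — making sure the index additivity is applied to the full concatenated class and that positivity of intersection with the base points $z_j$ (so that all coefficients $c_i, c_i', m,\dots$ are genuinely $\ge 0$) is preserved under taking limits; this is where I would lean on \eqref{eq20} and the positivity remark following it.
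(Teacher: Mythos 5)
Your proposal follows essentially the same route as the paper's proof: Gromov compactness, the fiber--bubble classification of Lemma \ref{lem7}, the index contributions of Lemma \ref{lem6} with nonnegative coefficients (justified in the paper by energy positivity, in your write--up by positivity of the intersection numbers $n_j$ --- both work), and additivity of the ECH index to exclude both bubbling and multi--level breaking, followed by Theorem \ref{thm4} and Lemma \ref{lem23} to get a compact zero--dimensional quotient. The only step you gloss over that the paper makes explicit is the uniform genus bound on the domains $F_n$ (via the relative adjunction formula), which is needed to fix the topological type before invoking the Gromov compactness of \cite{FYHKE}.
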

	\begin{proof}
By  Lemma \ref{lem28},  $\mathcal{M}^J(\textbf{y}_+, \textbf{y}_-, A)/\mathbb{R}$ is a zero dimensional manifold. Thus, it suffices to show that it is compact.

		Let $\{u_n: \dot{F}_n \to M \}_{n=1}^{\infty}$ be a sequence of HF curves in  $\mathcal{M}^J(\textbf{y}_+, \textbf{y}_-, A)$.   By Lemma \ref{lem31},  $\{u_n\}_{n=1}^{\infty}$ converges to    a broken holomorphic curve $\mathbf{u}=\{u^1,...,u^N\}$.   Each level $u^i$ is a union of HF curves and  bubbles.  By Lemma \ref{lem28} , we know that the homology
 classes of the bubbles are linear combination of   $\cup_{i=1}^{k+1}[B_i], \cup_{i=1}^{k+1}[\varphi_H(B_i)]$,   and $[\Sigma]$.  Let  $u^i_{\star}$
be the   component of $u^i$ by removing the bubbles. Then
\begin{equation*}
[u^i]=[u^i_{\star}] + \sum_{j=1}^{k+1} \left(c_{ij} [B_j] + c_{ij}'[\varphi_H(B_j)] \right) +m_i[\Sigma]
\end{equation*}
	 for some $c_{ij}, c_{ij}', m_i \in \mathbb{Z}$.   The part $\sum_{j=1}^{k+1} \left(c_{ij} [B_j] + c_{ij}'[\varphi_H(B_j)] \right)$  comes from the bubbles
 with Lagrangian boundary conditions, and $m_i[\Sigma]$ comes from the closed bubbles. Let $v$     be a bubble component with homology class  $\sum_{i=1}^{k+1} a_i [B_i].$ Let $z_j \in \mathring{B}_j$.  By definition, $n_{z_j} (B_i) =\delta_{ij}$, where $n_{z_j}$ are  defined in (\ref{eq20}). By the intersection positivity of holomorphic curves, we have  $n_{z_j}(v) \ge 0$.  On the other hand, $n_{z_j}(v)=a_j$. Thus, $a_j \ge 0$.  Since $c_{ij}$
 are the sum of $a_j$ corresponding to different bubbles, it is nonnegative. By the same  reasons, we have  $c_{ij}' \ge 0$.  If $v$ is a closed bubble, then $[v] = a[\Sigma]$ for some $a\in \mathbb{Z}$. Since the
 energy of $v$ is nonnegative, we have $\int v^*\omega = a \ge  0$.  Hence, $m_i  \ge 0$.

	 By Lemma \ref{lem6}, we have
\begin{equation} \label{eq1}
\begin{split}
1=I(A) = I(\mathbf{u})&=  I([u^1]) +... +  I([u^N])\\
&=\sum_{i=1}^N \left( I(u^i_{\star}) + 2\sum_{j=1}^{k+1} (c_{ij} +c_{ij}') +2m_i(k+1)\right).
\end{split}
\end{equation}
 By Theorem \ref{thm4}, we have $I(u_{\star}^i) \ge  \operatorname{ind}  u_{\star}^i \ge 1$. Hence, $c_{ij} =c'_{ij} =m_i=0$. In other words,  the
 bubbles cannot exist. Moreover, $N = 1$ and $\mathbf{u} = u^1=u^1_{\star}$; otherwise, the right hand side of
 (\ref{eq1}) is strictly greater than one. Again by Theorem \ref{thm4},  $ \operatorname{ind}  u^1 = I(u^1) = 1, \delta(u^1) = 0$,
 and $u^1$ is embedded.
		
	\end{proof}

	\begin{lemma} \label{lem2}
		Suppose that $I(A)=2$. Then for a generic $J$,  $\partial \mathcal{M}^J( {\mathbf{y}}_+,  {\mathbf{y}}_-, A)/ \mathbb{R}$ consists of the following two types of broken holomorphic curves:
		\begin{itemize}
			\item
			$\mathbf{u}=(u_1, u_2)$, where $u_1 \in  \mathcal{M}^J( {\mathbf{y}}_+,  {\mathbf{y}}_0, A_1)/ \mathbb{R}$ and $u_1 \in \mathcal{M}^J( {\mathbf{y}}_0,  {\mathbf{y}}_-, A_2)/ \mathbb{R}$. Here $A_1$ and $A_2$ satisfy $A=A_1\#A_2$ and $I(A_1)=I(A_2)=1$.
			\item
			$\mathbf{u}=u_1\cup u_2$, where $u_1$ is a union of trivial holomorphic strips and $u_2$ is a simple index 1 holomorphic disk whose image is $\{s\} \times \{1\} \times B_i$ or $\{s\} \times \{0\} \times \varphi_H(B_i)$ for some $s$ and  $1\le i\le k +1$.      In particular, $ {\mathbf{y}}_+= {\mathbf{y}}_-$ in this case.
		\end{itemize}
	\end{lemma}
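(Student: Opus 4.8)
The plan is to repeat the Gromov--SFT compactness analysis of Lemma \ref{lem1}, but now with an ECH--index budget of $2$ rather than $1$, and to pin down the possible degenerations using the index formula \eqref{eq4} of Lemma \ref{lem6} together with the fibrewise description of compact curves in Lemma \ref{lem7}. First I would take a sequence $\{u_n\}\subset\mathcal{M}^J(y_+,y_-,A)$ representing a point of the compactified boundary, i.e.\ one with no limit in the interior. Exactly as in Lemma \ref{lem1}, the relative adjunction formula (Lemma 4.5.9 of \cite{VPK}) bounds the genus and the number of boundary components of $F_n$, so after passing to a subsequence we may fix the topological type and apply the compactness theorem of \cite{FYHKE}. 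The limit is a broken configuration $u_\infty=(u^1,\dots,u^N)$ with consecutive levels matched along Reeb chords; each level $u^i$ may carry bubble components produced either by pinching an interior circle (a closed bubble) or by pinching a boundary arc (a disk bubble). Levels consisting only of $\mathbb{R}_s$--invariant trivial strips carry ECH index $0$ and are discarded; after this reduction every surviving level contains a non--trivial component.

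The next step is to classify the pieces using Lemma \ref{lem7}. Every closed irreducible component --- of a level or of a bubble --- lies in a single fibre and is a branched cover of $\Sigma$, hence has homology class $m[\Sigma]$ with $m\ge 1$. Every disk bubble lies in a single fibre, and since the $\Lambda_i$ are pairwise disjoint its boundary lies on a single $\varphi_H(\Lambda_i)$ or $\Lambda_i$; thus it is a branched cover of one of $B_i$, $\varphi_H(B_i)$, $B_i^c$, $\varphi_H(B_i^c)$. Writing $[u^i]=[u^i_0]+(\text{bubble classes})$, where $u^i_0$ is the union of the honest HF--curve components of $u^i$, Lemma \ref{lem6} gives
\begin{equation*}
	I(u^i)=I(u^i_0)+\sum_j\bigl(2c_{ij}+2c'_{ij}\bigr)+2k\sum_j\bigl(d_{ij}+d'_{ij}\bigr)+2m_i(k+1),
\end{equation*}
with all coefficients nonnegative for energy reasons, exactly as in \eqref{eq1}. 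By Theorem \ref{thm4} and genericity of $J$, $I(u^i_0)\ge 0$ with equality iff $u^i_0$ is a union of trivial strips; in particular no $u^i_0$ has a closed component, so Lemma \ref{lem23} applies to every surviving piece. Additivity of the ECH index gives $2=I(A)=\sum_i I(u^i)$, and each retained level has $I(u^i)\ge 1$ (either $u^i_0$ is non--trivial, so $I(u^i_0)\ge 1$, or $u^i$ carries a bubble, which contributes at least $2$).

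From here the bookkeeping is short. If $N=2$ then $I(u^1)=I(u^2)=1$, which forces every bubble coefficient to vanish (every bubble contributes at least $2$), so each $u^i$ is a single HF--curve with $I(u^i)=\operatorname{ind}(u^i)=1$ (hence embedded, by Theorem \ref{thm4}) and $A=A_1\#A_2$; this is the first type. If $N=1$ then $I(u^1)=2$, and there must be a bubble, for otherwise $u_\infty=u^1$ would be an interior point. A closed bubble contributes at least $2(k+1)$ and a $B_i^c$ or $\varphi_H(B_i^c)$ bubble at least $2k$, and by the hypothesis $k>1$ both of these exceed $2$; hence the only bubble is a branched cover of some $B_i$ or $\varphi_H(B_i)$, contributing $2$ times its covering multiplicity, and this being exactly $2$ forces a single, multiplicity--one, embedded holomorphic disk onto $\{s\}\times\{1\}\times B_i$ or $\{s\}\times\{0\}\times\varphi_H(B_i)$, together with $I(u^1_0)=0$. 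Thus $u^1_0$ is a union of trivial strips connecting $y_+$ to $y_-$, so $y_+=y_-$, and the node joins the disk to the trivial strip through $\Lambda_i$ (resp.\ $\varphi_H(\Lambda_i)$). Finally I would verify, via the index formula for a holomorphic disk in a fibre and the fact that such a disk has Maslov index $2$, that it is rigid modulo the $\mathbb{R}_s$--translation and reparametrisation, so that this stratum has the expected codimension one; this is the second type. The cases $N\ge 3$, or a bubble inside a level of a two--level breaking, are excluded by the same index count.

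For the converse --- that the compactified moduli space really does bound along exactly these two families --- the first type is produced by the usual Floer gluing of two rigid HF--curves, and the second by the standard gluing of a minimal Maslov disk onto a trivial strip, as in monotone Lagrangian Floer theory (cf.\ \cite{RL1}, \cite{CHMSS}). The main obstacle is the bubble bookkeeping of the middle two paragraphs --- excluding multiply covered disks, the ``large'' disks over $B_i^c$, sphere and higher--genus bubbles, and mixed configurations --- which is exactly what the index identity \eqref{eq4}, the monotonicity assumption \ref{assumption4}, and the hypothesis $k>1$ are there to deliver.
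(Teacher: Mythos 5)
Your proposal is correct and follows essentially the same route as the paper: both rest on the index identity of Lemma \ref{lem6} (Equation (\ref{eq1})) applied to the SFT/Gromov limit, Lemma \ref{lem7} to locate the bubbles in the fibres, the hypothesis $k>1$ to kill the $[\Sigma]$ and $B_i^c$ contributions, and the coefficient being exactly $1$ to rule out multiply covered disks. The paper's proof is just a terser version of the same bookkeeping (it defers the gluing/converse direction to the $d_J^2=0$ discussion), so no further comment is needed.
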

	
	\begin{proof}
		 By the same argument in Lemma \ref{lem1}, we still have Equation (\ref{eq1}). As $I(u_{\star}^i) \ge 0$,   $m_i=0$. If there exists some $u_{\star}^i$ with $I=1$, then $c_{ij}=c_{ij}'=0$ as well.  The Equation (\ref{eq1}) becomes
$$2=I(\mathbf{u}) = \sum_{i=1}^N I(u_{\star}^i). $$
Hence, there is another level with $I = 1$ and all the other levels have $I = 0$. The levels
 with $I = 0$ are just union of trivial strips, which violates the stable conditions of the
 holomorphic building (see Page 832 of \cite{FYHKE}). Hence, they cannot exist. Then we get the
 first conclusion.
		
		If $I(u^i_{\star})=0$ for all $i$, then $u_{\star}^i$ is just a union of trivial strips.  By  Equation (\ref{eq1}), either  $c_{ij}=1$ or $c_{ij}'=1$  for some $(i,j)$ and all the  other $c_{ij}, c_{ij}'. m_i$ are zero.    Thus,  $u$ can be written as a union of trivial strips $u_1$ and a fiber bubble $u_2$, and the
 homology class of $u_2$ is $[B_i]$ or $[\varphi_H(B_i)]$  $1\le i\le k+1$. The  image  $u_2$ is  $\{s\} \times \{1\} \times B_i$ or $\{s\} \times \{0\} \times \varphi_H(B_i)$.  Since  $u_2$  the
 homology class of $u_2$ is either $[B_i]$ or $[\varphi_H(B_i)]$,  $u_2$ cannot be branched covered.
		
	\end{proof}

	\subsection{A Heegaard Floer type homology} \label{section3.2}
 With the preparations from the previous section, we now define the cylindrical formulation of the quantitative Heegaard Floer homology.
	
	Fix a base point $\textbf{x}=(x_1, ..., x_d)$, where $x_i \in \Lambda_i$. As before, we  fix  a reference chord  $$\textbf{x}_H(t):=\varphi_H \circ (\varphi_H^t)^{-1} (\textbf{x})$$ from $\varphi_H(\underline{\Lambda})$ to $\underline{\Lambda}.$ Let $\textbf{y}$ be a Reeb chord.   Replacing the Reeb chord  $\textbf{y}_+$ in Definition \ref{definition3} by $\textbf{x}_H$,  define the space of  equivalence  classes of $d$-multisections from $\textbf{x}_H$ to $\textbf{y}$, denoted by $H_2(M, \textbf{x}_H, \textbf{y})$.
A capping of $\textbf{y}$ is  a relative homology  class $A \in H_2(M, \textbf{x}_H, \textbf{y})$. Two cappings $A_1, A_2$ are equivalent if and only if  $$\int_{A_1} \omega + \eta J_0(A_1)=\int_{A_2} \omega + \eta J_0(A_2). $$ Let $[A]$ denote the equivalence   class.
	
	The chain complex  is a free module  generated by the Reeb chords and cappings, i.e.,
	\begin{equation*}
		CF_*(\Sigma, \underline{\Lambda}, \varphi_H,   \textbf{x})=\oplus  \mathbb{F}  (\textbf{y}, [A]).
	\end{equation*}
	By the monotone assumption  (\ref{assumption4}),   all the classes $[B_i]$ and $[\varphi_H(B_i)]$ are equivalent, written as $B$.  It induces  a $\mathbb{Z}$-action on $CF_*(\Sigma, \underline{\Lambda}, \varphi_H,  \mathbf{x})$  by  $(\mathbf{y}, [A]) \to (\mathbf{y},[A\#B])$.
	
	Fix a generic $J \in \mathcal{J}^{reg}_M$. We define the  differential by
	\begin{equation} \label{eq54}
		d_J (\textbf{y}_+, [A_+])=\sum_{A \in H_2(M, \textbf{y}_+, \textbf{y}_-), I(A)=1} \#\left(\mathcal{M}^J(\textbf{y}_+, \textbf{y}_-, A)/\mathbb{R} \right) (\textbf{y}_-, [A_+ \#A]),
	\end{equation}
	By Lemma \ref{lem1},  the counting $\#\left(\mathcal{M}^J(\textbf{y}_+, \textbf{y}_-, A)/\mathbb{R} \right)$  is well defined.  Moreover,  (\ref{eq8}) implies  that for all the classes  $A$ with $I(A)=1$ satisfy  $\int_A \omega + \eta J_0(A) =\kappa$, where $\kappa$ is a constant.  If  $\mathcal{M}^J(\textbf{y}_+, \textbf{y}_-, A) \ne \emptyset$, by Lemma \ref{lem28},  we have $J_0(A) \ge 0$.  Then, $0 \le \int_A \omega \le \kappa$.  Therefore, the right hand side of (\ref{eq54}) only consists of finitely many terms.
	
	According to  Lemma \ref{lem2} and the gluing argument in \cite{RL1}, we have
	\begin{equation*}
	d_J^2(\textbf{y}, [A])=\sum_{i=1}^{k+1}\left(\#\mathcal{M}^{J_1}(B_i) - \#\mathcal{M}^{J_0}(\varphi_H(B_i)\right) (\textbf{y},[A \# B]),
	\end{equation*}
	where $\mathcal{M}^{J_1}(B_i)$ and  $\mathcal{M}^{J_0}(\varphi_H(B_i))$ are moduli spaces of holomorphic curves  in $\Sigma$ with homology classes  $[B_i]$ and  $[\varphi_H(B_i)]$ respectively.  Obviously, the Hamiltonian symplectomorphism $\varphi^{-1}_H$ gives a diffeomorphism  between  $\mathcal{M}^{J_0}(\varphi_H(B_i))$  and   $\mathcal{M}^{J_1}(B_i)$. Therefore, $d^2_J =0.$

	In sum, the  homology of $(CF_*(\Sigma,  \underline{\Lambda}, \varphi_H, \mathbf{x}), d_J)$  is well defined, denoted by  $ HF_*(\Sigma, \underline{\Lambda}, \varphi_H,  \mathbf{x})_J$.

	\subsection{Invariance} \label{section3}
 The purpose of this section is to show that the homology $ HF_*(\Sigma, \underline{\Lambda}, \varphi_H,  \mathbf{x})_J$ is independent of the choices of several data. The following Lemma \ref{lem10} and Proposition
\ref{lem30} are the same as Lemmas 6.1.1, 6.1.2, and 6.1.3 in  \cite{CHT}, where V. Colin, K. Honda,
 and Y. Tian prove the invariance of the higher dimensional Heegaard Floer homology.
 The construction of the Heegaard Floer homology in \cite{CHT} is essentially the same as
 $ HF_*(\Sigma, \underline{\Lambda}, \varphi_H, \mathbf{x})_J$. The difference is that the symplectic manifolds and Lagrangian submanifolds  in
 \cite{CHT} are required to be exact in order to rule out the bubbles, while here we rule out the
 bubbles using Lemma \ref{lem6}. Thus, ignore the effects from bubbles, one can image that
 the proof of Proposition \ref{lem30} is identical to those of \cite{CHT}.

 In the proof of Lemma \ref{lem10}, we use a construction that differs from \cite{CHT}. The
 advantage of this construction is that we can describe the reference relative homology
 in a clearer way.
	
	\begin{lemma} \label{lem10}[Lemma 6.1.1 of \cite{CHT}]
		Given Hamiltonian functions $H_- $ and $ H_+$, 	there exists a triple $( \Omega,  L_0, L_1)$ on $M$ such that
		\begin{enumerate}
			\item
			$\Omega$ is a symplectic form such that $\Omega =\omega + ds \wedge dt$ when $|s|  \ge  R_0$, where $R_o >0$ is a constant.
			\item
			$L_1= \mathbb{R} \times \{1\} \times \underline{\Lambda}$,   $L_0 \subset \mathbb{R} \times \{0\} \times \Sigma$   are $d$ disjoint union of cylinders.
 Moreover,
			\begin{equation*}
				\begin{split}
					& L_0 \vert_{s \ge R_0} =\mathbb{R} \vert_{s\ge R_0} \times \{0\} \times
					\varphi_{H_+}(\underline{\Lambda}),\\
					&L_0 \vert_{s \le -R_0} = \mathbb{R} \vert_{s\le -R_0} \times \{0\} \times \varphi_{H_-}(\underline{\Lambda}).
				\end{split}
			\end{equation*}
			\item
			$L_0$ and $L_1$ are disjoint union of  Lagrangian submanifolds with respective to $\Omega$.
		\end{enumerate}
	\end{lemma}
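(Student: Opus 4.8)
The plan is to build $L_0$ as a Lagrangian suspension of a Hamiltonian isotopy carrying $\varphi_{H_-}(\Lambda)$ to $\varphi_{H_+}(\Lambda)$, and to compensate for the resulting twisting by adding a closed, compactly supported correction term to the product symplectic form. First I would fix a smooth family $\{\phi_s\}_{s\in\mathbb{R}}$ of Hamiltonian symplecticmorphisms of $(\Sigma,\omega)$ with $\phi_s=\varphi_{H_+}$ for $s\ge R_0$, $\phi_s=\varphi_{H_-}$ for $s\le -R_0$, and $\phi_s$ locally constant near $\{|s|\ge R_0\}$, writing $\partial_s\phi_s=X_{\mathcal{K}_s}\circ\phi_s$ for a family of generating Hamiltonians $\mathcal{K}_s\colon\Sigma\to\mathbb{R}$ that vanish for $|s|\ge R_0$. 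Crucially, by performing this interpolation slowly, i.e. by enlarging $R_0$, I can arrange that $\mathcal{K}_s$ and $d_\Sigma\mathcal{K}_s$ are as $C^0$-small as desired. Then I set $L_0:=\{(s,0,\phi_s(x)):s\in\mathbb{R},\,x\in\Lambda\}\subset\mathbb{R}_s\times\{0\}\times\Sigma$ and $L_1:=\mathbb{R}\times\{1\}\times\Lambda$; these are properly embedded, disjoint, and have the ends required in (2).

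The symplectic form will have the shape $\Omega:=\omega+ds\wedge dt+ds\wedge\theta_{s,t}+dt\wedge\eta_{s,t}$, where $\theta_{s,t}$ and $\eta_{s,t}$ are smooth families of $1$-forms on $\Sigma$ that are closed on $\Sigma$ and vanish for $|s|\ge R_0$ (a curve moving inside $\{t=0\}$ is never Lagrangian for the bare product form, so a correction supported in the slab $\{|s|\le R_0\}$ is unavoidable). A direct computation gives $d\Omega=ds\wedge dt\wedge(\partial_s\eta_{s,t}-\partial_t\theta_{s,t})$, so $\Omega$ is closed precisely when $\partial_s\eta_{s,t}=\partial_t\theta_{s,t}$; and evaluating $\Omega$ along the tangent spaces of $L_1$ and $L_0$ one finds that $\Omega|_{TL_1}=0$ is equivalent to $\theta_{s,1}$ vanishing on $T\Lambda$, while $\Omega|_{TL_0}=0$ is equivalent to $(d_\Sigma\mathcal{K}_s+\theta_{s,0})$ vanishing on $T\phi_s(\Lambda)$. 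To meet these I would pick a cutoff $\beta\colon[0,1]\to[0,1]$ with $\beta(0)=1$, $\beta(1)=0$, and closed $1$-forms $\lambda_s$ on $\Sigma$ vanishing for $|s|\ge R_0$ and vanishing on $T\phi_s(\Lambda)$, normalized by $\int_{\mathbb{R}}\lambda_s\,ds=d_\Sigma\!\int_{\mathbb{R}}\mathcal{K}_s\,ds$; such a family exists because the right-hand side is exact and $\lambda_s$ is essentially free in the directions normal to $\phi_s(\Lambda)$. Setting $\vartheta_s:=-d_\Sigma\mathcal{K}_s+\lambda_s$, $\theta_{s,t}:=\beta(t)\vartheta_s$, and $\eta_{s,t}:=\beta'(t)\int_{-\infty}^s\vartheta_\sigma\,d\sigma$ makes $\Omega$ closed, forces $\theta_{s,1}=0$ and $\eta_{s,t}\equiv 0$ off the slab (so $\Omega=\omega+ds\wedge dt$ there), gives the $L_1$-condition for free, and reduces the $L_0$-condition to $\lambda_s|_{T\phi_s(\Lambda)}=0$.

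The remaining point, and the one I expect to require the most care, is \emph{non-degeneracy} of $\Omega$: one computes $\Omega^2=(2-2h)\,\omega\wedge ds\wedge dt$ where $\theta_{s,t}\wedge\eta_{s,t}=h\,\omega$, so $\Omega$ is symplectic as soon as $\|\theta_{s,t}\|$ and $\|\eta_{s,t}\|$ are small enough that $h<1$ everywhere. This is exactly where the slow-interpolation freedom is used: making $\mathcal{K}_s$ small (hence $\vartheta_s$, $\theta$, $\eta$ small) costs only a larger $R_0$, which is harmless. Assembling these steps then produces a triple $(\Omega,L_0,L_1)$ with all the stated properties. (Alternatively, one can argue more softly: use the Weinstein neighbourhood theorem to construct a symplectic form near $L_0\cup L_1$ agreeing with $\omega+ds\wedge dt$ for $|s|\ge R_0$ and making $L_0,L_1$ Lagrangian—interpolating the Weinstein charts of the moving curves $\phi_s(\Lambda)$ across the slab—and then extend it over the open $4$-manifold $M$ by the $h$-principle for symplectic forms; since $L_0$ and $L_1$ lie in the region where the form is left untouched, they remain Lagrangian.)
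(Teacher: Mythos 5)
Your overall strategy coincides with the paper's: realize $L_0$ as the suspension of an interpolating Hamiltonian isotopy $\phi_s$, keep $L_1$ a product, decouple the two Lagrangian conditions with a cutoff in $t$, and obtain non--degeneracy by stretching the interpolation. The genuine gap is the existence of the family $\lambda_s$. You need, for every $s$, a closed $1$--form $\lambda_s$ that vanishes \emph{identically} on $T\phi_s(\Lambda)$ (not merely has zero periods there), is supported in $|s|\le R_0$, and satisfies the exact identity $\int_{\mathbb{R}}\lambda_s\,ds=d_\Sigma\int_{\mathbb{R}}\mathcal{K}_s\,ds$. The justification ``the right--hand side is exact and $\lambda_s$ is essentially free in the normal directions'' does not address this: writing $\lambda_s=dg_s$ with $g_s$ locally constant on $\phi_s(\Lambda)$, the requirement becomes $\int g_s\,ds=\int\mathcal{K}_s\,ds+\mathrm{const}$, i.e.\ you must decompose the fixed function $F=\int\mathcal{K}_s\,ds$ as an $s$--average of functions each of which is constrained along a moving family of circles. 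The naive choice $\lambda_s=\rho(s)\,dF$ fails because $F$ has no reason to be locally constant on any $\phi_s(\Lambda)$, and nothing in your argument shows the general decomposition is solvable. This step needs a real proof and, as written, the construction does not close up: without it, $\eta_{s,t}$ does not vanish for $s\ge R_0$ and condition (1) of the lemma fails.

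You are forced into this corner because your ansatz $\Omega=\omega+ds\wedge dt+ds\wedge\theta_{s,t}+dt\wedge\eta_{s,t}$ omits a correction proportional to $ds\wedge dt$. Allowing it, take the correction to be the globally exact form $d\bigl(\beta(t)\mathcal{K}_s\,ds\bigr)=-\beta'(t)\mathcal{K}_s\,ds\wedge dt-\beta(t)\,ds\wedge d_\Sigma\mathcal{K}_s$: it is automatically closed; it vanishes for $|s|\ge R_0$ because $\mathcal{K}_s$ does, so there is no integral constraint and no $\lambda_s$; it vanishes near $t=1$ (choose $\beta$ constant there), so $L_1$ is Lagrangian; and at $t=0$ it gives $\theta_{s,0}=-d_\Sigma\mathcal{K}_s$, which is exactly your condition $(d_\Sigma\mathcal{K}_s+\theta_{s,0})|_{T\phi_s(\Lambda)}=0$. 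Non--degeneracy reads $\Omega^2=2\bigl(1-\beta'(t)\mathcal{K}_s\bigr)\,\omega\wedge ds\wedge dt$ and is handled by your slow--interpolation observation. This is precisely the paper's proof, where $\Omega=\omega+ds\wedge dt+dG_{s,t}\wedge ds$ with $G_{s,t}=\chi(t)G_s$. Your closing alternative via Weinstein neighbourhoods and the $h$--principle is too vague to substitute for this.
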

	\begin{proof}
	Let $\chi(s): \mathbb{R}_s  \to \mathbb{R} $ be a nondescreasing  cutoff function such that
\begin{equation*}
\chi(s)=
\begin{cases}
0&\mbox{if $s \le -R_0$}\\
1.  &\mbox{if $s \ge R_0$.}
\end{cases}
\end{equation*}
Let  $H^s := \chi(s)H_+ + (1- \chi(s)) H_-$.
Define a diffeomorphism
\begin{equation}  \label{eq44}
\begin{split}
F: &\mathbb{R} \times [0,1] \times \Sigma \to   \mathbb{R} \times [0,1] \times \Sigma\\
&(s, t, x) \to (s, t,  \varphi_{H^s} \circ (\varphi_{H^{s}}^{t})^{-1}(x)).
\end{split}
\end{equation}
 Let
\begin{equation} \label{eq56}
\begin{split}
&L_0  :=F(\mathbb{R} \times \{0\} \times \underline{\Lambda}), L_1  :=F(\mathbb{R} \times \{1\} \times \underline{\Lambda} )  =\mathbb{R} \times \{1\} \times \underline{\Lambda},\\
&\omega_E:=(F^{-1})^*(\omega + d(H^s_t dt)) \mbox{ and } \Omega_E =\omega_E +ds \wedge dt.
\end{split}
\end{equation}
$\Omega_E$ is symplectic as long as we choose $R_0$ large enough.  Then $L_0 \subset \mathbb{R} \times \{0\} \times \Sigma$ and $L_1 \subset \mathbb{R} \times \{1\} \times \Sigma$ are disjoint  union of  $\Omega_{E}$-Lagrangian submanifolds   such that
\begin{equation*}
\begin{split}
&L_0 \vert_{s \ge R_0} = \mathbb{R}_{s \ge R_0} \times    \{0\} \times \varphi_{H_+}(\underline{\Lambda}),  \\
&L_0 \vert_{s \le  -R_0} = \mathbb{R}_{s \le R_0} \times  \{0\} \times \varphi_{H_-}(\underline{\Lambda})) .
\end{split}
\end{equation*}

	\end{proof}

\begin{definition}
	The triple  $(\Omega, L_0, L_1)$ is called a \textbf{Lagrangian cobordism} from $(\varphi_{H_+}(\underline{\Lambda}),\underline{\Lambda})$ to $(\varphi_{H_-}(\underline{\Lambda}), \underline{\Lambda})$.
	\end{definition}

	\begin{prop} \label{lem30}
		The   homology  $HF_*(\Sigma, \underline{\Lambda}, \varphi_H,  \mathbf{x})_J$ is independent of the choices of $H$ and $J$.
	\end{prop}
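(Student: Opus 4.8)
The plan is to run the standard continuation--map argument of Floer theory, using the Lagrangian cobordisms of Lemma \ref{lem10} as the interpolating data and the index computation of Lemma \ref{lem6} to control every bubbling phenomenon. Fix the base point $\textbf{x}$. Given two admissible pairs $(H_+,J_+)$ and $(H_-,J_-)$, I first pick a Lagrangian cobordism $(\Omega,L_0,L_1)$ on $M$ from $(\varphi_{H_+}(\Lambda),\Lambda)$ to $(\varphi_{H_-}(\Lambda),\Lambda)$ as in Lemma \ref{lem10}, together with a cobordism--admissible almost complex structure $J$ on $M$ which agrees with $J_\pm$ over the ends $\pm s\geq R_0$, keeps the projection $\pi\colon M\to\mathbb{R}\times[0,1]$ holomorphic, and preserves the vertical bundle. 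The continuation map $I((H_+,J_+),(H_-,J_-))$ is then defined on chains by $(y_+,[A_+])\mapsto\sum_{I(A)=0}\#\mathcal{M}^J(y_+,y_-,A)\,(y_-,[A_+\#A])$; unlike in the symplectization there is no $\mathbb{R}$--action to quotient, since $\Omega$ is not translation invariant away from the ends, so the relevant count is of $I=0$ curves.

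For this to be well defined I establish transversality and compactness of the $I=0$ moduli spaces exactly as in Lemmas \ref{lem23} and \ref{lem1}: HF--curves are never multiply covered because of their asymptotics, so a generic cobordism $J$ (agreeing with the already--regular $J_\pm$ at the ends) cuts out $\mathcal{M}^J(y_+,y_-,A)$ transversally; and in a Gromov limit any closed component or fiber bubble carries one of the classes $[B_i]$, $[\varphi_H(B_i)]$, $[B_i^c]$, $[\varphi_H(B_i^c)]$ or $[\Sigma]$, each contributing at least $2$ to the ECH index by Lemma \ref{lem6}, while Theorem \ref{thm4} gives $I\geq 0$ on the simple remainder. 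Hence an $I=0$ limit carries no bubble and cannot break, so $\mathcal{M}^J(y_+,y_-,A)$ is a compact $0$--manifold; the monotonicity relation (\ref{eq8}) makes the sum finite.

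Next I verify that this is a chain map, $d_{J_-}\circ I = I\circ d_{J_+}$, by analysing the $1$--dimensional moduli space $\mathcal{M}^J(y_+,y_-,A)$ with $I(A)=1$. The same bookkeeping as in Lemma \ref{lem2} shows its boundary consists only of once--broken configurations: a translation class with $I=1$ (modulo $\mathbb{R}$) in one of the two symplectization ends, glued to an $I=0$ curve in the cobordism; a fiber bubble would force the main component to have negative ECH index, which is excluded. The gluing theorem of \cite{RL1} identifies these ends with the terms of $d_{J_-}\circ I$ and of $I\circ d_{J_+}$, which over $\mathbb{F}=\mathbb{Z}/2$ yields the desired identity; the induced map on homology is $I((H_+,J_+),(H_-,J_-))_*$.

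Finally I upgrade this to a canonical isomorphism. Two choices of cobordism data with the same endpoints are joined by a generic path $\{(\Omega^\lambda,L_0^\lambda,L_1^\lambda,J^\lambda)\}_{\lambda\in[0,1]}$; counting the $1$--dimensional parametrized moduli space of $I=0$ curves — whose interior breaking involves, at isolated parameters, an $I=-1$ curve in the cobordism (again with no fiber bubbling, since that would demand a main component of index $\leq-2$, which does not occur in a generic $1$--parameter family) — produces a chain homotopy, so $I(\cdot)_*$ depends only on the endpoints. Concatenating a cobordism from $(H_0,J_0)$ to $(H_1,J_1)$ with one from $(H_1,J_1)$ to $(H_2,J_2)$ gives a cobordism from $(H_0,J_0)$ to $(H_2,J_2)$, homotopic to a direct one; a neck--stretching argument identifies the concatenated map with $I((H_1,J_1),(H_2,J_2))_*\circ I((H_0,J_0),(H_1,J_1))_*$, so $I(\cdot)_*$ is functorial, and the product cobordism gives $I((H,J),(H,J))_*=\mathrm{id}$. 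Therefore $I((H_+,J_+),(H_-,J_-))_*$ and $I((H_-,J_-),(H_+,J_+))_*$ are mutually inverse, proving the homology is independent of $H$ and $J$. The one recurring technical point — and essentially the only place the hypotheses on $\Lambda$ (in particular $k>1$) are used — is this bubbling control, but it is dispatched uniformly: by Lemma \ref{lem6} every bubble configuration costs at least two units of ECH index, so it never interferes with moduli spaces of the dimensions relevant here.
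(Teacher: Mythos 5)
Your proposal is correct and follows essentially the same route as the paper: construct the continuation map from the Lagrangian cobordism of Lemma \ref{lem10} by counting $I=0$ curves, rule out bubbles and breaking via the index cost of Lemma \ref{lem6}, establish the chain-map and chain-homotopy properties by the usual $I=1$ and parametrized $I=-1$ analyses with Lipshitz's gluing, and conclude via neck-stretching composition that the maps are isomorphisms. The only difference is that you spell out the identity-cobordism step slightly more explicitly than the paper does, which is a harmless elaboration rather than a different argument.
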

	\begin{proof}
		 Since the proof is the same as Section 6.1 in  \cite{CHT}, we just outline the main
 argument.
		Fix  two pairs $(H_-, J_-)$ and $(H_+, J_+)$, where $H_{\pm}$ are Hamiltonian functions and $J_{\pm} \in \mathcal{J}_M^{reg}$.  We  define a cobordism map  from $HF_*(\Sigma, \underline{\Lambda},  \varphi_{H_+},   \textbf{x})_{J_+}$  to $HF_*(\Sigma, \underline{\Lambda}, \varphi_{H_-},   \textbf{x})_{J_-}$  as follows.

 Take the triple  $ (\Omega, L_0, L_1)$ provided by Lemma \ref{lem10}. 
		Let $J$ be a  $\Omega$-tame almost complex structure on $M$ such that
		\begin{enumerate}
			\item
			$J$ agrees with $J_+$ at the positive end ($s \ge  R_0$) and with  $J_-$ at the negative end ($s\le -R_0$), where $J_{\pm} \in \mathcal{J}^{reg}_M;$
			\item
			$\pi: M\to \mathbb{R} \times [0,1]$ is complex linear;
\item
The $J$-holomorphic  HF curves are Fredholm regular.
		\end{enumerate}

Let $F$ be the diffeomorphism in Lemma \ref{lem10}. Let $A_{ref} \in H_2(M, \textbf{x}_{H_+} ,\textbf{x}_{H_-})$  be a class represented by $F(\mathbb{R} \times [0,1]\times \mathbf{x})$.
Then we  define a homomorphism
		\begin{equation*}
			CF_{A_{ref}}(\Omega, L_0, L_1)_J : CF_*(\Sigma, \underline{\Lambda},\varphi_{H_+},    \mathbf{x})_{J_+} \to CF_*(\Sigma, \underline{\Lambda}, \varphi_{H_-},  \mathbf{x})_{J_-}
		\end{equation*}
		by counting $I=0$ $J$-holomorphic HF curves    in $(M, \Omega)$  with boundary on $L_0$ and $L_1$.    The compactness of the moduli space can be guaranteed by the same argument of
 Lemma \ref{lem1} and the index calculation (\ref{eq1}).
		
		Consider a sequence of $I=1$    HF curves  $\{u_n\}_{n=1}^{\infty}$ in $(M, \Omega)$  with boundary on $L_0$ and $L_1$. As $n\to \infty$,  the  same argument  in Lemma \ref{lem2} shows that  $\{u_n\}_{n=1}^{\infty}$ converges to a broken curve $\mathbf{u}=(u_1, u_2)$.  One level is an HF curve with $I=1$ and another level is an HF curve in the Lagrangian cobordism with $I=0$. The gluing argument (see Appendix in \cite{RL1}) shows that  $CF_{A_{ref}}(\Omega, L_0, L_1)_J$ is a chain map. Therefore, it induces a homomorphism
			\begin{equation*}
			HF_{A_{ref}}(\Omega, L_0, L_1)_J : HF_*(\Sigma, \underline{\Lambda}, \varphi_{H_+},   \mathbf{x})_{J_+} \to HF_*(\Sigma, \underline{\Lambda}, \varphi_{H_-},  \textbf{x})_{J_-}.
		\end{equation*}
Moreover, the usual homotopy argument shows  that $HF_{A_{ref}}(\Omega, L_0, L_1)_J$  is  unchanged  if  we deform  $(L_0, L_1, \Omega, J)$ over a compact subset of $M$. Therefore, $HF_{A_{ref}}(\Omega, L_0, L_1)_J$
 only depends on $(H_+, J_+) $ and $(H_-, J_-)$.  To simplify the notation, we denote $HF_{A_{ref}}(\Omega, L_0, L_1)_J$ by $I_{H_+, H_-}$.
		
		Let $(M_{s_+}, \Omega, L_0, L_1)$ and $(M_{s_-}, \Omega', L'_0, L_1)$ be  Lagrangian cobordisms  from $(\varphi_{H_+}(\underline{\Lambda}), \underline{\Lambda})$ to $(\varphi_{H_0} (\underline{\Lambda}), \underline{\Lambda})$ and  from $(\varphi_{H_0}(\underline{\Lambda}), \underline{\Lambda})$ to $(\varphi_{H_-}(\underline{\Lambda}), \underline{\Lambda})$ respectively, where $s_{\pm}$ denote the $\mathbb{R}$-coordinates.   Define  a family of Lagrangian  cobordisms   by
		\begin{equation*}
			(M, \Omega_R, L_R^0, L_1) : = (M_{s_+}, \Omega, L_0, L_1)\vert_{s_+ \ge -R} \cup_{s_+=-R \sim s_-=R}  (M_{s_-}, \Omega', L'_0, L_1) \vert_{s_- \le R}
		\end{equation*}
		Note that $(L_R^0, L_R^1)$ are $\Omega_R$-Lagrangian submanifolds. 
		By the  neck stretching  and homotopy arguments, we have
		\begin{equation} \label{eq36}
		I_{H_+, H_-} =HF_{A_{ref}}(\Omega_R, L^0_R,  L^1_R)_* =I_{H_0, H_-}  \circ I_{H_+, H_0}.
		\end{equation}
		When  $(H_+, J_+) =(H_-, J_-)$,  we can choose the Lagrangian cobordism $(\Omega, L_0, L_1)$  to be   $\mathbb{R}$-invariant.  Then,
 take the almost complex structure to be in  $\mathcal{J}_M$.  As a result, the only $ I = 0$ HF curves
 are union of the trivial strips (see Theorem \ref{thm4}). Hence,  $I_{H, H}=\operatorname{Id}$.  Then  (\ref{eq36})    implies that  $I_{H_+, H_-} $ is an isomorphism.
	\end{proof}

 The map $I_{H_+, H_-}$
defined in the proof of Proposition \ref{lem30} is called a \textbf{continuous morphism.}

	\paragraph{Different choice of base points}
	Let $\textbf{x}'=(x'_1, ..., x'_d)$ be another base point, where $x_i' \in \Lambda_i$.  As in Section \ref{Section1.2}, we take a special relative homology class  $A_{\textbf{x}, \textbf{x}'}$ as
 follows:  Take a path $\eta :\sqcup_i [0,1]_{s_i}  \to  \underline{\Lambda}$ such that $\eta(0)=\textbf{x}$ and $\eta(1)=\textbf{x}'$.  Define  $u(s, t) :=(s, t, \varphi_H \circ (\varphi_{H}^{t})^{-1}(\eta(s)) $ and  $A_{\textbf{x}, \textbf{x}'}=[u] \in H_2(M, \textbf{x}_H', \textbf{x}_H)$. Then $u$ induces an isomorphism
	\begin{equation} \label{eq22}
		\Psi_{H, {\textbf{x}, \textbf{x}'}}:    HF_*(\Sigma, \underline{\Lambda},  \varphi_H,   \textbf{x})_{J}  \to HF_*(\Sigma, \underline{\Lambda}, \varphi_H,  \textbf{x}')_{J}
	\end{equation}
	by sending $(\mathbf{y}, [A])$ to $(\mathbf{y}, [A\#u])$.
 By the same reasons as in (\ref{eq41}), the continuous maps and the maps  $\Psi_{H, {\textbf{x}, \textbf{x}'}}$
 are compatible in the sense that
 \begin{equation}
 	\begin{CD}
				HF_*(\Sigma, \underline{\Lambda}, \varphi_H,   \textbf{x})_{J}   @>\Psi_{H, \mathbf{x}, \mathbf{x}'}>> HF_*(\Sigma, \underline{\Lambda},  \varphi_H,   \textbf{x}')_{J}   \\
				@VV I_{H, G}V @VV  I_{H, G}V\\
				HF_*(\Sigma, \underline{\Lambda}, \varphi_G,   \textbf{x})_{J}  @>\Psi_{G, \mathbf{x}, \mathbf{x}'}>> HF_*(\Sigma, \underline{\Lambda},  \varphi_G,   \mathbf{x}')_{J} .			
				\end{CD}
				\end{equation}
By the above diagram, (\ref{eq36}) and  (\ref{eq22}), we define $HF(\Sigma, \underline{\Lambda})$ to be the direct limit
 with respect to the continuous morphisms  and $	\Psi_{H, {\textbf{x}, \textbf{x}'}}$.   Therefore, we have a canonical isomorphism
\begin{equation} \label{eq37}
j^{\textbf{x}}_H: HF(\Sigma, \underline{\Lambda},  \varphi_H,   \mathbf{x}) \to HF(\Sigma,  \underline{\Lambda}).
\end{equation}

\subsection{ Action functional and HF spectral invariant}
We define the \textbf{action functional} on the chain complex   by
	\begin{equation} \label{eq7}
		\mathcal{A}^{\eta}_H(\textbf{y},[A])_{\textbf{x}}:=-\int_{A} \omega +  \int_0^1 H_t(\textbf{x})dt -\eta J_0(A).
	\end{equation}
Later, we will see that the perturbation term $\eta J_0$ is corresponding to the $\eta \Delta \cdot$ in (\ref{eq46}).

 As the cases   of  PFH and quantitative Heegaard Floer homology, we define a submodule  $ CF^L(\Sigma, \underline{\Lambda}, \varphi_H,  \textbf{x}) $ generated by $(\mathbf{y}, [A])$ with $\mathcal{A}^{\eta}_H(\textbf{y},[A])_{\textbf{x}}<L. $

\begin{lemma}
The differential decreases the action functional, ie.,  $ \partial( CF^L(\Sigma, \underline{\Lambda},  \varphi_H,   \mathbf{x})) \subset  CF^L(\Sigma, \underline{\Lambda}, \varphi_H,  \mathbf{x}). $
\end{lemma}
\begin{proof}
Suppose that $<\partial (\mathbf{y}_+, [A_+]), (\mathbf{y}_-, [A_-])>\ne 0$. Then we have an HF curve $u \in \mathcal{M}^J(\mathbf{y}_+, \mathbf{y}_-, (-A_+) \#A_-)$.  By definition, we have
	\begin{equation*}
		\mathcal{A}^{\eta}_H(\mathbf{y}_+, [A_+])_{\textbf{x}} -\mathcal{A}^{\eta}_H(\mathbf{y}_-,[A_-])_{\textbf{x}}  =\int u^*\omega  + \eta J_0(u).
	\end{equation*}
By Lemma \ref{lem28}, the right hand side of the above equation is nonnegative.

\end{proof}

 Therefore, we get a filtration on the chain complex and can define the spectral
 invariant as before. Let
 $$c^{hf}_{\underline{\Lambda}, \eta}: C^{\infty}([0,1]\times \Sigma) \times HF(\Sigma, \underline{\Lambda}) \to \mathbb{R}$$ denote the spectral invariant defined by   $HF(\Sigma, \underline{\Lambda})$ and $\mathcal{A}^{\eta}_H.$ By the same argument as
 in Lemma  \ref{lem32}, the spectral invariant is independent of the base point $\mathbf{x}$.


	\section{Equivalence of two formulations}
In this section, we prove the promised isomorphism in Theorem  \ref{thm1}. The main idea is the same as \cite{RL1} what Lipshitz done for the usual Heegaard Floer homology. It may
 be feasible to copy the argument in \cite{RL1}  to prove Theorem 1. But here we provide
 an alternative way to compare the ECH index of HF curves and Fredholm index of
 holomorphic sections in   $\mathbb{R} \times[0,1] \times  \operatorname{Sym}^d \Sigma$.

	\subsection{Index comparison}
 To compare the index between HF curve and holomorphic section, we first need to construct a nice representative of  $A\in H_2(M, \textbf{y}_+, \textbf{y}_-)$ which is  like a holomorphic curve.  Lipshitz needs such a  representative in his argument as well.  We apply his arguments in  Lemma 4.2' and Lemma 4.9' of  \cite{RL2} to our setting directly.    Then we have the following lemma.
	\begin{lemma} \label{lem8}
		Fix a positive relative homology class $A\in H_2(M, {\mathbf{y}}_+,  {\mathbf{y}}_-)$. Then we have  a    representative $u: \dot{F} \to M $ of $A+[\Sigma]$ with the following   properties:
		\begin{enumerate}
			\item
			$u$ is holomorphic in a neighborhood of $\partial \dot{F}$;
			\item
			$\pi \circ u: \dot{F} \to \mathbb{R} \times [0,1 ]$ is a branched  covering;
			\item
			$u$ is holomorphic near the branch points of $\pi \circ u$;
			
			\item
			$u$ is embedded except for finitely many double points.
		\end{enumerate}
	\end{lemma}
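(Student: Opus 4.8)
The plan is to transcribe Lipshitz's construction of ``nice'' representatives (Lemmas 4.2' and 4.9' of \cite{RL2}) into the present setting, using the product structure $M=(\mathbb{R}_s\times[0,1]_t)\times\Sigma$ and the fact that $\pi$ is the projection onto the first factor. First I would fix, once and for all, the germ of $u$ near $\partial\dot F$ and near the two ends: along the cylindrical ends $s\to\pm\infty$ a representative of $A$ is forced to be asymptotic to the trivial strips over $y_\pm$, and along $L_0$ and $L_1$ I may prescribe the standard holomorphic collar coming from a flat structure on a tubular neighbourhood of $\varphi_H(\Lambda)$, resp.\ $\Lambda$, in $\Sigma$. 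Declaring $u$ to agree with these fixed holomorphic models on a collar of $\partial\dot F$ and on the ends gives property (1) for free and reduces the lemma to building $u$ over the compact remainder of $F$ with fixed boundary data.

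Next I would construct the domain $F$ together with the map $p=\pi\circ u\colon F\to\mathbb{R}\times[0,1]$. The ends contribute $d$ boundary punctures at each of $\pm\infty$, each mapped diffeomorphically onto a strip end; in the interior I choose a generic finite collection of points and prescribe simple (transposition) branching there. By the Riemann existence theorem for branched covers of a surface with boundary one realises such a $p$ with $F$ connected and of any prescribed genus (one raises the genus by adding cancelling pairs of branch points). Taking $p$ holomorphic near its branch points and near the ends yields properties (2) and (3).

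The crux is producing the $\Sigma$--component $f\colon F\to\Sigma$ so that $u=(p,f)$ represents $A+[\Sigma]$. The boundary values of $f$ are dictated by $\partial A$: $f$ must run along $\varphi_H(\Lambda_i)$, resp.\ $\Lambda_{\sigma(i)}$, on the relevant boundary arcs with the wrapping multiplicities read off from $A$, after which I extend $f$ over the interior. The point of adding one copy of $[\Sigma]$ is that it raises every local multiplicity by one, so that $n_j(A+[\Sigma])=n_j(A)+1\ge 1$ is \emph{strictly} positive for every component of $\Sigma-\Lambda-\varphi_H(\Lambda)$ (here the hypothesis that $A$ is positive is used), and this strict positivity is exactly what lets $f$ be chosen, region by region, as a branched--covering--like map with controlled self--intersections. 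Finally the class of the map just built differs from $A+[\Sigma]$ by an element of the span of $[\Sigma]$, $[B_i]$ and $[\varphi_H(B_i)]$ --- the same group that appeared in the bubble analysis --- and I correct this discrepancy by the explicit local finger--move modifications used in the proof of Lemma \ref{lem6}, carried out away from the ends. This bookkeeping is precisely the content of Lipshitz's Lemma 4.9'.

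To finish, a generic perturbation of $u$ supported away from the ends and away from the branch points of $p$ (where $u$ must stay holomorphic) turns $u$ into an immersion with transverse self--intersections; since $F$ is compact after compactifying the ends, and the ends themselves are embedded, there are only finitely many double points, which is property (4). I expect the third step to be the main obstacle: keeping simultaneous control of the relative class $A+[\Sigma]$, of the fixed holomorphic germ along $\partial\dot F$ and the ends, and of the positivity of the fibre intersections along the boundary (so that $p$ remains an honest branched covering). Granting Lipshitz's Lemmas 4.2' and 4.9', the translation is routine, with $[B_i]$, $[\varphi_H(B_i)]$ and $[\Sigma]$ playing the roles of his periodic and basepoint domains.
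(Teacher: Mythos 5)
Your overall skeleton --- transcribe Lipshitz's Lemmas 4.2' and 4.9', add one copy of $[\Sigma]$ to make all coefficients strictly positive (equivalently, to make $F$ connected), and finish with a generic perturbation to get finitely many transverse double points --- matches the paper's proof, which is itself a short deferral to those two lemmas. However, the order in which you build the two factors of $u=(p,f)$ is inverted relative to Lipshitz and to the paper, and this inversion is where your argument breaks. You propose to first manufacture an abstract $d$--fold branched cover $p\colon F\to\mathbb{R}\times[0,1]$ via the Riemann existence theorem, ``with $F$ connected and of any prescribed genus,'' and only afterwards to fill in $f\colon F\to\Sigma$. But $F$ is not a free parameter: the positive class $A+[\Sigma]$ determines a domain $\sum_j n_j(A+[\Sigma])D_j$ in $\Sigma$, and the surface $F$ (its Euler characteristic, its boundary components, the alternating pattern of arcs on $\varphi_H(\Lambda_i)$ and $\Lambda_i$, and the corners at the chord points) is obtained by gluing the regions $D_j$ according to these coefficients --- this is Lemma 4.2'. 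Only after $F$ and $u_\Sigma$ are in hand does one construct the branched covering $u_{\mathbb{D}}\colon F\to\mathbb{R}\times[0,1]$ (Lemma 4.9'), with the total branching $b$ forced by the Euler--measure identity $e(F)=d\,e(D)-b$ that is used again later in the index comparison. Choosing $p$ first with generic transposition branching gives you no control over whether the resulting $F$ admits a map to $\Sigma$ realizing the required domain with the required boundary behavior; matching Euler characteristics is not enough.

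A second, related problem is your proposal to ``correct the discrepancy'' in the relative class afterwards by the finger moves of Lemma \ref{lem6}. Those modifications only add \emph{non--negative} multiples of $[B_i]$, $[\varphi_H(B_i)]$ and $[\Sigma]$, and the $[\Sigma]$ direction is realized there by a disjoint union with closed fibers, which destroys property (2) (a closed component projects to a point, not a branched cover of $\mathbb{R}\times[0,1]$). So the correction step cannot repair an arbitrary discrepancy while preserving the conclusions of the lemma. The fix is simply to run the construction in the paper's order: domain first, then $F$ and $u_\Sigma$, then $u_{\mathbb{D}}$, then the perturbation.
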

	\begin{proof}
		Let $A \in H_2(M, \textbf{y}_+, \textbf{y}_-)$ be a positive relative homology  class. It associates a domain $\sum_jn_{z_j}(A)D_j$, where $n_{z_j}(A) \ge 0$ is the intersection number  in (\ref{eq20})  and $\{D_j\}$ is the complement of $\underline{\Lambda}  \cup \varphi_H(\underline{\Lambda})$.
		
The proof is the same as \cite{RL2}. So here we only briefly outline the key steps. The
 construction consists of the following steps:  First, we follow Lemma 4.2' of \cite{RL2} to  glue $\{D_j\}$ together according to the coefficients $\{n_{z_j}(A)\}$. This gives us a surface $F$ with corners   and a map  $u_{\Sigma}: F \to \Sigma$. Secondly, we modify  $u_{\Sigma}: F \to \Sigma$ such that
		\begin{enumerate}
			\item
			$F$ has conners $\{p_i^{\pm}\}_{i=1}^d$ and $u_{\Sigma}(p_i^{\pm}) =y_i^{\pm}$.
			\item
			The conners are acute.
			\item
			Each component of  $\partial F$  is mapped to some $\varphi_H(\Lambda_i)$ or $\Lambda_i$ so that each   $\varphi_H(\Lambda_i)$, $\Lambda_i$, $1\le i \le d$, is used exactly once.
		\end{enumerate}
		All the modifications in this step are local. Therefore, the argument in Lemma 4.2' of \cite{RL2} can be applied to our case directly.
		The surface $F$ maybe disconnected. We can glue it with $\Sigma$ so that the surface $F$ is connected (see the proof of Lemma 4.9' in \cite{RL2} (Page 27)). Moreover, the new map  $u_{\Sigma}: F \to \Sigma$  still satisfies the mentioned properties.

 Finally, we   use  the argument in Lemma 4.9' of \cite{RL2} to construct a branched  covering  $u_{\mathbb{D}} : F  \to \mathbb{R} \times [0,1]$ as follows:  1) Let $U = \partial F \times [0, \epsilon)$ be a neighborhood of $\partial F$. One
 can choose a holomorphic map $u^1_{\mathbb{D}}$ from $U $ to $\mathbb{R} \times [0,1]$ such that $u^1_{\mathbb{D}}$  is a $d$-fold covering, mapping the arcs in
 $u_{\Sigma}^{-1}(\varphi_H(\underline{\Lambda}))$  to  $\mathbb{R} \times \{0\}$ and the arcs in $u_{\Sigma}^{-1}(\underline{\Lambda})$   to $\mathbb{R} \times \{1\}$. 2) Collapsing  the circles $\partial F \times \{\frac{\epsilon}{2}\}$ yields  a surface consisting of a
 nonempty union of disks (one for each component of $\partial F$) and a closed surface $F'$. Take $u^2_{\mathbb{D}}: F' \to \mathbb{S}^2$
 to be a $d$-branched covering. Finally, splicing  $u^1_{\mathbb{D}}$ and  $u^2_{\mathbb{D}}$  together
gives the desired
 branched covering $u_{\mathbb{D}} : F \to \mathbb{R} \times [0,1].$
		
		Define $u:=u_{\mathbb{D}} \times u_{\Sigma} : F \to M$. We  perturb $u$ such that it is  embedded except for finitely many double points.  Then  $u: F \to M$ satisfies all the requirements.
	\end{proof}
	Let $u$ be the $d$-multisection provided by Lemma \ref{lem8}. Note that $u$ intersects each fiber $(s,t ) \times \Sigma$ with $d$ points (counting  multiplicities) because $\pi \circ u$ is a branched covering, where $\pi: M \to \mathbb{R} \times [0, 1]$ is the projection.   These $d$ intersection points give us an element $S(s,t)  \in  \operatorname{Sym}^d \Sigma$. Then we  define a section $s_u$ on $\mathbb{R}_s \times [0,1]_t \times \operatorname{Sym}^d \Sigma$ by sending $(s,t)$ to $S(s,t)$. The section $s_u$ is called the \textbf{tautological correspondence} of $u$.  Note that the intersections of  $s_u$ with  the diagonal arise in  the following two ways:
	\begin{enumerate}
		\item
		Branch points of $\pi \circ u$.
		\item
		Double points of $u$.
	\end{enumerate}

 The index formula of  $s_u$ is
 \begin{equation*}
   \operatorname{ind} s_u = 2c_1(s_u^*T \operatorname{Sym}^d \Sigma, \tau_d) + \mu_{\tau_d}(s_u),
 \end{equation*}
  where $\tau_d$ is a trivialization of  $s_u^*T \operatorname{Sym}^d \Sigma$  that will be explained later. Let $z \in \Sigma \setminus (\varphi_H(\underline{\Lambda}) \cup\underline{\Lambda})$.  Define
  \begin{equation}\label{eq55}
    \mathfrak{n}_z(\mathcal{S}):= \#\left( \mathcal{S} \cap  \{z\} \times  \operatorname{Sym}^{d-1} \Sigma\right)
  \end{equation}
  for  $ \mathcal{S}  \in H_2(\mathbb{M}, \mathbf{y}_+, \mathbf{y}_-)$, where  $ \{z\} \times \operatorname{Sym}^{d-1} \Sigma : =\{[z_1,...,z_d] \in \operatorname{Sym}^d\Sigma \vert z_i =z \mbox{ for some } i\}$. The main goal of this section is to prove the following proposition.

	\begin{prop}\label{lem5}
		Let $u $ and $s_u$ be the above.  Then we have
\begin{equation*}
I(u) = \operatorname{ind} s_u, J_0(u)=\Delta \cdot s_u,   \mbox{ and }  n_z(u) = \mathfrak{n}_z(s_u),
\end{equation*}
where $z\in \Sigma \setminus (\varphi_H(\underline{\Lambda}) \cup  \underline{\Lambda})$.
	\end{prop}
 The strategy of the proof of Proposition \ref{lem5} is simply to compute the $c_1(s_u^*T \operatorname{Sym}^d \Sigma, \tau_d)$, $ \mu_{\tau_d}(s_u)$	 in terms of  $c_1(u^*T\Sigma, \tau), \mu_{\tau}(u) $ and $\delta(u)$.  We will choose a suitable trivialization  such that  $\mu_{\tau_d}(s_u)= \mu_{\tau}(u) $.  By definition,   $c_1(s_u^*T \operatorname{Sym}^d \Sigma, \tau_d)$ is a counting of the zeros
 of a generic section.  Roughly speaking, we will choose such a section to be the symmetric product of a section in $u^*T\Sigma$. Then, we get a relation between  $c_1(s_u^*T \operatorname{Sym}^d \Sigma, \tau_d)$
 and  $c_1(u^*T\Sigma, \tau)$ (Lemma  \ref{lem16}).
	
 We first clarify the definition of Maslov index  $\mu_{\tau_d}(s_u)$  and the trivialization $\tau_d$ as
 follows: Define a real subbundle $\mathcal{L}_d$ of $s_u^*T \operatorname{Sym}^d\Sigma$ by
 \begin{equation*}
\mathcal{L}_d \vert_{\partial s_u} = s_u^* \left(T(\mathbb{R} \times \{0\} \times \operatorname{Sym}^d \varphi_H(\underline{\Lambda}))  \cap T\mathbb{M}\right)\cup  \left(s_u^*T(\mathbb{R} \times \{1\} \times \operatorname{Sym}^d\underline{\Lambda}) \cap T \mathbb{M} \right).
 \end{equation*}
  We extend $\mathcal{L}_d$ along  $\{\pm \infty \} \times [0,1]$    by rotating in counterclockwise direction from $s_u^*T \operatorname{Sym}^d \varphi_H(\underline{\Lambda})$  to $s_u^*T \operatorname{Sym}^d  \underline{\Lambda}$ by the minimum account.   Recall that $\mathcal{L}$ is a real bundle over $\partial F$ such that  $\mathcal{L}\vert_{\partial \dot{F}} =u^*T\varphi_H(\underline{\Lambda}), u^*T\underline{\Lambda}$  and it is given by rotating in counterclockwise direction from $u^*T\varphi_H(\underline{\Lambda})$ to  $u^*T\underline{\Lambda}$  by the minimum account along $\partial F \setminus \partial \dot{F}$. Therefore, the real subbundle  $\mathcal{L}_d =\operatorname{Sym}^d  \mathcal{L}$.

  Fix a trivialization   $\tau$  of $u^* T \Sigma$  as before. Let $\mathbf{x}=[x_1, ...,x_d] \in  \operatorname{Sym}^d \varphi_H (\underline{\Lambda})$ or $\operatorname{Sym}^d \underline{\Lambda}$. Since $\{\Lambda_i\}_{i=1}^d$ are pairwise disjoint,  we identify $T_{\mathbf{x}} \operatorname{Sym}^d \Sigma$ with $T_{x_1} \Sigma \oplus,...,\oplus  T_{x_d}  \Sigma$.   
	Therefore,
	$\tau$ induces a trivialization $\tau_d$  of  $ T \operatorname{Sym}^d \Sigma$  along  $\operatorname{Sym}^d  \varphi_H (\underline{\Lambda})$  and  $\operatorname{Sym}^d \underline{\Lambda}$. Since the  Reeb chords $\{\textbf{y}=  \cup_i  [0,1] \times y_i \vert y_i \in \varphi_H(\Lambda_i) \cap \Lambda_{\sigma(i)}\}$ are pairwise disjoint,  $\tau$ also induces trivializations  of $T \operatorname{Sym}^d  \Sigma \vert_{ \{\pm \infty\} \times \textbf{y}_{\pm} }$ (still dented by $\tau_d$) similarly. The Maslov index term in the index formula is
\begin{equation*}
  \mu_{\tau_d}(s_u):= \mu(s_u^*T \operatorname{Sym}^d \Sigma, \mathcal{L}_d, \tau_d).
\end{equation*}
 The following lemma is obtained directly from the definition and  the direct sum property of the Maslov index.

	\begin{lemma}  \label{lem15}
		We have $\mu(u^*T\Sigma,  \mathcal{L}, \tau) =\mu(s_u^*T \operatorname{Sym}^d \Sigma, \mathcal{L}_d, \tau_d)$. In other words, $\mu_{\tau}(u) =\mu_{\tau_d}(s_u)$.
	\end{lemma}
	
	The next step is to compare the  relative Chern number of $u$ and $s_u$.  We take a  generic smooth section of $\psi \in \Gamma(u^*T\Sigma)$ such that $\psi =\tau$ along  $\partial {F}$.    We  choose  $\psi$ such it  satisfies the following condition: Let $q$ be a  branch point of  $\pi\circ u: F \to \mathbb{R} \times [0,1]$.  We   identify a neighbourhood  of $u(q)$ with $D_w \times D_z$,  where $z$ is the holomorphic coordinate of the fiber and $w$ is the coordinate  pull back from the base.  
	Similarly, if $q$ is a double point, then we can take the coordinates $(w, z) $  around $u(q)$ as the above. We take $\psi=\partial_z$ in terms of these coordinates.  In particular, $\psi$  has  no  zeros near the branch points and the double points.

	Away from the diagonal, the section $\psi$ induces a section $\psi_d $ over $(\Lambda^{\max}T \operatorname{Sym}^d \Sigma )^{\otimes 2} \vert_{s_u}$ in the following  way:
	$$\psi_d([x_1,..., x_d]) := (\psi(x_1) \wedge \cdots \wedge \psi(x_d)) \otimes  (\psi(x_1) \wedge \cdots \wedge \psi(x_d) ).$$
 Note that the right hand side of the above formula is independent of the order  of  $(x_1, ..., x_d)$.

	Now we extend $\psi_d$ over the whole $s_u$ by the following way: Let $q $  be a branch point  of $\pi \circ u: F \to \mathbb{R} \times [0,1]$. Assume that the degree of $q$ is $d$ firstly.   Reintroduce the coordinates $D_w \times D_z$ in the above. Then $\operatorname{Image} u \cap (\pi^{-1}(w)) = [z_1(w), ..., z_d(w) ] \in \operatorname{Sym}^d \mathbb{C}_z$  near $u(q)$.
	When $w \ne 0$, then  $\{z_1(w), ...,z_d(w)\}$ are  distinct points in $\mathbb{C}_z$.   For fixed $w\ne 0$,  we use  $z_i$  to denote the coordinate near $z_i(w)$.    Since $[z_1(w), ..., z_d(w) ] \in  \operatorname{Sym}^d  \mathbb{C}$ is away from diagonal, we  identify its neighbourhood with $\mathbb{C}_{z_1} \times \dots \mathbb{C}_{z_d}$.    Then $\partial_z \vert_{z_i(w)} =\partial_{z_i}$.
	Let $$\sigma_1(z_1 \dots z_d) =\sum_i z_d, \dots, \sigma_d(z_1 \dots z_d) =\Pi_i z_i$$  be the  elementary symmetric functions.  These functions are coordinates of $ \operatorname{Sym}^d  \mathbb{C}$. We have
	\begin{equation*}
		\begin{split}
			\psi \vert_{z_i(w)} = \partial_z \vert_{z_i(w)}  = \partial_{z_i} =  \sum_j \frac{\partial \sigma_j}{\partial z_i} \partial_{\sigma_j} .
		\end{split}
	\end{equation*}
	It is well known that the Jacobian  of  the elementary symmetric functions is $\Delta = \Pi_{i<j} (z_i -z_j)$.  Therefore, we have
	\begin{equation} \label{eq10}
		\begin{split}
			\psi_d \vert_{[z_1(w),...,z_d(w)]} = (\partial_z \vert_{z_1(w)} \wedge \dots \wedge \partial_z \vert_{z_d(w)} )^{\otimes 2} =( \partial_{z_1} \wedge  \dots \wedge \partial_{z_d})^{\otimes 2} =  \Delta^2(w)(\partial_{\sigma_1} \wedge  \dots \wedge \partial_{\sigma_d} )^{\otimes 2} ,
		\end{split}
	\end{equation}
where  $ \Delta(w) = \Pi_{i<j} (z_i(w) -z_j(w)) $ for $w\ne 0$.
	We extend the function $\Delta^2(w)$ generically    over the whole  $D_w$, still denoted by $\Delta^2(w)$.   We set $\psi_d$ over  $D_w \times  \operatorname{Sym}^d D_z $ to be the most right hand side of (\ref{eq10}).

	For the general case, the construction  is  similar.  Let $q_1, ...,q_N$ be branch points of $\pi \circ u: F \to \mathbb{R} \times [0,1]$ such that $\pi \circ u(q_k) =p$. Let $d_k$ denote  the degree of $q_k$. Then $d=\sum_{k=1}^Nd_k$.    Let $(w, z_k) $ be the local coordinates around $u(q_k)$ as before.  A neighbourhood  of $[\pi^{-1}(p) \cap \operatorname{Image} u] \in  \mathbb{R} \times [0,1] \times  \operatorname{Sym}^d \Sigma$ can be identified with  $D_w \times  \operatorname{Sym}^{d_1} D_{z_1} \times, ..., \times  \operatorname{Sym}^{d_N} D_{z_N}$.   By the discussion above, the vector field $\partial_{z_k}$ gives  raise vectors $ \{ \sum_r\frac{\partial{\sigma_r^k}}{\partial z^k_i} \partial_{\sigma^k_r} \}_{i=1}^{d_k}$   over $\{ \delta \le |w| \le 1\} \times  \operatorname{Sym}^{d_k} D_{z_k}$. Then
	$$\psi_d = (\Pi_k \Delta^2_k(w)) \Pi_k (\partial_{\sigma_1^k} \wedge \dots \partial_{\sigma_{d_k}^k} )^{\otimes 2}$$   over $\{ \delta  \le |w| \le 1\} \times  \operatorname{Sym}^{d_1} D_{z_1} \times \dots  \times \operatorname{Sym}^{d_N} D_{z_N}$.
	We   extend $\Delta^2_k(w)$ generically  over $D_w$ and set   $\psi_d$ to be  $  (\Pi_k \Delta^2_k(w)) \Pi_k (\partial_{\sigma_1^k} \wedge \dots \partial_{\sigma_{d_k}^k} )^{\otimes 2}$ over the whole   $D_w  \times  \operatorname{Sym}^{d_1} D_{z_1} \times \dots  \times \operatorname{Sym}^{d_N} D_{z_N}$. If $u(q)$ is a double point, then we extend $\psi_d$ over a neighbourhood of $s_u(\pi(u(q)))$ in the above way.

	\begin{lemma} \label{lem16}
We have
 $$2c_1( s_u^*T \operatorname{Sym}^d\Sigma, \tau_d) =\# \psi_d^{-1}(0) =2c_1(u^* T\Sigma, \tau) +  b + 2\delta(u), $$ where $b$  is the sum over all the branch points of the order of multiplicity minus
		one  and $\delta(u)$ is the sign count  of the double points.
	\end{lemma}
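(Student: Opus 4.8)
The plan is to read both relative Chern numbers as relative Euler numbers and to match zeros of the corresponding sections through the local structure of $s_u$. For a complex vector bundle $V$ of rank $d$ over the (compactified) strip with strip--like ends, equipped with a nonvanishing trivialization $\tau$ over the boundary and the ends, one has $2c_1(V,\tau)=$ relative Euler number of $(\Lambda^{\max}V)^{\otimes 2}$, i.e. the signed count of zeros of any smooth section that agrees with $\tau^{\otimes 2}$ over the boundary and the ends. So the first step is to verify that $\psi_d$ is admissible for this count. Over any point $[x_1,\dots,x_d]$ of $Sym^d\varphi_H(\Lambda)$, of $Sym^d\Lambda$, or of the chords $\{\pm\infty\}\times y_\pm$, the $d$ points $x_i$ lie on the pairwise disjoint circles $\Lambda_i$ (respectively are the pairwise distinct chord endpoints), hence $[x_1,\dots,x_d]\notin\Delta$ and $\psi_d=\bigl(\psi(x_1)\wedge\cdots\wedge\psi(x_d)\bigr)^{\otimes 2}$ with each $\psi(x_i)=\tau\neq 0$; therefore $\psi_d$ is nonvanishing over the ends (and, in the same way, over the Lagrangian boundary), and there it agrees with $\tau_d^{\otimes 2}$. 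This gives the first assertion and the equality $2c_1(s_u^*TSym^d\Sigma,\tau_d)=\#\psi_d^{-1}(0)$, reducing the lemma to the computation of $\#\psi_d^{-1}(0)$.

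Next I would recall, from the discussion preceding Proposition \ref{lem5}, that $s_u$ meets the diagonal $\Delta$ only at images of branch points of $\pi\circ u$ and at double points of $u$, and I would fix $\psi$ so that (after a small perturbation rel boundary) its zeros in $F$ are simple, disjoint from the branch points and the double points (where $\psi=\partial_z\neq 0$ by construction), and project to pairwise distinct points of the base. Then $\psi_d^{-1}(0)$ splits into three disjoint families. (a) Over a base point with $d$ distinct preimages $x_1,\dots,x_d$ that is the image of a zero $x_0$ of $\psi$, the bundle $s_u^*TSym^d\Sigma$ splits as $\bigoplus_i T_{x_i}\Sigma$ and $\psi_d=\bigl(\psi(x_1)\wedge\cdots\wedge\psi(x_d)\bigr)^{\otimes 2}$; since $\pi\circ u$ is a local biholomorphism near $x_0$, a simple zero of $\psi$ of sign $\epsilon$ yields a zero of $\psi_d$ of index $2\epsilon$. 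Summing over all zeros of $\psi$ and using $\#\psi^{-1}(0)=c_1(u^*T\Sigma,\tau)$ gives a total contribution $2c_1(u^*T\Sigma,\tau)$.

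For (b), the branch points: if $q$ is a branch point of $\pi\circ u$ of local degree $d_k$, then in the coordinates $D_w\times Sym^{d_k}D_z$ introduced before, $s_u(w)=[z_1(w),\dots,z_{d_k}(w)]$ with all $z_i(w)\to 0$ as $w\to 0$, and by (\ref{eq10}) $\psi_d=\Delta_k^2\,(\partial_{\sigma_1^k}\wedge\cdots\wedge\partial_{\sigma_{d_k}^k})^{\otimes 2}$, where $\Delta_k=\prod_{i<j}(z_i-z_j)$ is the Jacobian of the elementary symmetric functions and the frame $(\partial_{\sigma_1^k}\wedge\cdots)^{\otimes 2}$ is nonvanishing on all of $Sym^{d_k}D_z$. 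Using the holomorphic normal form $z\mapsto z^{d_k}=w$ (property (3) of Lemma \ref{lem8} makes $u$, hence $s_u$, holomorphic near $q$), one has $z_i(w)=\zeta^i w^{1/d_k}$, so $\Delta_k^2$ restricted to $s_u$ is a unit times $w^{d_k-1}$; being a symmetric polynomial it is holomorphic in the $\sigma^k_r$, hence in $w$, and vanishes to order exactly $d_k-1$, contributing $+(d_k-1)$ regardless of the generic extension of $\Delta_k^2$. Summing over all branch points gives $b$. For (c), at a transverse double point $q$ of $u$ with sign $\epsilon(q)$ two sheets $z_1,z_2$ cross, and the same computation in $Sym^2$ shows $\psi_d$ carries the factor $(z_1-z_2)^2$, which vanishes to second order with index $2\epsilon(q)$; summing gives $2\delta(u)$. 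Since (a),(b),(c) exhaust $\psi_d^{-1}(0)$, we conclude $2c_1(s_u^*TSym^d\Sigma,\tau_d)=\#\psi_d^{-1}(0)=2c_1(u^*T\Sigma,\tau)+b+2\delta(u)$.

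The step I expect to be the main obstacle is the branch--point analysis in (b): one must invoke the holomorphic local normal form for $\pi\circ u$, check that the Vandermonde--squared factor pulls back along $s_u$ to $w^{d_k-1}$ times a nonvanishing function independently of how $\Delta_k^2$ is extended over the ambient symmetric product, and track signs so that every such contribution is $+(d_k-1)$ and hence additive. The sheet--wise count (a) and the double--point count (c) only need the routine genericity of $\psi$ recorded above.
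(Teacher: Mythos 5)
Your proposal is correct and follows essentially the same route as the paper: split $\#\psi_d^{-1}(0)$ into the off-diagonal contribution (each simple zero of $\psi$ doubling to give $2c_1(u^*T\Sigma,\tau)$), the branch-point contribution (the Vandermonde-squared factor pulling back to $c_0w^{d_k-1}+o(|w|^{d_k-1})$, hence winding number $d_k-1$), and the double-point contribution ($(f-g)^2$ giving $\pm 2$ per double point). The only cosmetic difference is that the paper keeps the general holomorphic local form $f(x)=c_1x+O(x^2)$ at a branch point rather than the exact normal form $z_i(w)=\zeta^i w^{1/d_k}$, but the winding-number conclusion is identical.
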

	\begin{proof}
By construction, the section $\psi_d =\tau_d^{\otimes 2}$ along the boundary $\partial s_u$ and the ends. By definition,   we have $2c_1( s_u^*T \operatorname{Sym}^d\Sigma, \tau_d) =\# \psi_d^{-1}(0) . $

		The contribution of  $ \# \psi_d^{-1}(0)$ comes from two parts $\psi^{-1}_{d, 1}(0) $ and $\psi^{-1}_{d, 2}(0) $, where $\psi_{1, d}$ is the  restriction of $\psi_d$ away from diagonal and $\psi_{d ,2}$ is the restriction of $\psi_d$ near the diagonal.
		
	We first consider the case that away from diagonal.    By definition,  for any $p \in \mathbb{R} \times [0,1]$ such that $\psi_{1, d}(s_u(p))=0$,  then we have $\# \psi_{1, d}^{-1}(0)\vert_p = 2\sum_i \#\psi^{-1}(0) \vert_{q_i}$,  where $\pi^{-1}(p) \cap \operatorname{Image} u =\{q_1,...,q_d\}$.  Therefore,  we obtain $\#\psi_{1, d}^{-1}(0)=2\# \psi^{-1}(0) $ away from the double points and branch points. Since $\psi$ has no zeros  near the branch points and double points, we have
		$$\#\psi_{1, d}^{-1}(0)=2\#\psi^{-1}(0)=2c_1(u^*T\Sigma, \tau).$$
		
		Let $q$ be a branch point of  $\pi \circ u:  F \to \mathbb{R} \times [0,1]$. To simplify the notation, we assume that the degree of $q$ is $d$. Reintroduce   the   coordinates $D_w \times D_z$ around $u(q)$.
		Under coordinates  $D_w \times D_{z}$ around  $u(q)$,  we write $u$ as $(x^{d}, f(x))$, where $x$ is the holomorphic  coordinate on $F$. 
		Since $u$ is holomorphic near the branch points, we know that $f$ is a holomorphic function. 
		$u$ is embedded near $u(q)$ implies that  $f(x) =c_1 x + O(x^2)$ and $c_1 \ne 0$.   The contribution of  $\psi_{2, d}^{-1}(0)$ near $u(q)$ is the same as the winding number of $\Delta^2 \vert_{|w|=\epsilon}$. Let $x_1(w),..., x_{d}(w)  $ be the  distinct branches  of $w^{\frac{1}{d}}$.   Then  we have
		\begin{equation*}
			\Delta^2(w) =c_0w^{d-1} +o(|w|^{d-1})
		\end{equation*}
		with $c_0 \ne 0$.  Hence, $\operatorname{wind} \Delta^2 \vert_{|w|=\epsilon} = d-1$. For the general case, the argument is similar.
		
		If $u(q)$ is a double point, then locally $\operatorname{Image} u$ is a graph of two functions $f, g$.  Then $\Delta^2(w)=(f(w)-g(w) )^2$ under the local coordinates. We may assume $f(w)=aw$ and $g(w) =b w$ if the double points is positive. Otherwise, we assume that  $f(w)=a\bar{w}$ and $g(w) =b \bar{w}$.  Hence, each  positive (negative) double point contribute $2(-2)$ to the $\# \psi_{2, d}^{-1}(0)$.

		In sum, $\# \psi_{2, d}^{-1}(0) = b + 2\delta(u)$ and this finishes  the proof of the lemma.
	\end{proof}
	
	\begin{proof}[ Proof of Proposition \ref{lem5}]
		By the Riemann-Hurwitz formula (see Corollary 3.2 of \cite{RL1}),  we have $e(F) =de(D) -b$, where $e(F), e(D)$ are the Euler measures. We know that $e(F) =\chi(F) -\frac{d}{2} $ and $e(D) =\chi(D) - \frac{1}{2} =\frac{1}{2}$  (see Page 974 of \cite{RL1}).  By    Lemmas \ref{lem15}, \ref{lem16}, we have
		\begin{equation*}
			\begin{split}
				 \operatorname{ind} s_u =&2c_1( s_u^*T \operatorname{Sym}^d\Sigma, \tau_d) + \mu_{\tau_d}(s_u) \\
				&=  b+ 2c_1(u^*T\Sigma, \tau) + \mu_{\tau}(u) + 2\delta(u) \\
				&=-\chi(F) + d + 2c_1(u^*T\Sigma, \tau) + \mu_{\tau}(u) + 2\delta(u)\\
				&= \operatorname{ind} u+ 2 \delta(u) =I(u).
			\end{split}
		\end{equation*}

By the argument in Lemma 4.9' of \cite{RL2}, we can arrange that all the branch points  of   $u$ are order 2.  Then we have $\#(\Delta \cap s_u) = b +2 \delta(u)$.  On the other hand, $\chi(F) =d-b$ by the Riemann-Hurwitz formula.  Therefore, by Lemma \ref{lem28}, we have
\begin{equation*}
		 \Delta \cdot s_u = -\chi(F) +d +2\delta(u) =J_0(u).
		\end{equation*}

Finally, we compare the intersection numbers  $n_z(u)$ and $\mathfrak{n}_z(s_u).$ Note that for any
 two $z_1$, $z_2$ in the same component of $\Sigma \setminus(\varphi_H(\underline{\Lambda}) \cup \underline{\Lambda})$, we have $n_{z_1}(u)=n_{z_2}(u)$ and  $\mathfrak{n}_{z_1}(s_u)=\mathfrak{n}_{z_2}(s_u). $  Therefore, we can choose $z$ such that the intersection points are
 away from the branch points and doubles. As a result, the intersection points of $s_u$ and $\mathbb{R} \times [0,1] \times \{z\}\times  \operatorname{Sym}^{d-1}\Sigma$ are away from the diagonal. Then one can check
$ n_z(u)$ = $\mathfrak{n}_z(s_u)$  directly from the construction of $s_u$.
	\end{proof}

	\subsection{Proof of Theorem \ref{thm1}}
	In this subsection, we use the tautological correspondence to prove Theorem \ref{thm1}.
	\begin{lemma} \label{lem19}
		The tautological correspondence induces a $\mathbb{Z}$-module isomorphism
		\begin{equation*}
			\Psi_* : H_2(M, \mathbf{y}_+, \mathbf{y}_{-}) \to H_2( \operatorname{Sym}^d\Sigma, \mathbf{y}_+, \mathbf{y}_{-}).
		\end{equation*}	
 Furthermore, for an HF curve  $u \in \mathcal{M}^J(\mathbf{y}_+, \mathbf{y}_-)$, we have $\Psi_*([u])=[s_u]$, where $s_u$ is the tautological correspondence of $u$.
	\end{lemma}
	\begin{proof}
		Fix a positive relative homology class $A\in H_2(M, \textbf{y}_+, \textbf{y}_-)$. 	Let $u_0$ be a representative provided by  Lemma \ref{lem8} with $A_0=[u_0]$.  Then, we define
\begin{equation*}
  \Psi_*(A_0) = [s_{u_0}] \in H_2( \operatorname{Sym}^d\Sigma, \mathbf{y}_+, \mathbf{y}_-)
\end{equation*}
 to be the class of the tautological correspondence.

		
		Recall that  $H_2(M, \textbf{y}_+, \textbf{y}_-)$ is an affine space over $\mathbb{Z}<[\varphi_H(B_i)], [B_i]>_{i=1}^{k+1}$. On the other hand,  $H_2( \operatorname{Sym}^d\Sigma, \textbf{y}_+, \textbf{y}_-)$ is an affine space over $H_2( \operatorname{Sym}^d\Sigma,  \operatorname{Sym}^d\varphi_H(\underline{\Lambda}) )$ and $H_2( \operatorname{Sym}^d\Sigma,  \operatorname{Sym}^d \underline{\Lambda})$.  We extend $\Psi_*$ linearly   by $$\Psi_*(A_0+ \sum_{i=1}^{k+1} c_i [B_i] +  \sum_{i=1}^{k+1} c'_i [\varphi_H(B_i)]) = \Psi_*(A_0) + \sum_{i=1}^{k+1} c_i \Psi_*([B_i]) + \sum_{i=1}^{k+1} c_i' \Psi_*[\varphi_H(B_i)]). $$  The definition of $\Psi_*([B_i]) $  and $ \Psi_* ([\varphi_H(B_i)]) $ are  given in the next paragraph.
		
		For each $B_i$, we define the class $\Psi_*([B_i]) \in H_2( \operatorname{Sym}^d\Sigma,  \operatorname{Sym}^d \underline{\Lambda} ) $ as follows: For $1\le i \le k$, define a surface $F=\sqcup_{j=1}^d \mathbb{D}_j$.  Let  $u_{\Sigma}^i: F \to \Sigma$  be a map such that $u_{\Sigma}^i \vert_{\mathbb{D}_j}$ is a constant at $\Lambda_j$ when $j\ne i$ and $u_{\Sigma}^i   \vert_{\mathbb{D}_i} $ is a biholomorphism to $B_i$. Take $u^i_{\mathbb{D}} : F \to \mathbb{D}$ to be the $d$-fold trivial covering.  In the case that $i=k+1$, let $F=B_{k+1} \cup \cup_{j=k+1}^d\mathbb{D}_j$.  Take  $u^{k+1}_{\Sigma}: F\to \Sigma$  such that  $u^{k+1}_{\Sigma} \vert_{B_{k+1}}$  is a biholomorphism     and  $u^{k+1}_{\Sigma} \vert_{\mathbb{D}_{j}}$ is a constant at $\Lambda_j$ for $k+1 \le j\le d$. Take  $u^{k+1}_{\mathbb{D}}: F\to \mathbb{D} $ to be a   $d$-fold branched covering such that $u^{k+1}_{\mathbb{D}} \vert_{B_{k+1}} $ is a $k$-fold branched covering and $u^{k+1}_{\mathbb{D}} \vert_{\cup_{j=k+1}^d\mathbb{D}_j}$ is the trivial covering. Define $u^i := u_{\mathbb{D}}^i \times  u_{\Sigma}^i:(F, \partial F) \to \mathbb{D} \times \Sigma$.  Then,  we obtain a map $s_{u^i}: (\mathbb{D}, \partial \mathbb{D}) \to ( \operatorname{Sym}^d\Sigma,  \operatorname{Sym}^d \underline{\Lambda})$  via the tautological correspondence.  Define $\Psi_*([B_i])$ to be the homology class of $s_{u^i}$.   The construction is similar for $[\varphi_H(B_i)]$.
		
		By  Lemma 4.10 in \cite{CHMSS},  we have
		\begin{equation*}
			\begin{split}
				&  H_2( \operatorname{Sym}^d\Sigma,  \operatorname{Sym}^d\varphi_H(\underline{\Lambda}) ) \cong \mathbb{Z}<[\varphi_H(B_i)] >_{i=1}^{k+1}\\
				&   H_2( \operatorname{Sym}^d\Sigma,  \operatorname{Sym}^d\underline{\Lambda}) \cong \mathbb{Z}< [B_i]>_{i=1}^{k+1}.
			\end{split}
		\end{equation*}
Therefore, $\Psi_*$ is an isomorphism.

Let $u$ be an HF curve.  We now prove the statement that $\Psi_*([u]) =[s_u]$.  Suppose
 that $[u] =A_0 + \sum_{i=1}^{k+1} \left(c_i [B_i]+ c_i' [\varphi_H(B_i)] \right)$.  By definition, we have
 \begin{equation*}
   \Psi_*([u]) =\Psi_*(A_0) + \sum_{i=1}^{k+1} \left(c_i \Psi_*([B_i])+ c_i' \Psi_*( [\varphi_H(B_i)] )\right).
 \end{equation*}
  Write the homology class of $s_u$ as  $\Psi_*(A_0) + \sum_{i=1}^{k+1} \left(b_i \Psi_*([B_i])+ b_i' \Psi_*( [\varphi_H(B_i)] )\right)$ for some $b_i, b_i' \in \mathbb{Z}. $  Fix $z \in \mathring{B}_i $ such that it does not lie inside $\underline{\Lambda}$, $\varphi_H(\underline{\Lambda})$ and $\varphi_H(B_j)$ for $1\le j \le k$.  This is feasible because $\varphi_H$ is nondegenerate. Note that we must have $z \in \varphi_H(\mathring{B}_{k+1}).$  From the construction of $\Psi_*([B_i])$, we have
 \begin{equation*}
\mathfrak{n}_z(  \Psi_*([B_j])) = n_z(B_j) = \delta_{ij},  \mathfrak{n}_z(  \Psi_*([\varphi_H(B_j)])) = n_z(\varphi_H(B_j)) = \delta_{jk+1}
 \end{equation*}
 for $1\le j\le k+1$. Then
 \begin{equation*}
   c_i+ c'_{k+1} =n_z(u)-n_z(A_0) \mbox{ and } b_i +b_{k+1}' =\mathfrak{n}_z(s_u) - \mathfrak{n}_z(\Psi_*(A_0)).
 \end{equation*}
 By Proposition \ref{lem5},  we have  $n_z(u) = \mathfrak{n}_z(s_u) $ and $n_z(A_0) = \mathfrak{n}_z(\Psi_*(A_0)).$ As a result,  $   c_i+ c_{k+1}' = b_i +b_{k+1}'$. By the similar argument, we have    $   c_i'+ c_{k+1} = b_i' +b_{k+1}$. These
 equations implies that
 \begin{equation}\label{eq58}
  \begin{split}
  [s_u]&=  \Psi_*(A_0) + \sum_{i=1}^{k+1} \left(b_i \Psi_*([B_i])+ b_i' \Psi_*( [\varphi_H(B_i)] )\right)\\
  &=   \Psi_*(A_0) + \sum_{i=1}^{k+1} \left(c_i \Psi_*([B_i])+ c_i' \Psi_*( [\varphi_H(B_i)] )\right) \\
  &+  (c'_{k+1}-b'_{k+1}) \sum_{i=1}^{k+1} \Psi_*([B_i])+ (c_{k+1} -b_{k+1})) \sum_{i=1}^{k+1}   \Psi_*( [\varphi_H(B_i)] )\\
  &=\Psi_*([u]) +  (c'_{k+1}-b'_{k+1} + c_{k+1} -b_{k+1}) \Psi_*([\Sigma]) \\
  &=\Psi_*([u]).
    \end{split}
 \end{equation}

	\end{proof}
	
\begin{remark} \label{remark8}
The isomorphism in Lemma \ref{lem19} and  the proof of Proposition \ref{lem5} tell us that the Lagrangian submanifold $\operatorname{Sym}^d\underline{\Lambda}$ is monotone in the sense of
\begin{equation}  \label{eq65}
  \omega_V(\mathcal{S}) + \eta <PD(\Delta), \mathcal{S}> = \frac{\lambda}{2}\mu(\mathcal{S}),
\end{equation}
for $\mathcal{S} \in H_2(\operatorname{Sym}^d \Sigma, \operatorname{Sym}^d\underline{\Lambda}).$ Let $u^i: (F_i, \partial F_i) \to \mathbb{D} \times \Sigma$  and $s_{u^i}: (\mathbb{D}, \partial \mathbb{D}) \to (\operatorname{Sym}^d \Sigma, \operatorname{Sym}^d\underline{\Lambda})$ be the maps defined in Lemma \ref{lem19}.  The homology classes   of $\{s_{u^i}\}_{i=1}^{k+1}$ generate $ H_2(\operatorname{Sym}^d \Sigma, \operatorname{Sym}^d\underline{\Lambda}).$ It suffices to verifies (\ref{eq65}) for $s_{u^i}$.  Since $s_{u^i}$ are   tautological correspondence
 of the disks $B_i$ for $1\le i \le k$, the computations are the same as  Lemma 4.17 and Lemma 4.18 of \cite{CHMSS}.

 We  only consider $i=k+1$. By the same argument, the conclusions in Proposition \ref{lem5}, Lemma \ref{lem15} and Lemma \ref{lem16} still hold for $s_{u^{k+1}}$.  Since $\tau$ is induced by  non-vanishing vector  fields on $\underline{\Lambda}$,  then
  \begin{equation*}
    \begin{split}
  & \mu_{\tau_d}(s_{u^{k+1}}) = \mu_{\tau}(u^{k+1}) = 0, \\
&c_1({u^{k+1}}^*T\Sigma, \tau) =c_1((u^{k+1}_{\Sigma})^*T\Sigma, \tau) =\chi(B_{k+1}).
\end{split}
 \end{equation*}
Also, note that $u^{k+1} \vert_{B_{k+1}}$ intersects $u^{k+1} \vert_{\cup_{j=g}^d \mathbb{D}_j}$ at $g$ points, and $u^{k+1}$ has no other self-intersection. Hence, $\delta(u^{k+1}) =g$. By Riemann-Hurwitz formula,   $$b=d\chi(\mathbb{D}) - \chi(F_{k+1}) = k-\chi(B_{k+1}).$$ In sum, we have
 \begin{equation*}
    \begin{split}
   \mu(s_{u^{k+1}}) =&2c_1(s_{u^{k+1}}^*T\operatorname{Sym}^d\Sigma, \tau_d)  +  \mu_{\tau_d}(s_{u^{k+1}}) \\
=&  2c_1((u^{k+1})^*T\Sigma, \tau) +b +2\delta(u)\\
    =&2\chi(B_{k+1}) + k-\chi(B_{k+1}) + 2g\\
    =&2, \\
  \mbox{ and }\#(s_{u^{k+1}} \cap \Delta) =&b+ 2\delta(u^{k+1}) \\
    =&  k-\chi(B_{k+1}) +2g\\
     =&k +2g+k-2 +2g \\
     =&2(2g+k-1) =2(d+g-1).
    \end{split}
 \end{equation*}
One can show that $\int (s_{u^i})^*\omega_V = \int_{B_i} \omega$ by the argument in Lemma \ref{lem9}. Therefore, (\ref{eq65}) is true for all $s_{u^i}$.
\end{remark}

The next lemma generalize Proposition \ref{lem5} to any  relative homology class.
	\begin{lemma} \label{lem9}
		Let $ A\in H_2(M, \mathbf{y}_+, \mathbf{y}_-)$ be a relative homology class. Then
		\begin{equation*}
			\begin{split}
				&I(A) =  \operatorname{ind}  \Psi_*(A), J_0(A)= \Psi_*(A) \cdot \Delta,\\
&n_z(A) = \mathfrak{n}_z(\Psi_*(A)) \mbox{\ and \ } \int_{A} \omega  =\int_{\Psi_*(A)} \omega_V.
			\end{split}
		\end{equation*}	
	\end{lemma}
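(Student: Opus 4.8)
The plan is to anchor both identities at the class $A+[\Sigma]$, where Proposition~\ref{lem5} and the construction of $\Psi_*$ apply on the nose, and then transport them down to $A$ using that all four quantities $I$, $\mathrm{ind}\circ\Psi_*$, $\int\omega$ and $\int\omega_V$ are affine-linear in the $H_2$-direction and that $\Psi_*$ is equivariant for the affine actions. For the index: apply Lemma~\ref{lem8} to the positive class $A$ to obtain a branched-multisection representative $u\colon F\to M$ of $A+[\Sigma]$, embedded up to finitely many double points. By the construction in the proof of Lemma~\ref{lem19} its tautological correspondence satisfies $[s_u]=\Psi_*(A+[\Sigma])$, and Proposition~\ref{lem5} gives
\[
I(A+[\Sigma])=I(u)=\mathrm{ind}\,s_u=\mathrm{ind}\,\Psi_*(A+[\Sigma]).
\]
It then remains to check that $A\mapsto I(A)$ and $A\mapsto\mathrm{ind}\,\Psi_*(A)$ change by the same amount under the affine $H_2$-action, for then agreement at $A+[\Sigma]$ forces agreement at $A$. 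On the left this is Lemma~\ref{lem6}: adding any $[B_i]$ or $[\varphi_H(B_i)]$ raises $I$ by $2$ (using $\sum_{i=1}^{k+1}[B_i]=[\Sigma]=\sum_{i=1}^{k+1}[\varphi_H(B_i)]$ to cover $i=k+1$ as well). On the right, $\Psi_*$ sends $[B_i]$ and $[\varphi_H(B_i)]$ to the standard small disk classes of Lemma~4.10 of~\cite{CHMSS}; these have Maslov index $2$ because $Sym^d\Lambda$ and $Sym^d\varphi_H(\Lambda)$ are monotone with minimal Maslov number $2$ (Remark~4.22 of~\cite{CHMSS}), and gluing such a disk to a Floer strip raises its Fredholm index by exactly the Maslov index of the disk. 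Hence the linearizations match and $I(A)=\mathrm{ind}\,\Psi_*(A)$.

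For the area I would argue in parallel. First, $\int_F u^*\omega=\int_{s_u}\omega_{\mathbb{M}}$: away from the branch points of $\pi\circ u$ the base splits locally into $d$ sheets $u_1,\dots,u_d\colon U\to\Sigma$ with $s_u|_U=[u_1,\dots,u_d]$, so $s_u^*\omega_{\mathbb{M}}=\sum_i u_i^*\omega$ there and the two integrals agree sheet by sheet, the branch locus having measure zero. Second, $\int_{s_u}\omega_{\mathbb{M}}=\int_{s_u}\omega_V$: since $Sym^d\Lambda$ and $Sym^d\varphi_H(\Lambda)$ are disjoint from the diagonal $\Delta$, one may shrink the neighbourhood $V$ so that $\omega_V=\omega_{\mathbb{M}}$ near both Lagrangians; then $\omega_V-\omega_{\mathbb{M}}=d\beta$ with $\beta$ vanishing near $\partial s_u$, so $\int_{s_u}(\omega_V-\omega_{\mathbb{M}})=\int_{\partial s_u}\beta=0$. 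This yields $\int_{A+[\Sigma]}\omega=\int_{\Psi_*(A+[\Sigma])}\omega_V$. Finally $\int_{[\Sigma]}\omega=\int_\Sigma\omega=1$, while by the monotonicity assumption~\ref{assumption4} and the area bookkeeping in Lemma~\ref{lem19}, $\int_{\Psi_*([\Sigma])}\omega_V=\sum_{i=1}^{k+1}\int_{\Psi_*([B_i])}\omega_V=\sum_{i=1}^{k+1}\int_{B_i}\omega=1$; since $\int\omega$ and $\int\omega_V$ are linear on $H_2$ and $\Psi_*$ is affine-equivariant, the $[\Sigma]$-shift cancels and $\int_A\omega=\int_{\Psi_*(A)}\omega_V$.

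The main obstacle I expect is the bookkeeping in the index step: one must make sure the increment of $\mathrm{ind}\circ\Psi_*$ under the affine action really is $2$ for every generator $[B_i]$, $[\varphi_H(B_i)]$, including the planar piece $B_{k+1}$, whose image under $\Psi_*$ is assembled from a $k$-fold branched cover in the proof of Lemma~\ref{lem19}. This is precisely where one leans on Lemma~4.10 and the monotonicity of the symmetric-product Lagrangians from~\cite{CHMSS}; with Proposition~\ref{lem5} and Lemma~\ref{lem19} already in hand, everything else is routine.
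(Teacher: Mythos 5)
Your proposal is correct and follows essentially the same route as the paper: anchor both identities at $A+[\Sigma]$ via the Lemma~\ref{lem8} representative and Proposition~\ref{lem5}, then descend using Lemma~\ref{lem6} on the ECH side and $\mu(\Psi_*([B_i]))=2$ on the symmetric-product side, together with additivity of the areas. The only cosmetic differences are that the paper cites Lemma~4.15 of \cite{CHMSS} directly for $\mu(\Psi_*([B_i]))=2$ where you invoke monotonicity and the minimal Maslov number, and it computes $\int_F u^*\omega=\int s_u^*\omega_{\mathbb{M}}$ via the $d!$-fold covering lift to $(\Sigma)^{\times d}$ rather than your sheet-by-sheet argument.
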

	\begin{proof}
		By Remark \ref{remark8},  $\mu(\Psi_*([B_i])) =2$ and  $\mu(\Psi_*([\varphi_H(B_i)])) =2$ for $1\le i \le k+1$. Therefore, we obtain
		\begin{equation*}
\begin{split}
			\operatorname{ind}\Psi_*(A) =& \operatorname{ind}\left( \Psi_*(A_0) + \sum_{i=1}^{k+1} \left(c_i \Psi_*([B_i])+ c_i' \Psi_*( [\varphi_H(B_i)] )\right) \right)\\
=& \operatorname{ind} \Psi_*(A_0) + \sum_i^{k+1}c_i \mu(\Psi_*([B_i])) + \sum_i^{k+1}c'_i \mu(\Psi_*([\varphi_H(B_i)]))\\
   =&  \operatorname{ind}  \Psi_*(A) +2\sum_{i=1}^{k+1}(c_i+ c_i'),
\end{split}
		\end{equation*}
where $A_0$ is the class defined in Lemma \ref{lem19}.
		Combine the above equation with  Proposition \ref{lem5} and  Lemma \ref{lem6}; then we get the first identity.

By Proposition \ref{lem5},  to prove $J_0(A) =\Psi_*(A) \cdot \Delta$, it suffices  to check that $J_0(A')-J_0(A)=\Delta\cdot(\Psi_*(A') -\Psi_*(A) )$.  By Lemma 4.16 in \cite{CHMSS} and Remark \ref{remark8},   we have
\begin{equation*}
\Delta \cdot  \Psi_*([B_i]) = 0  \mbox{  for $1\le i\le k$, and } \Delta \cdot  \Psi_*([B_{k+1}]) = 2(2g+k-1) =2(d+g-1).
\end{equation*}
  Combine these equations  with Lemma \ref{lem28}; then we obtain the result.

  By Proposition \ref{lem5},   $n_z(A_0) = \mathfrak{n}_z(\Psi_*(A_0))$.   It follows from the construction,one can check that
  $n_z([B_i]) = \mathfrak{n}_z(\Psi_*([B_i]))$ and   $n_z([\varphi_H(B_i)]) = \mathfrak{n}_z(\Psi_*([\varphi_H(B_i)]))$. Therefore,
  \begin{equation*}
  \begin{split}
 n_z(A) &=n_z(A_0)  +\sum_{i=1}^{k+1} c_in_z([B_i])+ \sum_{i=1}^{k+1} c'_in_z([\varphi_H(B_i)]) \\
 & =   \mathfrak{n}_z(\Psi_*(A_0))  +\sum_{i=1}^{k+1} c_i   \mathfrak{n}_z(\Psi_*([B_i]))+ \sum_{i=1}^{k+1} c'_i   \mathfrak{n}_z(\Psi_*([\varphi_H(B_i)])).
 \end{split}
\end{equation*}
		
	 To prove the last statement,  we first need to know how to recover $u$ from $s_u$.  Write $s_u=(s,t, u')$ and  $\dot{\mathbb{D}} =\mathbb{R} \times [0,1]$.  Lift  $u' $ to $\tilde{u}': \tilde{F} \to (\Sigma)^{\times d}$ satisfying the following  diagram
 		\begin{equation}  \label{eq18}
			\xymatrix{
				\tilde{F} \ar[d]^{ \tilde{u}_{\mathbb{D}}} \ar[r]^{\tilde{u}' } &  (\Sigma)^{\times d} \ar[d]^{ p }\\
				\dot{\mathbb{D}} \ar[r]^{u'} &  \operatorname{Sym}^d \Sigma}
		\end{equation}
 where $ \tilde{F}=\{(x, y) \in \dot{\mathbb{D}} \times  (\Sigma)^{\times d}: u'(x)=p(y)\}$, and $\tilde{u}_{\mathbb{D}}:   \tilde{F} \to \dot{\mathbb{D}}$ is a covering map
 with degree $d!$.  Let  $S_{d-1}$ be the permutation group fixing the first factor of $(\Sigma)^{\times d}$.  Note that   $S_{d-1}$ also acts on $\tilde{F}$. Let $\pi_1$ be the projection of $(\Sigma)^{\times d}$ to its first factor. Then $\pi_{\Sigma} \circ u = \pi_1 \circ  u'/S_{d-1}$.
		
		By the diagram  (\ref{eq18}) and the fact that $\tilde{u}_{\mathbb{D}}$  is a degree  $d!$-covering, we have
		\begin{equation*}
			\begin{split}
				\int_{\tilde{F}} \tilde{u}^* \omega^{\times d } = \int_{\tilde{F}} \tilde{u}^* p^*\omega_{\mathbb{M}} = \int_{\tilde{F}} \tilde{u}^*_{\mathbb{D}} u'^* \omega_{\mathbb{M}} =d!\int_{\dot{\mathbb{D}}}  u'^* \omega_{\mathbb{M}}=d!\int_{\dot{\mathbb{D}}}  s_u^* \omega_{\mathbb{M}}.
			\end{split}
		\end{equation*}
		On the other hand, by the fact that $\tilde{F} \to \dot{F}=\tilde{F}/S_{d-1}$ is a $(d-1)!$-covering, we have
		\begin{equation*}
			\begin{split}
				(d-1)! \int_{\dot{F}} u^* \omega=\int_{\tilde{F}} \tilde{u}^* \pi_1^* \omega = \frac{1}{d}\int_{\tilde{F}} \tilde{u}^* \omega^{\times d }.
			\end{split}
		\end{equation*}
		The above two equations imply that $\int_{\dot{F}} u^* \omega = \int_{\dot{\mathbb{D}}} s_u^* \omega_{\mathbb{M}}$.
		
		By the definition of $\Psi_*([B_i])$ in Lemma \ref{lem19} and the similar argument, we have  $\int_{B_i}  \omega= \int_{\Psi_*([B_i])} \omega_{\mathbb{M}}$.  As the energy is additivity, we get the second statement of the lemma.
	\end{proof}

	\begin{proof} [Proof of Theorem \ref{thm1} Part A]

		First of all, let us identify the moduli space of HF-curves and the moduli space of holomorphic sections in $\mathbb{R} \times [0,1] \times \operatorname{Sym}^d\Sigma$.

		Since $J \in \mathcal{J}_M$ is $\mathbb{R}$-invariant, it is determined by a path of complex structures $\{j_t\}_{t \in [0,1]}$ on $\Sigma$. Therefore, each $J\in \mathcal{J}_M$ gives raise to a   path of quasi-nearly-symmetric almost complex structure $\mathbb{J}_t := \operatorname{Sym}^d j_t $.   We extend it to be an almost complex structure $\mathbb{J}$ on $\mathbb{R} \times [0,1] \times  \operatorname{Sym}^d \Sigma$ by setting $\mathbb{J}(\partial_s)=\partial_t$.  If  $u$ is a $J$-holomorphic HF curve, then $s_u$ is a $\mathbb{J}$-holomorphic section.

		Let $J$ be a generic almost complex structure such that the curves in  $\mathcal{M}^J(\textbf{y}_+, \textbf{y}_-, A)$ and $\mathcal{M}^{\mathbb{J}}(\textbf{y}_+, \textbf{y}_-, \Psi_*(A))$ are Fredholm regular.
		Fix a positive  class $A \in H_2(M, \textbf{y}_+, \textbf{y}_-)$ with $I(A)=1$.  The  tautological correspondence  provides a map
		$$\Psi: \mathcal{M}^J(\textbf{y}_+, \textbf{y}_-, A) \to \mathcal{M}^{\mathbb{J}}(\textbf{y}_+, \textbf{y}_-, \Psi_*(A))$$
 by sending $u$ to $s_u.$
 Obviously, $\Psi$ is injective.  By Lemma \ref{lem9}, the curves in  $ \mathcal{M}^{\mathbb{J}}(\textbf{y}_+, \textbf{y}_-, \Psi_*(A))$ have Fredholm index 1.
		
		Conversely,  given a holomorphic section  $s=(s, t, u') \in  \mathcal{M}^{\mathbb{J}}(\textbf{y}_+, \textbf{y}_-, \Psi_*(A))$ with $\operatorname{ind} s=1$,   by the same argument in Proposition 13.2 of \cite{RL1},  we lift  $u'$ to a map $\tilde{u}': \tilde{F} \to (\Sigma)^{\times d}$ satisfying the diagram (\ref{eq18}).
Moreover, the maps in the diagram are holomorphic.  Recall the $S_{d-1}$-action in Lemma \ref{lem9}.  Then $\tilde{u}' $ is $S_{d-1}$-equivariant.  The map  $\pi_1 \circ \tilde{u}'  $ descends to a map $u_{\Sigma}: \dot{F}  \to \Sigma$, where $\dot{F}:=\tilde{F}/ S_{d-1}$. Let $u_{\mathbb{D}} : = \tilde{u}_{\mathbb{D}} /S_{d-1} : \dot{F} \to \mathbb{D}$. Then we define the inverse $u=\Psi^{-1}(s)$ by  $u:= u_{\mathbb{D}} \times u_{\Sigma} : \dot{F} \to M$.
		
		As in  Section 13 of \cite{RL1}, we have $\Psi \circ \Psi^{-1}= \operatorname{Id}$. Again by Lemma \ref{lem9},   we have $1= \operatorname{ind} s  =I(u)$ and $[s]=\Psi_*([u]) =\Psi_*(A)$.  Hence,  the  map $\Psi$ is 1-1 onto.

	 The rest of task is to identify the cappings. 	We extend the isomorphism in Lemma  \ref{lem19} to
\begin{equation*}
			\Psi_* : H_2(M, \mathbf{x}_{H}, \mathbf{y}) \to H_2( \operatorname{Sym}^d \Sigma, \mathbf{x}_H, \mathbf{y})
\end{equation*}	
as follows.  Fix $\mathbf{y}_0 \in \varphi_H(\underline{\Lambda}) \cap \underline{\Lambda}.$
Let $\mathcal{S}_0 \in  H_2( \operatorname{Sym}^d\Sigma, \mathbf{x}_H, \textbf{y}_{0})$ be a class represented by a capping $s: \mathbb{R}_s \times [0,1]_t \to ( \operatorname{Sym}^d\Sigma,  \operatorname{Sym}^d \varphi_H(\underline{\Lambda}) \cup  \operatorname{Sym}^d \underline{\Lambda})$.  Because the diagonal $\Delta$ is codimension 2, we assume that $s$ intersects $\Delta$ at  finitely  many points.  Also, assume that $s$ is holomorphic near the intersection points.  Using the construction in diagram (\ref{eq18}), we can construct a  surface  $u=\Psi^{-1}(s): \dot{F}  \to M$. Then $u$ represents a relative homology class $A_0 \in H_2(M, \mathbf{x}_{H}, \mathbf{y}_0) $. Then define  $\Psi_*(A\#A_0) = \Psi_*(A)\# \mathcal{S}_0$, where $A  \in H_2(M, \textbf{y}_0, \textbf{y}) $.

We claim that  $\mathcal{A}_H^{\eta}(\mathbf{y}, A) =\mathfrak{A}_H^{\eta}(\mathbf{y}, \Psi_*(A)).$  By Lemma \ref{lem9}, it suffices to show that   $\int_{A_0} \omega = \int_{\mathcal{S}_0} \omega_V$ and $J_0(A_0)=\Delta \cdot \mathcal{S}_0$. Fix another  class $\mathcal{S}_0' \in  H_2( \operatorname{Sym}^d\Sigma, \textbf{y}_{0},  \textbf{x}_H)$. Let $A_0' \in H_2(M, \mathbf{y}_0, \mathbf{x}_{H}) $ be the corresponding class  via the  tautological correspondence. Then $A_0' \#A_0 \in H_2(M, \mathbf{y}_0, \mathbf{y}_0)$ and  $\mathcal{S}_0' \#\mathcal{S}_0 \in H_2( \operatorname{Sym}^d\Sigma, \mathbf{y}_0, \mathbf{y}_0)$.    By Lemma \ref{lem9}, we have $\Delta \cdot \mathcal{S}_0 + \Delta \cdot \mathcal{S}_0 '  = J_0(A_0) + J_0(A_0')$.   Without loss of generality, assume that $A_0'\#A_0$ is represented by surface $u_0$ satisfying the conclusion of   Lemma \ref{lem8}.  Recall the proof of Proposition  \ref{lem5},  $\Delta \cdot \mathcal{S}_0' \#\mathcal{S}_0$ comes from the double points and branch points of $u_0$. Since $\mathbf{x}_H$ is a union of pairwise disjoint path, there is no contribution to  $\Delta \cdot \mathcal{S}_0' \#\mathcal{S}_0$ near $\mathbf{x}_H$.   For the same reasons, a generic section $\psi$ of $ T \operatorname{Sym}^d \Sigma \vert_{\mathcal{S}_0' \#\mathcal{S}_0}$ has no zeros near $\mathbf{x}_H$.  Hence, we must have $\Delta \cdot \mathcal{S}_0  = J_0(A_0) $ and $ \Delta \cdot \mathcal{S}_0'  = J_0(A_0')$.
By the
 same argument in Lemma \ref{lem9}, we also have $\int_{A_0} \omega = \int_{\mathcal{S}_0} \omega_V. $

With the above preparations, define
\begin{equation*}
\begin{split}
\Phi^L_H:  CF^L_*(\Sigma, \underline{\Lambda},  &  \varphi_H,   \textbf{x}_H) \to CF_*^L ( \operatorname{Sym}^d\varphi_{H}(\underline{\Lambda}),  \operatorname{Sym}^d\underline{\Lambda},  \textbf{x}_H)\\
&(\textbf{y}, [A]) \to (\textbf{y}, [\Psi_*(A)]).
\end{split}
\end{equation*}
  By the above discussion, we know that $\Phi^L_H$ is a chain map and it induces an isomorphism at the  homological level. From the construction
 of $\Phi_H^L$, it is easy to check that first diagram in Theorem \ref{thm1} is true. To prove the reminder statements of Theorem \ref{thm1}, we still need some preparations. We prove them in the next part.



	\end{proof}

	To move on, recall the continuous morphism  $I_{H, G}$  is defined by counting HF curves in
 $\mathbb{R} \times [0,1] \times \Sigma$ with Lagrangian boundary $\mathcal{L} = F(\mathbb{R} \times [0,1] \times  \underline{\Lambda})$, where $F$ is defined in  (\ref{eq44}).   The symplectic form is $\Omega_E =\omega_E + ds \wedge dt$ (\ref{eq56}). By a direct computation, we
 have
 \begin{equation*}
   \omega_E= \omega + \operatorname{a} \wedge ds + (1+ \partial_s H^s) ds \wedge dt,
 \end{equation*}
	where $\operatorname{a} $ is a $(s,t)$-dependent 1-form on $\Sigma$ defined by
\begin{equation*}
\operatorname{a} (v):=\omega((\varphi_{H^s})_* \circ (\varphi^t_{H^s})^{-1}_*( \partial_s (\varphi^t_{H^s} \circ \varphi_{H^s}^{-1})), v)
\end{equation*}
	for any $v\in T\Sigma$.  Note that $\operatorname{a} =0$ when $|s| \ge R_0$. Let $C(s)$ be  a function such
 that $C(s) = $C for $ |s|  \le R_0$ and $C(s) = 0$ for $|s| \ge  2R_0$, where $C  \ge  0$ is a large constant.   Replace the symplectic form by $\Omega'_E =\omega_E + (1+C(s)) ds \wedge dt.$ Note that  $\Omega'_E$ still is symplectic and $\mathcal{L}$ is $\Omega'_E$-Lagrangian.   The advantage of this choice is that the almost complex structure $
J=\begin{bmatrix}
j_{\mathbb{D}}& 0 \\
0 &  j_{s,t}
\end{bmatrix}
$ is $\Omega_E'$-tame, where $j_{\mathbb{D}}$ is the standard complex structure on $\mathbb{R} \times [0,1]$ mapping $\partial_s$ to $\partial_t$, and $j_{s,t}$ is a family of complex structures on $\Sigma$.  Also, we require that $j_{s,t}$ is
$s$-independent when $|s| \ge  2R_0$.

  We perform the same construction for $ \operatorname{Sym}^d \Sigma$. Let $\mathbb{H}$, $\mathbb{G}$  be Hamiltonian functions
 which are compatible with  $ \operatorname{Sym}^d H$, $ \operatorname{Sym}^d G$, i.e., they satisfy the following properties:
 \begin{equation}\label{eq70}
    \begin{split}
    & \mathbb{H}= \operatorname{Sym}^dH, \mathbb{G} = \operatorname{Sym}^d G \mbox{  outside the a neighbourhood of the diagonal }\Delta;\\
     &\mathbb{H}, \mathbb{G}  \mbox{ are ($t$-dependent) constant near diagonal}.
    \end{split}
 \end{equation}
The construction of such   $\mathbb{H}, \mathbb{G}$ can be found in Remark 6.8 of \cite{CHMSS}.
	Let $\mathbb{H}^s =\chi(s)\mathbb{H} + (1-\chi(s))\mathbb{G}$. Define a diffeomorphism
 \begin{equation*}
    \begin{split}
     F' : &\mathbb{R} \times [0,1] \times  \operatorname{Sym}^d \Sigma \to \mathbb{R} \times [0,1] \times  \operatorname{Sym}^d \Sigma\\
     &(s, t, \mathbf{x}) \to (s, t, \varphi_{\mathbb{H}^s} \circ  (\varphi_{\mathbb{H}^s}^t)^{-1} (\mathbf{x})).
    \end{split}
 \end{equation*}
 Let
  \begin{equation*}
    \begin{split}
    & \mathbb{L}:= F'(\mathbb{R} \times \{0, 1\} \times  \operatorname{Sym}^d \underline{\Lambda}),\\
     &\Omega := (F')^{-1}(\omega_V + d(\mathbb{H}^s dt) ) + (1 + C(s)) ds \wedge dt.
    \end{split}
 \end{equation*}
 Then $\Omega$ is symplectic and the almost complex structure $
\mathbb{J}=\begin{bmatrix}
j_{\mathbb{D}}& 0 \\
0 &   \operatorname{Sym}^d j_{s,t}
\end{bmatrix}
$ is $\Omega$-tame.  Since $\varphi_{\mathbb{H}^s}^t = \operatorname{Sym}^d \varphi_{H^s}^t$ away from the diagonal, $\mathbb{L} = \operatorname{Sym}^d \mathcal{L}$.

 \begin{proof} [Proof of Theorem \ref{thm1} Part B]
 With the above preparations, we prove the second diagram of Theorem \ref{thm1}. Let  $I_{H,G}$ and $\mathbb{I}_{H,G}$ be the continuous morphisms defined by
 $(\Omega'_E, \mathcal{L},J)$ and $(\Omega, \mathbb{L},\mathbb{J})$ respectively.  It suffices to show that
 \begin{equation*}
   <I_{H, G} (\mathbf{y}_+, [A_+]), (\mathbf{y}_-, [A_-])> =    <\mathbb{I}_{H, G} (\mathbf{y}_+, [\Psi_*(A_+)]), (\mathbf{y}_-, [\Psi_*(A_-)])>
 \end{equation*}
The argument is similar to the $\mathbb{R}$-invariant case, we establish a one-to-one correspondence between the $J$-holomorphic HF curves in
$ (\mathbb{R} \times [0,1] \times \Sigma, \mathcal{L})$ and the $\mathbb{J}$-holomorphic
 section in $(\mathbb{R} \times  [0,1] \times   \operatorname{Sym}^d \Sigma, \mathbb{L})$.

Let $u$ be a HF curve in $(\mathbb{R}\times [0,1]\times \Sigma, \mathcal{L})$ with $[u] = A$ and $I(A) = 0$. Note that the
 fibers $\{(s,t)\}\times \Sigma $ are still holomorphic with respect to $J$. Then $u$ intersects  $\{(s,t)\}\times \Sigma $ at $d$  points (counting multiplicities).   We regard these $d$ points as a point in $ \operatorname{Sym}^d\Sigma$,
 and then get a $\mathbb{J}$-holomorphic section $s_u$ in $\mathbb{R} \times  [0,1] \times   \operatorname{Sym}^d \Sigma$. Applying the same argument in Proposition \ref{lem5}, we have $ \operatorname{ind} s_u = 0$ and $J_0(u) = s_u \cdot \Delta$.

  Let $A_+ \in H_2(\Sigma,\mathbf{x}_H, \mathbf{y}_+)$ and $A_- \in H_2(\Sigma, \mathbf{x}_G, \mathbf{y}_-)$ such that $A_+\#[u] \#(-A_-) =
 A_{ref}$, where $A_{ref} = F(\mathbb{R} \times [0, 1] \times \mathbf{x} ).$ Now  we  show that the homology class
 of $s_u$ is determined by $\Psi_*(A_{\pm})$ and $\Psi_*(A_{ref})$, where $\Psi_*(A_{ref}) =[F'(\mathbb{R} \times [0, 1] \times \mathbf{x} )]$
 is the tautological correspondence of $A_{ref}$. Since $H_2( \operatorname{Sym}^d\Sigma,\mathbf{x}_H, \mathbf{x}_G)$  is an affine space of
$ H_2( \operatorname{Sym}^d\Sigma ,  \operatorname{Sym}^d\underline{\Lambda})$ and $ H_2( \operatorname{Sym}^d\Sigma ,  \operatorname{Sym}^d\varphi_{H^s}(\underline{\Lambda}))$, we have
\begin{equation} \label{eq57}
\Psi_*(A_+) \#[s_u] \#(-\Psi_*(A_-)) = \Psi_*(A_{ref}) + \sum_{i=1}^{k+1} c_i [\Psi_*(B_i)] + \sum_{j=1}^{k+1} c'_j [\Psi_*(\varphi_{H^s}(B_i))]
\end{equation}
for some $c_i, c_j'\in \mathbb{Z}$. Note that  the relative homology classes $[\varphi_{H^s}(B_i)] $ and $[\Psi_*(\varphi_{H^s}(B_i))] $
 are independent of $s$.

 Choose $H^s$ be a generic path such that $\mathring{B}_i \cap \varphi_{H^s} (\mathring{B}_{k+1}) \ne \emptyset$ for all $s$ and $1\le i\le k$.
  Let $z(s) : \mathbb{R} \to \mathring{B}_i $ be a path such that $z(s) \in
 \mathring{B}_i \cap \varphi_{H^s}(\mathring{B}_{k+1}) $ for all $s$ ($1\le i\le k+1$). As a result,   $z(s)$ does not lie inside $ \underline{\Lambda}$, $\varphi_{H^s}( \underline{\Lambda} )$ and $\varphi_{H^s}(\mathring{B}_j)$ for $1 \le j \le k$. Define
 intersection numbers
 \begin{equation*}
    \begin{split}
     &n_{z(s)} (u) : =\# (u\cap  \{(s,t, z(s)) \vert (s, t) \in \mathbb{R} \times [0,1]\})\\
      &\mathfrak{n}_{z(s)} (s_u) : =\# (s_u\cap  \{(s,t) \times \{ z(s)\} \times \operatorname{Sym}^{d-1}\Sigma  \vert (s, t) \in \mathbb{R} \times [0,1]\}).
    \end{split}
 \end{equation*}
 The above intersection numbers only depend on the relative homology classes of $u$ and $s_u$.  If we choose $z(s)$ such that $\{(s,t,z(s))|(s,t) \in \mathbb{R} \times  [0,1]\}$ is away from the double
 point of $u$, then one can check that $n_{z(s)}(u) = \mathfrak{n}_{z(s)}(s_u) $ by definition.  We also have
$ n_{z(s)}(A_{ref}) = \mathfrak{n}_{z(s)} (\Psi_*(A_{ref}))$. Applying $ \mathfrak{n}_{z(s)}$ to (\ref{eq57}), we have
 \begin{equation*}
 \mathfrak{n}_{z(s)} (\Psi_*(A_{+})) +  \mathfrak{n}_{z(s)} (s_u) - \mathfrak{n}_{z(s)} (\Psi_*(A_{-})) - \mathfrak{n}_{z(s)} (\Psi_*(A_{ref}))  =c_i + c_{k+1}'.
 \end{equation*}
 On the other hand, by Lemma \ref{lem9}, we get
 \begin{equation*}
    \begin{split}
   &\mathfrak{n}_{z(s)} (\Psi_*(A_{+})) +  \mathfrak{n}_{z(s)} (s_u) - \mathfrak{n}_{z(s)} (\Psi_*(A_{-})) - \mathfrak{n}_{z(s)} (\Psi_*(A_{ref}))  \\
   =&n_{z(s)}(A_+) + n_{z(s)}(u) -n_{z(s)}(A_-) -n_{z(s)}(A_{ref}) \\
   =& n_{z(s)}(A_+\#[u] \# (-A_-)) -n_{z(s)}(A_{ref})= 0.
    \end{split}
 \end{equation*}
  Hence, $c_i +c'_{k+1} = 0$ for $1\le i\le k+1$. By the same argument, we also obtain $c'_i +c_{k+1} = 0$. Therefore, by the same computations as in (\ref{eq58}), we have
 \begin{equation*}
   \Psi_*(A_+) \#[s_u] \#(-\Psi_*(A_-)) =\Psi_*(A_{ref}).
 \end{equation*}

 Let $\mathcal{M}^J(\mathbf{y}_+, \mathbf{y}_-, A)$ be the moduli space of HF curves in $(\mathbb{R} \times [0,1 ] \times \Sigma, \mathcal{L})$, and  $\mathcal{M}^{\mathbb{J}}(\mathbf{y}_+, \mathbf{y}_-, \mathcal{S})$ be the moduli space of holomorphic sections in  $(\mathbb{R} \times [0,1 ] \times\operatorname{Sym}^d \Sigma, \mathbb{L})$, where $\mathcal{S}$ denotes the class $   (-\Psi_*(A_+)) \#\Psi_*(A_{ref}) \#\Psi_*(A_-) $. Then $ \operatorname{ind}  \mathcal{S} = I(A) =0$. We get an injective map
 \begin{equation*}
   \Psi : \mathcal{M}^J(\mathbf{y}_+, \mathbf{y}_-, A) \to  \mathcal{M}^{\mathbb{J}}(\mathbf{y}_+, \mathbf{y}_-, \mathcal{S})
 \end{equation*}
 by sending $u$ to $s_u$.  The inverse of $\Psi$ is constructed by diagram (\ref{eq18}) as before. Then
 $\Psi$ is a bijective map.

 \end{proof}

  For $a \in HF(\operatorname{Sym}^d \underline{\Lambda})$, since the isomorphism $\Phi_H$ is compatible with the continuous  morphisms, $j^{\mathbf{x}}_H \circ \Phi^{-1}_H \circ (\mathbf{j}_H^{\mathbf{x}})^{-1}(a)$ is independent of the choice of $H$, i.e.,
 $j^{\mathbf{x}}_H \circ \Phi^{-1}_H \circ (\mathbf{j}_H^{\mathbf{x}})^{-1}(a)$ is a fixed class (independent of $H$ and $\mathbf{x}$) in $HF(\Sigma,  \underline{\Lambda})$.

\begin{definition} \label{definition4}
Let  $e_{\underline{\Lambda}}: = j^{\mathbf{x}}_H \circ \Phi^{-1}_H \circ (\mathbf{j}_H^{\mathbf{x}})^{-1}(\mathbf{1}_{\underline{\Lambda}})$. We call $e_{\underline{\Lambda}}$ the unit of   $ HF(\Sigma, \underline{\Lambda})$.
\end{definition}
  From the definition, we have  $\mathbf{j}^{\mathbf{x}}_H \circ \Phi_H \circ ({j}_H^{\mathbf{x}})^{-1}(e_{\underline{\Lambda}}) = \mathbf{1}_{\underline{\Lambda}}$. So far we have finished
 the proof of Theorem \ref{thm1}.
	\section{Closed-open  morphisms}
Instead of proving Theorem \ref{thm2},  the goal of this section and the upcoming sections is to prove the following substitute  theorem.
	\begin{thm} \label{thm5}
		Fix a  $\eta$-admissible link $\underline{\Lambda}$ and a base point $\mathbf{x} \in \operatorname{Sym}^d\underline{\Lambda}$. For a generic
 admissible almost complex structure   $J \in \mathcal{J}_{tame}(W, \Omega_{\varphi_H})$  (Definition \ref{definition1}),   there exists a  homomorphism
$$\widetilde{\mathcal{CO}}(\underline{\Lambda}, H)_J: \widetilde{PFH}(\Sigma, \varphi_H, \gamma_H^{\mathbf{x}})_J \to HF(\Sigma, \underline{\Lambda},  \varphi_H,    {\mathbf{x}})_J$$
satisfying  the following properties:
		\begin{itemize}
			\item
			(Partial invariance)
			Suppose that $\varphi_H, \varphi_G$ satisfy the following conditions: (see Definition \ref{definition2})
			\begin{enumerate} [label=\textbf{$\spadesuit$.\arabic*}]
				\item  \label{assumption1}
				Each  periodic orbit of $\varphi_{H}$ with degree less than or equal $d$ is either $d$-negative elliptic or hyperbolic.
				
				\item \label{assumption2}
				Each periodic orbit of  $\varphi_{G}$  with degree less than or equal $d$ is either $d$-positive elliptic or hyperbolic.	
			\end{enumerate}
Then for  generic admissible almost complex structures   $J_H \in \mathcal{J}_{tame}(W, \Omega_{\varphi_H})$ and $J_G \in \mathcal{J}_{tame}(W, \Omega_{\varphi_G})$, we have  the following commutative diagram:
			$$\begin{CD}
				\widetilde{PFH}_*(\Sigma,  \varphi_H, \gamma_H^{\mathbf{x}})_{J_H} @> \widetilde{\mathcal{CO}}(\underline{\Lambda}, H)_{J_H}>> HF_{*}(\Sigma, \underline{\Lambda}, \varphi_H,     {\mathbf{x}})_{J_H} \\
				@VV \mathfrak{I}_{H, G}V @VV I_{H, G}V\\
				\widetilde{PFH}_*(\Sigma,  \varphi_G, \gamma_G^{\mathbf{x}})_{J_G}  @> \widetilde{\mathcal{CO}}(\underline{\Lambda},  G)_{J_G}>> HF_{*}(\Sigma, \underline{\Lambda},  \varphi_G,     {\mathbf{x}})_{J_G}.
			\end{CD}$$

			\item
			(Non-vanishing)	If $\varphi_H$ satisfies the  condition (\ref{assumption1}),   then there is a non-zero class $\mathfrak{e}_H^{\mathbf{x}}  \in \widetilde{PFH}_*(\Sigma, \varphi_H,  \gamma_H^{\mathbf{x}})$ and    $e_{\underline{\Lambda}} \in HF_*(\Sigma, \underline{\Lambda})$ such that
			$$\widetilde{\mathcal{CO}}(\underline{\Lambda}, H)_J (\mathfrak{e}_H^{\mathbf{x}}) =(j^{\mathbf{x}}_H)^{-1}(e_{\underline{\Lambda}}),$$
		where   $j^{\mathbf{x}}_H$ is the canonical isomorphism  (\ref{eq37}) and $e_{\underline{\Lambda}} \ne 0  \in HF_*(\Sigma, \underline{\Lambda})$ is the unit of  HF (see Definition \ref{definition4}). In particular, $\widetilde{\mathcal{CO}}(\underline{\Lambda}, H)_J$ is non-vanishing.
		
		\end{itemize}
	\end{thm}
\begin{remark}
The key difference between   Theorem \ref{thm5} and Theorem \ref{thm2} is the
appearance of the technical assumptions \ref{assumption1} and \ref{assumption2}. However,  Theorem  \ref{thm2} is \textbf{not} a  better result  than Theorem \ref{thm5}.  In fact,  Theorem  \ref{thm2}  is merely a reformulation of  Theorem \ref{thm5}.  The reasons are as follows:

The closed-open morphisms in   Theorem  \ref{thm2} are  defined by
\begin{equation}\label{eq67}
  \mathcal{CO}(\underline{\Lambda}, H): =   I_{H_0, H}  \circ \widetilde{\mathcal{CO}}(\underline{\Lambda}, H_0)_{J_0} \circ \mathfrak{I}_{H, H_0},
\end{equation}
where $H_0$ is a certain fixed Hamiltonian function and $J_0$ is a fixed almost complex structure.  Using this definition, the first two   bullets on    $\mathcal{CO}$  in Theorem \ref{thm2} is  just a consequences   of the properties of  continuous morphisms ((\ref{eq66}) and Proposition of \ref{lem30}), provided that  $\widetilde{\mathcal{CO}}(\underline{\Lambda}, H_0)$ is non-vanishing.

To estimate the spectral invariants (\ref{eq68}), we need a holomorphic curve  in a  closed-open symplectic coboridsm $(W_{\varphi_H}, \Omega_{\varphi_H}, L_{\Lambda_H})$ associated with the data $(\underline{\Lambda}, H)$.  However, the definition (\ref{eq67}) is   not sufficient for this purpose  whereas $\widetilde{\mathcal{CO}}$ is, because  $\widetilde{\mathcal{CO}}(\underline{\Lambda}, H)$ is defined by counting holomorphic curves in $(W_{\varphi_H}, \Omega_{\varphi_H}, L_{\Lambda_H})$.  The key observation is that if  $H$ and $H_0$ satisfy  \ref{assumption1} and \ref{assumption2} respectively, then the partial invariance in Theorem \ref{thm2} implies that  ${\mathcal{CO}}(\underline{\Lambda}, H)=\widetilde{\mathcal{CO}}(\underline{\Lambda}, H)$.  Consequently,  we obtain a holomorphic curve in   $(W_{\varphi_H}, \Omega_{\varphi_H}, L_{\Lambda_H})$ and obtain  (\ref{eq68}) for $H$ satisfying \ref{assumption1}. The general case follows from the continuity of the spectral invariants.  Therefore, the underlying proof of Theorem \ref{thm2} in fact also requires  the assumptions \ref{assumption1} and \ref{assumption2}.

\end{remark}

\begin{remark}
The assumptions \ref{assumption1} and \ref{assumption2} come from the holomorphic curve definition of the PFH cobordism maps. Due to certain technical issues mentioned in Section 5.5 of \cite{H4}, the
cobordism maps on PFH cannot yet be defined using holomorphic curves.  At present, the  definition of the PFH cobordism maps relies heavily on Lee-Taubes’s isomorphism ``SWF=PFH” \cite{LT} and the Seiberg-Witten theory \cite{KM}.

However, in the proof of Theorem \ref{thm5}, we indeed require a holomorphic curve definition of the cobordism maps. Assumptions \ref{assumption1} and \ref{assumption2} are introduced to ensure
that the cobordism maps on PFH can be defined in terms of holomorphic curves (see
Theorem 2 of \cite{GHC}). If one could give a holomorphic curve definition for the PFH cobordism maps, then   we believe that   ${\mathcal{CO}}(\underline{\Lambda}, H)=\widetilde{\mathcal{CO}}(\underline{\Lambda}, H)$ for any $H$.
\end{remark}

 In this section, we construct the closed-open morphisms in Theorem \ref{thm5} and prove the partial invariance.
 The construction is very similar to the construction of the PFH cobordism maps. It
 is defined by counting $ I = 0$ holomorphic curves in a ``closed-open” symplectic manifold
 $W$. As pointed out by Hutchings (Section 5.5 of  \cite{H4}), the main difficulty of defining the
PFH cobordism maps is that the appearance of holomorphic curves with negative ECH
 index. These curves violate the compactness and tranvsersality of the moduli space.
 However, in the “closed-open” setting, the HF-ends ensure that the holomorphic curves
 cannot be multiply-covered. A key observation is that if a holomorphic curve in W with
 at least one end, then it has at least one positive end and one negative end. This is due
 of the fibration structure of W. By the ECH index inequality, the “closed-open” curves
 have non-negative ECH index. Moreover, as in Lemma \ref{lem6}, the bubbles contribute at
 least 2 to the ECH index; we can rule out them by the index reason. Therefore, we can
 define closed-open morphisms by using the classical techniques.

	
\subsection{Closed-open curves and their indexes}	 \label{section5}

In this subsection, we review the closed-open cobordism, PFH-HF curves, ECH index, etc., which are largely paraphrased from Chapter 5 of  \cite{VPK}.

\paragraph{Closed-open cobordism} 	First of all, we introduce the closed-open symplectic manifold. Define a surface  $B \subset \mathbb{R} \times (\mathbb{R}/(2 \mathbb{Z}))$  by $B:=  \mathbb{R}_s \times (\mathbb{R}_t/(2 \mathbb{Z})) -B^c$, where $B^c$ is $(-\infty, -2)_s \times [1,2]_t $ with the corners rounded.  The picture of $B$ is  shown in Figure \ref{figure1}.
	\begin{figure}[h]
		\begin{center}
			\includegraphics[width=10cm, height=5cm]{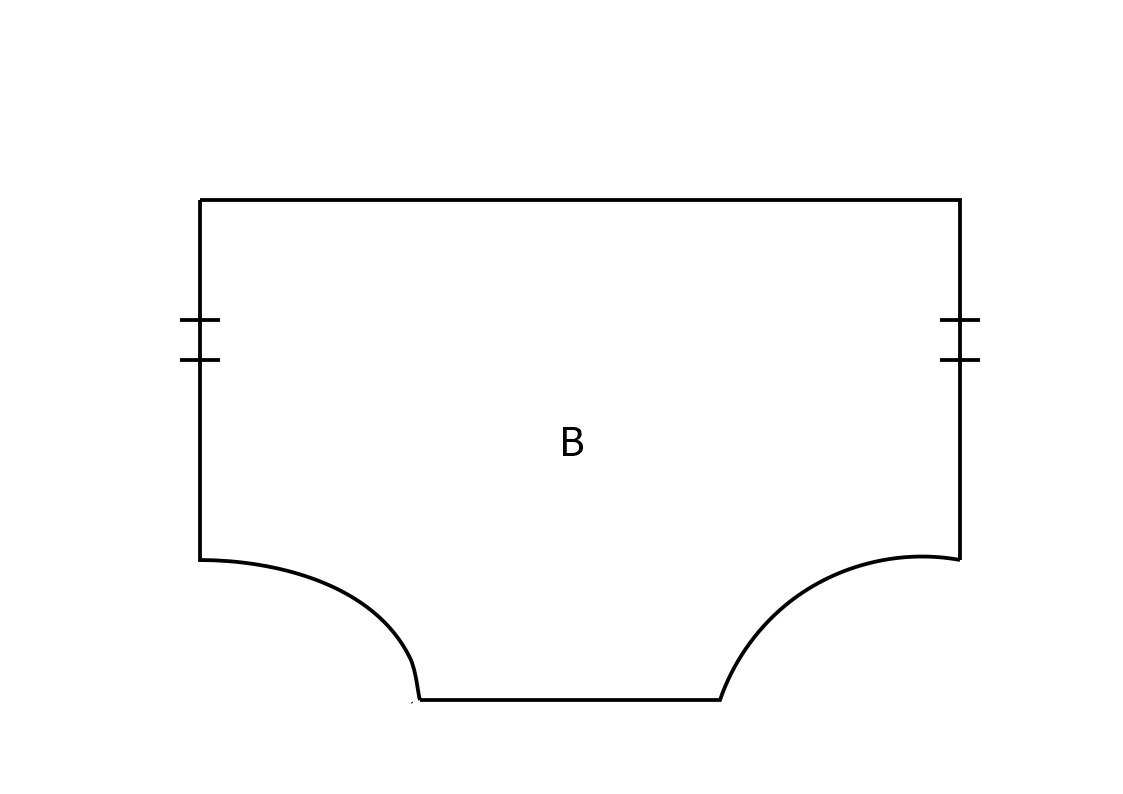}
		\end{center}
		\caption{}
		\label{figure1}
	\end{figure}

	Define the mapping torus $Y_{\varphi_H} :=[0, 2] \times \Sigma / (0, \varphi_H(x)) \sim (2, x)$.  Then  $\pi: \mathbb{R}_s \times Y_{\varphi_{H}} \to \mathbb{R} \times  (\mathbb{R}/(2 \mathbb{Z}))$ is a surface  bundle over the cylinder.
	Define $W_{\varphi_H}:=\pi^{-1}(B)$. Obviously, $\pi_W: W_{\varphi_H} \to B$ is a surface  bundle over the surface $B$.  The symplectic form $\Omega_{\varphi_H}$ on $W_{\varphi_H}$ is defined to be the restriction of $\omega_{\varphi_H} + ds\wedge dt$.
	
	
	We place a copy of $\underline{\Lambda}$ on the fiber $\pi_B^{-1}(-3,1)$ and take its parallel transport along $\partial B$ using the symplectic connection.  The parallel transport sweeps out a Lagrangian submanifold $L_{\Lambda_H}$ of $(W_{\varphi_H}, \Omega_{\varphi_H})$.  Note that $L_{\Lambda_H}$ consists of $d$ disjoint connected components.  Moreover, we have
	\begin{equation*}
		\begin{split}
			&L_{\Lambda_H} \vert_{s \le -3 \times\{0\}} = \mathbb{R}_{s \le -3} \times \{0\} \times \varphi_H(\underline{\Lambda})\\
			&L_{\Lambda_H} \vert_{s \le -3 \times \{1\}} = \mathbb{R}_{s \le -3} \times \{1\} \times \underline{\Lambda}.
		\end{split}
	\end{equation*}
	The triple $(W_{\varphi_H}, \Omega_{\varphi_H}, L_{\Lambda_H})$ is  called a \textbf{closed-open symplectic  cobordism}.

	\begin{remark}\label{remark1}
		By using the trivialization (\ref{eq11}), we can identify $W_{\varphi_H}$ as a  bundle over $\pi^{-1}(B) \subset \mathbb{R} \times S^1 \times \Sigma$, where $\pi: \mathbb{R} \times S^1 \times \Sigma\to \mathbb{R} \times S^1$ is the projection.  Therefore, for any two Hamiltonian functions $H,G$, we have a diffeomorphism from $W_{\varphi_H}$ to $W_{\varphi_G}$ preserving the fibration structure. When  the context is clear, we  suppress the subscript ``$\varphi_H$'' from the notation.
	\end{remark}
	
	\begin{remark} \label{remark5}
		We  define a  slight different trivialization of $W_{\varphi_H}$. The advantage   of the following trivialization is that we   also trivialize the Lagrangian submanifold $L_{\Lambda_H}$ simultaneously. We shift the coordinate by $-\frac{1}{2}$ and define
$$\mathbb{R} \times Y_{\varphi_H} :=\mathbb{R}  \times [-\frac{1}{2}, \frac{3}{2}] \times \Sigma/ (s, -\frac{1}{2}, \varphi_H(x)) \sim (s, \frac{3}{2}, x).$$
		Let $\chi: \mathbb{R} \to \mathbb{R}$ be a nondecreasing cutoff function such that $\chi=1$ when $t \ge 1$ and $\chi=0$ when $t \le 0$.  Define a flow $\varphi_t : = \varphi_H^{\chi(t)}$ and extend it to be $\operatorname{Id}$ for $t \le 0$ and be $\varphi_H$ for $t \ge 1$.  Define a trivialization by
		\begin{equation*}
			\begin{split}
				\Psi_H' :  &  \ \mathbb{R} \times [-\frac{1}{2},  \frac{3}{2}]/ ( -\frac{1}{2}  \sim \frac{3}{2}) \times \Sigma \to \mathbb{R} \times Y_{\varphi_H}\\
				& (s, t, x) \to (s, t, \varphi_H \circ \varphi_t^{-1}(x)).
			\end{split}
		\end{equation*}
		The restriction of   $\Psi_H'$ to  $B \times \Sigma$ gives a trivialization of $W_{\varphi_H}$. Moreover, we have  $(\Psi_H')^* \omega_{\varphi_H} = \omega + d (\dot{\chi} H) \wedge dt$ and $\Psi_H' (\partial B \times   \underline{\Lambda}) =L_{\Lambda_H}$.
	\end{remark}

	\paragraph{PFH-HF curves}
\begin{definition}\label{definition1}
Let $J_{tame}(W,\Omega)$ be the set of $\Omega$-tame almost complex structures $J$
 such that $J$ agrees with the admissible almost complex structures on the ends, and the projection $\pi_W$ is $(J,j_B)$  complex linear.
\end{definition}

	\begin{definition}
		Fix a Reeb chord $\mathbf{y}$ and an orbit set $\alpha$ with degree $d$. Let $(\dot{F}, j)$ be a Riemann surface (possibly disconnected) with   punctures.   Each irreducible component of $\dot{F}$ has at least one puncture. A \textbf{d-multisection} is a smooth map $u: (\dot{F}, \partial \dot{F}) \to (W, L_{\Lambda_H}) $ such that
		\begin{enumerate}
			\item
			$u(\partial {\dot{F}}) \subset L_{\Lambda_H }$. Let $\{L^i_{\Lambda_H}\}_{i=1}^d$ be the connected components of $L_{\Lambda_H}$. For each $1 \le i\le d$,   $u^{-1}(L_{\Lambda_H}^i)$ consists of exactly one component of $\partial \dot{F}$.
			\item
			$u$ is asymptotic to $\mathbf{y}$ as $s \to -\infty $.
			\item
			$u$ is asymptotic to $\alpha$ as $s \to +\infty $.
			
		\end{enumerate}
Given $J \in J_{tame}(W,\Omega)$, a $J$-holomorphic  $d$-multisection  is called a \textbf{PFH-HF} curve.
	\end{definition}
By the same reasons in Remark \ref{remark6}, here we exclude the possibilities that a PFH-HF curve contains an irreducible within fiber.  Consequently, an irreducible  PFH-HF curve has at least one positive end and one negative end.
	
	Fix an orbit set  $\alpha$ and a Reeb chord $\mathbf{y}$. Let  $$Z_{\alpha, \mathbf{y}} : = L_{\Lambda_H} \cup (\{ \infty \} \times \alpha) \cup ( \{- \infty \}  \times \textbf{y}) \subset W.$$
	We denote $H_2(W, \alpha, \mathbf{y})$ the equivalence classes of continuous  maps  $u: (\dot{F}, \partial \dot{F}) \to (W, Z_{\alpha, \textbf{y}}) $ satisfying 1), 2), 3) in  the above definition. Two  maps are equivalent if their difference is trivial in $H_2(W,Z_{\alpha, \textbf{y}}; \mathbb{Z})$.  Then   $H_2(W, \alpha, \textbf{y})$ is an affine space over  $H_2(W, L_{\Lambda_H}; \mathbb{Z})$.   By the trivialization in Remark \ref{remark5} and the following exact sequence
 for relative homology,
 \begin{equation*}
   0 \to H_2(W, \mathbb{Z}) \to H_2(W, L_{\Lambda_H}, \mathbb{Z}) \to H_2(W, L_{\Lambda_H}, \mathbb{Z})  \to H_1(W, \mathbb{Z}) \to ...
 \end{equation*}
 we have  $H_2(W, L_{\Lambda_H}, \mathbb{Z}) \cong (H_1(S^1, \mathbb{Z}) \otimes H_1(\Sigma, \mathbb{Z})) \oplus_{i=1}^{k+1} \mathbb{Z} [B_i]. $ Therefore, the difference of any two relative homology classes can be written as
	\begin{equation*}
		\mathcal{Z}'-\mathcal{Z}=\sum_{i=1}^{k+1} c_i [B_i] +   m[\Sigma]+ [S],
	\end{equation*}
	where $[B_i]$ is the class represented by a symplectic parallel translation of $\{-3\} \times \{1\} \times B_i$, $i=1, ..., k+1$, and $[S]$ belongs to the   $H_1(S^1, \mathbb{Z}) \otimes H_1(\Sigma, \mathbb{Z})$-component of $H_1(Y_{\varphi_H}, \mathbb{Z})$.
	
	\paragraph{Fredholm index}
	Similar as before, we fix a non-singular vector field on $\underline{\Lambda}$ and it induces  a trivialization $\tau $  of $T\Sigma$   along  $\underline{\Lambda}$. We extend $\tau$ by parallel transport along $L_{\Lambda_H}$ and arbitrarily along $\textbf{y}$ and $\alpha$. Define a subbundle $\mathcal{L}$ of $u^*T\Sigma \vert_{\partial {F}}$ as follows. Set $\mathcal{L}: =u^*( TL_{\Lambda_H} \cap T\Sigma)$ along $\partial \dot{F}$. We extend $\mathcal{L}$ over $\partial F \setminus  \partial \dot{F} $ by rotating in  clockwise direction from $T \underline{\Lambda}$ to $T\varphi_H(\underline{\Lambda})$ by minimum amount possible.  Let $\mu_{\tau}(u)$ denote the Maslov index of the bundle pair $(u^*T\Sigma \vert_{\partial F}, \mathcal{L}) $.
	
	The  Fredholm index  is defined by
	\begin{equation}
		\operatorname{ind}u : =-\chi(\dot{F}) + 2c_1(u^* T\Sigma, \tau) + CZ_{\tau}^{ind}(\alpha) +\mu_{\tau}(u),
	\end{equation}
where $CZ_{\tau}^{ind}(\alpha)$ is the 	combination of the  Conley-Zehnder index defined  in  (\ref{eq29}).
	
	\paragraph{ECH index}
	Fix $\mathcal{Z} \in H_2(W, \alpha, \textbf{y})$. Let $u: \dot{F} \to W $ be a $\tau$-trivial representative in the sense of Definition 5.6.1 of \cite{VPK}. We define  the relative self-intersection $Q_{\tau}(\mathcal{Z})$ as before.  Then the ECH index (Definition 5.6.6 \cite{VPK}) is
	\begin{equation}
		I(\mathcal{Z}) : = c_1(TW \vert_{\mathcal{Z}}, \tau) +Q_{\tau}(\mathcal{Z}) + CZ^{ech}_{\tau}(\alpha)+ \mu_{\tau}(\mathcal{Z}),
	\end{equation}
	where $CZ^{ech}_{\tau}(\alpha)$ is the 	combination of the  Conley-Zehnder index defined  in  (\ref{eq28}).

\begin{lemma}  \label{lem38}
The ECH index satisfies the following properties:
\begin{itemize}
\item
(Theorem 5.6.9 of \cite{VPK}) Let $u \in \mathcal{M}^J( \alpha, \mathbf{y})$ be an irreducible  PFH-HF  curve. Then we have
\begin{equation}  \label{eq59}
		\begin{split}
		 &I(u) \ge \operatorname{ind} u  + 2 \delta(u).\\
		\end{split}
	\end{equation}	
 Moreover, equality  holds only if the positive ends of $u$ satisfy  the ECH partition condition.

 \item
 If $u=\cup_a u_a$  is a PFH-HF  curve consisting  of several  (distinct) irreducible components, then
\begin{equation*}
		\begin{split}
		  I(u) \ge \sum_a I(u_a) + 2\sum_{a \ne b} \# (u_a \cap u_b).
		\end{split}
	\end{equation*}
\item
Let $u$ be a PFH-HF  curve. Then     $I(u) \ge 0$  provided that $u$ is Fredholm regular.

\item
Let $\mathcal{Z}', \mathcal{Z} \in H_2(W,  \alpha, \mathbf{y})$  be relative  homology classes  such that $$\mathcal{Z}'-\mathcal{Z}=m[\Sigma]+\sum_{i=1}^{k+1} c_i[B_i] + [S], $$ where $[S] \in H_1(S^1, \mathbb{Z}) \otimes H_1(\Sigma, \mathbb{Z})$.   Then  we have
		\begin{equation} \label{eq5}
\begin{split}
			&I(\mathcal{Z}')=I(\mathcal{Z}) + \sum_{i=1}^{k+1} 2c_i  + 2m(k+1).
\end{split}
		\end{equation}
 \end{itemize}
\end{lemma}

\begin{proof}
We prove the statements one by one as follows:
\begin{itemize}
\item
 Let  $u$ be an irreducible  PFH-HF  curve.
By definition and the adjunction formula (Lemma 5.6.3  of \cite{VPK}), we have
\begin{equation*}
		\begin{split}
		 &I(u) - \operatorname{ind} u  =2\delta(u) - w_{\tau}(u) +   CZ^{ech}_{\tau}(\alpha) -CZ^{ind}_{\tau}(\alpha), \\
		\end{split}
	\end{equation*}	
where $w_{\tau}(u)$ is  the total writhe of the
braids $u(\dot{F})\cap \{s\} \times Y_{\varphi_H}$ for $s\gg  1$  with respect to $\tau$. See Definition 2.8 of \cite{H2} for its definition.

 By Lemma 4.20  of \cite{H2}, we have $   CZ^{ech}_{\tau}(\alpha)- CZ^{ind}_{\tau}(\alpha) \ge w_{\tau}(u)  $ and equality holds only if $u$ satisfies the ECH partition condition.  This implies the first bullet.

\item
To prove the second statement, without loss of generality, assume that $u=u_0\cup u_1$ has two distinct irreducible components, where   $u_i \in \mathcal{M}^J(\alpha_i, \mathbf{y}_i)$. By Lemma 8.5 of \cite{H1},
\begin{equation}
Q_{\tau}(u_0, u_1) = \# (u_0 \cap u_1) + l_{\tau}(u_0, u_1),
\end{equation}
 where  $l_{\tau}(u_0, u_1)$ is  the total  linking number  of the
braids $u_0(\dot{F}_0)\cap \{s\} \times Y_{\varphi_H}$ and $u_1(\dot{F}_1)\cap \{s\} \times Y_{\varphi_H}$ for $s\gg  1$  with respect to $\tau$  (see Definition 2.9 of \cite{H2}).
 Because  Chern number and Maslov  index are  additive and  the  relative intersection number is quadratic, we have
\begin{equation*}
		\begin{split}
		 &I(u_0 \cup u_1)  - I(u_0) -I(u_1)    =2\#(u_0 \cap u_1)  + 2l_{\tau}(u_0, u_1)  + CZ_{\tau}^{ech}(\alpha)- CZ_{\tau}^{ech}(\alpha_0)  - CZ_{\tau}^{ech}(\alpha_1)       \\
		\end{split}
	\end{equation*}	
By Lemma 4.17 and  Lemma 5.10 of \cite{H2}, we have $ 2l_{\tau}(u_0, u_1)+ CZ_{\tau}^{ech}(\alpha) \ge CZ_{\tau}^{ech}(\alpha_0)  + CZ_{\tau}^{ech}(\alpha_1)   $.  Then we  get the second bullet.

\item
We now show that the ECH index is nonnegative when $u$ is Fredholm regular. By the first bullet and $u$ is Fredholm regular, we have $I(u_a) \ge \operatorname{ind} u_a +2\delta(u_a) \ge 0$.  By the intersection positivity of holomorphic curves, we have $\#(u_a \cap u_b) \ge0$.  Therefore, the third bullet follows from the second bullet.

\item
Let $u$ be a $\tau$-trivial representative of $\mathcal{Z}$. For $1\le i \le k$, using the construction in Step 1 of  Lemma  \ref{lem6},  we modify an end of $u$, denote the result by $u'$ such that $[u']= \mathcal{Z} +[B_i]$.  The computations in Step 2 of Lemma \ref{lem6} show that $I(u') =I(u) +2$.  Therefore, each $[B_i]$ ($1\le i \le k$)  contributes $2$ to the ECH index.

By definition,  we have
\begin{equation*}
\begin{split}
I(\mathcal{Z} + m [\Sigma] + [S]) &=  c_1(TW \vert_{\mathcal{Z}}, \tau  )  + c_1(TW \vert_{m[\Sigma] + [S]}) +Q_{\tau}(\mathcal{Z}) \\
&+ 2 Q_{\tau}(\mathcal{Z},  m[\Sigma] + [S])  + (m[\Sigma] + [S]) \cdot (m[\Sigma] + [S]) +  CZ^{ech}_{\tau}(\alpha)+ \mu_{\tau}(\mathcal{Z})\\
&= I(\mathcal{Z}) + m \chi(\Sigma)+ 2dm  \\
&=I(\mathcal{Z}) + 2m(d-g+1).
\end{split}
\end{equation*}
 By using $\sum_{i=1}^{k+1} [B_i] =[\Sigma]$ and the trick in (\ref{eq64}), we know that adding $[B_{k+1}]$ to $\mathcal{Z}$ increasing the ECH index by $2$.
\end{itemize}
\end{proof}

\paragraph{$J_0$ index} Again, the concept of $J_0$ index can be generalized to the closed-open setting.  Given $\mathcal{Z} \in H_2(W, \alpha, \textbf{y})$, the $J_0$ index is
	\begin{equation}
		J_0(\mathcal{Z}) : = -c_1(TW \vert_{\mathcal{Z}}, \tau) +Q_{\tau}(\mathcal{Z}) +CZ^{J_0}_{\tau}(\alpha),
	\end{equation}
where $CZ^{J_0}_{\tau}(\alpha) = \sum_i \sum_{p=1}^{m_i-1} CZ_{\tau}(\alpha_i^{p})$.

The following lemma summarizes the properties of the $J_0$ index.
\begin{lemma}\label{lem29}
The $J_0$ index satisfies the following properties:
\begin{itemize}
\item
Let $u \in \mathcal{M}^J(\alpha, \mathbf{y} )$ be an irreducible PFH-HF curve. Then we have
\begin{equation*}
		\begin{split}
       J_0(u)\ge   2(g(F) -1    + \delta(u)) + \#\partial F  + |\alpha|,
		\end{split}
	\end{equation*}	
where $|\alpha| $ is a quantity satisfying $|\alpha| \ge 1$ provided that $\alpha$ is nonempty (see Definition 6.4 of \cite{H2}).

 \item
 If $u=\cup_a u_a$  is a    PFH-HF curve consisting  of several  (distinct) irreducible components, then
\begin{equation*}
		\begin{split}
		  J_0(u) \ge \sum_a J_0(u_a) + \sum_{a \ne b} 2\# (u_a \cap u_b).
		\end{split}
	\end{equation*}
\item
Let $u$ be a  PFH-HF curve.  Then $J_0(u) \ge 0$.

\item
		Let $\mathcal{Z}', \mathcal{Z} \in H_2(W,  \alpha, \mathbf{y})$  be relative  homology classes  such that $$\mathcal{Z}'-\mathcal{Z}=m[\Sigma]+\sum_{i=1}^{k+1} c_i[B_i] + [S], $$ where $[S] \in H_1(S^1, \mathbb{Z}) \otimes H_1(\Sigma, \mathbb{Z})$.   Then  we have
		\begin{equation}
\begin{split}
J_0(\mathcal{Z}')=J_0(\mathcal{Z}) + 2c_{k+1} (d+g-1)+ 2m(d+g-1).
\end{split}
		\end{equation}
 \end{itemize}
\end{lemma}
\begin{proof}
 \begin{itemize}
 \item
By definition and adjunction formula (Lemma 5.6.3  of \cite{VPK}), we obtain
\begin{equation*}
		\begin{split}
		 J_0(u)  &=-\chi(\dot{F})  - w_{\tau}(u)  + 2\delta(u) + CZ^{J_0}_{\tau}(\alpha) \\
&=2(g(F)-1 + \delta(u)) +\#\partial F + \#\Gamma  -  w_{\tau}(u)     + CZ^{J_0}_{\tau}(\alpha),
		\end{split}
	\end{equation*}	
where $\Gamma$ is the set of interior punctures.
By (6.2) of \cite{H2}, we have  $$  \#\Gamma -w_{\tau}(u)  + CZ^{J_0}_{\tau}(\alpha) \ge |\alpha|.$$ Hence, the inequality in the first statement holds.

\item
Assume that $u=u_0\cup u_1$ has two distinct irreducible components, where   $u_i \in \mathcal{M}^J( \alpha_i, \mathbf{y}_i)$.
 Because  Chern number and Maslov  index are  additive,  the  relative intersection number is quadratic, we have
\begin{equation*}
		\begin{split}
		 &J(u_0 \cup u_1)  - J(u_0) -J_0(u_1)    =2\#(u_0 \cap u_1) +2l_{\tau}(u_0, u_1) + CZ_{\tau}^{J_0}(\alpha)  - CZ_{\tau}^{J_0}(\alpha_0)  - CZ^{J_0}_{\tau}(\alpha_1)     \\
		\end{split}
	\end{equation*}	
By Lemma 4.17 and Lemma 6.15 of \cite{H2}, we have $   2l_{\tau}(u_0, u_1)+ CZ_{\tau}^{J_0}(\alpha)  \ge CZ_{\tau}^{J_0}(\alpha_0)  + CZ_{\tau}^{J_0}(\alpha_1) $.  Then we  get the second bullet.

\item
Because a PFH-HF  curve at least one  boundary and $\alpha_a$ are  not empty,  by the first bullet, we have $ J_0(u_a)\ge 0.$ Then $J_0(u) \ge  0$ follows from second bullet and intersection positivity of holomorphic curves.

\item
The proof of the fourth statement is just the same as those in Lemma \ref{lem3}.
 \end{itemize}
\end{proof}

	\subsection{Moduli space of PFH-HF curves}
	Fix  $J \in \mathcal{J}_{tame}(W, \Omega_{\varphi_H})$. Let $\mathcal{M}^J(\alpha, \mathbf{y}, \mathcal{Z})$   be the moduli space of $J$-holomorphic PFH-HF
 curves with relative homology class $\mathcal{Z}.$ Apart from  the ECH inequality, the ECH index and Fredholm index also satisfy the following relation. It is an analogy of Proposition 1.6 (c) in \cite{H1}.

	\begin{lemma} \label{lem20}
	Let  $\alpha=\{(\alpha_i, m_i)\}$ is a  PFH generator.	Let $u \in \mathcal{M}^J(\alpha, \mathbf{y}, \mathcal{Z})$ be a PFH-HF curve.  Then
		\begin{equation*}
			I(u)=\operatorname{ind} u \mod 2.
		\end{equation*}
	\end{lemma}
	\begin{proof}
		By the relative adjunction formula (Lemma 5.6.3 of \cite{VPK}), we have
		\begin{equation*}
			I(u) - \operatorname{ind} u = 2\delta(u) - w_{\tau}(u) +CZ_{\tau}^{ech}(\alpha) -CZ_{\tau}^{ind}(\alpha),
		\end{equation*}
		where $ w_{\tau}(u) $ is  the writhe number defined in Section 3.1 of  \cite{H1}.
		
		We may assume that the positive ends of $u$ are  $\tau$-trivial in the sense of Definition 2.3 of \cite{H1}. Otherwise,   we modify $u$ such that the positive ends are $\tau$-trivial by gluing
 branched covers of trivial cylinders. Note that such a modification does note change the relative homology class.
		
		Suppose that $u$ has positive ends at a simple periodic orbit $\alpha_i$ with total multiplicity $m_i$.
		Because the positive ends of $u$ are  $\tau$-trivial, $u$ has $m_i$ ends at $\alpha_i$ and  $w_{\tau}(u)=0$.  If $\alpha_i$ is hyperbolic, then $ CZ_{\tau}^{ech}(\alpha_i) =CZ_{\tau}^{ind}(\alpha_i)$ because  $m_i=1$. If $\alpha_i$ is elliptic, then
		$$ CZ_{\tau}^{ech}(\alpha_i) -CZ_{\tau}^{ind}(\alpha_i) =\sum_{q=1}^{m_i} \left( CZ_{\tau}(\alpha^q_i) -CZ_{\tau}(\alpha_i) \right)= even.   $$
		
	\end{proof}

The following lemma is the same as Lemma \ref{lem23}. It asserts that for most of almost
 complex structures the transversality holds for simple PFH-HF curve. These almost complex structures are called generic.	
\begin{lemma} \label{lem33}
There is a Baire subset $J^{reg}_{tame}(W, \Omega) \subset  J_{tame}(W, \Omega)$
 such that for   $J \in J^{reg}_{tame}(W, \Omega)$, any PFH-HF curve $u  \in  \mathcal{M}^J(\alpha,\mathbf{y},\mathcal{Z})$ is Fredholm regular. In particular, we have $ \operatorname{ind}  u  \ge  0$.	
\end{lemma}
\begin{proof}
As mentioned at the beginning, the PFH-HF curve in $W$ has at least one positive
 end and one negative end thanks to the fibration structure of $W$. The appear of the HF-ends ensure that the PFH-HF curves  are  simple. Then we can obtain the Baire subset $J^{reg}_{tame}(W, \Omega)$ by the standard argument in Lemma 9.12 of  \cite{H1}.
\end{proof}

	\begin{lemma} \label{lem3}
		Assume that $I(\mathcal{Z})=0$ and $J \in \mathcal{J}^{reg}_{tame}(W, \Omega_{\varphi_H})$ is a generic  almost complex structure. Then  the moduli space $\mathcal{M}^J(\alpha, \mathbf{y}, \mathcal{Z})$   is a compact  zero-dimensional manifold. In other words,  $\mathcal{M}^J(\alpha, \mathbf{y}, \mathcal{Z})$   is a set of finite points.
	\end{lemma}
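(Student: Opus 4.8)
The plan is to rerun the argument of Lemma~\ref{lem1}, now using the ECH index inequality for PFH--HF curves (Theorem 5.6.9 of \cite{VPK}) in place of Theorem~\ref{thm4} and the connector facts \ref{fact1}--\ref{fact3} to control the possible breaking at the PFH end. The first point is transversality: exactly as in Lemma~\ref{lem23}, a PFH--HF curve cannot be multiply covered, since each component meeting the HF boundary $L_{\Lambda_H}$ is somewhere injective by the asymptotic and boundary analysis of Lemma 4.7.2 of \cite{VPK}, and a component with at least one end has, by the fibration structure of $W$, both a positive and a negative end, so it cannot be a nonconstant cover confined to a single fiber unless it is closed. Hence for generic $J\in\mathcal{J}_{tame}(W,\Omega_H)$ every simple PFH--HF curve, in $W$ and in the two symplectization ends, is Fredholm regular.

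Next I would show that $\mathcal{M}^J(\gamma,y,\mathcal{Z})$ is a $0$--dimensional manifold. Let $u\in\mathcal{M}^J(\gamma,y,\mathcal{Z})$. By Theorem 5.6.9 of \cite{VPK} we have $0=I(\mathcal{Z})=I(u)\ge \mathrm{ind}\,u+2\delta(u)$. Any closed (fiber) component of $u$ is a branched cover of some $B_i$, of $\varphi_H(B_i)$, or of $\Sigma$, so, exactly as in Lemma~\ref{lem6}, it contributes at least $2$ to $I(u)$; positivity of intersection numbers then forces $u$ to have no closed component. Consequently every component of $u$ meets $L_{\Lambda_H}$ and is somewhere injective, so by the previous paragraph $\mathrm{ind}\,u\ge 0$. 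Combining the two inequalities, $\mathrm{ind}\,u=0$ and $\delta(u)=0$, and by \ref{fact2} the positive ends of $u$ satisfy the ECH partition conditions. Thus $\mathcal{M}^J(\gamma,y,\mathcal{Z})$ is a $0$--dimensional manifold, and it remains to prove compactness.

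Given a sequence $\{u_n\}\subset\mathcal{M}^J(\gamma,y,\mathcal{Z})$, the relative adjunction formula (Lemma 5.6.3 of \cite{VPK}) together with $I(u_n)=0$ and $\delta(u_n)=0$ bounds the genus of the domains, so after passing to a subsequence of fixed topological type we may apply Gromov--SFT compactness. The limit $u_\infty$ consists of a main level in $W$, finitely many levels in the symplectization $\mathbb{R}\times Y_{\varphi_H}$ over the PFH end, finitely many HF--levels in $\mathbb{R}\times[0,1]\times\Sigma$ over the HF end, and possibly bubbles. Since the ECH index is additive under concatenation up to a nonnegative intersection term, each symplectization level has nonnegative ECH index, each bubble contributes at least $2$ (exactly as in Lemma~\ref{lem6}), and the main level has nonnegative ECH index by Theorem 5.6.9 of \cite{VPK}, the identity $0=I(\mathcal{Z})$ forces that there are no bubbles and that every level has ECH index $0$. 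An HF--level with $I=0$ is a trivial strip by Theorem~\ref{thm4}. A PFH--symplectization level with $I=0$ has, by \ref{fact3}, only index-$0$ connector behaviour; reading the configuration from the top, where the positive end is $\gamma$ carrying the ECH partition and hyperbolic orbits of a PFH generator have multiplicity one, \ref{fact1} shows inductively that every such level is a union of trivial cylinders. Hence $u_\infty$ is a single embedded PFH--HF curve in $W$ with $I=0$ and no closed component; the $d$--multisection boundary condition survives the limit just as in Lemma~\ref{lem1}, using that the components of $\Lambda$ are pairwise disjoint. Therefore $\mathcal{M}^J(\gamma,y,\mathcal{Z})$ is compact, and being $0$--dimensional it is a finite set of points.

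The hard part is the last step, ruling out breaking at the PFH end: this is precisely where the connector analysis \ref{fact1}--\ref{fact3} and, if one wishes to phrase it that way, the positivity hypothesis \ref{assumption1} on the periodic orbits of $\varphi_H$ enter, and it is the analogue of the most delicate point in the construction of the PFH cobordism maps. Once this index bookkeeping is in place, the transversality and Gromov-compactness inputs are routine and parallel Lemmas~\ref{lem1} and~\ref{lem2}.
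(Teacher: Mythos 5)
Your proposal is correct and follows essentially the same route as the paper: SFT compactness, additivity of the ECH index over levels, the observation that the fibration structure and the HF boundary conditions force the main level to be somewhere injective with $I\ge 0$ while bubbles contribute at least $2$, and the connector facts \ref{fact1}--\ref{fact3} together with the ECH partition conditions to kill the symplectization and HF levels. (One minor remark: assumption \ref{assumption1} is not actually needed for this lemma — it enters only later, for the partial invariance — but you already hedge on that point.)
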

	\begin{proof}
Let $\{u_n\}_{n=1}^{\infty} \subset \mathcal{M}^J(\alpha, \mathbf{y}, \mathcal{Z})$  be a sequence of PFH-HF curves. Since the relative
 homology class $\mathcal{Z}$ is fixed, Lemma  \ref{lem29}  implies that we can fix the topological type of $u_n$ by passing to a subsequence.
 By the SFT compactness \cite{FYHKE},   $ \{u_n\}_{n=1}^{\infty}$ converges to a broken holomorphic curve $ \mathbf{u}=\{u^{N_+},..., u^0, ...,u^{N_-}\}$, where  $u^i$ are holomorphic curves in $\mathbb{R} \times Y_{\varphi_H}$ for $i>0$, $u^0$ is a curve in $W$ and $u^j$ are curves in $M$ for $j<0$. Also, the positive ends of $u^j$ agree with the negative ends of $u^{j+1}$. By the
 same argument in Lemma \ref{lem31}, the noncompact irreducible components in $M$ and $W$
 are still HF curves and PFH-HF curves respectively.
 The SFT-compactness in closed-open cobordisms  can be found in 	Section 6.1 and  Section 7.3  of  \cite{VPK}. The broken phenomenon is the same as the one described in Section \ref{Section2.3}.

By the ECH inequality (\ref{eq59}) and the fact \ref{fact2},  the curves in $\mathcal{M}^J(\alpha, \textbf{y}, \mathcal{Z})$  satisfy  $I(u_n) =  \operatorname{ind} u_n =0$ and the ECH partition conditions.  Therefore, the positive ends of $u^{N_+}$ also satisfy the  ECH partition conditions.
		
		Similar as in Lemma \ref{lem7}, by the fact that $\pi_W: W \to B$ is complex linear and the open mapping theorem,   the compact irreducible components of $u^0$ (bubbles) lie inside the fiber of $W$.
 Write
  \begin{equation*}
[u^0]=[u^{0}_{\star}] + m_0[\Sigma]+ \sum_{j=1}^{k+1} c_{0j} [B_j]
\end{equation*}
where $u^{0}_{\star}$ is a PFH-HF curve and the rest are homology classes of the bubbles.   According to Lemma \ref{lem3}, we have
		\begin{equation} \label{eq26}
			I([u^0])=I(u^{0}_{\star}) + 2m_0(k+1) + \sum_{j=1}^{k+1}2c_{0j}.
		\end{equation}
Let $v \subset \pi_W^{-1}(\tau)$ be an irreducible component of bubble, where  $\tau \in \partial B$.  For $z_i \in \mathring{B}_i$,  define
$$n_{z_i}(v): =\#(v \cap \Psi_H'(B \times \{z_i\})), $$
where $\Psi_H'$ is the trivialization in Remark \ref{remark5}. Since  $\partial  \Psi_H'(B \times \{z_i\}) \cap L_{\Lambda_H} = \emptyset$,  $n_{z_i}(v)$ is   well defined and it only depends on the homology
 class  of $[v] =\sum_{i=1}^{k+1} a_i [B_i^{\tau}] \in H_2(\Sigma, L_{\Lambda_H} \cap (\pi_W)^{-1}(\tau), \mathbb{Z})$.  By definition, $n_{z_i}([B_j^{\tau}]) =\delta_{ij}$ and  $ \Psi_H'(B \times \{z_i\})$ intersects the fibers positively transversally.  Since $v$ is holomorphic,
 the orientation of $v$ is the same as the fiber.  As a result,   $ \Psi_H'(B \times \{z_i\})$  intersects $v$
 positively transversally. As in the proof of Lemma \ref{lem1}, we have $a_i = n_{z_i}(v) \ge 0$. Thus, $m_0, c_{0j} \ge 0$.

Let $u_{\star}^i$ denote the component of $u^i$ by removing the bubbles. By the ECH inequalities (Lemma 9.5 of \cite{H1}, (\ref{eq59}) and Theorem \ref{thm4}), we have $I(u^i_{\star} ) \ge 0$.
By (\ref{eq26}) and  Lemma \ref{lem6}, we have
		\begin{equation}\label{eq3}
			\begin{split}
				0=I(\mathcal{Z}) =&I(\mathbf{u})=I(u^{N_+}) +,...,+ I(u^0) +,...,+   I(u^{N_-})\\
				= & \sum_{i\le 0} \left( I(u^{i}_{\star})  + 2\sum_{i=-N_-}^0 \sum_{j=1}^{k+1} (c_{ij} +c_{ij}')\right) + \sum_{i=-N_-}^{N_+} 2m_i(k+1).
			\end{split}
		\end{equation}
By the proof of Lemma \ref{lem6}, we have $c_{ij}, c_{ij}', m_i \ge0$. 	Therefore,  we have  $c_{ij}=c_{ij}'=m_i=0 $; in other words, no fiber bubbles appear.  The inequality (\ref{eq3}) also implies that $I(u^i)=0$ for each  $i$.   The Fredholm  indices of $u^i$ are adding  to zero. By \ref{fact3},  we have $\operatorname{ind}  u^i=0$ for each $i$.  For $i<0$, $u^i$ are trivial strips which are ruled out in the holomorphic buildings. For $i>0$, $u^i$ are connectors with zero Fredholm index.  By  fact \ref{fact1} and  the ECH partition conditions, they must be trivial.  Thus,  these curves   are also  ruled out.  In sum, $\mathcal{M}^J(\alpha, \textbf{y}, \mathcal{Z})$   is compact. By Lemma \ref{lem33},  $\mathcal{M}^J(\alpha, \textbf{y}, \mathcal{Z})$   is a finite set of points.
		
	\end{proof}
	
	Fix a reference class $\mathcal{Z}_{ref} \in H_2(W, \gamma_H^{\mathbf{x}}, \textbf{x}_H)$ represented by  $\Psi_H'(B \times \mathbf{x})$,  where $\Psi_H'$ is defined in Remark \ref{remark5}. We define a homomorphism
	\begin{equation*}
	\widetilde{\mathcal{CO}}(\underline{\Lambda}, H)_J : \widetilde{PFC}(\Sigma, \varphi_H, \gamma_H^{\mathbf{x}}) \to CF(\Sigma, \underline{\Lambda},  \varphi_H,  \textbf{x})
	\end{equation*} by
	\begin{equation*}
\widetilde{\mathcal{CO}}(\underline{\Lambda}, H)_J  (\alpha,  [Z]) :=\sum_{(\mathbf{y}, [A]), I(\mathcal{Z})=0} \# \mathcal{M}^J(\alpha, \textbf{y}, \mathcal{Z})(\textbf{y}, [A]),
	\end{equation*}
	where  $A$ is determined by the relation $Z\#\mathcal{Z}_{ref}\#A =\mathcal{Z}$.
	For a generic $J \in J^{reg}_{tame}(W, \Omega_{\varphi_H})$, Lemma \ref{lem3} implies that  the above definition make sense.

	\begin{lemma}
		$\widetilde{\mathcal{CO}}(\underline{\Lambda}, H)_J $  is a chain map, i.e., $d\circ \widetilde{\mathcal{CO}} (\underline{\Lambda}, H)_J =\widetilde{\mathcal{CO}}(\underline{\Lambda}, H)_J \circ \partial.$
	\end{lemma}
	\begin{proof}
		Consider the moduli space  $\mathcal{M}^J(\alpha, \textbf{y}, \mathcal{Z})$ with $I(\mathcal{\mathcal{Z}})=1$, where $\alpha$ is a PFH generator.  
		By the ECH index inequality (\ref{eq59}) and Lemma \ref{lem20},  the curves in   $\mathcal{M}^J(\alpha, \textbf{y}, \mathcal{Z})$  have Fredholm index $\operatorname{ind}=1$. The fact \ref{fact2} implies  that the curves  satisfy the ECH partition conditions.  We can repeat the argument in Lemma  \ref{lem3}  and get a broken
 holomorphic curve $\mathbf{u}$ from a sequence of curves in   $\mathcal{M}^J(\alpha, \textbf{y}, \mathcal{Z})$. The inequality (\ref{eq3})
 still holds. Thus,  the bubbles can be ruled out.

		Suppose that $\mathbf{u} =\{ u^{-N_-}, ..., u^0, ...,u^{N_+} \}$ is broken. Then $\sum_i I(u^i) = \sum_i  \operatorname{ind}  u^i=1$.  By the ECH inequalities (Lemma 9.5 of \cite{H1}, (\ref{eq59}, Theorem \ref{thm4}),   $I(u^i) \ge 0$ for $-N_- \le i\le N_+$. The same argument in Lemma \ref{lem3} shows that   $\mathbf{u}$ either consists of
		\begin{itemize}
			\item
			A PFH-curve with $I=\operatorname{ind}=1$ in the top level;

			\item
			The $\operatorname{ind} =0 $ connectors in the middle levels;

 \item
			A PFH-HF curve in $W$ with $I=\operatorname{ind}=0$;

			\item
			There are no negative level;
			
		\end{itemize}
		or
		
		\begin{itemize}
			\item
			There is no positive level;
			
			\item
			A PFH-HF curve  in $W$ with $I=\operatorname{ind}=0$;
			
			\item
			A HF-curve with $I=1$ in the negative level.
		\end{itemize}
		The conclusion follows from Hucthings-Taubes’s obstruction gluing analysis \cite{HT1, HT2}. A
 good summary of the gluing analysis can be found in Section 6.5 of  \cite{VPK}.
		
	\end{proof}

	\subsection{Homotopy invariance}
	Even we only define the closed-open morphism for a tuple  $(W_{\varphi_H}, \Omega_{\varphi_H}, L_{\Lambda_H}, J)$, the construction still holds for a slightly general situation.   If we deform the tuple   $(\Omega_{\varphi_H}, L_{\Lambda_H}, J)$ over a compact subset of $W_{\varphi_H}$ such that the almost complex structure is still generic, then the closed-open morphism  is still well defined by counting $I=0$ PFH-HF curves as before.  The purpose of this subsection is to show that this variation does not change
 the closed-open morphism at the  homological level.

	Let $\{(\Omega_{\tau}, L_{\tau}, J_{\tau}) \}_{\tau \in [0,1]}$ be a family of triples  on $W$ such that
\begin{enumerate} [label=\textbf{B.\arabic*}]
		\item   \label{B1}
		$(\Omega_{\tau}, L_{\tau}, J_{\tau}) \vert_{|s| \ge R_0}=( \Omega_{\varphi_H}, L_{\Lambda_H}, J )$.
		\item \label{B2}
		For each $\tau$, $L_{\tau} \subset \pi_W^{-1}(\partial B)$ is a $\Omega_{\tau}$-Lagrangian submanifold over $\pi^{-1}(\partial B)$ and $L_{\tau} \cap (s,t) \times \Sigma$ is a  link   which  is Hamiltonian isotropic to $\underline{\Lambda}$.
		\item\label{B3}
		For each $\tau$,  $J_{\tau}$ is an $\Omega_{\tau}$-tame almost complex structure such that the projection $\pi_W: W  \to B$ is complex linear with respect to $(J_{\tau},  j_B)$. 
	\end{enumerate}
	By the previous argument, if $J_{\tau}$ is generic (the curves in $\mathcal{M}^{J_{\tau}}(\alpha, \mathbf{y})$ is Fredholm regular), then we  can   define a homomorphism   $\widetilde{\mathcal{CO}} (W, \Omega_{\tau}, {L_{\tau}})_{J_{\tau}} $ by counting PFH-HF curves with $I=0$.

Fix $J \in \mathcal{J}^{reg}_{tame}(W, \Omega_{\varphi_H})$. 	Let $\mathcal{J}_{path}(W, \Omega_{\tau \in [0,1]}, J)$  denote the set of path of almost
 complex structures satisfying \ref{B1} and \ref{B3}. Consider the moduli space $$\mathcal{M}^{\{J_{\tau}\}}(\alpha, \textbf{y}, \mathcal{Z}): =  \{ (u, \tau) \vert u \in \mathcal{M}^{J_{\tau}} (\alpha, \textbf{y}, \mathcal{Z})\}. $$  Again, thanks to the appearance of HF-ends, a curve in  $\mathcal{M}^{\{J_{\tau}\}}(\alpha, \mathbf{y}, \mathcal{Z}) $ cannot be multiply covered. By the standard Sard-Smale argument, there is a Baire subset $\mathcal{J}^{reg}_{path}(W, \Omega_{\tau \in [0,1]}, J) \subset \mathcal{J}_{path}(W, \Omega_{\tau \in [0,1]}, J)$ such that for  $\{J_{\tau}\}_{\tau \in [0,1]} \in \mathcal{J}^{reg}_{path}(W, \Omega_{\tau \in [0,1]}, J) $, the moduli space $\mathcal{M}^{\{J_{\tau}\}}(\alpha, \textbf{y}, \mathcal{Z})$  is a smooth manifold of expected dimension. We call
$\{J_{\tau}\}_{\tau \in [0,1]}$ a generic family of almost complex structures.

	Before we go on, note that   it is a little  misleading to denote the set of relative homology classes by $H_2(W, \alpha, \mathbf{y})$ because its definition also depend on the Lagrangian boundary.  Now the Lagrangian submanifolds $L_{\tau}$ changes  as  $\tau$ varies,   the group  $H_2(W, \alpha, \mathbf{y})$ should change as well.  But the family $\{(W, L_{\tau})\}_{\tau \in [0,1]}$ also induces an isomorphism between  the groups    $H_2(W, \alpha, \mathbf{y})$ defined by different $\tau$.  Thus, here we use $\tau$-independent notation to denote  $H_2(W, \alpha, \mathbf{y})$ and the relative homology class. 

	\begin{lemma} \label{lem4}
	Suppose that $\{J_{\tau}\}_{\tau \in [0,1]}$ is a generic family of almost complex structures. 	Let $\{(u_n, \tau_n)\}_{n=1}^{\infty}$ be a sequence  of PFH-HF curves in  $\mathcal{M}^{\{J_{\tau}\}}(\alpha, \mathbf{y}, \mathcal{Z}) $.  Then after passing to a subsequence,   $\{(u_n, \tau_n)\}_{n=1}^{\infty}$  converges to a broken holomorphic curve $\mathbf{u}$ in  the sense of SFT.  Moreover, we have the
 following statements:
		\begin{enumerate}
			\item
			If $I(\mathcal{Z}) =-1$, then $\mathbf{u}$ is unbroken.
			\item
			Assume that $\alpha$ is a PFH generator. If $I(\mathcal{Z})=0$ and $\tau \to \tau_*$, then
			\begin{enumerate}
				\item
				If $J_{\tau_*}$ is generic, then  $\mathbf{u}$  is unbroken, i.e.,  $\mathbf{u} \in \mathcal{M}^{J_{\tau_*}} (\alpha, \mathbf{y}, \mathcal{Z})$.
				\item
				If $J_{\tau_*}$ is not generic, then  we have the following two possibilities:
				\begin{itemize}
					\item
					$\mathbf{u} =\{ u^{+}, u^1,..., u^k, u^{-} \}$ , where  $u^+$ is a $J$-holomorphic curve in $\mathbb{R} \times Y_{\varphi_H}$ with $I(u^+) =1$, $u^i $ are  $ \operatorname{ind}=0$ connectors and $u^-$ is a $J_{\tau_*}$-holomorphic PFH-HF curve in $W$ with $I=-1$.
					
					\item
					$\mathbf{u}=\{ u^{+},  u^{-} \}$ , where  $u^+$ is  a  $J_{\tau_*}$-holomorphic PFH-HF curve in $W $ with $I=-1$, and $u^-$ is a  $J$-holomorphic HF-curve with $I=1$.
				\end{itemize}
			\end{enumerate}
		\end{enumerate}
	\end{lemma}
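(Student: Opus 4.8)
The plan is to rerun the SFT--compactness and ECH--index bookkeeping from the proof of Lemma~\ref{lem3}, but now carrying the extra parameter $\tau$, so that the $W$--component of a limiting building is allowed one extra dimension; the parity input of Lemma~\ref{lem20} and the bubble estimate of Lemma~\ref{lem6} then pin down the possible degenerations.

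First I would apply the SFT compactness theorem for closed--open cobordisms (\cite{VPK}, Sections~6.1 and~7.3) to a sequence $\{(u_n,\tau_n)\}$ with $\tau_n\to\tau_*$. This produces a holomorphic building $u_\infty=\{u_{N_+},\dots,u_1,u_0,u_{-1},\dots,u_{-N_-}\}$, possibly with fiber bubbles, where $u_0$ is $J_{\tau_*}$--holomorphic in $W$, the levels $u_i$ with $i>0$ are $J$--holomorphic currents in $\mathbb{R}\times Y_{\varphi_H}$, and the levels $u_j$ with $j<0$ are $J$--holomorphic HF--curves in $M$. Since $\pi_W:W\to B$ and $\pi:M\to\mathbb{R}\times[0,1]$ are complex linear, the open mapping theorem forces every bubble into a fiber exactly as in Lemma~\ref{lem7}, and Lemma~\ref{lem6} shows that every fiber bubble, every closed component of $u_0$, and every disk bubble of the $u_j$ costs at least $2k\ge 2$ in the ECH index (here $k>1$ is used). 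Write $[u_0]=[u_{00}]+(\text{fiber classes})$, where $u_{00}$ is $u_0$ with its fiber bubbles and closed components deleted. Every irreducible component of $u_{00}$ has at least one positive and at least one negative end by the fibration structure of $W\to B$, so $u_{00}$ is somewhere injective, and similarly for the $u_j$, $j<0$, after removing their bubbles.

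The decisive input is parametrized transversality: for a generic path $\{J_\tau\}$ the relevant spaces of $W$--pieces are cut out transversally of dimension $\mathrm{ind}+1$, so nonemptiness gives $\mathrm{ind}(u_{00})\ge-1$, whence the ECH index inequality (Theorem~5.6.9 of \cite{VPK}) gives $I(u_{00})\ge\mathrm{ind}(u_{00})+2\delta(u_{00})\ge-1$, with equality throughout when $I(u_{00})=-1$ (which forces the ECH partition conditions on the PFH ends and propagates them to the building). Meanwhile $I(u_i)\ge0$ for $i>0$, with equality only for trivial connectors (Facts~\ref{fact1}, \ref{fact3}), and $I(u_j)\ge1$ for $j<0$ unless $u_j$ is a trivial strip (Theorem~\ref{thm4}). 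ECH index additivity then reads
\begin{equation*}
I(\mathcal{Z})=I(u_0)+\sum_{i\neq0}I(u_i)+(\text{bubble terms})\ \ge\ I(u_{00})+\sum_{i\neq0}I(u_i)\ \ge\ -1 .
\end{equation*}
If $I(\mathcal{Z})=-1$, every term beyond $I(u_{00})$ must vanish: no bubbles, all symplectization levels trivial, all HF--levels trivial strips, so the building is unbroken — this is part~(1). If $I(\mathcal{Z})=0$ and $J_{\tau_*}$ is generic then $\mathrm{ind}(u_{00})\ge0$ and the same additivity forces everything else to vanish, giving part~(2a). If $I(\mathcal{Z})=0$ and $J_{\tau_*}$ is not generic and the building breaks, then a codimension--$1$ dimension count — a PFH piece $u^{+}/\mathbb{R}$ has dimension $I(u^{+})-1\ge0$, a $W$ piece $u^{-}$ has dimension $I(u^{-})+1\ge0$, and $I(u^{+})+I(u^{-})=0$ — forces $I(u^{+})=1$, $I(u^{-})=-1$ in the first case and symmetrically $I(u^{-})=1$, $I(u^{+})=-1$ in the second (and they cannot occur simultaneously, since two nontrivial nonzero--index levels would push the total above $0$). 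The intervening symplectization levels have $I=\mathrm{ind}=0$, so they are the $\mathrm{ind}=0$ connectors $v_1,\dots,v_k$ of the statement; this is precisely the dichotomy in part~(2b), with Lemma~\ref{lem20} (valid since $\gamma$ is a PFH generator) supplying the parity that makes the index budget add up.

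I expect the main obstacle to be the careful bookkeeping in case~(2b): combining the ECH index inequality, Fredholm--index additivity across all levels of the building, Facts~\ref{fact1}--\ref{fact3} on connectors, and the mod--$2$ relation of Lemma~\ref{lem20} to conclude that exactly one extra index--$1$ level appears and that the intervening connectors are forced to be trivial, and then verifying that the non-generic locus of the path contributes these broken configurations transversally — this last point is what will make the associated count in the next lemma a genuine chain homotopy. The fiber--bubble exclusion and the parametrized transversality are routine given Lemmas~\ref{lem6}, \ref{lem7}, \ref{lem23} and the fact that PFH--HF curves are never multiply covered.
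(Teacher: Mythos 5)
Your proposal follows essentially the same route as the paper's proof: SFT compactness, confinement of bubbles to fibers via the open mapping theorem, the index cost of at least $2$ per bubble from Lemma \ref{lem6}, parametrized transversality giving $\mathrm{ind}\ge -1$ for the $W$--level, the ECH index inequality together with the parity relation of Lemma \ref{lem20}, and Facts \ref{fact1}--\ref{fact3} with the ECH partition conditions to eliminate nontrivial connectors and trivial strips. The only minor imprecision is the claim that each bubble costs ``at least $2k$'' (a $[B_i]$ bubble costs exactly $2$), but since the argument only needs the cost to be at least $2$, this does not affect the proof.
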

	\begin{proof}
		Let  $u \in \mathcal{M}^{\{J_{\tau}\}}(\gamma, \textbf{y}, \mathcal{Z})$ be a PFH-HF curve  with $I=-1$ or $ I=0$.    The ECH inequality (it still holds even $J_{\tau} $ is not generic) implies that $$-1 \le  \operatorname{ind} u +2 \delta(u) \le I(\mathcal{Z})=-1 \mbox{\ or \ } 0.$$
		If $I(\mathcal{Z}) =-1$, then we have $\operatorname{ind} u=I(u)=-1$. 
		If $I(\mathcal{Z}) =0$ and $\alpha$ is a PFH generator, then $\operatorname{ind} u = 0$. Here $ \operatorname{ind} u \ne -1 $ since $\operatorname{ind} u =I(u) \mod 2 $ by Lemma \ref{lem20}.  In any case, the PFH ends of $u$ satisfy the ECH partition conditions and $I(u) = \operatorname{ind} u$.

		The convergence of $\{(u_n, \tau_n)\}_{n=1}^{\infty}$ is
 proved by the same argument as in Lemma  \ref{lem3}.
 The limit is dented by  $ \mathbf{u}=\{  u^{-N_-},..., u^{0},  ..., u^{N_+}\}$,      where $u^i$ are curves in $\mathbb{R} \times Y_{\varphi_H}$ for $i>0$,   and $u^{0}$ is a $J_{\tau_*}$-holomorphic curve in $W$ with Lagrangian boundary conditions $L_{\tau_*}$, and $u^{j}$  are curves in $M$ with Lagrangian boundary conditions for $j<0$.    Moreover, the total ECH index of $\mathbf{u}$ is $I(\mathcal{Z})$.
		
		Write $u^i=u^i_{\star}  \cup u_b^i$, where $u^{0}_{\star}$ is a PFH-HF curve, $u^i_{\star}$ is a PFH-curve for $i>0$  and   $u^i_{\star}$  is  an  HF curve for $i<0$,  and $u_b^i$ consist  of the bubbles in fibers.  Since the path $\{J_{\tau}\}_{\tau \in [0,1]}$ is generic, we have $I(u_{\star}^0) \ge  \operatorname{ind}  u_{\star}^0 \ge -1$.   Also, we have $I(u^i_{\star}) \ge 0$ for $i \ne 0$.   By (\ref{eq3}), each bubble   contributes at least $2$  to the ECH index.   Therefore, when $I(\mathcal{Z}) =0$ or $-1$, the bubbles can be ruled out, i.e., $u^i=u^i_{\star}.$
		
		Suppose that $I(\mathcal{Z}) =-1$.  Then
\begin{equation}\label{eq60}
 -1= I(\mathcal{Z}) =I(\mathbf{u}) =\sum_{i=-N_-}^{N_+} I(u^i).
\end{equation}
 Since the $i\ne 0$ levels have nonnegative ECH index and  $I(u^0) \ge -1$, we must have  $I(u^0) = -1$ and $I(u^i) = 0$ for $i \ne 0$.
Hence, $u^i$ are connectors for $i \ge 1$ and $ u^j$ are trivial strips for $j\le -1$.   Since the Fredholm index is also additivity, i.e.,
 \begin{equation}\label{eq61}
 -1= \operatorname{ind} u_n =\sum_{i=-N_-}^{N_+} \operatorname{ind}  u^i,
\end{equation}
 we have $ \operatorname{ind} u^0 =-1$ and $\operatorname{ind}  u^i=0$ for $i \ne 0$.  The connectors can be ruled out by the ECH partition conditions and the fact \ref{fact1}.
		
		Consider the case that  $I(\mathcal{Z}) =0$.    First, we rule out the bubbles by (\ref{eq3}). Equations
 (\ref{eq60}), (\ref{eq61})  still hold, but the difference is $I(\mathcal{Z}) = \operatorname{ind} u_n =0$.  By the ECH
 inequalities, we have $I(u^0) \ge \operatorname{ind} u^0 \ge -1$ and $I(u^i) \ge \operatorname{ind}  u^i \ge 0$ for $i \ne 0$. Combine these
 inequalities with (\ref{eq60});  we know that that either $\mathbf{u}$ is either unbroken or there is a positive
 or negative level with $I = 1$, $I(u^0) = -1$ and all other levels have $I = 0$.   Suppose that $I(u^{k})=1 $ for some $k > 0$.    Then by Proposition 3.7 of \cite{H4} and (\ref{eq3}), $\operatorname{ind} u^k= 1$
 and all the other positive levels are connectors with $\operatorname{ind}= 0$.   The ECH partition conditions  and the fact \ref{fact1}  imply that $u^j$ are trivial connectors for $j >k$, which are ruled out.  The negative levels are trivial strips, which are ruled out as well.    Then we get the first bullet in the statement (b).
  In the case that $I(u^k) =1 $ for some $k<0$, then $k=-1$ and the positive levels  must be trivial connectors by the ECH partition conditions and the fact \ref{fact1}. We obtain the second bullet in statement (b).
	\end{proof}

	\begin{lemma} \label{lem11}
		Let $\{(\Omega_{\tau}, L_{\tau}, J_{\tau}) \}_{\tau \in [0,1]}$ be a family of  triples  on $W$ satisfying \ref{B1}, \ref{B2} and \ref{B3}.   Suppose that the family of almost complex structures  $\{J_{\tau}\}_{\tau \in [0, 1]}$ is generic. Then there is a chain homotopy
		\begin{equation*}
			K: \widetilde{PFC}(\Sigma, \varphi_H, \gamma_H^{\mathbf{x}})_J \to CF(\Sigma, \underline{\Lambda}, \varphi_H,  \mathbf{x})_J
		\end{equation*}
		such that
		\begin{equation*}
		\widetilde{\mathcal{CO}} (W, \Omega_1, L_1)_{J_1} - \widetilde{\mathcal{CO}}(W,\Omega_0, L_0)_{J_0} = K \circ \partial_J + d_J \circ K.
		\end{equation*}
		 In particular, we have $\widetilde{\mathcal{CO}}(W, \Omega_1, L_1)_{J_1} = \widetilde{\mathcal{CO}}(W,\Omega_0, L_0)_{J_0}$ at the homological level.
	\end{lemma}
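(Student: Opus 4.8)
The plan is to define the chain homotopy $K$ by counting PFH--HF curves of ECH index $-1$ in the parametrized family $\{(\Omega_\tau, L_\tau, J_\tau)\}_{\tau\in[0,1]}$, and then to read off the homotopy identity from the boundary of the one--dimensional parametrized moduli space with $I(\mathcal{Z})=0$. Concretely, for a PFH generator $\gamma$ and $Z\in H_2(W,\gamma,\gamma_0)$ I would set
$$K(\gamma, Z) := \sum_{(y,[A]),\ I(\mathcal{Z})=-1} \#\,\mathcal{M}^{\{J_\tau\}}(\gamma, y, \mathcal{Z})\,(y,[A]),$$
where $\mathcal{Z}$ and $A$ are linked by $Z\#\mathcal{Z}_0\#A=\mathcal{Z}$, and $\mathcal{M}^{\{J_\tau\}}(\gamma,y,\mathcal{Z})=\{(u,\tau)\mid u\in\mathcal{M}^{J_\tau}(\gamma,y,\mathcal{Z})\}$. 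Because the HF--ends force the curves to be somewhere injective, a generic path $\{J_\tau\}$ makes $\mathcal{M}^{\{J_\tau\}}(\gamma,y,\mathcal{Z})$ a manifold of dimension $\operatorname{ind}u+1$; for $I(\mathcal{Z})=-1$ and $\gamma$ a PFH generator Lemma~\ref{lem20} forces $\operatorname{ind}u=-1$, so this is a $0$--manifold. By Lemma~\ref{lem4}(1) a limiting configuration is unbroken, and the fiber--bubbling analysis of Lemma~\ref{lem6} (each fiber bubble costs at least $2$ to the ECH index, cf.\ (\ref{eq26})) excludes degenerations into the fiber; hence $\mathcal{M}^{\{J_\tau\}}(\gamma,y,\mathcal{Z})$ is a finite set and $K$ is well defined. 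The negative monotonicity relation (\ref{eq8}) again makes the sum over $[A]$ finite.

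Next I would analyze the one--dimensional moduli space $\mathcal{M}^{\{J_\tau\}}(\gamma,y,\mathcal{Z})$ for $I(\mathcal{Z})=0$ and $\gamma$ a PFH generator; here Lemma~\ref{lem20} gives $\operatorname{ind}u=0$, so the parametrized moduli space has dimension $1$, and its compactification is governed by Lemma~\ref{lem4}(2). Its boundary then consists of: (i) the slice $\tau=0$, whose count is $\Phi_{\mathcal{Z}_0}(W,\Omega_0,L_0)_{J_0}$; (ii) the slice $\tau=1$, whose count is $\Phi_{\mathcal{Z}_0}(W,\Omega_1,L_1)_{J_1}$; and (iii) the broken configurations of Lemma~\ref{lem4}(2b). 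A configuration of the first kind there---an $I=1$ PFH--curve in $\mathbb{R}\times Y_{\varphi_H}$, a string of $\operatorname{ind}=0$ connectors, and a $J_{\tau_*}$--holomorphic PFH--HF curve with $I=-1$---contributes to $K\circ\partial_J$, while a configuration of the second kind---a $J_{\tau_*}$--holomorphic PFH--HF curve with $I=-1$ followed by an $I=1$ HF--curve---contributes to $d_J\circ K$. Counting the boundary of a compact one--manifold modulo $2$ then yields
$$\Phi_{\mathcal{Z}_0}(W,\Omega_1,L_1)_{J_1} - \Phi_{\mathcal{Z}_0}(W,\Omega_0,L_0)_{J_0} = K\circ\partial_J + d_J\circ K,$$
and passing to homology gives the asserted equality of the induced maps.

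The technical heart is the gluing statement underlying step (iii): one must show that each broken configuration produced by Lemma~\ref{lem4}(2b) is the limit of exactly one end of $\mathcal{M}^{\{J_\tau\}}(\gamma,y,\mathcal{Z})$, and that these ends are disjoint from the $\tau\in\{0,1\}$ boundary. For configurations with nontrivial connectors in the middle level this is the obstruction--gluing analysis of Hutchings--Taubes \cite{HT1}, \cite{HT2} on the PFH side, applied exactly as in the proofs of $\partial_J^2=0$ and of the chain map property of $\Phi_{\mathcal{Z}_0}$; on the HF side the relevant gluing is the one in the appendix of \cite{RL1}, used as in Lemma~\ref{lem2}. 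I expect this gluing analysis---together with a uniform exclusion of fiber bubbles and of multiply--covered components along the entire path $\{J_\tau\}$---to be the main obstacle, the remaining bookkeeping being a routine repetition of arguments already in place.
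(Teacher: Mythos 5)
Your proposal is correct and follows essentially the same route as the paper: define $K$ by counting $I=-1$ curves in the parametrized moduli space $\mathcal{M}^{\{J_\tau\}}$, use Lemma \ref{lem4} for well--definedness and for the structure of the boundary of the one--dimensional $I=0$ moduli space, and conclude via the Hutchings--Taubes obstruction gluing. The paper's proof adds one piece of detail you only flag as "the main obstacle": in the gluing step the linearized operator $\Theta_-$ acquires the extra term $\tau\,\dot{J}_{\tau_0}(u_-)\circ du\circ j$ because $(J_\tau,L_\tau)$ varies, and genericity of the path makes the resulting operator a bijection so the gluing goes through as before.
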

	\begin{proof}
		Define the  homomorphism $K$ by
		\begin{equation*}
			K(\alpha, Z) :=\sum_{\textbf{y},  I(\mathcal{Z}) =-1} \# \mathcal{M}^{\{J_{\tau}\}}(\alpha, \textbf{y}, \mathcal{Z}) (\textbf{y}, [A]),
		\end{equation*}
		where $A$ is determined by the relation $Z\#\mathcal{Z}_{ref}\#A =\mathcal{Z}$ and $\mathcal{Z}_{ref} =[\Psi_H'(B \times \mathbf{x})]$. By  Lemma \ref{lem4}, the  homomorphism $K$ is well defined.
		
		By Lemma \ref{lem4} and Hutchings-Taubes's obstruction gluing argument \cite{HT1, HT2}, then we get the result.  Note that the ECH partition conditions ensure that the gluing coefficient is 1.
		
		Here we explain a little more about  the gluing argument. Hutchings and Taubes's original obstruction gluing argument can be modified to glue a PFH-HF curve and a PFH curve with connectors in between, see Section 6.5 of \cite{VPK}.  Let us outline the key steps of  gluing a pair $(u_+, u_-)$ with connectors in between, where $u_+$ is an embedded  PFH-curve with $I=\operatorname{ind}=1$ and $u_-$ is a PFH-HF curve with $I=\operatorname{ind} =0$, and  then we will explain the changes in our situation. The gluing argument includes the following three steps:
		\begin{enumerate}
			\item
			Let $\mathcal{M}$ be the moduli space of $\operatorname{ind}=0$ branched covering  of  the trivial cylinders whose ends are determined by the ECH partition conditions. Given $u_0 \in \mathcal{M}$ and $u_{\pm}$, construct a preglued  curve $u_*: \dot{F}_* \to W$. Let $\psi_{\pm}$ be sections of normal bundles of $u_{\pm}$ and $\psi_{0}$ be a complex function over $u_0$.  Using   these sections,  we can deform $u_*$ to  a new curve  $e_{u_*}(\beta_+ \psi_+, \beta_{0} \psi_{0}, \beta_- \psi_- )$, where $\beta_{\pm}, \beta_0$ are suitable cutoff functions.
			\item
			The deformed curve $e_{u_*}(\beta_+ \psi_+, \beta_{0} \psi_{0}, \beta_- \psi_- )$ is $J$-holomorphic if and only if  it satisfies the following equation
			\begin{equation} \label{eq2}
				\beta_+ \Theta_+ (\psi_+, \psi_{0}) + \beta_{0} \Theta_{0}(\psi_+, \psi_{0}, \psi_-) + \beta_- \Theta_-(\psi_-,  \psi_{0})=0.
			\end{equation}
			Hence, it suffices to solves $\Theta_{\pm} =0$ and $\Theta_0=0$.  Fix a sufficiently small $\psi_0$. Since $u_{\pm}$ are Fredholm regular, one can find solutions  $\psi_{\pm}$ such that $\Theta_{\pm}=0$, where $\psi_{\pm}$ depends on $\psi_0$.  Then, one can reduce solving  $\Theta_0=0$ to finding zeros of  a section $\mathfrak{s}$ of the obstruction bundle $\mathcal{O} \to \mathcal{M}$.
			\item
			Finally, Hutchings and Taubes show that $\mathfrak{s}^{-1}(0)$ only depends on the asymptotic expansion of the ends of $u_0$. Moreover, they give a combinatorial formula for $\#\mathfrak{s}^{-1}(0)$. If $u_{\pm}$ satisfies the ECH partition conditions, then $\#\mathfrak{s}^{-1}(0)=1$.
		\end{enumerate}
		
		
		In our situation, we need to glue a $J_{\tau_0}$-holomorphic  PFH-HF curve with $I=\operatorname{ind}=-1$ and a $I=\operatorname{ind}=1$ PFH curves with connectors in  between. The difference between the case here  and  the above  is that  $(J_{\tau},  L_{\tau})$ changes as $\tau$ vary.  This difference only influences  the second steps as the other steps take over the ends of holomorphic curves where $(J_{\tau}, L_{\tau})$  is fixed.  As before, the deformed  curve is $J_{\tau_0+\tau}$-holomorphic if it satisfies the Equation (\ref{eq2}).  Over the region  of $|s| \le R_0$, $\Theta_-$ should be replaced by  $$\Theta_-(\psi_-, \tau)=D\bar{\partial}_{J_{\tau_0}, u_-} \psi_- + \tau \dot{J}_{\tau_0}(u_-) \circ du \circ j + F_-(\psi_-, \tau),$$ where  $F_-(\psi_-, \tau)$ is type 2-quadratic (see Definition 5.1 of \cite{HT2})  with respect to $\psi_-$ and $\tau$.   Over the region $|s| >R_-$, $\Theta_-$ is the same as before. Because  our $\{J_{\tau}\}_{\tau \in [0,1]}$ is generic, the operator  $D\bar{\partial}_{J_{\tau_0}, u_-} \psi_- + \tau \dot{J}_{\tau_0}(u_-) \circ du \circ j $ is a bijection between  suitable Sobolev spaces. Hence, for sufficiently small $\psi_{0}$, we can find a unique solution $(\psi_-, \tau)$ depending on $\psi_{0}$. The rest of argument is the same as before.
	\end{proof}

	\subsection{Proof of the partial invariance}
	To prove the  partial invariance  of Theorem \ref{thm2}, we want to apply the  neck stretching  argument. However,  at present   the PFH cobordism maps  only can be defined via Seiberg-Witten equations which is not compatible with our setting here. To ensure that the cobordism maps can be defined by holomorphic curves, we assume that $\varphi_{H}$ and $\varphi_G $ satisfy (\ref{assumption1}) and (\ref{assumption2}) respectively.

	Let $(W, \Omega_{\varphi_G}, L_{\Lambda_{G}})$ be a closed-open  cobordism defined by a Hamiltonian $G$ as before.   Let $(M, \Omega,  L_0, L_1)$ be  the  Lagrangian cobordism from $(\varphi_G(\underline{\Lambda}), \underline{\Lambda})$ to $(\varphi_H(\underline{\Lambda}), \underline{\Lambda})$ provided by Lemma \ref{lem10}.  Let $(X, \Omega_X)$  be the symplectic cobordism (\ref{eq9}) from $(Y_{\varphi_H}, \omega_{\varphi_H})$ to  $(Y_{\varphi_G}, \omega_{\varphi_G})$. For simplicity, assume that  the constant $R_0=1$ in Lemma \ref{lem10} and (\ref{eq9}). We glue  $(X, \Omega_X)$, $(W, \Omega_{G}, L_{\Lambda_{G}})$  and $(M, \Omega,  L_0, L_1)$ in the following way:  Let $s_+$, $s_0$ and  $s_-$ be the $\mathbb{R}$-coordinates of $X$, $W$ and $M$ respectively.  Define
	\begin{equation*}
		(W_{r, R}, \Omega_{r, R}) := (M, \Omega) \vert_{s_- \le r} \cup_{s_-=r \sim s_0=-r } (W, \Omega_{\varphi_G}) \vert_{ -r \le s_0 \le R} \cup_{s_0=R \sim s_+=-R}  (X, \Omega_X) \vert_{s_+ \ge -R}.
	\end{equation*}
	Note that $\pi: W_{r, R} \to B$ still is a surface  bundle over $B$.
	We glue the Lagrangian submanifolds together similarly. Define
	\begin{equation*}
		L_{r, R}:=   (L_0\cup L_1) \vert_{s_- \le r} \cup_{s_-=r \sim s_0=-r }   L_{\Lambda_{G}} \vert_{ -r \le s_0}.
	\end{equation*}
	Then $L_{r, R}$ is  a $\Omega_{r, R}$-Lagrangian submanifold and  it consists of $d$ disjoint connected components. Moreover,  the Lagrangian $L_{r, R} \subset \pi^{-1} ( \partial B)$  satisfies
	\begin{itemize}
		\item
		$L_{r, R} \vert_{s \le -2r-1 \times \{0\} } =\mathbb{R}_{s \le -2r-1} \times \{0\}\times \varphi_H(\underline{\Lambda})$,
		\item
		 $L_{r, R} \vert_{s \le -2r-1 \times \{1\} } =\mathbb{R}_{s \le -2r-1} \times \{1\}\times \underline{\Lambda}$,
		\item
		and $L \vert_{|s| \le r} = L_{\Lambda_G}$.
	\end{itemize}

Define
\begin{equation*}
 \begin{split}
    &(W_R, \Omega_R, L_R) : = (W_{r, R}, \Omega_{r, R}, L_{r, R}) \vert_{r=-3},\\
       &(M \circ W \circ X, \Omega_1, L_1 ) := (W_R, \Omega_R, L_R) \vert_{R=1},\\
        &(M \circ W,  \Omega', L' ): =   (M, \Omega, L_0 \cup L_1) \vert_{s_- \le 3} \cup_{s_-=3 \sim s_0=-3 } (W, \Omega_{\varphi_G}, L_{\Lambda_G}) \vert_{s_0 \ge -3}.
   \end{split}
\end{equation*}
Let $s$ denote the $\mathbb{R}$ coordinate on $W_{R}$ such that $s=s_0$ over $W_{\varphi_G}$. 	Take a generic path of almost complex structures $\{J_R\}_{R \ge 1}$ on $W_R$ such that
	\begin{enumerate}
		\item
		$J_1=J_{R} \vert_{R=1}$  is generic in the sense of Lemma \ref{lem31};
		\item
		$J_R =J_H \in \mathcal{J}(Y_{\varphi_H}, \omega_{\varphi_H})$ when $s  \ge 2R+10$ and $s \le -10$.
		\item
		$J_R =J_G \in \mathcal{J}(Y_{\varphi_G}, \omega_{\varphi_G})$  when  $-3 \le s  \le 2R-10$.
		\item
		For each $R$,  $J_{R}$ is an $\Omega_{R}$-tame almost complex structure such that the projection $\pi_R: W_R  \to B$ is complex linear with respect to $(J_{R},  j_B)$.
		\item
		As $R \to \infty$, $J_R \vert_{M \circ W}$ converges in $C^{\infty}_{loc}$ to a generic  $J'$ and $J_R \vert_{X}$ converges in $C^{\infty}_{loc}$ to a generic  admissible almost complex structure $J_X$.
	\end{enumerate}
As $R \to \infty$, $(W_R, \Omega_R, L_R, J_R)$  splits into  $(X, \Omega_X, J_X)$ and $(M \circ W, \Omega', L', J')$.  We want
 to show that the closed-open morphism defined by $(M \circ W\circ X, \Omega_1, L_1, J_1)$ equals to the
 composition of the PFH cobordism map and the one defined by $(M \circ W, \Omega', L', J')$.

	Define the moduli space $\mathcal{M}^{\{J_R\}}(\alpha, \textbf{y}, \mathcal{Z}) : =  \{ (u, R) \vert u \in \mathcal{M}^{J_R} (\alpha, \textbf{y}, \mathcal{Z}), R\in [1, \infty)\}$.  The following lemma study the boundary of   $\mathcal{M}^{\{J_R\}}(\alpha, \textbf{y}, \mathcal{Z})$.

	\begin{lemma} \label{lem17}
	Suppose that $\varphi_H$ and $\varphi_G$ satisfy \ref{assumption1} and \ref{assumption2} respectively. 	Assume that $I(\mathcal{Z}) =0$.  Let $\alpha$ be a PFH generator.  Let $\{(u_n, R_n)\}_{n=1}^{\infty}$ be a sequence of  curves in  $\mathcal{M}^{\{J_{R}\}}(\alpha, \mathbf{y}, \mathcal{Z}) $. Then after passing to a subsequence,   $\{(u_n, R_n)\}_{n=1}^{\infty}$ converges to a broken holomorphic curve $\mathbf{u}$ in the sense of  SFT.  Furthermore, the limit $\mathbf{u}$ satisfies the following properties:
		\begin{itemize}
			\item
			If $R_n \to \infty$, then $\mathbf{u}=\{ u^{+}, u^1, ..., u^k, u^{-} \}$, where $u^+$ is an embedded  $J_X$-holomorphic curve in $X $ with $I=0$, $u^i$ are  $ \operatorname{ind} =0$ connectors and $u^{-}$ is an embedded $J'$-holomorphic PFH-HF curve in $M\circ W$ with $I=0$.
			\item
			If $R_n \to R_*$ and $J_{R_*} $ is not a generic almost complex structure, then  we have the following to possibilities:
			\begin{enumerate}
				\item
				$\mathbf{u}=\{ u^{+}, u^1, ...,u^k, u^{-} \}$ , where  $u^+$ is a $J_H$-holomorphic curve in $\mathbb{R} \times Y_{\varphi_H}$ with $I(u^+) =1$, $u^i$ are $ \operatorname{ind} =0$ connectors and $u^-$ is a $J_{R_*}$-holomorphic PFH-HF curve in $W_{R_*}$ with $I=-1$.
				
				\item
				$\mathbf{u}=\{ u^{+},  u^{-} \}$ , where  $u^+$ is  a $J_{R_*}$-holomorphic PFH-HF curve in $W_{R_*}$ with $I=-1$ and $u^-$ is a  $J_H$-holomorphic HF curve with $I=1$.
			\end{enumerate}
		\end{itemize}
	\end{lemma}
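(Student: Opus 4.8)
The proof will follow the pattern of Lemmas \ref{lem3} and \ref{lem4}, the only genuinely new feature being the degeneration of the neck lying over $Y_{\varphi_G}$ as $R_n\to\infty$. First I would record the uniform index control. As in Lemma \ref{lem3}, no curve in $\mathcal{M}^{\{J_R\}}(\gamma,y,\mathcal{Z})$ has a closed irreducible component and none is multiply covered (the HF--ends forbid it), so, the family $\{J_R\}$ being generic, each such curve has $\mathrm{ind}\ge -1$. Combining the ECH index inequality $I(u)\ge \mathrm{ind}(u)+2\delta(u)$ with the parity relation $I(u)=\mathrm{ind}(u)\bmod 2$ of Lemma \ref{lem20} (valid because $\gamma$ is a PFH generator) forces $\mathrm{ind}(u)=I(\mathcal{Z})=0$, and hence, by Fact \ref{fact2}, the PFH ends of $u$ satisfy the ECH partition conditions; assumption (\ref{assumption1}) makes those partitions the extreme ones at the orbits of $\gamma$.

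Next I would invoke SFT compactness for closed--open cobordisms (Sections 6.1 and 7.3 of \cite{VPK}, in its neck--stretching form). Along $(u_n,R_n)$ the limit $u_\infty$ is a holomorphic building whose levels are distributed among $\mathbb{R}\times Y_{\varphi_H}$ (over the PFH end), a copy of $X$, $\mathbb{R}\times Y_{\varphi_G}$ (the stretched neck, present only when $R_n\to\infty$), a copy of $W'$ when $R_n\to\infty$ or of $W_{R_*}$ when $R_n\to R_*$, and $M$ (over the HF end). Decomposing every level into simple curves, the Lemma \ref{lem6}--type computation used already in Lemma \ref{lem3} shows that each fibre bubble and each closed component raises the total ECH index by at least $2$; since the total ECH index is $0$ and every level has $I\ge 0$ except possibly the $W_{R_*}$--level (which has $I\ge -1$, and only in the non--generic case $R_n\to R_*$), no bubble can occur and the index is pinned down level by level. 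In the symplectization factors an $I=0$ level reduces, exactly as in Lemma \ref{lem3}, to trivial cylinders (dropped) and $\mathrm{ind}=0$ connectors; an $I=0$ level in $M$ is a trivial strip by Theorem \ref{thm4} and is dropped; the $I=0$ curves in the $X$-- and $W'$--factors have $\delta=0$ by the ECH inequality, hence are embedded with $\mathrm{ind}=0$. Finally Fact \ref{fact1} together with the extreme ECH partitions at $\gamma$ (assumption (\ref{assumption1})) and, over $Y_{\varphi_G}$, at the matched orbit sets (assumption (\ref{assumption2})) rules out every non--trivial connector over $Y_{\varphi_H}$ and reduces the connectors over $Y_{\varphi_G}$ to $\mathrm{ind}=0$ ones. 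This is precisely the first assertion: for $R_n\to\infty$ one gets $u_\infty=\{u^+,v_1,\dots,v_k,u^-\}$ with $u^+$ an embedded $I=0$ curve in $X$, the $v_i$ the $\mathrm{ind}=0$ connectors over $Y_{\varphi_G}$, and $u^-$ an embedded $I=0$ PFH--HF curve in $W'$, with relative homology classes composing to $\mathcal{Z}$. When $R_n\to R_*$ with $J_{R_*}$ non--generic, the index bookkeeping (using that only the $W_{R_*}$--level may be negative) forces that level to carry $I=-1$ and exactly one other level to carry $I=1$, this latter living either in $\mathbb{R}\times Y_{\varphi_H}$ (with only $\mathrm{ind}=0$ connectors in between and trivial cylinders above) or in $M$ (with only trivial strips in between and only trivial cylinders over $Y_{\varphi_H}$, ruled out as above) — the two configurations of case (2).

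The main obstacle I expect is exactly this combinatorial bookkeeping: showing that between the distinguished $I=1$ level (or the $X$--level) and the $W_{R_*}$-- (resp. $W'$--) level only $\mathrm{ind}=0$ connectors survive, and that nothing but trivial cylinders and trivial strips sit above and below. Assumptions (\ref{assumption1}) and (\ref{assumption2}) are what make this work, since they are precisely what turns the ECH partitions at $\gamma$ and at the relevant intermediate orbit sets into the extreme ones to which Fact \ref{fact1} applies; one must also be careful that it is the $1$--parameter genericity of the family $\{J_R\}$, not genericity of each $J_R$, that permits an $\mathrm{ind}=-1$ curve in the middle level and nothing worse, and that in the range $R\le R_*$ only finitely many parameter values are non--generic. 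The remaining points — Fredholm regularity of the surviving pieces, that the asymptotic orbit sets in each intermediate level are again PFH generators (needed to reapply Lemma \ref{lem20} inductively), and, for the chain--homotopy statement that will follow, that the obstruction--gluing coefficient equals $1$ — are routine given Facts \ref{fact1}--\ref{fact3} and the arguments already carried out in Lemmas \ref{lem3} and \ref{lem4}.
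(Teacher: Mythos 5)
Your overall strategy (SFT compactness, fibre--bubble index bookkeeping as in Lemma \ref{lem6}, level--by--level index pinning, then Fact \ref{fact1} plus the partition conditions to kill connectors and trivial pieces) is the same as the paper's, and most of the bookkeeping is right. But there is one genuine gap: you assert that in the limit building ``every level has $I\ge 0$ except possibly the $W_{R_*}$--level,'' and in the $R_n\to\infty$ case this includes the level $u^+$ living in the surface--bundle cobordism $X$ between $Y_{\varphi_H}$ and $Y_{\varphi_G}$. That level is a closed--ended PFH--type curve with no HF boundary, so nothing prevents it from being multiply covered, and for holomorphic currents in a genuine symplectic cobordism (as opposed to a symplectization or a closed--open cobordism with HF ends) the ECH index can be negative --- this is precisely the obstruction, highlighted at the start of Section 5, to defining PFH cobordism maps by holomorphic curves. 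The non--negativity $I(u^+)\ge 0$ is exactly where the hypotheses \ref{assumption1} and \ref{assumption2} enter: the paper deduces it from Corollary 7.4 of \cite{GHC}, and likewise uses Lemma 7.7 of \cite{GHC} to get $\mathrm{ind}(u^+)=0$ level by level. In your write--up the role of \ref{assumption1}--\ref{assumption2} is instead attributed to making the ECH partitions ``extreme'' for Fact \ref{fact1}; but Fact \ref{fact1} applies to any elliptic orbit and does not need those hypotheses, whereas the index control on the $X$--level cannot be obtained without them. Without this input your claim that the total index $0$ pins every level to $I=0$ does not follow.

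A second, smaller point: your opening reduction (``$\mathrm{ind}\ge -1$ from one--parameter genericity, hence $\mathrm{ind}=0$'') applies to the curves $u_n$ in $W_{R_n}$, which do have HF ends; it does not transfer to the individual limit levels in $X$ or in the neck $\mathbb{R}\times Y_{\varphi_G}$, where multiple covers reappear. So the Fredholm--index additivity argument for the limit also has to route through the cited results of \cite{GHC} (or Fact \ref{fact3} for the connectors), not through somewhere--injectivity. Once $I(u^+)\ge 0$ and $\mathrm{ind}(u^+)\ge 0$ are supplied from \cite{GHC}, the rest of your argument, including the $R_n\to R_*$ case modeled on Lemma \ref{lem4}, goes through as in the paper.
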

	\begin{proof}
The convergence of  $\{(u_n, R_n)\}_{n=1}^{\infty}$
  is guaranteed by Lemma \ref{lem29}, SFT
 compactness \cite{FYHKE} and the same argument as in Lemma \ref{lem3}.

		Consider the case that $R_n \to \infty$. Then  $$\mathbf{u}=\{u^{N^+},... ,u^1,   u^{+}, v^1, ...,v^k, u^{-}, v^{-1}, ..., v^{-N_-}\}$$ is a broken holomorphic curve with total zero ECH index,   where $u^i$ are $J_H$-holomorphic  curves in $\mathbb{R} \times Y_{\varphi_H}$,  $u^+$ is  a $J_X$-holomorphic curve in $X $, $v^i$ are $J_G$-holomorphic curves in $\mathbb{R} \times Y_{\varphi_G}$, $u^{-}$ is a  $J'$-holomorphic curve in $M \circ W$, and $v^{-j}$  are $J_H$-holomorphic curves in $M$ with Lagrangian boundary conditions $\mathbb{R} \times \{0\} \times  \varphi_H(\underline{\Lambda}) \cup \mathbb{R} \times \{1\} \times \underline{\Lambda}$.
		
		As we choose  the  almost complex structures so that the projections are complex linear, the same argument in Lemma \ref{lem7}  shows that the bubbles lie in the fibers.  Hence, the inequality (\ref{eq3}) still holds.
 By Lemma 9.5 of \cite{H1}, (\ref{eq59}) and Theorem \ref{thm4},
 $u^i$, $u^-$ and $v^{j}$  have nonnegative ECH index.  By     the assumptions on $\varphi_{H}, \varphi_G$ and Corollary  7.4 in \cite{GHC},  we have  $I(u^+) \ge 0$. In sum, each level of $\mathbf{u}$ has    nonnegative ECH index. Consequently,  each level of $\mathbf{u}$ has   zero ECH index and no bubbles appear.

 The zero ECH index implies that $v^{-j}$ are trivial strips,   $u^i$ and $v^j$ are connectors.  By  Lemma 7.7 in \cite{GHC}, $ \operatorname{ind} u^+ \ge 0.$ Since the total Fredholm index is zero, by \ref{fact3},  each level of $\mathbf{u}$ has   zero Fredholm  index.
 The ECH partition conditions and \ref{fact1} can be used to show that $u^i$ are trivial connectors. In particular, $u^i$ and $v^{-j}$ are ruled out. Then
 we get the statement.

		For the case  $R_n \to R_*$, the proof is the same as  Lemma \ref{lem4}.
	\end{proof}

 We have closed-open maps $\widetilde{\mathcal{CO}}(M \circ W, \Omega', L')_{J'} $ and $\widetilde{\mathcal{CO}}(M \circ W \circ X,\Omega_1,L_1)_{J_1}$
 defined by counting $I = 0$ PFH-HF curves in the corresponding closed-open symplectic cobordisms. Also, we have a holomorphic curve definition of the PFH cobordism
 map $PFH_{Z_{ref}}(X, \Omega_X)_{J_X}$.  Since these maps also depend on the choice of the reference classes, we need to clarify our choice here.  To define $\widetilde{\mathcal{CO}} (M \circ W, \Omega', L')_{J'} $,  we
 use $\mathcal{Z}_{ref} = [\Psi'_G(B \times \mathbf{x})\#F(\mathbb{R} \times [0,1] \times \mathbf{x})] $ as a reference homology class, where $\Psi'_G$
 is the trivialization in Remark \ref{remark5} and $F$ is the diffeomorphism (\ref{eq44}). For $PFH_{Z_{ref}}(X, \Omega_X)_{J_X}$, we take $Z_{ref}=[\mathbb{R} \times S^1 \times \mathbf{x}] \in H_2(X, \gamma_H^{\mathbf{x}},  \gamma_G^{\mathbf{x}}). $ Then the reference
 homology class for defining   $\widetilde{\mathcal{CO}}(M \circ W \circ X,\Omega_1,L_1)_{J_1}$  is $\mathcal{Z}_{ref} \# Z_{ref} = [\Psi_H'(B \times \mathbf{x})]$.   Note that here
 $ [\Psi'_G(B \times \mathbf{x})]$, $[ F(\mathbb{R} \times [0,1] \times \mathbf{x})] $ and $[\mathbb{R} \times S^1 \times \mathbf{x}]$ are the classes used to define $\widetilde{\mathcal{CO}}(\underline{\Lambda}, G)$,
 $I_{G,H}$ and $\mathfrak{I}_{H,G}$ respectively.

  By Lemma  \ref{lem17} and Hutchings-Taubes's obstruction gluing argument \cite{HT1, HT2},  we  have
	\begin{equation*}
		\widetilde{\mathcal{CO}} ( M \circ W\circ X, \Omega_1, L)_{J_1} =\widetilde{\mathcal{CO}} ( M \circ W, \Omega', L')_{J'}\circ PFC_{Z_{ref}}(X,  \Omega_{X})_{J_X}  + d_{J_H} \circ K + K\circ  \partial_{J_H}.
	\end{equation*}
	The morphism $K: \widetilde{PFC}(\Sigma, \varphi_H, \gamma_H^{\mathbf{x}})_{J_H} \to CF(\Sigma, \underline{\Lambda},  \varphi_H,  \textbf{x})_{J_H} $ is defined by counting the moduli space $\mathcal{M}^{\{J_R\}}(\alpha, \textbf{y}, \mathcal{Z})$ with $I(\mathcal{Z})=-1$.  In particular, we have
	$$\widetilde{\mathcal{CO}}( M \circ W \circ X, \Omega_1, L_1 )_{J_1} = \widetilde{\mathcal{CO}} ( M \circ W, \Omega', L')_{J'} \circ PFH_{Z_{ref}}(X,  \Omega_{X})_{J_X}.  $$
at the  homological level.
	
	Recall that $(\Omega_1, L, J_1)_{|s| \ge 20} =(\Omega_{\varphi_H}, L_{\Lambda_{H}}, J_H)$.   Under the trivialization in Remark \ref{remark5},  one can   find a homotopy $\{(\Omega_{\tau}, L_{\tau}, J_{\tau})\}_{\tau \in [0,1]}$ such that $(\Omega_{\tau}, L_{\tau}, J_{\tau}) \vert_{\tau =0} = (\Omega_H, L_{\Lambda_{H}}, J_H )$ and  $(\Omega_{\tau}, L_{\tau}, J_{\tau}) \vert_{\tau =1} = (\Omega_1, L, J_1)$.   More in details, we can construct  $\{(\Omega_{\tau}, L_{\tau})\}_{\tau \in [0,1]}$ as follows.  Let $H^X_s$, $H_s^M$  denote the functions using for defining (\ref{eq9})
 and (\ref{eq44}) respectively. Let
$$H_s =
\begin{cases}
H_s^X, & s >1 \\
G, & -3 \le s \le 1\\
H_s^M, & s <-3.
\end{cases}$$
Define  $H_{s, \tau} : =(1 -\tau) H_s + \tau H$ and $\tilde{\omega}_{\tau}: = \omega + d(\dot{\chi} H_{s, \tau}  dt)$, where $\chi$ is the cutoff function in Remark \ref{remark5}.  Then $(\Omega_{\tau}, L_{\tau}):= \left( (\Psi_H'^{-1})^*\tilde{\omega}_{\tau} + ds \wedge dt , \Psi_H'(\underline{\Lambda}) \right)$ satisfies our
 requirement.

By Lemma \ref{lem11}, we have
	\begin{equation}\label{eq31}
		 \widetilde{\mathcal{CO}} (\underline{\Lambda}, H)_{J_H}  =\widetilde{\mathcal{CO}} ( M \circ W, \Omega', L')_{J'}  \circ PFH_{Z_{ref}}(X,  \Omega_{X})_{J_X}
	\end{equation}
at the homological level.
	According to Theorem 3 in \cite{GHC},  we can replace  $ PFH_{Z_{ref}}(X,  \Omega_{X})_{J_X}$ by  $PFH^{sw}_{Z_{ref}}(X,  \Omega_{X}) =\mathfrak{I}_{H, G}.$

 The remaining step is to prove the following lemma by applying neck stretching
 argument to $( M \circ W, \Omega', L', J').$
	\begin{lemma} \label{lem24}
		We have $\widetilde{\mathcal{CO}} ( M \circ W, \Omega', L')_{J'} =I_ {G, H} \circ \widetilde{\mathcal{CO}} ( \underline{\Lambda}, G)_{J_G}.$
	\end{lemma}
	\begin{proof}
	For $r\ge 0$, define
  $$(W_{r}, \Omega_{r} , L_{r}):=   (M, \Omega, (L_0\cup L_1)) \vert_{s_- \le r+3} \cup_{s_-=r+3 \sim s_0=-r-3 } (W, \Omega_{\varphi_G},  L_{\Lambda_{G}}) \vert_{ -r -3\le s_0 }.   $$
  Note that  $( M \circ W, \Omega', L') =(W_{r}, \Omega_{r} , L_{r}) \vert_{r=0}.$

		Let $\{J_r\}_{r\ge 0}$ be a generic family of almost complex structures such that
	\begin{enumerate}	
	\item
		$J_{r=0} =J'$, $ J_r \vert_{ W_{\varphi_G}} $ converges to $ J_G$ and $J_r \vert_{M} $ converges to $ J''$. Moreover, $J_G$ and $J''$ are generic.
	\item
	$J_r =J_G$  over the region  $s\ge -2r $ and $J_r=J_H$ over the region $s \le -2r-10$.
	\item
	$J_r$ is $\Omega_r$-tame. Also, the projection  $\pi_r: W_r  \to B$ is complex linear with respect to $(J_{r},  j_B)$.
	\end{enumerate}
As $r \to \infty $,the tuple $(W_r, \Omega_r,L_r,J_r) $  splits into $(M, \Omega, L_0, L_1, J'')$  and $(W_{\varphi_G}, \Omega_{\varphi_G}, L_{\Lambda_G}, J_G)$.
		 Let $\{u_n\}_{n=1}^{\infty}$ be a sequence of $J_{r_n}$-holomorphic  PFH-HF curves in $W_{r_n}$ with fixed relative homology class.   Then $\{u_n\}_{n=1}^{\infty}$
 converges to a broken holomorphic $\mathbf{u}$. Using the same argument  in Lemma \ref{lem17}, then we get the following results:
		\begin{itemize}
			\item
			If $r_n \to \infty$, then $\mathbf{u}=\{ u^{+}, u^{-} \}$, where $u^+$ is an embedded  $J_G$-holomorphic PFH-HF curve in $W_{\varphi_G}$ with $I=0$   and $u^{-}$ is an embedded $J''$-holomorphic curve in $M$ with $I=0$.
			\item
			If $r_n \to r_*$ and $J_{r_*} $ is not a generic almost complex structure, then  we have the following two possibilities:
			\begin{enumerate}
				\item
				$\mathbf{u}=\{ u^{+}, u^1,...,u^k, u^{-} \}$, where  $u^+$ is a $J_G$-holomorphic curve in $\mathbb{R} \times Y_{\varphi_G}$ with $I(u^+) =1$, $u^i$ are $ \operatorname{ind}  =0$ connectors and $u^-$ is a $J_{r_*}$-holomorphic PFH-HF curve in $W_{r_*}$ with $I=-1$.
				
				\item
				$\mathbf{u}=\{ u^{+},  u^{-} \}$, where  $u^+$ is  a $J_{r_*}$-holomorphic PFH-HF curve in $W_{r_*}$ with $I=-1$ and $u^-$ is a  $J_H$-holomorphic HF curve with $I=1$.
			\end{enumerate}
		\end{itemize}
		 By the gluing argument \cite{HT1, HT2} and our choice of the reference homology classes, we finish the proof of the statement.
	\end{proof}
	Combine  Lemma \ref{lem24} and (\ref{eq31}); then  we finish the proof of the partial invariance in Theorem \ref{thm2}.

	\section{Computations} \label{section6}
	In this section, we show that the closed-open morphism $\widetilde{CO}$ is non-vanishing under assumption  \ref{assumption1}. To this end,   we first  fix a Morse function as follows.  Let $f_{\Lambda_i} : \Lambda_i \to \mathbb{R}$ be a perfect Morse function with a minimum $y_i^-$ and  a maximum $y_i^+$. We   extend $\cup_{i=1}^d f_{\Lambda_i}$ to be a Morse function $f: \Sigma\to \mathbb{R} $ such that
	\begin{enumerate}
		\item
		$(f, g_{\Sigma})$ satisfies the Morse-Smale condition, where $g_{\Sigma}$ is a fixed metric on $\Sigma$.
		\item
		$f=f_{\Lambda_i} -\frac{1}{2} y^2$ in a neighbourhood of $\Lambda_i$, where $y$ is the coordinate of the normal direction.
		\item $\{y_i^+\}_{i=1}^d$ are the only   maximum points  of $f$. Also,   $f(y_i^+) =0$ for any $1 \le i \le d$.
	\end{enumerate}
	The picture of $f$ is shown in Figure \ref{figure2}.
	Define a function $H_{\epsilon}:=\epsilon f$, where $0<\epsilon<1$ is a small constant.

To prove the non-vanishing result, the idea is to compute the closed-open morphism  for a special Hamiltonian function $G$ and then using the partial invariance in Theorem \ref{thm2} to deduce the non-vanishing for other $H$.

	The Morse function $H_{\epsilon}$ is a nice  candidate  for computation.    In the next subsection, we show that the Reeb chords and periodic orbits of $\varphi_{H_{\epsilon}}$  correspond  to the critical points of $f_{\Lambda_i}$ and $f$ respectively.  Also, the ECH index  and  the energy  of holomorphic curves in $ (W, \Omega_{\varphi_{H_{\epsilon}}}, {L_{\Lambda_{H_{\epsilon}}}} )$  are computable. Combining the energy and index reasons, we could compute   $ \widetilde{\mathcal{CO}}(\underline{\Lambda}, H_{\epsilon})$.  However, to apply the partial invariance in Theorem \ref{thm2}, we require that $H_{\epsilon}$ satisfies assumption \ref{assumption1} or \ref{assumption2} while $H_{\epsilon}$ does not.   Therefore, we need to modify  $H_{\epsilon}$ to  $H_{\varepsilon}'$ satisfying  \ref{assumption2}. After this modification,  the periodic orbits of ${\varphi_{H'_{\varepsilon}}}$ are no longer  corresponding to the critical points any more.  We need to do a bit more works.   For example, it is not clear which  PFH generators  represented the unit of PFH.  Thus,  we need to compute the PFH cobordism maps.

	\subsection{Reeb chords and period orbits of $\varphi_{H_{\epsilon}}$ }  \label{section5.1}
In this section, we make a suitable choice of the Hamiltonian function and study its  Reeb chords and periodic orbits.

	\begin{lemma} \label{lem25}
		If $0< \epsilon \ll 1$ is sufficiently small, then $\mathbf{y}=[0, 1] \times \{y_1^{\epsilon_1}, ..., y_d^{\epsilon_d}\}$ are the only Reeb chords of $\varphi_{H_{\epsilon}}$, where  $\epsilon_i \in \{+, -\}$. 
	\end{lemma}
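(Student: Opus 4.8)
The plan is to reduce everything to a local analysis of the Hamiltonian flow of $\epsilon f$ near each curve $\Lambda_i$, using that $H_\epsilon=\epsilon f$ is $C^1$--small for $\epsilon$ small.

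\textbf{Step 1: localization.} First I would note $\|X_{H_\epsilon}\|_{C^0}=\epsilon\|X_f\|_{C^0}$, so every Hamiltonian chord $\gamma\colon[0,1]\to\Sigma$ of $\varphi_{H_\epsilon}$ stays within distance $\epsilon\|X_f\|_{C^0}$ of $\gamma(0)$. Since $\Lambda_1,\dots,\Lambda_d$ are compact and pairwise disjoint, fix $\delta_0>0$ with $d(\Lambda_i,\Lambda_j)>\delta_0$ for $i\neq j$ and with the $\delta_0$--neighbourhood $U_i$ of $\Lambda_i$ a collar; then for $\epsilon<\delta_0/\|X_f\|_{C^0}$ every chord starting on $\Lambda_i$ stays inside $U_i$. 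Consequently, in the definition of a Reeb chord the permutation $\sigma$ is forced to be the identity, and each $y_i\in\varphi_{H_\epsilon}(\Lambda_i)\cap\Lambda_i$ is governed entirely by the flow on $U_i$. So it suffices to prove $\varphi_{H_\epsilon}(\Lambda_i)\cap\Lambda_i=\{y_i^+,y_i^-\}$ for every $i$; equivalently, the only Hamiltonian chords of $\epsilon f$ from $\Lambda_i$ to $\Lambda_i$ are the constant chords at $y_i^\pm$.

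\textbf{Step 2: the local model and the constant chords.} On $U_i$ I would use the coordinates $(\theta,y)$ from the construction, in which $\Lambda_i=\{y=0\}$ and $f=f_{\Lambda_i}(\theta)-\tfrac12 y^2$, taking them symplectic ($\omega=d\theta\wedge dy$) as part of that choice. Then $X_{H_\epsilon}=\epsilon\bigl(y\,\partial_\theta+f_{\Lambda_i}'(\theta)\,\partial_y\bigr)$, so after rescaling time a chord becomes a trajectory of the autonomous Hamiltonian $K:=f_{\Lambda_i}(\theta)-\tfrac12 y^2$, of $K$--time $\epsilon$, beginning and ending on $\{y=0\}$; along it $K$ is conserved, whence $f_{\Lambda_i}(\theta(s))=E+\tfrac12 y(s)^2$ for a constant $E$, with equality $f_{\Lambda_i}(\theta)=E$ exactly when $y=0$. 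A constant chord occurs precisely at a zero of $X_{H_\epsilon}$ on $\Lambda_i$, i.e.\ at a critical point of $f_{\Lambda_i}$, which by the choice of $f$ is $y_i^+$ or $y_i^-$; and since $df$ also vanishes there, $X_{H_\epsilon}(y_i^\pm)=0$, so the constant paths at $y_i^\pm$ are genuine Reeb chords. This gives the ``$\supseteq$'' inclusion.

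\textbf{Step 3: no non--constant chords, and conclusion.} This is the crux. For a regular value $E$ with $\min f_{\Lambda_i}<E<\max f_{\Lambda_i}$, the perfect Morse hypothesis gives $f_{\Lambda_i}^{-1}(E)=\{\theta_a,\theta_b\}$, and the level set $\{K=E\}=\{(\theta,\pm\sqrt{2(f_{\Lambda_i}(\theta)-E)})\}$ is a single smooth closed curve meeting $\{y=0\}$ transversally at $(\theta_a,0),(\theta_b,0)$ and nowhere else. A trajectory on it is periodic with period $T(E)$ and returns to $\{y=0\}$ only after a positive integer multiple of $T(E)/2$. As $E\to\max f_{\Lambda_i}$ one has $T(E)\to 2\pi/\sqrt{|f_{\Lambda_i}''|}>0$ (the period of the elliptic linearization at the maximum), while $E\to(\min f_{\Lambda_i})^{+}$ drives the trajectory onto the homoclinic loop of the saddle at the minimum, so $T(E)\to\infty$; levels with $E\le\min f_{\Lambda_i}$ either avoid $\{y=0\}$ or meet it only at the saddle. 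By continuity $T(E)$ has a positive lower bound $T_{*,i}$, so choosing $\epsilon<\tfrac12\min_i T_{*,i}$ (and small enough for Steps 1--2) rules out every non--constant chord. Hence $\varphi_{H_\epsilon}(\Lambda_i)\cap\Lambda_i=\{y_i^+,y_i^-\}$, and combining with Step 1 a Reeb chord is exactly a tuple $[0,1]\times\{y_1^{\epsilon_1},\dots,y_d^{\epsilon_d}\}$ with the signs chosen independently. I expect Step 3 to be the main obstacle; a cleaner substitute, if one prefers to avoid the explicit ODE discussion, is to invoke the standard Weinstein--neighbourhood / generating--function description of $\Lambda_i\cap\varphi_{\epsilon f}(\Lambda_i)$ for small $\epsilon$ as the critical set of a function $C^2$--close to $\epsilon f_{\Lambda_i}$, which again consists of exactly $y_i^\pm$, still using Step 2 to pin these intersection points down as $y_i^\pm$ themselves.
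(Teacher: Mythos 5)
Your proposal is correct, and Steps 1--2 coincide with what the paper does: the paper also observes $\mathrm{dist}(\varphi_{H_\epsilon}(x),x)\le c_0\epsilon|\nabla f|$ to localize to a collar of each $\Lambda_i$, writes the same local ODE $\dot x=-\epsilon y$, $\dot y=-\epsilon f_{\Lambda_i}'(x)$, and identifies the constant chords at $y_i^{\pm}$. Where you genuinely diverge is the exclusion of non--constant chords. The paper argues by contradiction with a sequence $\epsilon_n\to 0$: the solutions converge in $C^\infty$ to a constant $(x_0,0)$, and then three cases are treated separately --- $x_0$ away from $\mathrm{Crit}(f_{\Lambda_i})$ (where $|f_{\Lambda_i}'|\ge\delta_0/2$ forces $|y_n(1)|\ge \delta_0\epsilon_n/2\neq 0$), $x_0$ near the maximum (the flow is an explicit rotation $e^{i\epsilon_n}$), and $x_0$ near the minimum (an explicit $\cosh/\sinh$ hyperbolic flow), using that $f_{\Lambda_i}$ is exactly quadratic near its critical points. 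You instead use conservation of the autonomous Hamiltonian $K=f_{\Lambda_i}(\theta)-\tfrac12 y^2$ and a uniform positive lower bound on the first--return time of its flow to $\{y=0\}$. Your route is more quantitative (it produces an explicit threshold for $\epsilon$ rather than a non--constructive ``for $n$ large'') and avoids the compactness step, at the cost of a global analysis of the level curves of $K$; the paper's route is softer and needs only the three local pictures.

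One point you should tighten in Step 3: the normal form $f=f_{\Lambda_i}(\theta)-\tfrac12 y^2$ holds only in the collar, whereas for $E$ close to $\min f_{\Lambda_i}$ the level curve $\{K=E\}$ reaches $|y|=\sqrt{2(\max f_{\Lambda_i}-E)}$, which is of order $1$ and leaves the collar; so ``$T(E)$'' there is a quantity of the model, not a period of the actual flow on $\Sigma$. This is not fatal, because Step 1 confines the chord to $|y|\le c_0\epsilon$: a trajectory on level $E$ can return to $\{y=0\}$ only by traversing a complete half--arc of the level curve, and every such half--arc passes over $\theta_{\max}$ with $|y|=\sqrt{2(\max f_{\Lambda_i}-E)}$. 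Hence either $E\ge \max f_{\Lambda_i}-c_0^2\epsilon^2/2$ --- a small loop around the elliptic point, whose half--period is bounded below by roughly $\pi/\sqrt{|f_{\Lambda_i}''(\theta_{\max})|}$ --- or the trajectory cannot return to $\{y=0\}$ at all while staying in the collar. Adding this one observation makes your Step 3 airtight without the continuity-of-$T(E)$ discussion near the separatrix.
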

	\begin{proof}
		Since  $\operatorname{dist}(\varphi_{H_{\epsilon}}(x), x ) \le c_0 \epsilon |\nabla f|_{L^{\infty}}$, we assume that $\varphi_{H_{\epsilon}}(\Lambda_i)$ lies inside a small neighbourhood of $\Lambda_i$, $1\le i \le d $. Let $(x, y)$ be  local coordinates near $\Lambda_i$, where $x$ is the coordinate of  $\Lambda_i$ and $y$ is the coordinate of the normal direction. We  assume that the symplectic form  is $\omega =dx \wedge dy$ under these  coordinates. It is easy to check that the Hamiltonian vector field is $X_{{H_{\epsilon}}}=-\epsilon y \partial_x -\epsilon f_{\Lambda_i}'(x) \partial_y $.  Therefore, the Hamiltonian flow is
		\begin{equation} \label{eq5}
			\begin{split}
				& \dot{x}=-\epsilon y\\
				& \dot{y}=-\epsilon f_{\Lambda_i}'(x).
			\end{split}
		\end{equation}

		Take a neighbourhood  $\mathcal{V}_{2\delta_0}=\mathcal{V}_{2\delta_0}^0 \cup \mathcal{V}_{2\delta_0}^1  \subset \Lambda_i$ of  $f_{\Lambda_i}$'s  critical points such that
		\begin{itemize}
			\item
			$|f'_{\Lambda_i}(x)| \ge \delta_0$   away from   a smaller neighbourhood  $\mathcal{V}_{\delta_0} \subset \mathcal{V}_{2\delta_0}$.
			\item
			$f_{\Lambda_i}\vert_{\mathcal{V}_{2\delta_0}^0} = m_p+  \frac{1}{2}x^2$ and $f_{\Lambda_i}\vert_{\mathcal{V}_{2\delta_0}^1} =- \frac{1}{2}x^2$, where  $m_p$ is the minimum value of  $f_{\Lambda_i}$.
		\end{itemize}
Suppose that for any $\epsilon_n \to 0$, we can  find   nonconstant solutions $(x_n, y_n)$  to (\ref{eq5}) satisfying $(x_n(0), 0), (x_n(1), 0) \in \Lambda_i$.  By Arzela-Ascoli Theorem,  $(x_n, y_n)$ converges in $C^{\infty}$ to a constant $ (x_0, 0)$ as $n \to \infty$. If $x_0$ is outside  $\mathcal{V}_{\delta_0}$, then we have $f_{\Lambda_i}'(x_n) \ge \frac{\delta_0}{2}$ or  $f_{\Lambda_i}'(x_n) \le - \frac{\delta_0}{2}$ for $n$ large enough. Equations (\ref{eq5}) implies that $y_n(1) \le - \frac{\delta_0{\epsilon_n}}{2}$ or $y_n(1) \ge  \frac{\delta_0{\epsilon_n}}{2}$. This contradicts with $y_n(1)=0$.
		
		If $x_0$ lies in the $\delta_0$-neighbourhood of the maximum, we may assume that $(x_n, y_n)$ lies inside $\mathcal{V}^1_{2\delta_0} \times (-2\delta_0, 2\delta_0)$ where  $f_{\Lambda_i}(x) = -\frac{1}{2}x^2$.  Then the solution to the ODE (\ref{eq5}) is
\begin{equation*}
x_n(t)+iy_n(t) =(x_n(0) + iy_n(0)) e^{i\epsilon_n t}.
\end{equation*}
 Obviously, $(0,0)$ is the only solution satisfying $y_n(0)=y_n(1)=0$.
		
		If $(x_n, y_n)$  lies inside $\mathcal{V}^0_{2\delta_0} \times (-2\delta_0, 2\delta_0)$   where $f_{\Lambda_i}(x) =m_p+ \frac{1}{2}x^2$.  Then the solution to the ODE (\ref{eq5}) is
		\begin{equation*} \
			\begin{split}
				&  x_n(t) = \cosh(\epsilon_n t) x_n(0) - \sinh(\epsilon_n t) y_n(0)\\
				& y_n(t) = -\sinh(\epsilon_n t) x_n(0) + \cosh(\epsilon_n t) y_n(0).
			\end{split}
		\end{equation*}	
		Again, $(0,0)$ is the only solution satisfying $y_n(0)=y_n(1)=0$. These contradict with  our assumption on $(x_n(t),y_n(t))$.

	\end{proof}

	\paragraph{First modification on $H_{\epsilon}$:} The purpose of this modification is to ensure that there exist periodic orbits with large period near the local minimum of $H_{\epsilon}$. Even though these orbits do not contribute to  the chain complex $\widetilde{PFC}(\Sigma, \varphi_{H_{\epsilon }}, \gamma_{H_{\epsilon}}^{\mathbf{x}})$, they are useful when
 we compute the cobordism maps and closed-open morphisms.

	Let $(x,y )$ be the local coordinates near a  local minimum  $p$ of $f$ such that  $\omega =dx \wedge dy$.  Under these coordinates, we choose $f$ such that  $f= m_p +x^2+y^2$.   Let $(r, \theta)$ be the polar coordinates.    Let $\mathcal{U}^{\delta}_p=\{(r, \theta) \vert r\le \delta\}$ denote  a $\delta$-neighbourhood of $p$.  Define $\mathcal{U}^{\delta}:= \cup_p \mathcal{U}^{\delta}_p $, where $p$ run over all the local minimums  of $f$. Fix  positive constants  $\delta, \delta_0>0$.
	Take a smooth  function $\varepsilon(r) $ such that
	\begin{itemize}

		\item
		$\varepsilon(r) =\epsilon$ on the regions $\mathcal{U}^{\delta + \delta_0}$ and $\Sigma \setminus  \mathcal{U}^{\delta +2 \delta_0} $. Here $\epsilon >0$ is a small constant. Also,   $\varepsilon(r) \ge \epsilon$ on the whole $\Sigma$.
		
		\item
		
		Over the region $\{ \delta + \delta_0 \le  r \le \delta + 2 \delta_0\}$, $\varepsilon$ is a function that  only depends on $r$.  The maximum of $\varepsilon(r) $ is denoted by $\epsilon_0$. We choose $\varepsilon(r)$ such that $0 <\frac{\epsilon_0 -\epsilon}{\delta_0} \ll \epsilon $ is small enough.
	\end{itemize}
	
	  Replace $H_{\epsilon}$ by  $H_{\varepsilon}: =\varepsilon(r) f$.  Note that  $H_{\varepsilon}=H_{\epsilon}$ outside the region $\{ \delta + \delta_0 \le r \le \delta + 2\delta_0  \}$. On the region  $\{ \delta + \delta_0 \le r \le \delta + 2\delta_0  \}$,  we have
	  \begin{equation}  \label{eq35}
	  (\varepsilon(r) f(r))' =\varepsilon'(r) (m_p+r^2)  +2\varepsilon(r) r \ge  2\epsilon (\delta+ \delta_0)  - c_0\frac{\epsilon_0 -\epsilon}{\delta_0}>0.
	  \end{equation}
 Therefore,  the replacement  does not  create  any new critical point and $H_{\varepsilon}$ is still a Morse function satisfying the Morse-Smale condition.  The Hamiltonian vector field of $H_{\varepsilon}$ over $\{r\le \delta+ 2\delta_0 \}$ is
	\begin{equation*}
		X_{H_{\varepsilon}}=-\frac{1}{r}H_{\varepsilon}'(r) \partial_{\theta}.
	\end{equation*}
	For $r_0 \in (\delta + \delta_0,      \delta + 2\delta_0)$ such that $\frac{1}{r_0}H_{\varepsilon}'(r_0)=\frac{p}{q} \ne 0$ is a rational number ($p, q$ are relatively prime), then $H_{\varepsilon}$  has  a family of periodic orbits of the form
	\begin{equation} \label{eq27}
		\gamma_{r_0, \theta_0}(\tau) =(\tau, r_0, \theta_0 - \frac{p}{q} \tau),
	\end{equation}
	where $\tau \in  \mathbb{R}/(q\mathbb{Z})$.     Since $\varepsilon(r) $ and $\varepsilon'(r)$ are very small,  the period $q >d$.

 Provided that  $\epsilon \le \varepsilon   $ is sufficiently small, then the periodic orbits of $\varphi_{H_{\varepsilon}}$ with period less than or equal   $d$ are still  one-to-one  corresponding to the iterations of the constant orbits at critical points of $H_{\varepsilon}$.  The critical points of $H_{\varepsilon}$ are  described   in Figure \ref{figure2}.
	\begin{figure}[h]
		\begin{center}
			\includegraphics[width=10cm, height=5cm]{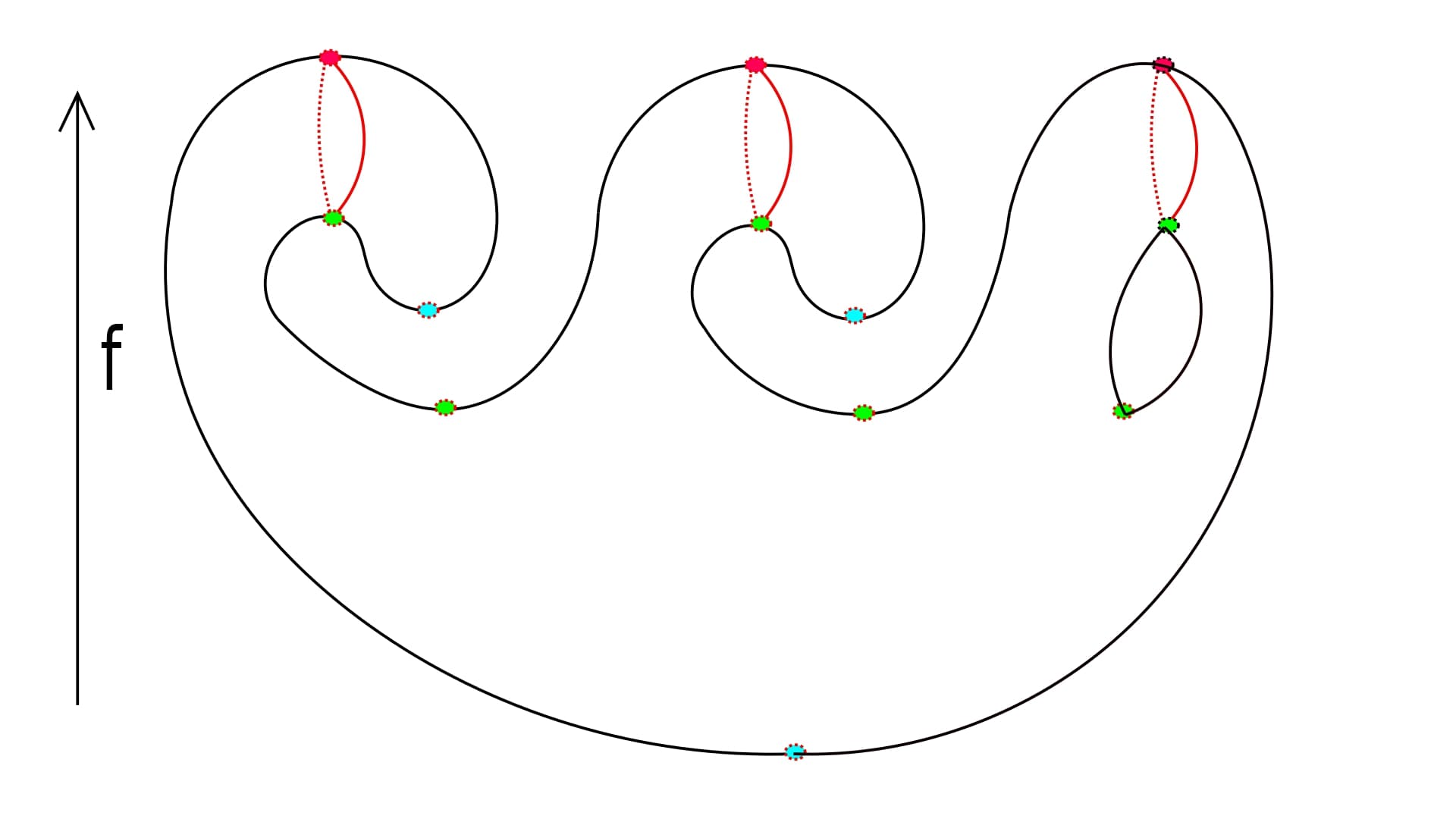}
		\end{center}
		\caption{The red points are the index 2 critical points. The green points are saddle points. The blue points are index 0 critical points. The union of the red circles is the link.  The function $f$ is a small  perturbation of  the height function.}
		\label{figure2}
	\end{figure}
	
 Let $p\in Crit(H_{\varepsilon})$ and $\gamma_p$ denote the constant simple periodic orbit at $p$.	Note that the degree of $\gamma_p$ is $1$. Let $\alpha=\gamma_{p_1} \cdots \gamma_{p_d}$ and  $\beta=\gamma_{q_1} \cdots \gamma_{q_d}$ be two orbit sets. Here $\{p_i\}, \{q_i\}$  could   repeat. Let $\eta: \sqcup_{i=1}^d \mathbb{R}_{s_i} \to \Sigma$ be a path with $d$ components such that $\lim_{s_i \to \infty}\eta(s_i) =p_i$ and $\lim_{s_i \to -\infty}\eta(s_i) =x_i$. Define  relative homology classes
	\begin{equation} \label{eq19}
		Z_{\alpha}:=[\Psi_{H_{\varepsilon}}(S^1 \times \eta)] \in H_2(Y_{\varphi_{H_{\varepsilon}}}, \alpha, \gamma_{H_{\varepsilon}}^{\mathbf{x}}) \mbox{ and } Z_{\alpha, \beta} :=Z_{\alpha} -Z_{\beta} \in H_2(Y_{\varphi_{H_{\varepsilon}}}, \alpha, \beta).
	\end{equation}

	Define a chain $\mathfrak{c}_{\heartsuit}  =\sum (\alpha_I, Z_I) \in \widetilde{PFC}(\Sigma, \varphi_{H_{\varepsilon}},  \gamma_{H_{\varepsilon}}^{\mathbf{x}}) $. The notations  are explained as follows:
	\begin{enumerate} [label=\textbf{C.\arabic*}]
		\item \label{class1}
		$\alpha_{I} =\gamma_{y^+_{i_1}} \cdots \gamma_{y^+_{i_d}}$  and   $I=(i_1, ..., i_d)$. The labels  $i_1 , ..., i_d$  could    repeat.
		\item \label{class2}
		$Z_I := Z_{\alpha_I} \in H_2 (Y_{\varphi_{H_{\varepsilon}}},  \alpha_I, \gamma_{H_{\varepsilon}}^{\mathbf{x}})$.
	\end{enumerate}
	When $I=(1,...,d)$, we write $\alpha_I$ as  $\alpha_{\heartsuit} =\gamma_{y^+_{1}} \cdots \gamma_{y^+_{d}}$.

	\begin{lemma}
		The chain	$\mathfrak{c}_{\heartsuit}=\sum_I (\alpha_I, Z_I)$ is a cycle.  Moreover, it represents a nonzero class $\mathfrak{e}^{\mathbf{x}}_{H_{\varepsilon}} \in   \widetilde{PFH}(\Sigma, \varphi_{H_{\varepsilon}},  \gamma_{H_{\varepsilon}}^{\mathbf{x}}) $.
	\end{lemma}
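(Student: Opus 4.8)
The plan is to reduce both claims to a Morse–theoretic computation, using that $H_{\varepsilon}=\varepsilon(r)f$ is $C^{2}$–small. First I would pin down the generators and the action. For $0<\epsilon\le\varepsilon\ll 1$ every periodic orbit of $\varphi_{H_{\varepsilon}}$ of degree $\le d$ is an iterate of a constant orbit at a critical point of $f$ — the long orbits \eqref{eq27} produced by the first modification all have degree $>d$ — so every degree--$d$ generator is of the form $(\alpha,Z)$ with $\alpha=\prod_{j=1}^{d}\gamma_{p_{j}}$, $p_{j}\in\mathrm{Crit}(f)$ and saddles occurring with multiplicity one. A computation analogous to \eqref{eq16}, applied to the classes $Z_{\alpha,\beta}=[S^{1}\times\eta]$, gives
\[
\mathbb{A}_{H_{\varepsilon}}(\alpha,Z)-\mathbb{A}_{H_{\varepsilon}}(\beta,Z-Z_{\alpha,\beta})=\sum_{j}\bigl(H_{\varepsilon}(p_{j})-H_{\varepsilon}(q_{j})\bigr),
\]
while replacing $Z$ by $Z+[\Sigma]$ shifts $\mathbb{A}_{H_{\varepsilon}}$ by $1$ and the ECH index by a nonzero constant. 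Since $f(y_{i}^{+})=1$ is the maximal value of $f$, all the generators $(\alpha_{I},Z_{I})$ carry one and the same action $L_{0}$ and the same ECH grading; moreover the PFH differential strictly decreases the action, and fixing the ECH index to $1$ forbids handles and $[\Sigma]$--summands, so every generator appearing in $\partial(\alpha_{I},Z_{I})$ has strictly smaller action and its relative class is forced to be the ``Morse'' one, obtained by lowering one maximum of $\alpha_{I}$ to an adjacent index--$1$ critical point of $f$.

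Granting this, I would compute $\partial$ on the span of the $(\alpha_{I},Z_{I})$ using the Hutchings--Taubes branched--cover gluing \cite{HT1,HT2} in this near--constant regime; the connectors interposed over the elliptic orbits $\gamma_{y_{i}^{+}}$ are trivial by Fact \ref{fact1}, and the upshot is that $\partial$ inserts the Morse differential of $(f,g_{\Sigma})$ in one factor. Under the bijection between the $(\alpha_{I},Z_{I})$ and the multisets of $d$ maxima of $f$, the chain $\mathfrak{c}$ is the sum of all such multisets, so $\partial\mathfrak{c}$ is governed by $\partial^{\mathrm{Morse}}\bigl(\sum_{i=1}^{d}y_{i}^{+}\bigr)$. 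But $\sum_{i=1}^{d}y_{i}^{+}$ is the sum of all maxima of $f$, hence a Morse cycle: for each index--$1$ critical point $s$ the stable manifold is a line, and running backward along either of its two ends one reaches a critical point of index $\ge 2$, i.e. a maximum (there is no index--$3$ critical point), so $s$ occurs an even number of times in $\partial^{\mathrm{Morse}}\bigl(\sum_{i}y_{i}^{+}\bigr)$. Hence $\partial\mathfrak{c}=0$. I expect the genuine obstacle to lie precisely in this identification: making the Hutchings--Taubes gluing precise for the multiply--covered orbits $\gamma_{y_{i}^{+}}^{m}$, and tracking the relative classes $Z_{I}$ so that the outputs of $\partial$ are the expected $Z_{I}$--lowerings rather than their $[\Sigma]$--translates.

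For the non--vanishing, the plan is to read it off from the closed--open morphism, which is computed in the remainder of this section precisely on the Morse Hamiltonian $H_{\varepsilon}$: one finds that it carries the class of $\mathfrak{c}$ to the unit $e^{\textbf{x}}\neq 0$ of $HF_{*}(\Sigma,\varphi_{H_{\varepsilon}},\Lambda,\textbf{x})$, and since a homomorphism sends $0$ to $0$ this forces $\sigma_{H_{\varepsilon}}=[\mathfrak{c}]\neq 0$; conceptually, $[\mathfrak{c}]$ is the element playing the role of the unit of periodic Floer homology (cf. Remark \ref{remark4}), which is nonzero because $\widetilde{PFH}(\Sigma,\varphi_{H_{\varepsilon}},\gamma_{0})\neq 0$. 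Finally, $\sigma_{H_{\varepsilon}}$ is independent of the remaining auxiliary data — the generic $J$, the paths $\eta$ defining the $Z_{I}$, and the small parameter $\varepsilon$ — by the usual continuation/homotopy argument, using that the PFH differential is a finite sum by the negative monotonicity of $[\gamma_{0}]$.
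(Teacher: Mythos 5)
Your argument for the first claim is essentially the paper's: both reduce the differential on the span of the $(\alpha_I,Z_I)$ to the Morse differential of $f$ and then observe that the sum of all maxima is a Morse cycle because each saddle's stable manifold has exactly two ends, each flowing backward to a maximum. The difference is in how the reduction is justified: the paper simply cites the observation of \cite{EH} that a low--energy $I=1$ current for such a Hamiltonian is a union of trivial cylinders together with a single non--trivial cylinder corresponding to a Morse flow line, whereas you re--derive this from the action filtration, the index cost of $[\Sigma]$-- and handle--summands, and Hutchings--Taubes gluing over the multiply covered elliptic orbits $\gamma_{y_i^+}^m$. Your version is more self--contained but, as you note yourself, the gluing/connector bookkeeping at the multiply covered ends is exactly the point you have not carried out; the paper avoids it by citation. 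Your filtration computation (all $(\alpha_I,Z_I)$ have equal action and grading, the differential strictly decreases action, and $I=1$ forces the relative class to be the Morse one) is correct and is a useful supplement to the paper's terse proof.

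For the non--vanishing, however, your route inverts the paper's logical order and, as written, is circular. The closed--open computation you invoke is carried out later in this section for the modified Hamiltonian $H'_{\varepsilon}$, and it takes the present lemma as input: the cycle $\mathfrak{c}+K\circ\partial'\mathfrak{c}$ in $\widetilde{PFC}(\Sigma,\varphi_{H'_{\varepsilon}},\gamma_0)$ is produced from $\mathfrak{c}$ via (\ref{eq33}), and its non--vanishing is deduced in the paper from $[\mathfrak{c}]\ne 0$ together with the fact that $PFH^{sw}_{Z_{ref-}}(X_-,\Omega_{X_-})$ is an isomorphism. One could reorganize the section so that the chain--level identity $\Phi_{\mathcal{Z}_0}(\mathfrak{c}+K\partial'\mathfrak{c})=(y_+,[A_+])$ is proved first and non--vanishing of $[\mathfrak{c}]$ is then read off, but you should not present this as a proof of the lemma where it stands. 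A direct argument is available and is what the lemma needs: since $H_{\varepsilon}$ attains its maximum exactly at the points $y_i^+$, the computation analogous to (\ref{eq16}) shows that the generators $(\alpha_I,Z_I)$ have strictly larger action than every other generator in their grading (adding $m[\Sigma]$ raises the action but shifts the ECH index by $2m(k+1)\ne 0$), while the differential strictly decreases the action; hence $\mathfrak{c}$ cannot be a boundary. Equivalently, under the identification with the Morse complex the sum of all maxima represents the non--zero top class, consistent with Remark \ref{remark4}.
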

	\begin{proof}
		As remarked in Pages 15--16 of  \cite{EH},  each  holomorphic current in $\mathbb{R} \times Y_{\varphi_{H_{\varepsilon}}}$  contributing to  the differential is a union of trivial cylinders and a unique non-trivial holomorphic  cylinder.  For sufficiently small $0<\varepsilon \ll 1$, such  a non-trivial  cylinder   corresponds  to a  Morse flow line of $H_{\varepsilon}$. Therefore, the computation of the PFH differential   reduces to the computation of the Morse differential.  The details are essentially the
 same as the computation of ECH for prequantization bundles by J. Nelson and M.
 Weiler  \cite{NW}.  The Morse differential can be figured out easily for our  $H_{\varepsilon}$, and this shows that $\mathfrak{c}_{\heartsuit}$  is a cycle and it is not exact.  (In Figure \ref{figure2},  for any two maximums adjacent to each other, there is a unique flow line  from the maximum to the saddle point in the middle.)
	\end{proof}
	
	\begin{remark} \label{remark4}
		The class $\mathfrak{e}^{\mathbf{x}}_{H_{\varepsilon}}$ here can be regarded as the ``unit" of the periodic Floer homology. Similar to the unit of the quantitative Heegaard Floer homology, the class $\mathfrak{e}^{\mathbf{x}}_{H_{\varepsilon}}$
should be equal to $PFH_{Z_{ref}}(X_+, \Omega_{X_+})(1)$. Here $X_+=B_+\times \Sigma$ is a symplectic cobordism from empty set to $Y_{\varphi_{H_{\varepsilon}}}$; the surface  $B_+$  is a  punctured sphere with a negative cylindrical end. The holomorphic curve definition of the cobordism map
 $PFH_{Z_{ref}}(X_+, \Omega_{X_+})(1)$ should be well defined by the index computation as in \cite{GHC2}. To
 see why we should have  $\mathfrak{e}^{\mathbf{x}}_{H_{\varepsilon}}=PFH_{Z_{ref}}(X_+, \Omega_{X_+})(1)$,   the idea is to compute the ECH
 index of relative homology class in $B_+\times \Sigma$  by the methods   in \cite{GHC2}.   From ECH index
 formula, we can find that  $I=0$  only can happen when $\alpha=\alpha_I$.  For suitable choices
 of symplectic form and almost complex structure,  $B_+ \times \{y_{i_1}^+,..., y_{i_d}^+\}$  is a holomorphic
 curve in  $\mathcal{M}_0^J(\alpha_I)$.   Such a holomorphic curve is the unique curve in   $\mathcal{M}_0^J(\alpha_I)$ due to the
 energy restriction as in \cite{GHC2}. As a result,  $\mathfrak{c}_{\heartsuit}=PFC_{Z_{ref}}(X_+, \Omega_{X_+})(1)$ is a cycle. The
 argument is similar to Lemma \ref{lem26}.
	\end{remark}

For the reasons in the above remark, we make the following definition. 	
\begin{definition} \label{definition6}
Define $\mathfrak{e} : = \mathfrak{j}_{H_{\varepsilon}}^{\mathbf{x}}(e^{\mathbf{x}}_{H_{\varepsilon}})\in \widetilde{PFH}(\Sigma, d)$. We call this class  PFH unit.
\end{definition}
	
	\paragraph{Second modification on $H$:}
	We expect that the closed-open morphism maps $\mathfrak{c}_{\heartsuit}$ to a cycle representing  the unit  of HF.  However, to apply the partial invariance in Theorem \ref{thm2}, we need to perturb $\varphi_{H_{\varepsilon}}$  so that it satisfies  the condition \ref{assumption1} or \ref{assumption2}.   This can be done by using the following result. It is    Proposition 3.7 of \cite{GHC} for  Hamiltonian case.

\begin{prop} \label{lem37}
Fix a Hamiltonian function $H$ and a metric $g_Y$ on $S^1 \times \Sigma$.   For any  positive numbers  $\delta>0$ and $d$, we can modify $H$ to a  Hamiltonian function $H'$ satisfying the following properties:
		\begin{enumerate}
			\item
			The symplectomorphism $\varphi_{H'}$ satisfies the condition (\ref{assumption1}).
			\item
			$|H-H'| \le \delta$ and $|dH-dH'|_{g_Y} \le \delta$.
		\end{enumerate}
		The condition (\ref{assumption1}) can be replaced by (\ref{assumption2}) depending on the context.
	\end{prop}
	Applying Proposition  \ref{lem37} to $H_{\varepsilon}$, we  obtain a Hamiltonian function  $\varphi_{H'_{\varepsilon}}$ satisfying  the condition \ref{assumption2}.
	According to the  proof of Proposition 3.7 of \cite{GHC},  such a modification takes place in an arbitrarily small neighborhood of the periodic orbits with degree less than  $d$  that   violate  \ref{assumption2}.  This modification  may create new periodic orbits that   violate \ref{assumption2} in the neighbourhood. But these newly created orbits have larger period.   We repeat the modification in a small neighbourhood of the newly
 created orbits that  violate   \ref{assumption2} until the newly created orbits have period  greater than $d$.
	
	  The modifications are local and the only orbits violating \ref{assumption2} are the  constant orbits at  the local minima of $H_{\varepsilon}$, hence  we  can arrange that $X_{H_{\varepsilon}}=X_{H_{\varepsilon}'}$ outside $\mathcal{U}^{\delta}$.   In particular, $ \gamma_{r_0, \theta_0}$  in (\ref{eq27}) are still periodic orbits of $\varphi_{H_{\varepsilon}'}$.  According to the construction, $H'_{\varepsilon}$ satisfies the estimates in Proposition \ref{lem37}.

	\subsection{Some computations on the PFH cobordism maps}
	In this section, we find a cycle which represents the  unit   in  $\widetilde{PFH}(\Sigma, \varphi_{H'_{\varepsilon}}, \gamma_{H'_{\varepsilon}}^{\mathbf{x}})$.  To
 this end, we do some computations on $\mathfrak{I}_{H_{\varepsilon}, H'_{\varepsilon}}$ and $\mathfrak{I}_{H'_{\varepsilon}, H_{\varepsilon}}$.
	
	Let $(X_+, \Omega_{X_+})$ be the symplectic cobordism from $(Y_{\varphi_{H'_{\varepsilon}}}, \varphi_{H_{\varepsilon}'})$ to $(Y_{\varphi_{H_{\varepsilon}}}, \varphi_{H_{\varepsilon}})$ defined by (\ref{eq9}).    Keep in mind that  $\Omega_{X_+}$ is   $\mathbb{R}$-invariant  in    the region $\mathbb{R} \times (Y_{\varphi_{H'_{\varepsilon}}} \setminus (S^1 \times \mathcal{U}^{\delta})) \subset X_+ $.  This region is called a \textbf{product region}. Here we identify  $S^1 \times \mathcal{U}^{\delta} $ as a subset of  $Y_{\varphi_{H_{\varepsilon}}}$ implicitly by using the trivialization  (\ref{eq11}).
	
 Given orbit sets $\alpha_{\pm}$ and $Z \in H_2(X_+, \alpha_+, \alpha_-)$, let $\overline{\mathcal{M}^{J_+}}(\alpha_+, \alpha_-, Z )$  denote the
 moduli space of broken holomorphic currents with relative homology class $Z$.

		\begin{lemma} \label{lem12}
		Let $J_+$ be an admissible almost complex structure on $X_+$ such that  it is $\mathbb{R} $-invariant in   the product region $\mathbb{R} \times (Y_{\varphi_{H'_{\varepsilon}}} \setminus  (S^1 \times \mathcal{U}^{\delta})) \subset X_+.   $ Let  $\alpha_{I'} =\gamma_{y^+_{i'_1}} \cdots \gamma_{y^+_{i'_d}}$     and  $Z_{\alpha_I, \alpha_{I'}} \in H_2(X_+, \alpha_I, \alpha_{I'})$ be the relative homology  class  defined in  \ref{class2}.   Then the moduli space $\overline{\mathcal{M}^{J_+}} (\alpha_I, \alpha_{I'}, Z_{\alpha_I, \alpha_{I'}} ) \ne \emptyset$ if and only if  $I'=I$. Moreover, $\mathbb{R} \times \alpha_I$ is the unique element in $\overline{\mathcal{M}^{J_+} }(\alpha_I, \alpha_I, Z_{\alpha_I, \alpha_{I}} )$.
	
	\end{lemma}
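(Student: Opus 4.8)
The plan is to determine $\overline{\mathcal{M}^{J_+}(\alpha_I,\alpha_{I'},Z_{\alpha_I,\alpha_{I'}})}$ entirely by index and energy bookkeeping, in the spirit of the proof of Lemma \ref{lem13}. First I would evaluate the ECH index of the reference class: since $\alpha_{I'}=\gamma_{y^+_{i'_1}}\cdots\gamma_{y^+_{i'_d}}$ contains no hyperbolic orbit and all of its $d$ components are maxima orbits, formula (\ref{eq32}) with $M(Z_{\alpha_I,\alpha_{I'}})=0$ gives $I(\alpha_I,\alpha_{I'},Z_{\alpha_I,\alpha_{I'}})=2d-0-2d+0=0$, so the moduli space has expected dimension zero. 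Next I would compute the $\omega_{X_+}$--energy of $Z_{\alpha_I,\alpha_{I'}}$ using the formula recalled in the proof of Lemma \ref{lem13}: it equals the difference of the $H$--values of the two orbit sets, and since every maximum $y^+_i$ lies at the critical value $f(y^+_i)=1$ while $H'_\varepsilon=H_\varepsilon=\epsilon f$ near all the maxima (the second modification of $H_\varepsilon$ is supported inside $\mathcal{U}^\delta$, away from the $y^+_i$), both evaluations equal $d\epsilon$ and this energy vanishes.

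Assuming $\overline{\mathcal{M}^{J_+}(\alpha_I,\alpha_{I'},Z_{\alpha_I,\alpha_{I'}})}\neq\emptyset$, the holomorphic curve axiom (Theorem \ref{thm3}) yields a $J_+$--holomorphic current $\mathcal{C}$ with $I(\mathcal{C})=0$ and $[\mathcal{C}]=Z_{\alpha_I,\alpha_{I'}}$. I would then reuse the estimate behind (\ref{eq6}): writing $\omega_{X_+}=(\omega+d_\Sigma H_{s,t}\wedge dt)+\partial_sH_{s,t}\,ds\wedge dt$, the first summand integrates nonnegatively over any $J_+$--holomorphic curve (Lemma 3.8 of \cite{CHS1}), while $\partial_sH_{s,t}=\chi'(s)(H'_\varepsilon-H_\varepsilon)$ is supported in a fixed compact $s$--slab and, because $\mathcal{C}$ projects to the base with total multiplicity $d$ and $\|H'_\varepsilon-H_\varepsilon\|_{C^0}\le\delta$ (Remark \ref{remark3}), contributes at most $c\,d\,\delta$ for a constant $c$. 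Hence
\[
0\ \le\ \int_{\mathcal{C}}\bigl(\omega+d_\Sigma H_{s,t}\wedge dt\bigr)\ =\ -\int_{\mathcal{C}}\partial_sH_{s,t}\,ds\wedge dt\ \le\ c\,d\,\delta ,
\]
so the vertical energy of $\mathcal{C}$ is as small as we wish once $\delta$ is small. Combined with the ECH index inequality in a fiberwise cobordism (giving $0=I(\mathcal{C})\ge\mathrm{ind}(\mathcal{C})+2\delta(\mathcal{C})\ge 0$, so $\mathcal{C}$ is embedded, has Fredholm index $0$, and obeys the ECH partition conditions) and the fact that $J_+$ is $\mathbb{R}$--invariant on the product region containing all the orbits $\gamma_{y^+_i}$, this should force $\mathcal{C}$ to be a union of $\mathbb{R}$--invariant cylinders; comparing asymptotics then gives $\alpha_I=\alpha_{I'}$, i.e.\ $I'=I$. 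Specializing to $I'=I$ shows $\mathbb{R}\times\alpha_I$ is the unique element.

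The hard part is this last step — excluding nonconstant holomorphic components of arbitrarily small vertical energy, in particular ones that might reach into the modification region $S^1\times\mathcal{U}^\delta$ where $X_+$ fails to be $\mathbb{R}$--invariant. I would handle this with the periodic orbits $\gamma_{r_0,\theta_0}$ of (\ref{eq27}): for $\delta$ small these lie in the $\mathbb{R}$--invariant collar surrounding $\mathcal{U}^\delta$, and positivity of intersection with their $\mathbb{R}$--invariant cylinders shows that a current asymptotic only to maxima orbits (which sit outside this collar) cannot enter $S^1\times\mathcal{U}^\delta$ at all. Once $\mathcal{C}$ is confined to the $\mathbb{R}$--invariant part of $X_+$, Hutchings' index calculation applies and an $I=0$ current there is automatically a union of trivial cylinders. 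Alternatively, since the statement is only needed for $\delta$ sufficiently small, one can instead take $\delta\to 0$ and invoke Gromov compactness, the small--energy limit of such currents being a union of trivial cylinders; either way the conclusion $\mathcal{C}=\mathbb{R}\times\alpha_I$ and $I'=I$ follows.
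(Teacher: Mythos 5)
Your proposal is correct and follows essentially the same route as the paper: the decisive steps --- zero $\omega_{X_+}$--energy of the reference class, positivity of intersections with the $J_+$--holomorphic cylinders $\mathbb{R}\times\gamma_{r_0,\theta_0}$ to confine the current to the region where $(\Omega_{X_+},J_+)$ is $\mathbb{R}$--invariant, and then concluding the current is $\mathbb{R}\times\alpha_I$ --- are exactly those of the paper. The only differences are cosmetic: the paper gets the confinement from the homological count $\#(S_\eta\cap\mathbb{R}\times\gamma_{r_0,\theta_0})=0$ (rather than just from the location of the asymptotics), and it closes with the energy--zero plus $\mathbb{R}$--invariance argument instead of your auxiliary index inequality, which would require a genericity of $J_+$ that the lemma does not assume.
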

	\begin{proof}
		Let $\mathcal{C} \in \overline{\mathcal{M}^{J_+} }(\alpha_I, \alpha_{I'}, Z_{\alpha_I, \alpha_{I'}} )$  be a broken holomorphic current.  Recall the periodic orbit $ \gamma_{r_0, \theta_0}$ in   (\ref{eq27}). Since it is disjoint from $\alpha_I, \alpha_{I'}$, the intersection number $\#(\mathcal{C} \cap  \mathbb{R} \times \gamma_{r_0, \theta_0})$ is well defined. Moreover, the intersection number only depends on the relative homology class of $\mathcal{C}$.   Therefore,  we have  $$\#(\mathcal{C} \cap \mathbb{R} \times \gamma_{r_0, \theta_0}) = \# ( (S^1 \times \eta)\cap \mathbb{R} \times \gamma_{r_0, \theta_0} )=0. $$
		By our choice of $J_+$,   $\mathbb{R} \times \gamma_{r_0, \theta_0}$ is a $J_+$-holomorphic curve.  According to the  intersection  positivity of holomorphic curves, $\mathcal{C}$ does not  intersect $\mathbb{R} \times \gamma_{r_0, \theta_0} $. Since $\theta_0$ is arbitrary, $\mathcal{C}$ must lie inside
		$\mathbb{R}   \times(  Y_{\varphi_{H'_{\varepsilon}}} \setminus (S^1 \times \mathcal{U}^{\delta + \delta_0})  )$.
		
 Note that the $\omega_{X_+}$-energy  $E_{\omega_{X_+}}(\mathcal{C})=E_{\omega_{X_+}}(Z_{\alpha_I, \alpha_{I'}} )  =0$. Since $J_+$ is $\mathbb{R} $-invariant over $\mathbb{R}   \times(  Y_{\varphi_{H'_{\varepsilon}}}\setminus (S^1 \times \mathcal{U}^{\delta + \delta_0})  )$, we can conclude that $\mathcal{C} =\mathbb{R} \times \alpha_I$ (see Proposition 9.1 of \cite{H1}).  In particular, $\alpha_{I'} =\alpha_I$.
	\end{proof}
	
	\begin{lemma} \label{lem13}
 Let $\alpha_{I}$ be the PFH  generators of $(Y_{\varphi_{H'_{\varepsilon}}}, \omega_{\varphi_{H'_{\varepsilon}}})$ defined in \ref{class1}.   Let $J_+$ be an admissible almost complex structure on $X_+$ such that  it is $\mathbb{R} $-invariant in   the product region $\mathbb{R} \times (Y_{\varphi_{H_{\varepsilon}}'} \setminus ( S^1 \times \mathcal{U}^{\delta})) \subset X_+.   $   Let $Z_{ref}^+=[\mathbb{R} \times S^1 \times \mathbf{x}] \in H_2(X_+, \gamma^{\mathbf{x}}_{H_{\varepsilon}'},  \gamma^{\mathbf{x}}_{H_{\varepsilon}})$	 be
 the reference relative homology class. 	Then we have  $$PFC_{Z_{ref+}}^{sw}(X_+, \Omega_{X_+})_{J_+}(\alpha_I, Z_I) =(\alpha_I, Z_I) . $$
	\end{lemma}
	\begin{proof}
		Let $\alpha_-=\gamma_{p_1} \cdots \gamma_{p_d}$ be a PFH generator  of $\varphi_{H_{\varepsilon}}$.    Let  $Z_{\alpha_I, \alpha_-}=[S^1 \times \eta] \in H_2(X_+, \alpha_I, \alpha_-)$  be a reference relative homology class defined as in (\ref{eq19}). Any other relative homology  class $Z \in H_2(X_+, \alpha_I, \alpha_-)$ can be written as $Z=Z_{\alpha_I, \alpha_-}+  m_Z [\Sigma]+[S_Z]$, where $[S_Z] \in H_1(S^1, \mathbb{Z}) \otimes H_1(\Sigma, \mathbb{Z})$.  Then the ECH index of   $Z \in H_2(X_+, \alpha_I, \alpha_-)$ is
		\begin{equation} \label{eq32}
			I(\alpha_I, \alpha_-, Z) = 2d-h(\alpha_-) -2e_+(\alpha_-) +2m_Z (k+1),
		\end{equation}
		where $h(\alpha_-)$ is the number of hyperbolic  orbits   in $\alpha_-$ and $e_+(\alpha_-)$ is the total multiplicities of periodic orbits  $\gamma_{y_i^+}$ in $\alpha_-$.  One can check this formula by using the argument
 in Proposition 1.3 of  \cite{NW}.
The $\omega_{X_+}$-energy of $Z$ is $$E_{\omega_{X_+}}(Z)=\int_Z \omega_{X_+} =H_{\varepsilon}(\alpha_I) -H_{\varepsilon}(\alpha_-)+ m_Z. $$
		
		Suppose  that $<PFC_{Z_{ref+}}^{sw}(X_+, \Omega_{X_+})_{J_+}((\alpha_I, Z_I)), (\alpha_-, Z_-)> \ne 0$. Then the holomorphic curve axiom in Theorem \ref{thm3}  provides us a $J_+$-holomorphic current $\mathcal{C}$ with $I(\mathcal{C}) =0$.  Moreover, we have the following estimates:
		\begin{equation} \label{eq6}
			\begin{split}
				E_{\omega_{X_+}}(\mathcal{C})=&\int_{\mathcal{C}} \omega + d_{\Sigma} H_s \wedge dt + \partial_s H_s ds \wedge dt\\
				\ge &\int_{\mathcal{C}} \omega + d_{\Sigma} H_s \wedge dt - d\int_{S^1} \max_{\Sigma} |H_{\varepsilon}'-H_{\varepsilon}|dt\\
				\ge & -dc_0 \delta.
			\end{split}
		\end{equation}
		The last step follows from  the fact  that the term $\int_{\mathcal{C}} \omega + d_{\Sigma} H_s \wedge dt$ is nonnegative (see Lemma 3.8 in \cite{CHS1}).

		Let $Z$ be the relative homology class of $\mathcal{C}$. Then we have  $m_Z \ge 0$; otherwise,  $E_{\omega_{X_+}}(Z) \le -1+O(d\epsilon)< -d c_0 \delta$ which contradicts (\ref{eq6}).
	  By (\ref{eq32}),  $I(Z) = 0$ implies that $m_Z=h(\alpha_-) =0$  and $e_+(\alpha_-)=d$. In other words,   $Z=Z_{\alpha_I, \alpha_{I'}} $ and $\alpha_-$ only consists of periodic orbits  $\gamma_{y_i^+}$.
By Lemma \ref{lem12}  and the holomorphic axioms in Theorem
\ref{thm3}, we get the results.
	\end{proof}

	Let $(X_-, \Omega_{X_-})$ be the symplectic cobordism from  $(Y_{\varphi_{H_{\varepsilon}}}, \omega_{\varphi_{H_{\varepsilon}}})$ to $(Y_{\varphi_{H'_{\varepsilon}}}, \omega_{\varphi_{H_{\varepsilon}'}})$ defined as in (\ref{eq9}).   Let $J_-$ be an admissible almost complex structure such that $J_-=J_+$ on  the product region   $\mathbb{R} \times (Y_{\varphi_{H'_{\varepsilon}}} \setminus (S^1 \times \mathcal{U}^{\delta + \delta_0})) \subset X_-$.  Let $Z_{ref-}=[\mathbb{R} \times S^1 \times \mathbf{x}] \in H_2(X_-, \gamma_{H_{\varepsilon}}^{\mathbf{x}},  \gamma_{H'_{\varepsilon}}^{\mathbf{x}})$ be the reference relative homology class. By the composition law of the  PFH cobordism maps, we have
	\begin{equation} \label{eq33}
		PFC_{Z_{ref-}}^{sw}(X_-, \Omega_{X_-})_{ J_-} \circ PFC_{Z_{ref+}}^{sw}(X_+, \Omega_{X_+})_{ J_+} =\operatorname{Id} + K \circ \partial' + \partial' \circ K,
	\end{equation}
 where $\partial'$ is the PFH differential of $\varphi_{H'_{\varepsilon}}$ and $K : \widetilde{PFC}_*(\Sigma, \varphi_{H'_{\varepsilon}}, \gamma_{H'_{\varepsilon}}^{\mathbf{x}}) \to  \widetilde{PFC}_{*-1}(\Sigma, \varphi_{H'_{\varepsilon}}, \gamma_{H'_{\varepsilon}}^{\mathbf{x}})$ is a homomorphism.
	By Lemma \ref{lem13} and (\ref{eq33}), we get a cycle
 \begin{equation}\label{eq63}
\mathfrak{c}_{\heartsuit} + K \circ \partial' \mathfrak{c}_{\heartsuit}\in \widetilde{PFC}(\Sigma, \varphi_{H_{\varepsilon}'},  \gamma_{H'_{\varepsilon}}^{\mathbf{x}}),
\end{equation}
	where $\mathfrak{c}_{{\heartsuit}} =\sum(\alpha_I, Z_I)$  is defined by  \ref{class1}, \ref{class2}.
	 Also, it represents a non-vanishing class of   $\widetilde{PFH}(\Sigma, \varphi_{H_{\varepsilon}'},  \gamma_{H'_{\varepsilon}}^{\mathbf{x}})$ because the cobordism map $\mathfrak{I}_{H_{\varepsilon}, H'_{\varepsilon}}=PFH_{Z_{ref-}}^{sw}(X_-, \Omega_{X_-})_{ J_-} $ is an isomorphism.

	\subsection{Unit of HF} \label{section6.3}
	In this subsection, we describe the unit  for  in $HF(\operatorname{Sym}^d\varphi_{H_{\varepsilon}}(\underline{\Lambda}),    \operatorname{Sym}^d\underline{\Lambda})$ (Lemma \ref{lem26}). Before we do that, let us recall the definition of unit as follows.
	
 Let $(D_0, \omega_{D_0}
 , j_0)$ be a disk with one boundary puncture, where $( \omega_{D_0}
 , j_0)$  is a K\"ahler structure. 	Assume that  $( \omega_{D_0}
 , j_0)$   is a disk with a negative strip-like end, i.e., there exists  a neighbourhood $U$ of the puncture of such that
 \begin{equation} \label{eq45}
(U, \omega_{D_0}
 , j_0) \cong \left( (-\infty, 0]_s \times [0,1]_t, ds\wedge dt, j\right),
 \end{equation}
 where $j$ is the standard complex structure that maps $\partial_s$ to $\partial_t$.

 Let $\mathbb{E}:=D_0 \times \mathbb{M}$, where $\mathbb{M}= \operatorname{Sym}^d \Sigma$.    Let $\Omega_{\mathbb{E}}$ be a symplectic form  such that $\Omega_{\mathbb{E}} =\omega_V + ds \wedge dt $ over the strip-like end.   Let   $\mathbb{L} \subset \partial D_0 \times \mathbb{M}$ be  a Lagrangian submanifold such that $$ \mathbb{L} \vert_{s\le -R} = \mathbb{R}_{s\le -R} \times (\{0\}\times  \operatorname{Sym}^d \varphi_{H}(\underline{\Lambda}) \cup \{1\}\times  \operatorname{Sym}^d \underline{\Lambda}). $$

Similar as defining the PFH cobordism maps, we need to fix a reference homology class $\mathcal{S}_{ref} \in H_2(\mathbb{E}, \emptyset, \textbf{x}_H)$. 
Let us clarify  the choice of  $\mathcal{S}_{ref}$. Let $\overline{D}_0$ be punctured disk with a positive strip-like end.  Let $\overline{\mathbb{E}} = \overline{D}_0 \times \mathbb{M}$ and $\overline{\mathbb{L}} = \partial \overline{D}_0  \times   \operatorname{Sym}^d \underline{\Lambda}$. 
 Let $(\mathbb{R}_{s_0} \times [0,1] \times \mathbb{M}, \mathbb{L}_0)$
 be a Lagrangian cobordism from $( \operatorname{Sym}^d \varphi_H(\underline{\Lambda}),  \operatorname{Sym}^d\underline{\Lambda})$ to  $( \operatorname{Sym}^d\underline{\Lambda},  \operatorname{Sym}^d\underline{\Lambda})$ given by Lemma \ref{lem10}. Then we define the composition of this three pairs
 \begin{equation*}
	(\mathbb{E}_R, \mathbb{L}_R): = (\mathbb{E}, \mathbb{L}) \vert_{s \ge -R} \cup_{s=-R \sim s_0=R }  ( \mathbb{R}_{s_0} \times [0,1] \times\mathbb{M}, \mathbb{L}_0) \vert_{|s_0| \le R} \cup_{s_0=-R \sim s_- =R} (\overline{\mathbb{E}}, \overline{\mathbb{L}}) \vert_{s_- \le  R}.
\end{equation*}
Let  $\mathcal{S}_{\mathbf{x}} \in H_2(\mathbb{M}, \mathbf{x}_H, \mathbf{x})$  be  the relative homology class defined in Proposition \ref{lem30}. The composition $ \mathcal{S}_{ref} \# \mathcal{S}_{\mathbf{x}} \#[\overline{D}_0 \times \{\mathbf{x}\}] $ gives a class in  $H_2 (\mathbb{E}_R, \mathbb{L}_R, \mathbb{Z}).$
Note that  $H_2 (\mathbb{E}_R, \mathbb{L}_R, \mathbb{Z}) \cong  H_2 (D \times \mathbb{M}, \partial D \times  \operatorname{Sym}^d \underline{\Lambda}, \mathbb{Z})$, where $D$ is a closed disk.   Under this identification, we choose $\mathcal{S}_{ref}$ such that
 \begin{equation*}
  \mathcal{S}_{ref} \# \mathcal{S}_{\mathbf{x}} \#[\overline{D}_0 \times \{\mathbf{x}\}]   = D \times \{\mathbf{x}\} \in H_2 (D \times \mathbb{M}, \partial D \times  \operatorname{Sym}^d \underline{\Lambda}, \mathbb{Z}).
\end{equation*}

The triple  $(\mathbb{E}, \Omega_{\mathbb{E}}, \mathbb{L})$  induces a cobordism map
 \begin{equation}\label{eq30}
		 HF_{\mathcal{S}_{ref} }(\mathbb{E}, \Omega_{\mathbb{E}},  \mathbb{L}): \mathbb{F}[T^{-1}, T] \to  HF_{*}(  \operatorname{Sym}^d \varphi_H(\underline{\Lambda}),   \operatorname{Sym}^d \underline{\Lambda}, \textbf{x})
	\end{equation}
The cobordism map  $ HF_{\mathcal{S}_{ref} }(\mathbb{E}, \Omega_{\mathbb{E}},  \mathbb{L})$  is induced by
 \begin{equation*}
		 CF_{\mathcal{S}_{ref} }(\mathbb{E}, \Omega_{\mathbb{E}},  \mathbb{L}) (1) : =\sum_{(\textbf{y}, \mathcal{S})} \# \mathcal{M}^J_0(\emptyset, \textbf{y}, \mathcal{S}_{ref}\#\mathcal{S}) (\textbf{y},  \mathcal{S})
		 	\end{equation*}
at the  chain level, where $ \mathcal{M}^J_0(\emptyset, \textbf{y}, \mathcal{S}_{ref}\#\mathcal{S}) $ is the moduli space of $ \operatorname{ind}=0$ holomorphic sections $s : D_0 \to  \mathbb{E}$  such that $s \vert_{\partial D_0} \subset \mathbb{L} $ and $[s]= \mathcal{S}_{ref}\#\mathcal{S}$.    Also, the negative end of $s$ is asymptotic to $\textbf{y}\in  \operatorname{Sym}^d \varphi_H(\underline{\Lambda}) \cap  \operatorname{Sym}^d \underline{\Lambda}$.  The cobordism maps  $HF_{\mathcal{S}_{ref} }(\mathbb{E}, \Omega_{\mathbb{E}},  \mathbb{L})$ only depends on  $( \operatorname{Sym}^d \varphi_H(\underline{\Lambda}),  \operatorname{Sym}^d \underline{\Lambda})$.

\begin{definition}
For any $H$, define $\textbf{1}^{\mathbf{x}}_H:=  HF_{\mathcal{S}_{ref} }(\mathbb{E}, \Omega_{\mathbb{E}},  \mathbb{L})(1) \in  HF_{*}( \operatorname{Sym}^d\varphi_H(\underline{\Lambda}),  \operatorname{Sym}^d\underline{\Lambda}, \mathbf{x}).
$  The unit $\mathbf{1}_{\underline{\Lambda}} \in HF( \operatorname{Sym}^d \underline{\Lambda})$ is defined by  $\mathbf{1}_{\underline{\Lambda}}: = \textbf{j}^{\mathbf{x}}_H(\textbf{1}^{\mathbf{x}}_H)$.
\end{definition}
The above definition is well defined because of the functorial properties of   Lagrangian Floer homology and the diagram (\ref{eq41}).  From (\ref{eq17}), we know that the  quantitative  Heegaard Floer homology is non-vanishing, so is the  unit $\mathbf{1}_{\underline{\Lambda}}$.

 Now we describe the unit when $H$ is a small Morse function. Let $D_0$ be a disk with
 a negative strip-like end (see (\ref{eq45})) and $\mathbb{E}=D_0 \times \mathbb{M}$.  Obviously, $\pi_{\mathbb{E}}: \mathbb{E} \to D_0$ is a fiber bundle over $D_0$. To begin with, let us construct a symplectic form and  a Lagrangian explicitly  over $\mathbb{E}$. Let $\mathbb{K}$ be a time independent  Hamiltonian function on $\mathbb{M}$.  Let $(s,t)$ be the coordinates on the end of $D_0$.  Let $\chi: \mathbb{R} \to\mathbb{R}$ be a nonincreasing  cutoff function such that $\chi=1$ when $s \le -R_0$ and $ \chi=0$ when $s \ge -1$. Define a 2-form $$\omega_{0}: = \omega_V+ d(\chi(s) \mathbb{K}\wedge dt).$$  Define a diffeomorphism
\begin{equation*}
\begin{split}
\Phi: &\mathbb{R}_- \times [0,1] \times \mathbb{M} \to   \mathbb{R}_- \times [0,1] \times \mathbb{M}\\
&(s, t, \mathbf{x}) \to (s, t, (\varphi_{\mathbb{K}}^{\chi(s)t})^{-1}( \mathbf{x})).
\end{split}
\end{equation*}
Because $\Phi=\operatorname{Id}$ when $s \ge -1$, we   extend it to be $\operatorname{Id}$ over the rest of $\mathbb{E}$. Let $\varphi^t=\varphi_{\mathbb{K}}^t$.  Note that $(\varphi^t)^* \mathbb{K} =\mathbb{K} $ because $\mathbb{K}$ is time independent. By a direct computation, we have
\begin{equation*}
\begin{split}
&\Phi^{-1}_*(\partial_s)=\partial_s + t\dot{\chi}(s) X_{ \mathbb{K} } \circ\varphi^{\chi(s)t} \\
&\Phi^{-1}_*(\partial_t)=\partial_t + \chi(s)X_{ \mathbb{K} } \circ\varphi^{\chi(s)t}\\
&  \Phi^{-1}_*(v) = \varphi^{\chi(s)t}_*(v) \mbox{ for } v \in T\mathbb{M}.
\end{split}
\end{equation*}
Combining these ingredients, we get  a 2-form
\begin{equation}\label{eq16}
\omega_{\mathbb{E}}: =(\Phi^{-1})^* \omega_0 = \omega_V + t\dot{\chi}(s) ds \wedge d\mathbb{K} + \dot{\chi}(s) \mathbb{K} ds\wedge dt.
\end{equation}
 Note that $\omega_{\mathbb{E}}= \omega_V$ when $s \le -R_0$.  The symplectic form on $\mathbb{E}$ is defined by $\Omega_{\mathbb{E}}:= \omega_{\mathbb{E}} + \omega_{D_0}$
Define $\mathbb{L} := \Phi(\partial D_0 \times \varphi_{\mathbb{K}}( \operatorname{Sym}^d\underline{\Lambda}))$ is a  $\Omega_{\mathbb{E}}$-Lagrangian such that $$\mathbb{L} \vert_{s \le -R_0} = \mathbb{R}_{s\le -R_0} \times ((\{0\} \times  \operatorname{Sym}^d\underline{\Lambda}\cup \{1\} \times \varphi_{\mathbb{K}}( \operatorname{Sym}^d\underline{\Lambda}) ).$$

Now we take $\mathbb{K}$ to be a Hamiltonian which is compatible with $ \operatorname{Sym}^d H_{\varepsilon}$, i.e., $\mathbb{K}$ satisfies  the properties (\ref{eq70}).
We  choose $\mathbb{K}$ to be  a time independent Hamiltonian function such that $\mathbb{K} \le 0$ and the only maximum point of  $\mathbb{K}$  is $\mathbf{y}_{\heartsuit} =[y_1^+, ...,y_d^+]$.  This is feasible  due to the construction (Remark 6.8 of \cite{CHMSS}).
Note that  $\varphi_{\mathbb{K}}( \operatorname{Sym}^d \underline{\Lambda}) =  \operatorname{Sym}^d \varphi_{H_{\varepsilon}} (\underline{\Lambda})$. The intersection points of  $\varphi_{\mathbb{K}}( \operatorname{Sym}^d \underline{\Lambda}) \cap  \operatorname{Sym}^d \underline{\Lambda}$ are still described by Lemma  \ref{lem25}.

Let $\mathcal{J}_{\mathbb{E}}$ denote the set of  $\Omega_{\mathbb{E}}$-compatible almost complex structures on $\mathbb{E}$ satisfying the following conditions:
\begin{enumerate}
\item
$d\pi_{\mathbb{E}} \circ J =j_0 \circ  d\pi_{\mathbb{E}}$.
\item
$J$ preserves the vertical bundle  $T\mathbb{M}$. Moreover, $J \vert_{T\mathbb{M}}$ is compatible with $\Omega_{\mathbb{E}} \vert_{T\mathbb{M}}$.
\item
Over the end of $\mathbb{E}$, $J$ is $\mathbb{R}$-invariant, and $J(\partial_s) =\partial_t$. 

\end{enumerate}

For each  $\mathbf{y}=[0,1] \times [y_1, ...,y_d] $, we construct a relative homology class $\mathcal{S}_{\mathbf{y}}$ as follows: Let $\eta = \cup_{i=1}^d\eta_i : \sqcup_i[0,1]_{s} \to \underline{\Lambda}$ be a $d$ union of paths in $\underline{\Lambda}$, where  $\eta_i \subset \Lambda_i$ satisfies $\eta_i(0)=  y_i$ and $\eta_i(1) =x_i$.  Let  $u_i(s, t) = (s, t, (\varphi_{H_{\varepsilon}}^t)^{-1}\circ \varphi_{H_{\varepsilon}}(\eta_i(s)))$. Then $u=\cup_{i=1}^d u_i$ is a $d$-multisection and it represents a class $A_{\textbf{y}} \in H_2(M, \textbf{x}_{H_{\varepsilon}}, \textbf{y})$.  Using the tautological correspondence, $u$ gives arise a relative homology class  $\mathcal{S}_{\textbf{y}} \in H_2(\mathbb{M}, \textbf{x}_{\mathbb{K}}, \textbf{y})$.  Note that $u_i \cap u_j =\emptyset$ for $i\ne j $. Hence,   $\mathcal{S}_{\textbf{y}} \cdot \Delta =0 $.   By the same  computation  in (\ref{eq13}), we know that $\int_{\mathcal{S}_{\textbf{y}}} \omega =\mathbb{K}(\textbf{x}) -\mathbb{K}(\textbf{y})= H_{\varepsilon}(\textbf{x}) - H_{\varepsilon}(\textbf{y})$.

\begin{lemma} \label{lem26}
Take $\mathbb{K}$ to be the above function. Let $\mathbf{y}_{\heartsuit} = [0,1 ] \times [y_1^+, ..., y_d^+]$.  Let $J \in \mathcal{J}_{\mathbb{E}}$ be a generic almost complex structure.  Let $\mathcal{S}_{ref} \in H_2(\mathbb{M}, \emptyset,  \mathbf{x}_H)$ be the reference class that is represented by $\Phi(D_0 \times \varphi_{\mathbb{K}}(\mathbf{x})) $.  Then  we have
$$CF_{\mathcal{S}_{ref}}(\mathbb{E},   \Omega_{\mathbb{E}}, \mathbb{L})_J(1) =( \mathbf{y}_{\heartsuit}, [\mathcal{S}_{\mathbf{y}_{\heartsuit}}]). $$
In particular, $( \mathbf{y}_{\heartsuit}, [\mathcal{S}_{\mathbf{y}_{\heartsuit}}])$ is a cycle representing   the unit.
\end{lemma}
\begin{proof}
 Let $ \mathcal{M}^J(\emptyset, \textbf{y}, \mathcal{S})$ be the moduli space of holomorphic sections in  $\mathbb{E}$ with Lagrangian boundary condition $\mathbb{L}$.   Let $u \in \mathcal{M}^J(\emptyset, \textbf{y}, \mathcal{S})$ be a curve contributing  to $CF_{\mathcal{S}_{ref}}(\mathbb{E},   \Omega_{\mathbb{E}}, \mathbb{L})_J(1)$. Note that
  \begin{equation*}
\begin{split}
&\int_{\mathcal{S}_{ref}} \omega_{\mathbb{E}} =  \int_{D_0 \times \varphi_{\mathbb{K}}(\textbf{x})} \omega_V+ d(\chi(s) \mathbb{K}\wedge dt) =-\mathbb{K}(\textbf{x}) \mbox{ and } \mathcal{S}_{ref} \cdot \Delta =0.
\end{split}
\end{equation*}
Let $\mathcal{S}_0 \in H_2(\mathbb{M}, \textbf{x}_{\mathbb{K}}, \textbf{y})$ be the class determined  by $\mathcal{S} =\mathcal{S}_{ref} \# \mathcal{S}_0$. Then
\begin{equation} \label{eq39}
\begin{split}
&\int u^*\omega_{\mathbb{E}} =  \int_{\mathcal{S}}  \omega_{\mathbb{E}} =\int_{\mathcal{S}_{ref}}  \omega_{\mathbb{E}} + \int_{\mathcal{S}_{0}}  \omega_{\mathbb{E}}  =-\mathfrak{A}_{\mathbb{K}}( \textbf{y}, \mathcal{S}_0), \\
& u\cdot \Delta = \mathcal{S}_{ref}\cdot \Delta + \Delta \cdot \mathcal{S}_0 = \Delta \cdot \mathcal{S}_0.
\end{split}
\end{equation}
On the other hand, we have
\begin{equation} \label{eq62}
\int u^*\omega_{\mathbb{E}} = \int |d^{vert} u|^2 + \omega_{\mathbb{E}}(d^{hor}u, J^{hh} d^{hor}u),
\end{equation}
where $d^{vert} u \in T^{vert}{\mathbb{E}} \cong T\mathbb{M}$ and $d^{hor} u \in T^{hor}{\mathbb{E}}: =\{ v \in T \mathbb{E}: \omega_{\mathbb{E}} (v, w)=0, \forall w\in T^{hor}{\mathbb{E}}\}$ are respectively the vertical and horizontal components of $du$.
By (\ref{eq16}), one can  check that $T^{hor}\mathbb{E}=span\{\partial_s -t\dot{\chi} X_{\mathbb{K}}, \partial_t\} $.  Therefore, $\omega_{\mathbb{E}} \vert_{T^{hor}\mathbb{E}} = \dot{\chi} \mathbb{K} \omega_{D_0}$.  Recall that we choose $\mathbb{K} \le 0$. Hence,  $\int u^*\omega_{\mathbb{E}} = \int |d^{vert} u|^2 + \dot{\chi} \mathbb{K} |d^{hor}u|^2 \ge 0. $  Furthermore, we  choose $J$ such that $J=\operatorname{Sym}^d j_z$ near the diagonal. Then,  $ u\cdot \Delta \ge 0$ by intersection positivity and $\mathbb{K}$ is a constant near the diagonal.  Therefore, (\ref{eq39}) and  (\ref{eq62}) imply that
\begin{equation}  \label{eq47}
\begin{split}
&\int u^*\omega_{\mathbb{E}} + \eta u\cdot \Delta  =  -\mathfrak{A}^{\eta}_{\mathbb{K}}( \mathbf{y}, \mathcal{S}_0) \ge 0.
\end{split}
\end{equation}
Write $\mathcal{S}_0= \mathcal{S}_{\mathbf{y}} + \sum_{i=1}^{k+1} c_i \Psi_*([B_i])$.  By Remark \ref{remark8}, we have
\begin{equation}  \label{eq40}
\begin{split}
&0= \operatorname{ind} u = n(\textbf{y}) + \sum_{i=1}^{k+1} 2c_i\\
&\mathfrak{A}^{\eta}_{\mathbb{K}}( \textbf{y}, [\mathcal{S}_0]) =\mathfrak{A}_{\mathbb{K}}( \textbf{y}, [\mathcal{S}_0])- \eta \Delta \cdot \mathcal{S}_0\\
=&  H_{\varepsilon}(\textbf{y}) - \sum_{i=1}^k c_i \lambda -c_{k+1}\left(\int_{B_{k+1}} \omega +2\eta(d+g-1)\right)\\
=&H_{\varepsilon}(\textbf{y}) - \lambda\sum_{i=1}^k c_i,
\end{split}
\end{equation}
where $n(\textbf{y})$ is the number of $y_i^-$-components.   By  (\ref{eq47}) and (\ref{eq40}), we know that  $ \mathbf{y} =\mathbf{y}_{\heartsuit}$,  $\int u^*\omega_{\mathbb{E}} =0$   and $d^{vert} u=0$.     Therefore, the  horizontal section $u=D_0 \times \{\textbf{y}_{\heartsuit}\}$ is the only holomorphic curve contributing to $CF_{\mathcal{S}_{ref}}(\mathbb{E},   \Omega_{\mathbb{E}}, \mathbb{L})_J(1)$.
\end{proof}

By the construction of the isomorphism $\Phi_{H_{\varepsilon}}$ in Theorem \ref{thm1},   we have $\Phi_{H_{\varepsilon}}([(\textbf{y}_{\heartsuit}, A_{\textbf{y}_{\heartsuit}})]) =[(\textbf{y}_{\heartsuit}, \mathcal{S}_{\textbf{y}_{\heartsuit}})]$. Then, the above lemma implies that $(\mathbf{y}_{\heartsuit}, A_{\mathbf{y}_{\heartsuit}})$ is a cycle and represents a  non-zero class  $e_{H_{\varepsilon}}^{\mathbf{x}} \in HF(\Sigma, \underline{\Lambda}, \varphi_{H_{\varepsilon}}, \mathbf{x})$.  Moreover,  $j_{H_{\varepsilon}}^{\mathbf{x}}(e_{H_{\varepsilon}}^{\mathbf{x}})$ is the unit defined in Definition \ref{definition4}.

	\subsection{Proof of the non-vanishing result }
 With the preparations in the last three sections, now we can compute the closed-open morphism $\widetilde{\mathcal{CO}}(\underline{\Lambda}, H'_{\varepsilon})$.	
 Before we prove the result, first note that for two different base $\mathbf{x}, \mathbf{x}'$, we have  the following diagram:
	 \begin{equation*}
 \begin{CD}
				\widetilde{PFH}_*(\Sigma,  \varphi_H, \gamma_H^{\mathbf{x}})_{J_H} @> \widetilde{\mathcal{CO}}(\underline{\Lambda}, H)_{J_H}>> HF_{*}(\Sigma,   \underline{\Lambda}, \varphi_H,  \mathbf{x})_{J_H} \\
				@VV \Psi^{pfh}_{H, {\textbf{x}, \textbf{x}'}} V @VV \Psi_{H, {\textbf{x}, \textbf{x}'}}V\\
				\widetilde{PFH}_*(\Sigma,  \varphi_H,  \gamma_H^{\mathbf{x}'})_{J_H} @>\widetilde{\mathcal{CO}}(\underline{\Lambda}, H)_{J_H}>> HF_{*}(\Sigma, \underline{\Lambda}, \varphi_H,    \mathbf{x}')_{J_H},
			\end{CD}
				\end{equation*}
where $\widetilde{\mathcal{CO}}(\underline{\Lambda}, H)_{J_H}$
on the upper arrow is defined by $\mathbf{x}$ and the one on the lower arrow
 is defined by $\mathbf{x}'$. Therefore, the choice of the base point is not important. 

	Reintroduce the   closed-open symplectic cobordism $(W, \Omega_{\varphi_H}, L_{\Lambda_{H}})$.
	Let $ \mathcal{J}(W, \Omega_{\varphi_H})  \subset  \mathcal{J}_{tame}(W, \Omega_{\varphi_H}) $  be the set of almost complex structures which are the restriction of admissible almost complex structures in  $\mathcal{J}(Y_{\varphi_{H}}, \omega_{\varphi_{H}} )$. We compute the closed-open morphism using the almost complex structures in  $ \mathcal{J}(W, \Omega_{\varphi_H})$ instead of  $ \mathcal{J}_{tame}(W, \Omega_{\varphi_H}) $.

	Let $u_{y_i^{\epsilon}} =B \times \{y^{\epsilon}_i\} / (0, y_i^{\epsilon}) \sim (2, y_i^{\epsilon})$, where $\epsilon \in \{+, -\}$.  Take  a   $J \in \mathcal{J}(W, \Omega_{\varphi_{H'_{\varepsilon}}}) $. Then  $u_{y_i^{\epsilon}}$  is a $J$-holomorphic PFH-HF curve in $\mathcal{M}^J(\gamma_{y^{\epsilon}}, y^{\epsilon})$.  It is called a \textbf{horizontal section of    $(W, \Omega_{\varphi_{H_{\varepsilon}'}}, L_{\Lambda_{H'_{\varepsilon}}}, J)$}. Moreover, it is to check  that $  \operatorname{ind} u_{y_i^{+}} =0$  and $ \int  (u_{y_i^{\epsilon}})^* {\omega_{\varphi_{H'_{\varepsilon}}}}  =0$  from the definition.

The strategy of proving the non-vanishing result is the same as Lemma \ref{lem26}. We
 will show that the horizontal sections are the only PFH-HF curves contributing to
$\widetilde{\mathcal{CO}} (\underline{\Lambda}, H'_{\varepsilon})_J$. We begin with the following two lemmas concerning the basis properties
 of the horizontal sections.
	\begin{lemma} \label{lem22}
	Given 	$J \in \mathcal{J}(W, \Omega_{\varphi_{H}}) $,  let $u$ be a $J$-holomorphic PFH-HF curve in   $(W,  \Omega_{\varphi_H}, L_{\Lambda_{H}})$.  Then $$E_{\omega_{\varphi_{H}} }(u)=\int_{\dot{F}} u^* \omega_{\varphi_H}  \ge 0.$$ Moreover, when $H=H'_{\varepsilon}$, $E_{\omega_{\varphi_{H'_{\varepsilon}}}} (u) =0$ if and only if $u$ is a union of  the horizontal sections $\cup_i u_{y_i^{\epsilon_i}}$.
	\end{lemma}
	\begin{proof}
		Note that the almost complex structure $J=\begin{bmatrix}
			J^{hh} & 0 \\
			0 & J^{vv}
		\end{bmatrix}
		$ with respect to the splitting $TW=TW^{hor} \oplus TW^{vert}$,  where $TW^{vert} =\ker (\pi_W)_*$  and $TW^{hor}$ is the $\Omega_{\varphi_H}$-orthogonal complement. By direct computations,  we have
		\begin{equation*}
			\int_{\dot{F}} u^* \omega_{\varphi_H} = \int_{\dot{F}} |d^{vert}u|^2 \ge 0.
		\end{equation*}
		In the case that $H=H'_{\varepsilon}$, then $E_{\omega_{\varphi_{H'_{\varepsilon}}}} (u_{y_i^{\epsilon}}) =0$ by definition.  Conversely, if  $E_{\omega_{\varphi_{H'_{\varepsilon}}}} (u) =0$,  we have $du \in T^{hor}W$. Therefore, the negative ends of $u$ must lie inside the trivial strips. This implies that $u=u_{y_i^{\epsilon}}$ provided that  $u$ is irreducible.
	\end{proof}

 Now we use a smaller set of almost complex structures	$J \in \mathcal{J}(W, \Omega_{\varphi_{H}}) $. Lemma \ref{lem33}  needs to be replaced by the following lemma.

 \begin{lemma} \label{lem34}
We have the following statements about the transversality:
\begin{enumerate}
  \item
  There is a Baire subset of  $
 \mathcal{J}(W, \Omega_{\varphi_{H}}) $, denoted by $  \mathcal{J}^{reg}(W, \Omega_{\varphi_{H}}) $. For any $J\in \mathcal{J}(W, \Omega_{\varphi_{H}})$,
 if $u$ is not a horizontal section, then $u$ is Fredholm regular.
  \item
  For any $J\in \mathcal{J}(W, \Omega_{\varphi_{H}})$,
 if $u$ is   a horizontal section with $ \operatorname{ind} u=0$, then $u$ is Fredholm regular.
\end{enumerate}
 \end{lemma}
 \begin{proof}
  The proof of the first statement is the same as Lemma 9.12 of \cite{H1}.

  To see the second statement, we can follow the argument in Lemma 2.11 of  \cite{PS}.   Let $u$ be a horizontal section and $u_{\tau}:=\operatorname{exp}_u(\tau \psi)$, where $\psi \in W^{1,p}(u^*T^{vert}W)$ ($p>2$) such that $\psi  \in u^* TL_{\Lambda_H} $ along $\partial \dot{F}$. By $\int u_{\tau}^* \omega_{\varphi_H} =0$ and a direct  computation, we have
  \begin{equation*}
    \int |d^{vert} u_{\tau}|^2 = \int |\bar{\partial}_J^{vert} u_{\tau}|^2,
  \end{equation*}
  where  $\bar{\partial}_J^{vert} u_{\tau}:=d^{vert}u_{\tau} + J^{vv}\circ d^{vert} u_{\tau} \circ j.  $ Differentiate the above equation twice at
 $\tau=0$, we have  $\int |\nabla \psi|^2 =\int |D_u \psi|^2$, where   $D_u$ is the vertical part of the linearization
 of $\bar{\partial}_J$ at $u$. Therefore, if $ \psi \in \ker D_u$, then $\nabla \psi =0$ . Then,  the norm of $\psi$ is a constant.
 As a result, $\psi$ cannot belong to $W^{1, p}$ unless $\psi \equiv 0$. Therefore, $ \ker D_u=0$. If $ \operatorname{ind}  u = 0$,
 then $\operatorname{coker} D_u = 0$. In other words, $u$ is Fredholm regular.
 \end{proof}

	\begin{remark} \label{remark9}
  Define the closed-open morphism $\widetilde{\mathcal{CO}}(\underline{\Lambda}, H)_{J'}$ by using a generic almost complex structure $J'\in \mathcal{J}_{tame}(W,\Omega_{\varphi_{H}}
 ) $ that is sufficiently close to  $J\in \mathcal{J}^{reg}(W,\Omega_{\varphi_{H}}
 ) $.
 Lemma \ref{lem34} tells us that $\#\mathcal{M}^{J'}(\alpha,\mathbf{y}, \mathcal{Z}) = \# \mathcal{M}^{J}(\alpha,\mathbf{y}, \mathcal{Z})$ provided that $I(\mathcal{Z})=0$.
 Therefore, we  can also define   $\widetilde{\mathcal{CO}}(\underline{\Lambda}, H)_{J} $ by using $J\in \mathcal{J}^{reg}(W,\Omega_{\varphi_{H}})$.
\end{remark}

 In the next lemma, we compute the ECH index, $J_0$ index and energy of PFH-HF curves. These information helps us determining what kinds of holomorphic curves
 contributing to $\widetilde{\mathcal{CO}}(\underline{\Lambda}, H'_{\varepsilon})$.  Let  $\eta : \mathbb{R} \to \Lambda_i$ be a path in the hemicycle such that $\lim_{s\to \infty}\eta(s) = y_i^+$ and $\lim_{s\to -\infty}\eta(s) = y_i^-$. Define $u_i^+(s,t) = (s,t, \varphi_{H'_{\varepsilon}}\circ (\varphi_{H'_{\varepsilon}}^t
 )^{-1}(\eta(s)))$.
 Let  $u_i^-$ be the trivial strip from $y_i^+$ to itself. Let $u_{\mathbf{y}} = \cup_{i=1}^d u_i^{\epsilon_i} \in \mathcal{M}^J(\mathbf{y}_{\heartsuit}, \mathbf{y})$, where
 $\epsilon_i \in\{+,-\}$ and they are determined by $\mathbf{y}$.  Let $u_{\heartsuit}: =\cup_{i=1}^d u_{y^+_i} \in \mathcal{M}^J(\alpha_{\heartsuit}, y_{\heartsuit})$ be a union
 of the horizontal sections from  $\alpha_{\heartsuit}$ to $ y_{\heartsuit}$. Then $Z_{\alpha_I, \alpha_{\heartsuit}} \# u_{\heartsuit} \#u_{\mathbf{y}}$ represents a relative
 homology class $\mathcal{Z}_0 \in H_2(W_{\varphi_{H'_{\varepsilon}}} ,\alpha_I,\mathbf{y})$.
	
	\begin{lemma} \label{lem14}
		Let  $\mathcal{Z} \in H_2(W_{\varphi_{H'_{\varepsilon}}}, \alpha_I, \mathbf{y})$ be a relative homology class such that $\mathcal{Z} =\mathcal{Z}_0 +m[\Sigma] + \sum_{i=1}^{k+1} c_i[B_i] + [S]$, where $[S] \in H_1(S^1, \mathbb{Z}) \otimes H_1(\Sigma, \mathbb{Z})$.  Then the ECH index, $J_0$ index  and the energy are respectively
		\begin{equation*}
			\begin{split}
				&	I(\mathcal{Z}) =n(\mathbf{y}) + 2m(k+1) + 2 \sum_{i=1}^{k+1} c_i, \\
&J_0(\mathcal{Z}) = 2m(d+g-1)+2c_{k+1}(d+g-1), \\
		\mbox{and }		&	E_{\omega_{\varphi_{H'_{\varepsilon}}}}(\mathcal{Z}) = H_{\varepsilon}'(\alpha_I)-H'_{\varepsilon}(\mathbf{y}) + m+  \sum_{i=1}^k \lambda c_i + c_{k+1} \int_{B_{k+1}} \omega,\\
			\end{split}
		\end{equation*}
		where $n(\mathbf{y})$ is the number of $y_i^-$ in $\mathbf{y}$.
		Moreover, if $I(\mathcal{Z})=0$, then $E_{\omega_{\varphi_{H'}}}(\mathcal{Z}) + \eta J_0(\mathcal{Z})\le 0$ and ``$=$" holds if and only if $n(\mathbf{y})=0$.
	\end{lemma}
	\begin{proof}
		The relative homology classes  $Z_{\alpha_I, \alpha_{I'}} \in H_2(Y_{\varphi_{H'_{\varepsilon}}}, \alpha_I, \alpha_{I'})$ satisfy $$I(Z_{\alpha_I, \alpha_{I'}})=J_0(Z_{\alpha_I, \alpha_{I'}})=E_{\omega_{\varphi_{H'_{\varepsilon}}}}( Z_{\alpha_I, \alpha_{I'}}) =0.$$ By the additivity of ECH index, $J_0$ index and energy, it suffices to prove the statement for $\alpha_I=\alpha_{\heartsuit}$.
		
  By the similar computation in (\ref{eq13}), the energy of $u^+_i$ is
		$$E(u^+_i) =\int (u^+_i)^*\omega =H'_{\varepsilon}(y^+_i) -H_{\varepsilon}'(y_i^-).$$
		Moreover, it is easy to check that  $ \operatorname{ind} u^+_i= 1$, $ \operatorname{ind}  u^-_i=0$, and $J_0(u_i^{\pm})=0$ by definition.	
		
   Then,  we have
    \begin{equation*}
   I(u_{\mathbf{y}}) = \sum_{i=1}^d I(u_i^{\epsilon_i}) =n(\mathbf{y}), J_0(u_{\mathbf{y}}) =0, \mbox{ and } E_{\omega}(u_{\mathbf{y}}) = H_{\varepsilon}'(\mathbf{y}_{\heartsuit})-H'_{\varepsilon}(\mathbf{y}).
 \end{equation*}
By definition, we also have $ I(u_{\heartsuit}) = \sum_{i=1}^d I(u_{y_i^+}) =0$, $J_0(u_{\heartsuit}) =0$,  and  $E_{\omega_{\varphi_{H'_{\varepsilon}}}}(u_{\heartsuit}) =0. $  By the additivity of the ECH index, $J_0$ index  and energy, we have $$ I(\mathcal{Z}_0) =n(\textbf{y}), J_0(\mathcal{Z}_0) =0, \mbox{ and }
		E_{\omega_{\varphi_{H'_{\varepsilon}}}}(\mathcal{Z}_0) = H_{\varepsilon}'(\textbf{y}_{\heartsuit})-H'_{\varepsilon}(\textbf{y}). $$ The first statement of the lemma follows from  Lemma \ref{lem38}, Lemma \ref{lem29}, and  $\mathcal{Z}=\mathcal{Z}_0 + m [\Sigma] +\sum_{i=1}^{k+1}  c_i[B_i] +[S] $.

		If $I(\mathcal{Z}) =0$,   then formulas on $I(\mathcal{Z})$,  $J_0(\mathcal{Z})$, 	$E_{\omega_{\varphi_{H'_{\varepsilon}}}}(\mathcal{Z}) $ and assumption \ref{assumption4} imply
 that $$ E_{\omega_{\varphi_{H'_{\varepsilon}}}}(\mathcal{Z}) +\eta J_0(\mathcal{Z}) =H_{\varepsilon}'(\textbf{y}_{\heartsuit})-H_{\varepsilon}'(\textbf{y}) - \frac{\lambda n(\mathbf{y})}{2}. $$ If $n(\textbf{y}) \ge 1$, then  $E_{\omega_{\varphi_{H'_{\varepsilon}}}}(\mathcal{Z}) + \eta J_0(\mathcal{Z}) <0$.  If $n(\textbf{y})=0$, then $\textbf{y}=\textbf{y}_{\heartsuit}$ and  $ E_{\omega_{\varphi_{H'_{\varepsilon}}}}(\mathcal{Z}) + \eta J_0(\mathcal{Z}) =0$.
	\end{proof}

	\begin{lemma} \label{lem36}
		$\widetilde{\mathcal{CO}}(\underline{\Lambda}, H'_{\varepsilon})_J( K\circ \partial' (\alpha_I, Z_I)) =0$,  where $\partial'$ is the differential
 of $\widetilde{PFH}(\Sigma, \varphi_{H'_{\varepsilon}}, \gamma_{H'_{\varepsilon}}^{\mathbf{x}})$ and $K$ is the chain homotopy in (\ref{eq63}). In particular, we have
 $\widetilde{\mathcal{CO}}(\underline{\Lambda}, H'_{\varepsilon})_J(K\circ \partial' \mathfrak{c}_{\heartsuit})=0.$
	\end{lemma}
	\begin{proof}
		Suppose that the statement is not true, then we get a chain of  holomorphic  currents $\mathcal{C}=( \mathcal{C}_1, \mathcal{C}_2, \mathcal{C}_3)$  from $\alpha_I$ to $\textbf{y}$ with total zero ECH index, where $\mathcal{C}_1$ is a  holomorphic current in $\mathbb{R} \times Y_{\varphi_{H_{\varepsilon}'}}$ with $I=1$, $\mathcal{C}_2$ is a $J_+ \circ_R J_-$ broken holomorphic current in $X_+ \circ_R X_-$  with $I=-1$, and $\mathcal{C}_3$ is a PFH-HF curve with $I=0$.   Here $\mathcal{C}_1$ comes from the definition of
 $\partial'(\alpha_I,Z_I)$, $\mathcal{C}_2$ comes from $K$ and the holomorphic curve axioms (Theorem \ref{thm4}), and $\mathcal{C}_3$
 comes from the definition of closed-open morphisms. The holomorphic current $\mathcal{C}_1\#\mathcal{C}_2\#\mathcal{C}_3$ represents  a
 class $\mathcal{Z} \in H_2(W,\alpha_I, \mathbf{y})$.

		Reintroduce the  periodic orbits $\gamma^i_{r_0, \theta_0}$ (\ref{eq27}) near the local minimums  of $H_{\varepsilon}$.  Here the superscript ``$i$'' indicates that the local minimum lies in the  domain $\mathring{B}_i$ ($1\le i\le k+1$). In particular,     $\gamma^i_{r_0, \theta_0}$ lies in $S^1 \times \mathring{B}_i$.  Let $v_i:= (\mathbb{R}\times \gamma_{r_0, \theta_0}^i) $.      Then  for any relative class $\mathcal{Z}' \in H_2(W, \alpha_I, \textbf{y})$, we have a well-defined intersection number
		\begin{equation*}
			n_i(\mathcal{Z}') : =\# (\mathcal{Z}' \cap v_i).
		\end{equation*}
Here we regard  $\mathcal{Z}' \subset W \subset \mathbb{R} \times Y_{\varphi_{H'_{\varepsilon}}}.$
	Recall that the relative homology class $\mathcal{Z} \in H_2(W, \alpha_I, \textbf{y})$ in the first paragraph can be written as
\begin{equation*}
\mathcal{Z}=Z_{\alpha_I, \alpha_{\heartsuit}} \#\mathcal{Z}_{hor} \#[u_{\mathbf{y}}] + \sum_{i=1}^{k+1} c_i[B_i] +m[\Sigma] + [S],
 \end{equation*}
 where $\mathcal{Z}_{hor}$ is the class represented by the union of horizontal sections and $u_{\mathbf{y}}$ are the curves defined in Lemma \ref{lem14}.
	 Let $q_i$ denote the period of $\gamma^i_{r_0, \theta_0}$. Note that the period $q_i$ is determined by the formula  (\ref{eq35}) and $r_0$,  we can choose $\varepsilon$  and $r_0$ such that $q_i=q$ for any $i$.   By  definition, we have
	 \begin{equation} \label{eq12}
	 	n_i(Z_{\alpha_I, \alpha_{\heartsuit}} \#\mathcal{Z}_{hor} \#  [u_{\mathbf{y}}] )=0, \ \  n_i([B_j])=\delta_{ij}q, \ \ n_i([S])=0 \ \  \mbox{and } n_i([\Sigma])=q
	 \end{equation}
	 for $1\le i, j \le k+1$.
	 From (\ref{eq12}), we know that
 \begin{equation*}
 \#(\mathcal{C} \cap (\sqcup_{i=1}^{k+1} v_i) )=\sum_{i=1}^{k+1} n_i(\mathcal{Z})=\sum_{i=1}^{k} c_i q +(k+1)mq=-\frac{qn(\mathbf{y})}{2}.
\end{equation*}
The last step is because 
$I(\mathcal{Z})=0$. Note that $v_i$ are $J$-holomorphic for $J \in \mathcal{J}(W, \Omega_{\varphi_{H'_{\varepsilon}}})$. By the intersection positivity of holomorphic curves, we have $n(\mathbf{y})=0$ and $\mathbf{y} = \mathbf{y}_{\heartsuit}$. Moreover, $\mathcal{C}_2$ does not  intersect  $\mathbb{R} \times \gamma^i_{r_0, \theta_0}$.   Since the choice of $\theta_0$ is arbitrary, $\mathcal{C}_2$ lies inside the product region of $X_+ \circ_R X_-$ and $ E_{\omega_{\varphi_{H'_{\varepsilon}}}}(\mathcal{C}_2) \ge 0$.    $ E_{\omega_{\varphi_{H'_{\varepsilon}}}}(\mathcal{C}_1) > 0$  because the only holomorphic currents
 in  $\mathbb{R} \times Y_{\varphi_{H'_{\varepsilon}}}$ with zero energy
are trivial cylinders.  By Lemma \ref{lem22},  we have
$$ E_{\omega_{\varphi_{H'_{\varepsilon}}}} (\mathcal{Z})= E_{\omega_{\varphi_{H'_{\varepsilon}}}}(\mathcal{C}_1)  +   E_{\omega_{\varphi_{H'_{\varepsilon}}}}(\mathcal{C}_2) +  E_{\omega_{\varphi_{H'_{\varepsilon}}}}(\mathcal{C}_3) >0.$$
By  Lemma \ref{lem27}, we have $J_0(\mathcal{C}_1), J_0(\mathcal{C}_2) \ge 0$, because $\mathcal{C}_2$ lies in the product region.  By Lemma \ref{lem29}, we have $J_0(\mathcal{Z}) = {J}_0(\mathcal{C}_1) + {J}_0(\mathcal{C}_2)+ {J}_0(\mathcal{C}_3) \ge 0$. Consequently, we obtain $$E_{\omega_{\varphi_{H'_{\varepsilon}}}} (\mathcal{Z}) + \eta J_0(\mathcal{Z}) >0.$$
This  contradicts with Lemma \ref{lem14}.

	\end{proof}
	
	\begin{lemma} \label{lem35}
We have
 $$\widetilde{\mathcal{CO}}(\underline{\Lambda}, {H'_{\varepsilon}})_J(  (\alpha_I, Z_I)) =0$$  for $\alpha_I \ne  \alpha_{\heartsuit}$. Moreover, we have
		$$\widetilde{\mathcal{CO}}(\underline{\Lambda}, {H'_{\varepsilon}})_J(  (\alpha_{\heartsuit}, Z_{\alpha_{\heartsuit}, \alpha_{\heartsuit}})) =(\mathbf{y}_{\heartsuit}, [A_{\mathbf{y}_{\heartsuit}}]). $$
	\end{lemma}
	\begin{proof}
		By  Lemmas \ref{lem29}, \ref{lem22}, \ref{lem14},  if $<\widetilde{\mathcal{CO}}(\underline{\Lambda}, {H'_{\varepsilon}})_J(  (\alpha_I, [Z_I])) , (\mathbf{y}, [A]) > \ne 0 $, then $\mathbf{y}$ must be $\mathbf{y}_{\heartsuit}$ and any PFH-HF curve $u$ contributing  to the closed-open morphism has  $E_{\omega_{\varphi_{H'_{\varepsilon}}}} (u) + \eta J_0(u) =0$.  According to Lemmas \ref{lem29}, \ref{lem22},  $E_{\omega_{\varphi_{H'_{\varepsilon}}}} (u) $ and $J_0(u) $ are nonnegative.  Consequently,  $E_{\omega_{\varphi_{H'_{\varepsilon}}}} (u) =  J_0(u) =0$. Again by  Lemma \ref{lem22}, $u=u_{\heartsuit}=\cup_{i=1}^d u_{y_i^+}$.   Therefore,  we have  $\alpha_I= \alpha_{\heartsuit}$ and
		\begin{equation*}
		\widetilde{\mathcal{CO}}(\underline{\Lambda}, {H'_{\varepsilon}})_J((\alpha_{\heartsuit}, Z_{\alpha_{\heartsuit}, \alpha_{\heartsuit}}))=(\textbf{y}_{\heartsuit}, A_{\textbf{y}_{\heartsuit}})
		\end{equation*}
	\end{proof}
	
	In sum, by Lemma \ref{lem36} and Lemma \ref{lem35}, we have
	$$\widetilde{\mathcal{CO}}(\underline{\Lambda}, {H_{\varepsilon}'})_J (\mathfrak{c}_{\heartsuit}+  K\circ \partial'  \mathfrak{c}_{\heartsuit})=(\mathbf{y}_{\heartsuit}, [A_{\textbf{y}_{\heartsuit}}])
. $$
	Since $\varphi_{H'_{\varepsilon}} = \varphi_{H_{\varepsilon}} $  outside the region $\mathcal{U}^{\delta}$,  $HF(\Sigma, \underline{\Lambda},  \varphi_{H'_{\varepsilon}},   \textbf{x}) $ is canonically  isomorphic to  $HF(\Sigma, \underline{\Lambda},  \varphi_{H_{\varepsilon}},  \textbf{x})$.  By Lemma \ref{lem26}, $(\textbf{y}_{\heartsuit}, [A_{\textbf{y}_{\heartsuit}}])$ represents the unit $j_{H_{\varepsilon}'}^{-1}(e_{\underline{\Lambda}})$.  Define
 $\mathfrak{e}_H^{\mathbf{x}} : =(\mathfrak{j}_H^{\mathbf{x}})^{-1}(\mathfrak{e}) \in \widetilde{PFH}(\Sigma, \varphi_H, \gamma_H^{\mathbf{x}})$. By
 the partial invariance; we get the non-vanishing result for the closed-open morphisms.
	Up to now, we finish  the proof of Theorem \ref{thm5}.

 \section{Proof of Theorem \ref{thm2}}
 \begin{proof}[Proof of Theorem \ref{thm2}]
 Fix a Hamiltonian function $H'_{\varepsilon}$ defined in Section \ref{section5.1}. Fix a generic almost complex structure $J_0 \in    \mathcal{J}^{reg}(W, \Omega_{\varphi_{H'_{\varepsilon}}}).$ Define
 \begin{equation}\label{eq69}
\mathcal{CO}(\underline{\Lambda}, H ) = I_{H'_{\varepsilon}, H} \circ \widetilde{\mathcal{CO}}(\underline{\Lambda}, {H_{\varepsilon}'})_{J_0} \circ \mathfrak{I}_{H, H'_{\varepsilon}}.
 \end{equation}
 Then the invariance follows from the properties of the continuous morphisms  of  continuous morphisms ((\ref{eq66}) and Proposition of \ref{lem30}).  By Lemma \ref{lem36} and Lemma \ref{lem35}, we know that $\widetilde{\mathcal{CO}}(\underline{\Lambda}, {H_{\varepsilon}'})_{J_0}$ is   non-vanishing, so is  $\mathcal{CO}(\underline{\Lambda}, H )$, beacuse   $ \mathfrak{I}_{H, H'_{\varepsilon}}$ and   $ I_{H'_{\varepsilon}, H} $ are isomorphisms.

Suppose that the link $\underline{\Lambda}$ is $0$-admissible.  Now we prove the estimate (\ref{eq68}).  We first assume that $H$ satisfies \ref{assumption1}.  Following  from  the partial invariance in Theorem \ref{thm2},  $\mathcal{CO}(\underline{\Lambda}, H)=\widetilde{\mathcal{CO}}(\underline{\Lambda}, H)_{J}$.  Here we can take $J \in \mathcal{J}^{reg}(W, \Omega_{\varphi_H})$ by the reasons in Remark \ref{remark9}.

Suppose that   $\mathcal{CO}(\underline{\Lambda}, H) ((\mathfrak{j}_H^{\mathbf{x}})^{-1}(\sigma)) =(j^{\mathbf{x}}_H)^{-1}(a) \ne 0$.    For any $n>0$, we   find a cycle $\mathfrak{c}= \sum(\alpha, [Z])$ representing $(\mathfrak{j}_H^{\mathbf{x}})^{-1}(\sigma) $ such that  $\mathbb{A}_H(\alpha, [Z]) \le c_{d}^{pfh}(H, \sigma) + 1/n $.     Then,   $\widetilde{\mathcal{CO}}(\underline{\Lambda}, H)_J ( \mathfrak{c}) =\sum (\mathbf{y}, [A])$  is a cycle representing  $(j_H^{\textbf{x}})^{-1}(a)$.   By the definition of $\widetilde{\mathcal{CO}}(\underline{\Lambda}, H)_J $,  there is a PFH-HF curve $u$ from $\alpha$ to $\mathbf{y}$ such that  $[u]=Z\# \mathcal{Z}_{ref} \# A$.   Note that $\int_{\mathcal{Z}_{ref}} \omega_{\varphi_H} =0$.
By Lemma \ref{lem22}, we have
		\begin{equation*}
			\begin{split}
				0\le \int_{\dot{F}} u^*\omega_{\varphi_H} &= \int_Z \omega_{\varphi_H} + \int_{\mathcal{Z}_{ref}} \omega_{\varphi_H} + \int_A  \omega\\
				&= \mathbb{A}_H(\alpha, [Z]) -\mathcal{A}_H(\mathbf{y}, A) \\
				&\le   c_{d}^{pfh}(H, \sigma)  + 1/n		  -\mathcal{A}_H(\textbf{y}, A).
			\end{split}
		\end{equation*}
		By the definition of the HF spectral invariant, we have
		$$   c^{hf}_{\underline{\Lambda}, \eta=0}(H, a) \le   c_d^{pfh}(H,   \sigma)
 + \frac{1}{n}.$$
		 Then we get (\ref{eq68}) by taking $ n  \to \infty$.
	
For a general Hamiltonian function $H$,  according to Proposition \ref{lem37},  we  have  a sequence of  Hamiltonian function  $\{H_n\}_{n=1}^{\infty}$ such that  satisfy (\ref{assumption1}) and   converge to $H$ in $C^1$-topology.  By the above discussion, the spectral invariant of  $H_n$ satisfies (\ref{eq68}).   Since  the spectral invariants satisfy the Hofer continuity (see Theorem 3.1 of \cite{CHS2} and Theorem 1.13 of \cite{CHMSS}),  (\ref{eq68})  also  holds for $H$.
		\end{proof}
\begin{remark} \label{remark3}
A priori, the definition (\ref{eq69}) could depend on the choice of $H'_{\varepsilon}$ and $J_0$.
\end{remark}

	\appendix
	\renewcommand{\appendixname}{Appendix~\Appendix{section}}
	\section{Seiberg-Witten equations with non-exact perturbation} \label{appendixA}
	 In the appendix, we do not assume that $\varphi$ is Hamiltonian. The map $\varphi \in Symp(\Sigma, \omega)$
 can be any symplectomorphism such that the periodic orbits are nondegenerate. For
 our purpose, we introduce the twisted Seiberg-Witten cohomology and its cobordism
 maps defined by non-exact perturbations. Most of what follows here paraphrases parts
 of the accounts in  \cite{LT, CPZ} and Chapter 29 of \cite{KM}.

\subsection{Seiberg-Witten cohomology}
Given a closed Riemannian $3$-manifold $(Y, g)$, a $Spin^c$ structure $\mathfrak{s}$ on $Y$  is a  pair $(S, \mathfrak{cl}_Y )$, where   $S$ is a rank $2$  Hermitian
vector bundle over $Y$ and  $\mathfrak{cl}_Y: TY \to End(S)$  is a bundle map such that
\begin{eqnarray} \label{e6}
\mathfrak{cl}_Y(u)\mathfrak{cl}_Y(v)+ \mathfrak{cl}_Y(v)\mathfrak{cl}_Y(u)=-2g(u,v),
\end{eqnarray}
for any $u$, $v \in TY$ and $\mathfrak{cl}(e_1)\mathfrak{cl}(e_2)\mathfrak{cl}(e_3)=1$,  where  $\{e_1, e_2, e_3\}$ is an  orthonormal  frame for $TY$. A   \textbf{$Spin^c$ connection} $\nabla_B$ is a connection  on $S$ such that
\begin{eqnarray} \label{e17}
\nabla_B (\mathfrak{cl}_Y(u) \psi) =  \mathfrak{cl}_Y(\nabla^g u)\psi +  \mathfrak{cl} (u) \nabla_B \psi,
\end{eqnarray}
 where $u \in TY$,  $\psi$ is a section of $S$   and $\nabla^g$ is the Levi-Civita connection of $g$. We
 employ the same notation $B$ to denote the corresponding connection on $\det S.$
 The \textbf{Dirac operator} of $B$ is defined by $D_{B} \psi = \sum_{i=1}^3\mathfrak{cl}_Y(e_i) \nabla_{ B, e_i}\psi$.

Let $\varphi \in Symp(\Sigma, \omega)$ and  $J \in \mathcal{J}_{comp}(Y_{\varphi}, \omega_{\varphi})$.  Then $(\varphi, J)$ determines a metric $g$
  $Y_{\varphi}$ such that:
 \begin{equation*}
|R|_g=1 \mbox{ and } *_3\omega_{\varphi} =dt.
 \end{equation*}
 Given $\Gamma \in H_1(Y_{\varphi}, \mathbb{Z})$ and the above metric,  we have a $Spin^c$ structure $\mathfrak{s}_{\Gamma}=(S, \mathfrak{cl}_Y)$ such that $c_1(\mathfrak{s}_{\Gamma}) =c_1(\xi) +2PD(\Gamma).$ Furthermore, we have a decomposition
\begin{equation} \label{e2}
S=E \oplus (E\otimes \xi),
\end{equation}
where $E $ is a Hermitian line bundle satisfying $c_1(E) =PD(\Gamma)$. Given $\Gamma \in H_1(Y, \mathbb{Z})$, we  define a  $Spin^c$  structure $\mathfrak{s}_{\Gamma}$ such that $c_1(E)=PD(\Gamma)$.

 To avoid using the Novikov ring, we assume that $(\varphi, \Gamma)$ is \textbf{monotone}, i.e.,  $c_1(\mathfrak{s}_{\Gamma}) = -\rho [\omega_{\varphi}]$, where $\rho$ is a nonzero constant.  Let $\mathcal{C} (\mathfrak{s}_{\Gamma}) : =\operatorname{Conn}(\det S) \oplus \Gamma(S)$  denote the
 set of $Spin^c$ connections and sections of $S$. A pair in  $\mathcal{C} (\mathfrak{s}_{\Gamma})$ is called a \textbf{configuration}.  Let $\mathcal{G}(Y_{\varphi}) : = C^{\infty} (Y_{\varphi}, S^1)$ be the \textbf{gauge group.}  It acts on $\mathcal{C} (\mathfrak{s}_{\Gamma})$  by
 \begin{equation*}
   u\cdot (B, \Psi) := (B-2u^{-1}du, u\Psi),
 \end{equation*}
 where  $u \in \mathcal{G}(Y_{\varphi})$.  Given $(B, \Psi)$,  the \textbf{Chern-Simon-Dirac functional} is
 \begin{equation}
\begin{split}
 \mathfrak{a}_{\mathfrak{g}}(B, \Psi)&= -\frac{1}{8}\int_{Y_{\varphi}}(B-B_0)\wedge (F_B -F_{B_0})
- \frac{i}{4}\int_{Y_{\varphi}}(B-B_0)\wedge (2r\omega + \wp_3) \\
&+ \frac{1}{2}\int_{Y_{\varphi}} \Psi^*D_B \Psi + \mathfrak{g}(B, \Psi) -  \mathfrak{g}(B_0, \Psi_0),
\end{split}
\end{equation}
where $(B_0, \Psi_0)$  is a fixed reference pair in  $\mathcal{C} (\mathfrak{s}_{\Gamma})$.  Here $\mathfrak{g}: \mathcal{C} (\mathfrak{s}_{\Gamma}) \to \mathbb{R}$  is a $\mathcal{G}(Y_{\varphi})$-invariant
 function which is called an \textbf{abstract perturbation.} Let  $\mathcal{G}_0(Y_{\varphi})$  be the identity component of  $\mathcal{G}(Y_{\varphi})$. Note that the Chern-Simon-Dirac functional is $\mathcal{G}_0(Y_{\varphi})$-invariant.

 Let $\mathcal{S} =(\omega_{\varphi}, J, r, \Gamma, \mathfrak{g})$  denote the parameter set, where   $J \in \mathcal{J}_{comp}(Y_{\varphi}, \omega_{\varphi})$, $\Gamma \in H_1(Y_{\varphi}, \mathbb{Z})$,  and $r$ is a large positive constant.   The $\mathcal{S}$-perturbed Seiberg-Witten
 equations ask that a pair $(B,\Psi)$ obeys
\begin{equation} \label{e1}
  \begin{cases}
   D_{B} \Psi  -\mathfrak{S}_{\mathfrak{g}}(B, \Psi)=0 \\
\frac{1}{2}*_3F_B + \mathfrak{cl}_{Y}^{-1} (\Psi\Psi^*)_0  +ir*_3\omega_{\varphi} + \frac{i}{2}*_3 \wp_3 - \mathfrak{C}_{\mathfrak{g}} (B, \Psi)=0,
  \end{cases}
\end{equation}
where $(\Psi\Psi^*)_0   = \Psi \otimes \Psi^* - \frac{1}{2}  \operatorname{Id}_S$, $\wp_3$ is a closed $2$-form with cohomology class $2\pi c_1(\mathfrak{s}_{\Gamma})$, and $\mathfrak{p}(B, \Psi)  =(\mathfrak{S}_{\mathfrak{g}}(B, \Psi), \mathfrak{C}_{\mathfrak{g}}(B, \Psi))$    is the ``$L^2$ formal gradient” of  $\mathfrak{g}$ at  $(B, \Psi)$. Note that the right hand side of (\ref{e1}) is the $L^2$ formal gradient of the Chern-Simon-Dirac  functional.

Define a  chain complex ${CM}^{*}(Y_{\varphi}, \mathcal{S})$  to be a free module generated by  $\mathcal{G}_0(Y_{\varphi})$  equivalence class of solutions to (\ref{e1}).
Given $[\mathfrak{c}_{\pm} ]^{\circ}\in {CM}^{*}(Y_{\varphi}, \mathcal{S})$, then the
 differential $<\delta [\mathfrak{c}_+]^{\circ}, [\mathfrak{c}_-]^{\circ}>$  is defined by counting index one solutions to the following  flow line equations:
\begin{equation}
  \begin{cases}
  \frac{\partial}{\partial s}\Psi(s) + D_{B(s)} \Psi(s) = \mathfrak{C}_{\mathfrak{g}} (B(s), \Psi(s)) \\
\frac{\partial}{\partial s}B(s) + *_3F_{B(s)}= -2\mathfrak{cl}_{Y}^{-1} (\Psi\Psi^*)_0  -ir*_3dt  -\frac{i}{2}*_3 \wp_3 +  \mathfrak{C}_{\mathfrak{g}} (B(s), \Psi(s)) \\
 \lim\limits_{s \to \pm\infty}(B(s), \Psi(s))=u_{\pm} \cdot (B_{\pm}, \Psi_{\pm}) \mbox{ for some } u_{\pm} \in \mathcal{G}_0(Y_{\varphi}),
  \end{cases}
\end{equation}
 modulo  the $\mathcal{G}_0(Y_{\varphi})$-equivalence  and the $\mathbb{R}$-action.   The homology of $({CM}^{*}(Y_{\varphi}, \mathcal{S} ), \delta)$ is denoted by ${HM}^{*}(Y_{\varphi}, \mathcal{S} )$.

\begin{remark}
 For sufficiently large $r > 0$, by By Lemma 29.12 of  \cite{KM}, the solutions
 to (\ref{e1}) are irreducible (even without the monotone condition). Therefore, here we do
 not need to work with the blow-up version of the Seiberg-Witten equations.
\end{remark}

\subsection{Cobordism maps on Seiberg-Witten cohomology }
Let $(Y_{+}, g_{+})$ and $(Y_{-}, g_{-})$ be two Riemannian $3$-manifolds. Let $X$ be a cobordism from $Y_+$ to $Y_-$, i.e., $\partial X=Y_+ \bigsqcup (-Y_-)$. Given  a Riemannian metric $g$ on $X$ such that $g \vert_{Y_{\pm}} =g_{\pm}$, a $Spin^c$ structure $\mathfrak{s}_X$ on $X$  consists of a rank $4$  Hermitian
vector bundle $S=S_+\oplus S_-$ over $X$,  where $S_+$ and $S_-$ are  rank $2$  Hermitian vector bundles, together with a  map $\mathfrak{cl}_X: TX \to End(S)$ satisfying  (\ref{e6}) and
\begin{equation*}
\mathfrak{cl}_X(e_1)\mathfrak{cl}_X(e_2)\mathfrak{cl}_X(e_3)\mathfrak{cl}_X(e_4)=\left(
  \begin{array}{ccc}
    -1 & 0 \\
    0 & 1 \\
  \end{array}
\right),
\end{equation*}
 whenever $\{e_1, e_2, e_3, e_4\}$ is an orthonormal frame for $TX$.  The definitions of $Spin^c$
 connection, gauge groups and Dirac operator are defined similarly as the 3-dimensional
 case.

Let $\mathfrak{s}_{\pm}=(S_{Y_{\pm}}, \mathfrak{cl}_{Y_{\pm}})\in Spin^c(Y_{\pm})$ and $\mathfrak{s}_X=(S_+\oplus S_-, \mathfrak{cl}_X) \in  Spin^c(X)$. We say that    $\mathfrak{s}_X \vert_{Y_{\pm}}=\mathfrak{s}_{\pm}$  if  $S_{Y_{\pm}}=S_+ \vert_{Y_{\pm}}$ and $\mathfrak{cl}_{Y_{\pm}}(\cdot) = \mathfrak{cl}_X(\nu_{\pm})^{-1}\mathfrak{cl}_X(\cdot)$, where $\nu_+$ is the outward unit normal vector of $Y_+$ and $\nu_-$ is the inward unit normal vector of $Y_-$.

 A symplectic cobordism    from $(Y_{\varphi_+},  \omega_{\varphi_+})$ to $(Y_{\varphi_-},  \omega_{\varphi_-})$ is a compact symplectic manifold $(X, \Omega_X)$ such that  $\partial X= Y_{\varphi_+} \bigsqcup (-Y_{\varphi_-})$ and $\Omega_X  \vert_{Y_{\varphi_{\pm}}} = \omega_{\varphi_{\pm}}$.  Let $(\widehat{X}, \Omega_X)$ be the symplectic completion via adding cylindrical ends (see Section 2.3 of \cite{GHC}). An $\Omega_X$-compatible almost complex structure
 $J$     on $\widehat{X}$ is called \textbf{cobordism-admissible}   if it agrees with some admissible almost complex structures of $\mathbb{R} \times Y_{\varphi_{\pm}}$
over the cylindrical ends.   The set of cobordism-admissible almost complex structures is denoted by $\mathcal{J}_{comp}(X, \Omega_X)$.

 Let $(X_+, \Omega_{X_+})$ be a symplectic cobordism from $(Y_{\varphi_+}, \omega_{\varphi_+})$ to  $(Y_{\varphi_0}, \omega_{\varphi_0})$ and  $(X_-, \Omega_{X_-})$
 be another symplectic cobordism from  $(Y_{\varphi_0}, \omega_{\varphi_0})$   to   $(Y_{\varphi_-}, \omega_{\varphi_-})$.  For $R \ge  0$, we define
 \textbf{$R$-stretched composition} by
 \begin{equation}\label{e5}
   X_+ \circ_R X_- =X_- \cup_{Y_{\varphi_0}} \left( [-R, R] \times Y_{\varphi_0}\right)  \cup_{Y_{\varphi_0}} X_+.
 \end{equation}
Given $J_{\pm} \in \mathcal{J}_{comp}(X_{\pm}, \Omega_{X_{\pm}})$ such that $J_+ \vert_{\mathbb{R}_- \times Y_{\varphi_0}} =J_+ \vert_{\mathbb{R}_+ \times Y_{\varphi_0}}$, then we can obtain a
 cobordism-admissible almost complex structure  $J_+ \circ_R J_-$ on $\widehat{X_+ \circ_R X_-}$ by gluing them
 along $Y_{\varphi_0}$. (See Section 2.5 of \cite{GHC}.)

Let $(X, \Omega_X)$ be a symplectic cobordism    from $(Y_{\varphi_+},  \omega_{\varphi_+})$ to $(Y_{\varphi_-},  \omega_{\varphi_-})$ and $J \in \mathcal{J}_{comp}(X, \Omega_X)$. Define a metric $g$ on $\widehat{X}$ by $g(\cdot, \cdot) := \Omega_X(\cdot, J \cdot)$.  Then $\Omega_X$ satisfies $*\Omega_X = \Omega_X$ and $|\Omega_X|_g =\sqrt{2}. $ Moreover, $g$ agrees with  $ds^2+ g_{\pm}$ on the ends $[0, +\infty) \times Y_{\varphi_+}$
and $(-\infty, 0] \times Y_{\varphi_-}$
respectively. Similar to the 3-dimensional case, there is a decomposition
\begin{equation}\label{e3}
S_+= E \oplus E\otimes K_X^{-1},
\end{equation}
where $E$ is a Hermitian line bundle and $K_X^{-1}$ is the canonical line  bundle of $(\overline{X},J)$. Furthermore,  the splitting (\ref{e3}) agrees with the splitting (\ref{e2}) \ on the ends $[0, \infty) \times Y_{\varphi_+}$ and $(-\infty, 0] \times Y_{\varphi_-}$.

Fix   $2$-forms  $\wp_{3,\pm}$ on $Y_{\varphi_{\pm}}$  such that $[\wp_{3, \pm}]= 2\pi c_1(\mathfrak{s}_{\pm}) $, and   $\wp_4$ be a closed $2$-form on $\widehat{X}$ which agrees with   $\wp_{3,+}$ and $\wp_{3,-}$  on the cylindrical  ends. Let $\mathfrak{p}$ be an
 abstract perturbation supported on the cylindrical ends.  The $(\Omega_X, r, J, \mathfrak{p})$ Seiberg-Witten  equations is
\begin{equation} \label{e4}
  \begin{cases}
D_{A} \Phi =\mathfrak{S}(A, \Phi) \\
\frac{1}{2}F_{A}^+=  \mathfrak{cl}_X^{-1}(\Phi\Phi^*)_0 -\frac{ir}{2}\Omega_X -i\wp_r^+ + \mathfrak{C}(A, \Phi),
  \end{cases}
\end{equation}
where  $A$ is a connection  $\det S_+$ and $\Phi \in \Gamma(S_+)$.  After a $\mathcal{G}_0(X)$-transformation, we  have  $\nabla_A= \partial_s + \nabla_{B(s)}$ (temporal gauge) on the ends, where $B(s) $ a  connection on   $ \det S_+ \vert_{\{s\} \times {Y_{\varphi_{\pm}}}}$.  Let  $\mathcal{S}_{\pm}$ be SW-parameter sets of $Y_{\varphi_{\pm}}$ with a common $r$-parameter.
 Given $[\mathfrak{c}_{\pm}]^{\circ} \in {CM}^{*}(Y_{\varphi_{\pm}},  \mathcal{S}_{\pm})$, let $\mathfrak{M}_{X, \operatorname{ind}=0}([\mathfrak{c}_+]^{\circ}, [\mathfrak{c}_-]^{\circ})$ be  the  moduli space of solutions to (\ref{e4}) with boundary conditions
\begin{equation*}
\lim\limits_{s \to \pm\infty}(B(s), \Phi(s))=u_{\pm} \cdot (B_{\pm}, \Psi_{\pm}) \mbox{ for some } u_{\pm} \in \mathcal{G}_0(Y_{\varphi_{\pm}}),
\end{equation*}
 and modulo  the  $\mathcal{G}_0(X)$-equivalence.  A solution $(A, \Phi)$ to (\ref{e4}) satisfying the above
 boundary condition is called a \textbf{SW-instanton}.

The cobordism map at the chain level is given by
\begin{equation*}
{CM}(X,  \Omega_X, J, r,  \mathfrak{p}, \mathfrak{s}_X)  = \sum_{[\mathfrak{c}_-]^{\circ}}  \#\mathfrak{M}_{X,  \operatorname{ind}=0}( [\mathfrak{c}_+]^{\circ}, [\mathfrak{c}_-]^{\circ})  [\mathfrak{c}_-]^{\circ}.
\end{equation*}
 To simplify the exposition, we assume that there is only finitely many $[\mathfrak{c}_-]^{\circ} $ such that  $\mathfrak{M}_{X,  \operatorname{ind}=0}( [\mathfrak{c}_+]^{\circ}, [\mathfrak{c}_-]^{\circ})  \ne \emptyset$; otherwise, we need to define the cobordism maps by using
 Novikov ring.
This map induces a homomorphism
$${HM}(X,  \Omega_X, J, r,  \mathfrak{p}, \mathfrak{s}_X):  {HM}^{*}(Y_{\varphi_+}, \mathcal{S}_+) \to {HM}^{*}(Y_{\varphi_-}, \mathcal{S}_-).$$
 that is called a \textbf{Seiberg-Witten cobordism map.}  The total Seiberg-Witten cobordism map is given by
 $${HM}(X,  \Omega_X, J, r,  \mathfrak{p})=\sum_{\mathfrak{s}_X \in Spin^c(X), \mathfrak{s}_X \vert_{Y_{\varphi_{\pm}}} =\mathfrak{s}_{\pm}} {HM}(X,  \Omega_X, J, r,  \mathfrak{p}, \mathfrak{s}_X).$$

 \begin{theorem} \label{thmA}
 The Seiberg-Witten cobordism map ${HM}(X,  \Omega_X, J, r,  \mathfrak{p})$ is well defined.
 Furthermore, it satisfies the following properties:
 \begin{itemize}
   \item
   (Homotopy invariance) Suppose that we have two tuples $(\Omega_i,J_i, \mathfrak{p}_i) $ on $\widehat{X}$ such
 that $(\Omega_1,J_1, \mathfrak{p}_1) =(\Omega_0,J_0, \mathfrak{p}_0) $ outside a compact subset of $\widehat{X}$  and $[\Omega_1 - \Omega_0] =0 $.
 Then
 $${HM}(X,  \Omega_0, J_0, r,  \mathfrak{p}_0, \mathfrak{s}_X)= {HM}(X,  \Omega_1, J_1, r,  \mathfrak{p}_1, \mathfrak{s}_X).$$
   \item
   (Composition rule)  Let $(X_+, \Omega_{X_+})$ be a symplectic cobordism from $(Y_{\varphi_+}, \omega_{\varphi_+})$ to  $(Y_{\varphi_0}, \omega_{\varphi_0})$, and  $(X_-, \Omega_{X_-})$
a symplectic cobordism from  $(Y_{\varphi_0}, \omega_{\varphi_0})$   to   $(Y_{\varphi_-}, \omega_{\varphi_-})$. Then
   \begin{equation*}
     \begin{split}
    & {HM}(X_+\circ X_-,  \Omega_{X_+\circ X_-}, J_+\circ J_-, r,  \mathfrak{p})\\
    =&{HM}(X_-,  \Omega_{X_-}, J_-, r,  \mathfrak{p}_-) \circ {HM}(X_+,  \Omega_{X_+}, J_+, r,  \mathfrak{p}_+).
      \end{split}
   \end{equation*}
 \end{itemize}
 \end{theorem}
To illustrate the proof of Theorem \ref{thmA}, we need to review some functionals on the
 Seiberg-Witten equations. Let
 \begin{equation*}
  \mathcal{SW}_{\mathfrak{p}}(A, \Phi)=\big( \frac{1}{2}F^+_A - \mathfrak{cl}_X^{-1}(\Phi \Phi^*)_0  + \frac{ir}{2}\Omega_X+ \frac{i}{2} \wp_4^+-\mathfrak{C}(A, \Phi), D_A \Phi - \mathfrak{S}(A, \Phi) \big).
 \end{equation*}
  As in \cite{KM}, we define the \textbf{analytic energy} and \textbf{topological energy} as follows:
\begin{itemize}
 \item (Analytic energy)
\begin{align*}
\mathscr{E}_{anal}(X_R)(A,\Phi)&= \frac{1}{4} \int_{X_R} |F_A|^2 + \int_{X_R} |\nabla_A \Phi|^2  + \int_{X_R} 2| \frac{ir}{2}\Omega_X + \frac{i}{2} \wp^+_4  -\mathfrak{cl}_{X_R}^{-1}(\Phi \Phi^*)_0 |^2 \\
&+\frac{1}{4}\int_{X_R} R_g |\Phi|^2 - i \int_{X_R} F_A \wedge    \frac{1}{2}*\wp_4,
\end{align*}
 where $X_R: =\{x \in \widehat{X} \vert | s(x)| \le R \}$.

 \item (Topological energy)
\begin{align*}
\mathscr{E}_{top}(X_R)(A,\Phi)&= \frac{1}{4} \int_{X_R} F_{A}\wedge F_{A}   - \int_{\partial X_R} <\Phi, D_A \Phi> +  i \int_{X_R} F_A \wedge (r\Omega_X + \frac{1}{2}\wp_4)\\
& + \int_{\partial X_R} \frac{H}{2} |\Phi|^2,
\end{align*}
 where   $H$ is mean curvature of $\partial X_R$.
\end{itemize}
 By the same computation as in Section 4.5 of  \cite{KM}, we have
\begin{equation} \label{e7}
 | \mathcal{SW}(A, \Phi)|^2_{L^2(X_R)} =\mathscr{E}_{anal}(X_R)(A,\Phi)- \mathscr{E}_{top}(X_R)(A,\Phi),
\end{equation}
where  $\mathcal{SW}(A, \Phi)$ is the $\mathfrak{p}=0$-version of $\mathcal{SW}_{\mathfrak{p}}(A, \Phi)$.

\begin{proof} [Sketch of the proof of Theorem \ref{thmA}] To  prove the well-definedness of ${HM}(X,  \Omega_X, J, r,  \mathfrak{p}, \mathfrak{s}_X)$  and its properties, the argument is the same as Lemma 25.3.6 and Proposition 25.3.8
 of \cite{KM}. The key ingredients are to prove the compactness and transversality of the
 moduli space. We illustrate these two points as follows.

\begin{itemize}
  \item
   For a generic choice of $\mathfrak{p}$, the moduli space $ \mathfrak{M}_{X}( [\mathfrak{c}_+]^{\circ}, [\mathfrak{c}_-]^{\circ})$    is a smooth manifold
 of expected dimension. To prove this, the idea is to show that the linearization
 of the Seiberg-Witten equations is surjective (Definition 24.4.2 of \cite{KM}). Note
 that the appearance of the perturbation term $\frac{i}{2} (r\Omega_X + \wp^+_4)$  does not change the
 linearization of the Seiberg-Witten equations. Therefore, one can repeat the argument in Proposition 24.3.1 and Proposition 24.4.7 of \cite{KM} to prove the transversality. Let $ \mathfrak{M}_{X}( [\mathfrak{c}_+]^{\circ}, [\mathfrak{c}_-]^{\circ})_P$ be the moduli space of families of $(\Omega_p, J_p, r, \mathfrak{p}_p)$-
perturbed SW-instantons, where  $p \in P$ and $P$ is a manifold.  Furthermore, we
 require that $(\Omega_p, J_p,\mathfrak{p}_p)$ are fixed  over the cylindrical ends and $[\Omega_p-\Omega_{p_0}] = 0$ for all
$p \in P$.  The second condition implies that $\Omega_p =\Omega_{p_0} + d\mu_p$, where $\mu_p \in \Omega^1_c(X)$.  As
 the unparameterization case, $\Omega_{p_0}$
does not contribute to the linearization of the
 Seiberg-Witten equations while $d\mu_p$ is a family of of exact perturbations. Thus,
 repeat the argument of Proposition 24.4.10 in \cite{KM}, the transversality results can
 be extended to  $ \mathfrak{M}_{X}( [\mathfrak{c}_+]^{\circ}, [\mathfrak{c}_-]^{\circ})_P$.

 \item
 To prove the compactness, we need to estimate $|\nabla_A \Phi|^2_{L^2(X_R)}$, $|F_A|^2_{L^2(X_R)}$ and $|\Phi|^4_{L^4(X_R)}$, where $\mathfrak{d} \in \mathfrak{M}_{X}( [\mathfrak{c}_+]^{\circ}, [\mathfrak{c}_-]^{\circ})$  is a SW-instanton. By the
 Cauchy-Schwarz inequality, we have
 \begin{equation*}
|\nabla_A \Phi|^2_{L^2(X_R)} + |F_A|^2_{L^2(X_R)} + |\Phi|^4_{L^4(X_R)} \le c_0  \mathscr{E}_{anal}(X_R)(A,\Phi) + C(r\Omega_X, \wp, g)R,
 \end{equation*}
  where $C(r\Omega_X, \wp, g)$ is a constant depending on $r\Omega_X$, $\wp$, $g$, and $c_0$ is a universal
 constant.  Therefore, it suffices to estimate the analytic energy $\mathscr{E}_{anal}(X_R)(A,\Phi).$

 By (\ref{e7}) and Cauchy-Schwarz inequality, we have
 \begin{equation*}
    \begin{split}
   0= | \mathcal{SW}_{\mathfrak{p}}(A, \Phi)|^2_{L^2(X_R)} & \ge  \frac{1}{2} | \mathcal{SW}(A, \Phi)|^2_{L^2(X_R)} - 4  |\mathfrak{p}|^2_{L^2(X_R)}\\
   &=\frac{1}{2}\mathscr{E}_{anal}(X_R)(A,\Phi)- \frac{1}{2}\mathscr{E}_{top}(X_R)(A,\Phi)-4   |\mathfrak{p}|^2_{L^2(X_R)}.
    \end{split}
 \end{equation*}
 On the other hand, a direct computation shows that
 \begin{equation*}
\mathscr{E}_{top}(X_R)(A,\Phi)= \frac{1}{4} \int_{X_R} F_{A_0}\wedge F_{A_0}   + \frac{i}{4} \int_{X_R} F_{A_0} \wedge (r\Omega_X + \frac{1}{2}\wp_4) - 2\mathfrak{a}( \mathfrak{d} \vert_{\partial X_R}) + \int_{\partial X_R} \frac{H}{2}|\Phi|^2,
\end{equation*}
where $A_0$ is a reference connection such that $A_0 = B_0$ over the cylindrical ends.
 Then, we have
 \begin{equation*}
   \mathscr{E}_{anal}(X_R)(A,\Phi) \le C(r\Omega_X, \wp, g)  R + 4  |\mathfrak{p}|^2_{L^2(X_R)} - 2\mathfrak{a}_{\mathfrak{g}}( \mathfrak{d} \vert_{\partial X_R}) +2\mathfrak{g}(\mathfrak{d} \vert_{\partial X_R})
 \end{equation*}
 By Stokes’s theorem and Cauchy-Schwarz inequality, we have
 \begin{equation*}
    \begin{split}
\mathfrak{g}(\mathfrak{d} \vert_{\partial X_R}) &=\int_0^R \frac{d}{ds} \mathfrak{g}_s(\mathfrak{d}(s)) ds\\
&= \int_0^R <\mathfrak{p}_s, \partial \mathfrak{d}(s)> ds + \int_0^R  (\frac{d}{ds} \mathfrak{g}_s)(\mathfrak{d}(s)) ds\\
&\le c_0|\mathfrak{p}|^2_{L^2(X_R)} + c_0^{-1} |\partial_s \mathfrak{d}(s)|_{L^2(X_R\setminus X)}^2 +  \int_0^R  (\frac{d}{ds} \mathfrak{g}_s)(\mathfrak{d}(s)) ds.
    \end{split}
 \end{equation*}
 The term  $|\partial_s \mathfrak{d}(s)|_{L^2(X_R\setminus X)}^2$ is bounded by   $|\nabla_A \Phi|^2_{L^2(X_R)} + |F_A|^2_{L^2(X_R)}$. Apply the
 argument in the proof of Lemma 24.5.1 of \cite{KM} to estimate $ \int_0^R  (\frac{d}{ds} \mathfrak{g}_s)(\mathfrak{d}(s)) ds$, we obtain
 \begin{equation*}
    \begin{split}
     \mathfrak{g}(\mathfrak{d} \vert_{\partial X_R}) & \le  c_0|\mathfrak{p}|_{L^2(X_R)}^2 + \mathfrak{g}(B_0, 0) R\\
     & + \left(|\nabla_A \Phi|^2_{L^2(X_R)} + |F_A|^2_{L^2(X_R)}  + \epsilon R|\Phi|^4_{L^4(X_R)}  + \epsilon R|F_A|^2_{L^2(X_R)}  \right) + \frac{1}{c_0\epsilon^2} R.
    \end{split}
 \end{equation*}
  Choose sufficiently small  $\epsilon>0$, the terms on  $|\nabla_A \Phi|^2_{L^2(X_R)}$, $|F_A|^2_{L^2(X_R)}$, $|\Phi|^4_{L^4(X_R)}$  are absorbed by the analytic energy.  By (iv) of Definition 10.5.1 in \cite{KM}, $|\mathfrak{p}|^2_{L^2(X_R)} \le c_0^2R + \frac{1}{c_0^2}|\Phi|^4_{L^4(X_R)}$. The term  $\frac{1}{c_0^2}|\Phi|^4_{L^4(X_R)}$  also can be absorbed by the analytic
 energy. In sum, we have
 \begin{equation}\label{e8}
 \begin{split}
\mathscr{E}_{anal}(A, \Phi)& \le C(r\Omega_X, \wp_4, B_0, g) R^3 -2\mathfrak{a}_{\mathfrak{g}}(\mathfrak{d} \vert_{\partial X_R})\\
 &\le C(r\Omega_X, \wp_4, B_0, g) R^3 -2\mathfrak{a}_{\mathfrak{g}_+} ([\mathfrak{c}_+]^{\circ}) + 2\mathfrak{a}_{\mathfrak{g}_-} ([\mathfrak{c}_-]^{\circ}).
 \end{split}
 \end{equation}
  The last inequality is because the Chern-Simon-Dirac functional is decreasing.
 Recall that  $\mathfrak{a}_{\mathfrak{g}_{\pm}} $
are  $\mathcal{G}_0(Y_{\varphi_{\pm}})$-invariant. Therefore, for any fixed $R$, we get a
 uniform bound on $|\nabla_A \Phi|^2_{L^2(X_R)}$, $|F_A|^2_{L^2(X_R)}$ and $|\Phi|^4_{L^4(X_R)}$.

 \item
 The bound (\ref{e8}) can be used to prove a local compactness of the moduli space as in Theorem 24.5.2 of \cite{KM}. Combining this with compactness results of SW
instantons on $\mathbb{R} \times Y$, the local compactness can be upgraded to the global compactness, i.e., for a sequence of SW-instantons $\{\mathfrak{d}_n \}_{n=1}^{\infty} \subset \mathfrak{M}_{X}( [\mathfrak{c}_+]^{\circ}, [\mathfrak{c}_-]^{\circ})$, there
 exists a subsequence converging to a broken trajectory in Definition 24.6.1 of
\cite{KM}.
\end{itemize}
\end{proof}

 \section{ Twisted cobordism maps on PFH} \label{appendixB}
In this appendix, we present the statements regarding the holomorphic curve axioms
 for PFH cobordism maps in twisted setting. They are slight refinements of Theorem 1 in \cite{GHC}.  They are slight refinements of Theorem 1 in  \cite{GHC}.  The statements are parallel to those for ECH  (Proposition 6.2 of \cite{H6} and \cite{HT}).    For a nondegenerate symplectomorphism $\varphi \in Symp(\Sigma, \omega)$,  the twisted period Floer homology  $\widetilde{PFH}_*(\Sigma, \varphi, \gamma^{ref})$ is still well
 defined for a fixed reference 1-cycle $\gamma^{ref}$.

	Let  $\phi   \in  Hom( \widetilde{PFC}_*(\Sigma_+, \varphi_+, \gamma_+^{ref}), \widetilde{PFC}_*(\Sigma_-, \varphi_-, \gamma^{ref}_-))$ be a linear map with decomposition
\begin{equation*}
\phi =\sum_{\Gamma_X \in H_2(X, \partial X, \mathbb{Z}), \partial_{\pm} \Gamma_X =\Gamma_{\pm}} \phi_{\Gamma_X}.
\end{equation*}
Fix a reference relative homology class $Z_{ref} \in H_2(X, \gamma_+^{ref}, \gamma_-^{ref})$. Then $\Gamma_X= [Z_{ref}] + j(S) \in  H_2(X, \partial X, \mathbb{Z})$ for some $S\in H_2(X, \mathbb{Z})$, where $j:  H_2(X,  \mathbb{Z}) \to H_2(X, \partial X, \mathbb{Z})$  is the homomorphism in
 the exact sequence of relative homology. Following the terminology in \cite{H6},  we say
 that $\phi$ \textbf{counts J-holomorphic currents}  if $<\phi_{\Gamma_X}(\alpha_+, Z_+), (\alpha_-, Z_-)> \ne 0$  implies
 that the moduli space $\overline{\mathcal{M}^J}(\alpha_+, \alpha_-, [Z_+\#(Z_{ref} +j(S)) \# (-Z_-)])$ is non-empty.  Here the
 bracket ``[ ]” denote the equivalence class of the relative homology class modulo out the
 following relation: $Z_1 \sim Z_2$ if and only if $\int_{Z_1} \Omega_X = \int_{Z_2} \Omega_X. $

  Following Hutchings and Taubes’ idea for ECH \cite{HT}, the author defines the PFH
 cobordism maps via the isomorphism ``SWF=PFH" \cite{LT} and Seiberg-Witten theory
 \cite{KM}. (See Theorem 1 of \cite{GHC}.) The following theorem covers three situations which do
 not appear in Theorem 1 of \cite{GHC}, but they can be obtained by the same method. The
 parallel statement for ECH cobordism maps can be found in Proposition 6.2 of \cite{H6}.

	\begin{theorem}\label{thm3}
		Let  $(X, \Omega_X) $ be a symplectic   cobordism  from $(Y_{\varphi_+}, \omega_{\varphi_+})$ to  $(Y_{\varphi_-}, \omega_{\varphi_-})$.  Assume that $d(\gamma_{\pm}) > g(\Sigma_{\pm})$. Fix a reference  relative homology class $Z_{ref} \in H_2(X, \gamma_+^{ref}, \gamma_-^{ref})$.    Then   $(X,   \Omega_X, Z_{ref}) $  induces a module homomorphism
		\begin{eqnarray*}
			PFH_{Z_{ref}}^{sw}(X, \Omega_X ): \widetilde{PFH}_*(\Sigma_+, \varphi_+,\gamma^{ref}_+) \to \widetilde{PFH}_*(\Sigma_-, \varphi_-, \gamma^{ref}_-)
		\end{eqnarray*}
		satisfying the following holomorphic curve axioms:
		\begin{enumerate}
			\item (Holomorphic curves)
			Given  a cobordism-admissible almost complex structure $J \in \mathcal{J}_{comp}(X,  \Omega_X)$ such that $J_{\pm} =J \vert_{\mathbb{R}_{\pm} \times Y_{\pm}} $ are  generic, then there is a chain map
			$$	PFC_{Z_{ref}}^{sw}(X, \Omega_X ): \widetilde{PFC}_*(\Sigma_+, \varphi_+,\gamma^{ref}_+) \to \widetilde{PFC}_*(\Sigma_-, \varphi_-, \gamma^{ref}_-)$$ inducing $PFH_{Z_{ref}}^{sw}(X, \Omega_X )$   and $PFC_{Z_{ref}}^{sw}(X, \Omega_X )_J$ counts $J$-holomorphic currents with $I=0$.

	\item (Homotopy invariance)
			Let $\{(\Omega_X^t, J_t)\}_{t \in [0,1]}$ be a family of symplectic  2-forms and cobordism-admissible almost complex structures such that $\Omega_X=\omega_{\varphi_{\pm}} + ds \wedge dt$ and $J=J_{\pm} \in \mathcal{J}(Y_{\varphi_{\pm}}, \omega_{\varphi_{\pm}})$ on the cylindrical ends. Suppose that $[\Omega_t-\Omega_0] =0 \in H^2_c(X, \mathbb{R})$ for each $t\in [0,1]. $  Then there exists a chain homotopy
			\begin{equation*}
					K: \widetilde{PFC}_*(\Sigma_+, \varphi_+,\gamma^{ref}_+) \to \widetilde{PFC}_*(\Sigma_-, \varphi_-, \gamma^{ref}_-)
			\end{equation*}
			such that
			\begin{equation*}
				PFC_{Z_{ref}}^{sw}(X, \Omega_ 0)_{J_0} =	PFC_{Z_{ref}}^{sw}(X, \Omega_1)_{J_1} + K \circ \partial_+ + K\circ \partial_-,
			\end{equation*}
			and $K$ counts $J_t$-holomorphic currents with $I=-1$.

			\item (Composition rule)
			Let $(X_+,  \Omega_{X+}) $ and $(X_-, \Omega_{X-}) $  be    symplectic  cobordisms   from $(Y_{\varphi_+},  \omega_{\varphi_+})$ to $(Y_{\varphi_0},  \omega_{\varphi_0})$  and  from $(Y_{\varphi_0},  \omega_{\varphi_0})$   to $(Y_{\varphi_-}, \omega_{\varphi_-})$ respectively.  If $(X,  \Omega_X)$ is the composition of $(X_+, \, \Omega_{X_+})$ and $(X_-, \Omega_{X_-})$, then there exists a chain homotopy
			\begin{equation*}
					K: \widetilde{PFC}_*(\Sigma_+, \varphi_+,\gamma^{ref}_+) \to \widetilde{PFC}_*(\Sigma_-, \varphi_-, \gamma^{ref}_-)
			\end{equation*}
			such that
			\begin{equation*}
				PFC_{Z_{ref-}}^{sw}(X_-, \Omega_{X_-} )_{J_-} \circ 	PFC_{Z_{ref+}}^{sw}(X_+, \Omega_{X_+} )_{J_+} =	PFC_{Z_{ref}}^{sw}(X, \Omega_X )_J + K \circ \partial_+ + K\circ \partial_-,
			\end{equation*}
			and $K$ counts $J_+ \circ_R J_-$-holomorphic currents with $I=-1$. Here $Z_{ref}=Z_{ref-}\#Z_{ref+}$.

			\item (Trivial cylinders)
			Suppose that $\mathbb{R} \times \mathcal{U}$ is a product region contained in $\widehat{X}$, i.e., $\Omega_X \vert_{\mathbb{R} \times \mathcal{U}} =\omega_{\varphi} + ds \wedge dt$.  Let $J$ be a cobordism-admissible almost complex structure on $\widehat{X}$ such that $J$ is $\mathbb{R}$-invariant over  $\mathbb{R} \times \mathcal{U}$. Suppose that the trivial cylinder  $\mathbb{R}\times \alpha$ is  the only element in $\overline{\mathcal{M}_0^J}(\alpha, \alpha)$, then
			\begin{equation*}
				<PFC_{Z_{ref}}^{sw}(X, \Omega_X )_J(\alpha, Z_+), (\alpha, Z_- )>=1,
			\end{equation*}
	 where $Z_+$ and $Z_-$ are relative homology classes such that $Z_+ -Z_- = [\mathbb{R}\times \alpha].$		
			
		\end{enumerate}
	\end{theorem}
	\begin{proof}[Sketch of proof]
Assume that  $(\varphi_{\pm}, J_{\pm})$ are $d$-$\delta$ flat approximations satisfying the properties in
 Lemma 2.1 and Lemma 2.4 of \cite{LT}.  For each orbit set  $\alpha$, Lee and Taubes construct
 a solution $\mathfrak{c}_{\alpha}(r)$ to (\ref{e1}), and this induces an identification between the PFH chain
 complex and Seiberg-Witten Floer chain complex (Theorem 3.1 of \cite{LT}).
As explained
 in Section 3.2 of  \cite{CPZ}, the above identification can be modified for the twisted setting by
 \begin{equation*}
   \begin{split}
     \mathcal{T}_r:& \widetilde{PFC}(\Sigma, \varphi,\gamma^{ref}) \to CM^{-*}(Y_{\varphi}, \mathcal{S}) \\
        & (\alpha, Z) \to [u_Z \cdot \mathfrak{c}_{\alpha}(r)]^{\circ},
    \end{split}
 \end{equation*}
 where $u_Z \in \mathcal{G}(Y_{\varphi})$  is uniquely determined by $Z$.

To define the twisted PFH cobordism maps, we need some preparations. First,
 we identify the relative homology classes with the $Spin^c$ structures via the following bijection
 \begin{equation*}
 \begin{split}
     \{\Gamma_X  \in H_2(X, \partial X, \mathbb{Z}): \partial_{\pm}\Gamma_X = \Gamma_{\pm}\}& \to \{\mathfrak{s}_X \in Spin^c(X) :  \mathfrak{s}_X \vert_{Y_{\varphi_{\pm}}} =\mathfrak{s}_{\Gamma_{\pm}}\}\\
     \Gamma_X & \to \mathfrak{s}_{\Gamma_X},
    \end{split}
 \end{equation*}
 where $\mathfrak{s}_{\Gamma_X}$
is the $Spin^c$ structure with $c_1(\mathfrak{s}_{\Gamma_X}) =2PD(\Gamma_X ) +c_1(K_X^{-1}).$

 Let $C_{ref} $ be a proper embedded surface such that  $C_{ref}  = \mathbb{R}_{\pm} \times \gamma^{ref}_{\pm}$  over the
 cylindrical ends and representing the class  $Z_{ref}$.  Let $(B^c_{\pm}(r), (\alpha_{\pm}^c(r), 0))$ be  the $
\gamma^{ref}_{\pm}$ concentrated family defined in Section 3.2 of \cite{CPZ}. We choose a family of configurations $\mathfrak{d}^c=(A^c(r), \Phi^c(r) =(\alpha^c(r), 0))$ such that
\begin{enumerate}
                               \item
                               $(A^c(r), \Phi^c(r) )= u_{\pm}^c \cdot (B^c_{\pm}(r), (\alpha_{\pm}^c(r), 0))$ over the cylindrical ends for some $u_{\pm}^c \in \mathcal{G}(Y_{\varphi_{\pm}})$;
                               \item
                               $(\alpha^c(r))^{-1}(0) \cap X =C_{ref} \cap X$  representing   $Z_{ref} \in H_2(X, \gamma_+^{ref}, \gamma_-^{ref})$;
                               \item
                                With respect to the decomposition (\ref{e3}),  we have $A^c(r) =2A^c_E(r) +A_{K^{-1}}$. Furthermore, $\frac{i}{2\pi} F_{A_E^c(r)}$ converge weakly to $C_{ref}$ in current sense as $r\to \infty$.
                             \end{enumerate}

  With the above preparations, the PFH cobordism map at the chain level is defined by
  \begin{equation*}
  \begin{split}
    &PFC^{sw}_{Z_{ref}}(X, \Omega_X, \Gamma_X)_J(\alpha_+, Z_+)\\
    =& \sum_{\alpha_-} \lim_{r\to \infty}\#\mathfrak{M}_{X, \operatorname{ind} =0}( [u^c_+\cdot u_{Z_+} \cdot \mathfrak{c}_{\alpha_+} ]^{\circ}, [u^c_-\cdot u_{Z_-} \cdot  \mathfrak{c}_{\alpha_-} ]^{\circ}) (\alpha_-, Z_-)
      \end{split}
  \end{equation*}
   It induces a homomorphism $PFH^{sw}_{Z_{ref}} (X, \Omega_X, \Gamma_X)$
  at the homological level. Furthermore,  by Theorem \ref{thmA},  $PFH^{sw}_{Z_{ref}} (X, \Omega_X, \Gamma_X)$ is independent of the choices of small $\mathfrak{p}$ and $J$. If $(\varphi_{\pm}, J_{\pm})$ are not   $d$-$\delta$ flat approximations, we can find   $(\varphi_{\pm}', J'_{\pm})$ such that they ar  $d$-$\delta$ flat.  By
 Proposition 2.1 of  \cite{LT}, the PFH of  $(\varphi_{\pm}', J'_{\pm})$ are canonically isomorphic to those of  $(\varphi_{\pm}, J_{\pm})$.
 Then we perturb $(\Omega_X,J)$ so that it becomes a cobordism between  $(Y_{\varphi_{\pm}'},  \omega_{\varphi_{\pm}'}, J_{\pm}')$ (Lemma 5.22 of  \cite{GHC}),  which we denote by  $(\Omega_X',J')$. Then, we define   $PFH^{sw}_{Z_{ref}} (X, \Omega_X, \Gamma_X)$  as
 the composition of   $PFH^{sw}_{Z_{ref}} (X, \Omega'_X, \Gamma_X)$  with the canonically isomorphisms between
 PFH of   $(\varphi_{\pm}', J'_{\pm})$ and those of  $(\varphi_{\pm}, J_{\pm})$ (see (5-32) of \cite{GHC}). The total PFH cobordism
 map  $PFH^{sw}_{Z_{ref}} (X, \Omega_X)$ is defined to be the sum of $PFH^{sw}_{Z_{ref}} (X, \Omega_X, \Gamma_X)$ over all the
 $\Gamma_X$ with $\partial_{\pm}\Gamma_X = \Gamma_{\pm}$.

 Now we prove the holomorphic curve axioms. We only prove the case that $(\varphi_{\pm}, J_{\pm})$
 are $d$-$\delta$  flat approximations.  The general case can be achieved by the Gromov-Taubes
 compactness argument as in Section 6.3 of \cite{HT}.
 Suppose that  $PFC^{sw}_{Z_{ref}} (X, \Omega_X, \Gamma_X)_J$
 is non-trivial. By definition,  we obtain a SW-instanton
 \begin{equation*}
   \mathfrak{d}(r) \in \mathfrak{M}_{X, \operatorname{ind} =0}( [u^c_+\cdot u_{Z_+} \cdot \mathfrak{c}_{\alpha_+} ]^{\circ}, [u^c_-\cdot u_{Z_-} \cdot  \mathfrak{c}_{\alpha_-} ]^{\circ})
 \end{equation*}
 for $r\gg 1$.  As $r \to \infty$, Proposition 5.12 in \cite{GHC} shows that $\mathfrak{d}(r)$ converge to a broken
 J-holomorphic current $\mathcal{C} =\{\mathcal{C}^{-N_-}, ..., \mathcal{C}^0, ..., \mathcal{C}^{N_+}\} \in \overline{\mathcal{M}^J}(\alpha_+, \alpha_-)$   in current sense. Moreover, $I(\mathcal{C}) =0$.  For a more general version of Taubes’s ``$SW \Rightarrow Gr$” degeneration, please see Theorem 1.10 of  \cite{L}.

  Now we verify that $[\mathcal{C}] = [Z_+\#(Z_{ref} +j(S)) \#(-Z_-)]$.  It suffices to verify that
  \begin{equation*}
     \int_{\mathcal{C}^0 \cap \mathbb{R}_+ \times Y_{\varphi_+}}  \omega_{\varphi_+} + \int_{\mathcal{C}^0 \cap \mathbb{R}_- \times Y_{\varphi_-}}  \omega_{\varphi_+} + \int_{\mathcal{C}^0 \cap X}  \Omega_X + \sum_{i=-N-}^{-1} \int_{\mathcal{C}^i}  \omega_{\varphi_-} +   \sum_{j=1}^{N_+} \int_{\mathcal{C}^j}  \omega_{\varphi_+} = \int_{Z_+\#(Z_{ref} +j(S)) \#(-Z_-)} \Omega_X.
  \end{equation*}
   The left hand side of the above equation is the energy of the holomorphic current,
 denoted by $\mathcal{E}_{\Omega_X} (\mathcal{C})$. Similarly,  f or a configuration $\mathfrak{d}= (A,\Phi)$ which is asymptotic to $[u^c_{\pm} \cdot u_{Z_{\pm}} \cdot \mathfrak{c}_{\alpha_{\pm}}(r)]^{\circ}$, we define its energy by
 \begin{equation*}
   \mathcal{F}_{\Omega_X} (\mathfrak{d}) : = \frac{i}{2\pi} \int_{ \mathbb{R}_+ \times Y_{\varphi_+}} F_{A_E(r)}  \wedge \omega_{\varphi_+} + \frac{i}{2\pi} \int_{ \mathbb{R}_- \times Y_{\varphi_-}} F_{A_E(r)}  \wedge \omega_{\varphi_-} +\frac{i}{2\pi}   \int_{X}  F_{A_E(r)}  \wedge\Omega_X.
 \end{equation*}
  By the argument in the Proof of Theorem 1.10  \cite{L},   we have  $   \lim_{r \to \infty }\mathcal{F}_{\Omega_X} (\mathfrak{d}(r)) = \mathcal{E}_{\Omega_X} (\mathcal{C})$ (the parallel result is (7.36) of \cite{L}). Then, it suffices to show that  $$   \lim_{r \to \infty }\mathcal{F}_{\Omega_X} (\mathfrak{d}(r)) =\int_{Z_+\#(Z_{ref} +j(S)) \#(-Z_-)} \Omega_X.$$
 By Stokes’s theorem, we have
 \begin{equation} \label{e9}
    \mathcal{F}_{\Omega_X} (\mathfrak{d}(r))  -\mathcal{F}_{\Omega_X} (\mathfrak{d}_+(r)\#\mathfrak{d}^c(r)\#(-\mathfrak{d}_-(r))) =\int_{\Gamma_X- [Z_{ref}]} \Omega_X= \int_{j(S)} \Omega_X,
 \end{equation}
 where  $\mathfrak{d}_{\pm}(r)$ is a family of configurations of $Y_{\varphi_{\pm}}$ such that  $\lim_{s\to \infty}\mathfrak{d}_{\pm}(r) = u_{\pm}^c \cdot u_{Z_{\pm}} \cdot \mathfrak{c}_{\alpha_{\pm}} $ and  $\lim_{s\to -\infty}\mathfrak{d}_{\pm}(r) = u_{\pm}^c \cdot (B^c_{\pm}(r), (\alpha^c_{\pm}(r), 0)) $.

 Using Stokes’s theorem again and the property of $u_{Z_{\pm}}$, we get
 \begin{equation}  \label{e10}
   \begin{split}
    \lim_{r\to \infty}     \mathcal{F}_{\omega_{\varphi_{\pm}}} (\mathfrak{d}_{\pm}(r)) &=  \lim_{r\to \infty}   \frac{i}{4\pi} \int_{\mathbb{R} \times Y_{\varphi_{\pm}}} ds \wedge \partial_s B_{\pm} \wedge \omega_{\varphi_{\pm}}\\
    &=   \lim_{r\to \infty}   \frac{i}{4\pi} \int_{ Y_{\varphi_{\pm}}} (u_{Z_{\pm}} \cdot B_{\pm}(r) - B_{\pm}^c(r)) \wedge \omega_{\varphi_{\pm}} = \int_{Z_{\pm}}  \omega_{\varphi_{\pm}}.
   \end{split}
 \end{equation}
 Since $\mathfrak{d}^c(r)$ is constantly equal to $u^c_{\pm} \cdot \mathfrak{c}_{\pm}^c$ over the cylindrical ends, it follows that $    \mathcal{F}_{\Omega_X} (\mathfrak{d}^c(r)) = \frac{i}{2\pi}  \int_X F_{A_E^c(r)} \wedge \Omega_X$.   The convergence of $\frac{i}{2\pi} F_{A_E^c(r)}$  to $C_{ref}$ as currents
 implies that
 \begin{equation} \label{e11}
\lim_{r\to \infty }\mathcal{F}_{\Omega_X} (\mathfrak{d}^c(r)) = \lim_{r\to \infty } \frac{i}{2\pi}  \int_X F_{A_E^c(r)} \wedge \Omega_X =\int_{C_{ref} \cap  X} \Omega_X = \int_{Z_{ref}}{\Omega_X}.
 \end{equation}
 By (\ref{e9}), (\ref{e10}),  and (\ref{e11}), we have   $\mathcal{E}_{\Omega_X} (\mathcal{C}) =  \int_{Z_+\#(Z_{ref} +j(S)) \#(-Z_-)} \Omega_X. $ Therefore, $\mathcal{C}  \in \overline{\mathcal{M}^J}(\alpha_+, \alpha_-, [Z_+\#(Z_{ref} +j(S)) \#(-Z_-)]).$

 The second and the third statements follow from the composition rule and homotopy
 invariance of the Seiberg-Witten cobordism maps. By the proof of Theorem  \ref{thmA}, we
 have
 \begin{equation*}
   CM(X,\Omega_1, J_1, r, \mathfrak{p}_1, \mathfrak{s}_{\Gamma_X}) -   CM(X,\Omega_0, J_0, r, \mathfrak{p}_0, \mathfrak{s}_{\Gamma_X}) =\delta_- \circ K_r^{sw} + K_r^{sw} \circ \delta_+.
 \end{equation*}
 The chain homotopy  $K_r^{sw} $ is defined by counting $(\Omega_t,J_t,r,\mathfrak{p}_t)$-perturbed SW-instantons
 with $\operatorname{ind} = -1.$  To simplify the discussion, assume that $u^c_{\pm} = 1$ and $\Gamma_X = [Z_{ref}]$. Then
 define $K := \lim_{r\to \infty} (\mathcal{T}^-_r)^{-1}  \circ K_r^{sw} \circ \mathcal{T}^+_r$.  Therefore, $K\ne 0$ implies that the existence of
 $(\Omega_t,J_t,r,\mathfrak{p}_t)$-perturbed SW-instanton for $r\gg1$. Again, by Proposition 5.12 in  \cite{GHC}, we
 obtain a holomorphic current $\mathcal{C}$. Moreover, the ECH index of C follows from Theorem
 5.1 of \cite{DCG}. This prove the homotopy invariance property. The composition rule can be
 proved similarly.

		For the last statement, we need to show that the $I=0$ moduli space of holomorphic curves  is diffeomorphic  to the $\operatorname{ind}=0$ moduli space of solutions to the Seiberg-Witten equations.   A special case that $\widehat{X} =\mathbb{R} \times Y$ and $(\Omega_X ,J )$ is $\mathbb{R}$-invariant  has been proved in \cite{LT}.  For another special case that the holomorphic current $\mathcal{C}$ is embedded,  Taubes's argument can be applied to the cobordism case equally well  \cite{GHC}.   The proofs  are   mirror modifications  of the argument in Taubes' s series papers \cite{T2}, \cite{T3}, \cite{T4}.
		
		Since  Taubes' arguments require many hard analysis on the Seiberg-Witten equations which beyond  scope of this paper.    We suggest the readers    to read \cite{LT} and \cite{GHC} for  the relevant e details on Taubes' s idea. Also  see    Proposition  6.3 of \cite{HT} for the parallel arguments  on  ECH.  Here we just outline the main idea as follow.
		
		a)Let $\mathcal{C} =\mathbb{R} \times \alpha$ be the unique holomorphic current with $I=0$.    We follow  Section 5a in \cite{T2}   to build  a complex line bundle $E$ by gluing the normal bundle  of $\mathcal{C}$ with the trivial line bundle away from $\mathcal{C}$.  The $Spin^c$ structure on $\widehat{X}$ is defined by $S_+= E \oplus E K_X^{-1}$. We can construct an approximation solution $(A_r^*, \psi_r^*)$  ( closed to solve $\mathfrak{p} = 0$)-version
 of $\ref{e4}$)  associated to $\mathcal{C}$(see Section 5a of \cite{T2}) . Away from the trivial cylinders $\mathcal{C}$, $(A_r^*, \psi_r^*)$  is just the trivial solution. Let $(\gamma, m)$ be a component of $\alpha$. Near the trivial cylinder $\mathbb{R} \times \gamma$,  $(A_r^* ,\psi_r^*)$ is determined by a map $\mathfrak{v}: \mathbb{R} \times \gamma \to \mathfrak{C}_m$, where $\mathfrak{C}_m$ is  the moduli space of $m$-vortices. The vortex  equations can be regarded as the 2-dimensional Seiberg-Witten equations.   When  $\mathcal{C}$ is the trivial cylinders, there is a canonical way to choose the map $\mathfrak{v}$.   The analysis in \cite{T2} can be used to perturb $(A_r^*, \psi_r^*)$ to be a true solution $(A_r, \psi_r)$ to (\ref{e4}).
		
		b)  The argument in \cite{T3} can be used to show that $(A_r ,\psi_r)$ is non-degenerate. By  Theorem 5.1 of \cite{DCG},  the index  of $(A_r ,\psi_r)$  is zero.
		
		c) Let $(A_r', \psi_r')$ be another solution to the $r\Omega_X$-perturbed Seiberg-Witten equations with $\operatorname{ind}=0$.  By Proposition 5.12 of \cite{GHC},   $(A_r', \psi_r')$  converges to the trivial cylinders $\mathcal{C} =\mathbb{R} \times \alpha$ as $r  \to \infty $ because it is the unique holomorphic current with $I=0$.   For any $\delta>0$, the convergence implies that $1-|\psi'_r|  < \delta$ away from $\mathcal{C}$.      Intuitively, this means     $(A_r', \psi_r')$  is close to the trivial solution away from $\mathcal{C}$. The arguments in Section 6 of \cite{T4} can be  carried  over  to show that  $(A_r', \psi_r')$ is gauge equivalent to $(A_r, \psi_r)$. 	
	\end{proof}

Shenzhen University

\verb| E-mail adress: mathseu1234@outlook.com |

\end{document}